\newcommand{\disk}{\ensuremath{\mathbb{D}} } % unit disk
\newcommand{\sphere}{\bar{\Bbb{C}}} %Riemann sphere
\newcommand{\riem}{\Sigma}  %Riemann surface
\newcommand{\teich}{Teichm\"uller }
\renewcommand{\Bbb}[1]{\ensuremath{\mathbb{#1}}}
\newcommand{\st}{\, | \,} % such that
\newcommand{\Oqc}{\mathcal{O}^{\mathrm{qc}}} % Oqc
\newcommand{\qs}{\operatorname{QS}}
\newcommand{\aoneinfinity}{A_1^\infty(\mathbb{D})}
\newcommand{\aonetwo}{A_1^2(\mathbb{D})}
\newcommand{\Oqco}{\Oqc_0}
\newcommand{\ttildep}{\widetilde{T}}
\newcommand{\ttildeop}{\widetilde{T}_0}
\newcommand{\qso}{\operatorname{QS}_0}
\newcommand{\qco}{\operatorname{QC}_0}
\newcommand{\rigo}{\operatorname{Rig}_0}
\newcommand{\rig}{\operatorname{Rig}}
\newcommand{\pmodi}{\operatorname{PModI}}
\newcommand{\db}{\operatorname{DB}}
\newcommand{\di}{\operatorname{DI}}
\newcommand{\pdb}{\operatorname{\Pi}}
\newcommand{\pdbo}{\operatorname{\Pi_0}}
\theoremstyle{plain}
        \newtheorem{theorem}{Theorem}[section]
        \newtheorem{lemma}[theorem]{Lemma}
        \newtheorem{proposition}[theorem]{Proposition}
        \newtheorem{corollary}[theorem]{Corollary}
\theoremstyle{definition}
        \newtheorem{definition}[theorem]{Definition}
\theoremstyle{remark}
    \newtheorem{remark}[theorem]{Remark}
\numberwithin{equation}{section} % Equation labels are 'section'.'eq #'
\title[Refined Teichm\"uller space of bordered surfaces]{A Hilbert manifold structure on the refined Teichm\"uller space of  bordered Riemann surfaces}
\author{David Radnell}
\address{David Radnell \\ Department of Mathematics and Statistics \\
American University of Sharjah \\
PO Box 26666, Sharjah \\ United Arab Emirates} \email{dradnell@aus.edu}
\author{Eric Schippers}
\address{Eric Schippers \\ Department of Mathematics \\
University of Manitoba\\
Winnipeg, Manitoba \\  R3T 2N2 \\ Canada}
\email{eric\_schippers@umanitoba.ca}
\author{Wolfgang Staubach}
\address{Wolfgang Staubach\\ Department of Mathematics\\
Uppsala University\\
Box 480\\ 751 06 Uppsala\\ Sweden}
\email{wulf@math.uu.se}
\thanks{Eric Schippers is
partially supported by the National Sciences and Engineering Research
Council. \\  He
would like to thank Nina Zorboska for several helpful
conversations.}
\subjclass[2010]{Primary 30F60 ; Secondary 30C55, 30C62, 32G15, 46E20, 81T40}
\keywords{Refined Teichm\"uller space, Hilbert manifold, quasiconformal maps, moduli space of rigged Riemann surfaces, conformal field theory} 
\begin{document}

\begin{abstract}
 We consider bordered Riemann surfaces which are biholomorphic to compact Riemann surfaces
 of genus $g$ with $n$ regions biholomorphic to the disc removed.
 We define a refined Teichm\"uller space of such Riemann surfaces and demonstrate that in the case that $2g+2-n>0$, this refined Teichm\"uller space is a Hilbert manifold.  The inclusion map from the refined Teichm\"uller space into the usual Teichm\"uller space (which is a Banach manifold) is holomorphic.

 We also show that the rigged moduli space of Riemann surfaces with non-overlapping holomorphic maps, appearing in conformal field theory, is a complex Hilbert manifold.
 This result requires an analytic reformulation of the moduli space, by enlarging the set of non-overlapping mappings to a class of
 maps intermediate between analytically extendible maps and quasiconformally
 extendible maps.  Finally we show that the rigged moduli space is the quotient of the refined Teichm\"uller space
 by a properly discontinuous group of biholomorphisms.
\end{abstract}

\maketitle

\begin{section}{Introduction} \label{Introduction}
In this paper, we construct a refined Teichm\"uller space of bordered Riemann surfaces of genus $g$ with $n$ boundary curves homeomorphic to the circle.  If $2g+2-n>0$ this refined
Teichm\"uller space possesses a Hilbert manifold structure, and furthermore
the inclusion map from this refined Teichm\"uller space into the standard
one is holomorphic.   In brief, the approach can be summarized
as follows: we combine the results of Takhtajan and Teo \cite{Takhtajan_Teo_Memoirs}
and Guo Hui \cite{GuoHui} refining the universal Teichm\"uller space, with the
results of Radnell and Schippers \cite{RS05, RSnonoverlapping, RS_fiber} demonstrating the
relation between
a moduli space in conformal field theory and the Teichm\"uller space of bordered
surfaces.  We also require a result by Nag \cite{NagSchiffer, Nagbook} on the variational method of Gardiner and Schiffer \cite{Gardiner}, together with the theory of marked holomorphic families of Riemann surfaces (see for example \cite{EarleFowler, Nagbook, Hubbard}). The demonstration that the transition
functions of the atlas defining the Hilbert manifold structure are biholomorphisms,
brings us into the realm of Besov spaces and the theory of Carleson measures for analytic Besov spaces.  We also utilize the relationship between the Dirichlet space and the little Bloch space.

Our results are motivated both by
Teichm\"uller theory, where there has been interest in refining Teichm\"uller space
(see below),
and by conformal field theory, where our results are
required to solve certain analytic problems in the  construction of conformal field theory from vertex operator algebras following
Yi-Zhi Huang \cite{Huang}.
First, we give some background for the problem, and then outline our approach.

There have been several refinements of quasiconformal Teichm\"uller space,
obtained by considering natural analytic subclasses either of the quasisymmetries
of the circle or of the quasiconformally extendible univalent functions
in the Bers model of universal Teichm\"uller space.
For example, Astala and Zinsmeister \cite{AstalaZinsmeister} give a model of
the universal Teichm\"uller space based on BMO, and Cui and Zinsmeister \cite{CuiZinsmeister} studied the Teichm\"uller spaces compatible with Fuchsian groups
in this model.  Gardiner and Sullivan \cite{GardinerSullivan} study a refined class
of quasisymmetric mappings (which they call symmetric) and the topology of this
refined class.

A family of refined models of the universal Teichm\"uller space was given by
Guo Hui \cite{GuoHui}, each based on an $L^p$ norm.  These spaces were completely
characterized in three ways: in terms of a space of quadratic differentials,
in terms of univalent functions, and in terms of a space of Beltrami differentials;
all satisfying a weighted $L^p$-type integrability condition.  In this paper, we are
concerned with the $L^2$ case.  Guo Hui attributes the $L^2$ case to a preprint
of Guizhen Cui, which we were unable to locate.
Independently, Takhtajan and Teo \cite{Takhtajan_Teo_Memoirs} defined a Hilbert manifold
structure on the universal Teichm\"uller space and universal Teichm\"uller
curve, equivalent to that of Guo Hui, and
obtained far-reaching results.  These results include (among many others)
obtaining a convergent Weil-Petersson metric and computation of its sectional
curvatures, showing that the Kirillov-Yuri'ev-Nag-Sullivan period matrix is a holomorphic
embedding of the universal Teichm\"uller space, and obtaining equivalent characterizations of elements of their refined
universal Teichm\"uller space
in terms of the generalized Grunsky matrix.

In conformal field theory one considers a moduli space
originating with Friedan and Shenker \cite{FriedanShenker}. We will use two different formulations of this moduli space due to Segal \cite{Segal} and Vafa \cite{Vafa}. Vafa's \textit{puncture model} of the rigged moduli space
consists of equivalence classes of pairs $(\riem,\phi)$,
 where $\riem$ is a compact Riemann surface with $n$
punctures, and $\phi=(\phi_1,\ldots,\phi_n)$ is an $n$-tuple of one-to-one holomorphic maps from the unit disc $\mathbb{D} \subset \mathbb{C}$ into the Riemann surface with non-overlapping images.
Two such pairs $(\riem_1,\phi)$ and $(\riem_2,\psi)$
are equivalent if there is a biholomorphism $\sigma:\riem_1 \rightarrow \riem_2$
such that $\psi_i  = \sigma \circ \phi_i$ for $i=1,\ldots,n$.
The $n$-tuple of maps $(\phi_1,\ldots,\phi_n)$ is called the \text{rigging}, and is
 usually subject to some additional regularity conditions which vary in the
 conformal field theory literature. The choice of these regularity conditions
 relates directly to the analytic structure of this moduli space.
 The regularity also relates directly to the regularity of certain elliptic
 operators, which are necessary for the rigorous definition of conformal field theory in the sense of Segal \cite{Segal}.
 In this paper we show that the rigged moduli space has
 a Hilbert manifold structure, and that this Hilbert manifold structure arises
 naturally from a refined Teichm\"uller space of bordered surfaces, which
 we also show is a Hilbert manifold. These results are further motivated
 by the fact that the aforementioned elliptic operators will have convergent
 determinants on precisely this refined moduli space.  We hope to return
 to this question in a future publication. Moreover, these results will have applications to the construction of higher genus conformal field theory, following a program of Yi-Zhi Huang and others \cite{Huang, HK2010}. Also, it is natural to ask whether there is a convergent natural generalization of the
 Weil-Petersson metric on the refined Teichm\"uller space, as in \cite{Takhtajan_Teo_Memoirs}.  We intend to demonstrate this in a future publication.

These results are made possible by previous work of two of the authors \cite{RSnonoverlapping}, in
which it was shown that if one
chooses the riggings to be extendible to quasiconformal maps of a
neighborhood of the closure of $\mathbb{D}$, then the rigged
moduli space is the same as the Teichm\"uller
space of a bordered Riemann surface (up to a properly discontinuous
group action).  Thus the rigged moduli space
inherits a complex Banach manifold structure from Teichm\"uller space.
This solved certain analytic problems in the
definition of conformal field theory, including holomorphicity of the sewing operation.

 On  the other hand this also provided an alternate description of the Teichm\"uller space
of a bordered surface $\riem$ as a fibre space that is locally modeled on the following rigged \teich space.  In \cite{RS_fiber} (following the first author's thesis \cite{RadThesis}), two of the authors introduced the \textit{rigged \teich space} based on quasiconformally extendible riggings,  which is the analogue of the above rigged moduli space. It was proved that this rigged \teich space is a fibre space: the fibres consist
of non-overlapping maps into a compact Riemann surface with punctures obtained by sewing
copies of the punctured disc onto the boundaries of $\Sigma$.  The base space is the
finite-dimensional Teichm\"uller space of the compact surface with punctures so obtained.

Thus the Teichm\"uller space of bordered surfaces has two independent
complex Banach manifolds structures: the standard one, obtained from the Bers
embedding of spaces of equivalent Beltrami differentials, and one obtained
from the fibre model.  It was shown that the two are equivalent \cite{RSnonoverlapping, RS_fiber}.
Up to normalizations, the fibres look locally like an $n$-fold product of the
universal Teichm\"uller space. We now define a \textit{refined} rigged \teich space and prove that it is a Hilbert manifold by using the results of Guo Hui \cite{GuoHui} and Takhtajan and Teo \cite{Takhtajan_Teo_Memoirs} to define a refined set of fibres that are modeled on Hilbert spaces. Finally, we define a refined \teich space of bordered surfaces and, via the fibre model, show that it is a Hilbert manifold using the refined rigged \teich space. Charts for the refined \teich space will be defined completely explicitly, using Gardiner-Schiffer variation and natural function spaces of non-overlapping maps.

The proof that these charts define a Hilbert manifold structure is somewhat complicated.
We proceed in the following way.  In Section \ref{se:refined_maps}, we define the refined
quasiconformal mappings and function spaces which will appear in the paper.  This section
mostly establishes notation and outlines some previous results, and proves some elementary
facts about the refined mappings.  The difficult work is done
in Sections \ref{se:non-overlapping} and \ref{se:rigged_Teich_space}.  In Section \ref{se:non-overlapping},
we define the set of non-overlapping mappings which serves as a model of the fibres, and show that it
is a complex Hilbert manifold.  In Section \ref{se:rigged_Teich_space},
we show that the refined rigged \teich space is a Hilbert manifold.  We do this
using the results of the previous section, and Gardiner-Schiffer variation. A key part of
the argument relies on the universality properties of the universal Teichm\"uller curve and the theory of marked holomorphic families of Riemann surfaces.
Finally, in Section \ref{se:refined_Teich_space} we show that the refined Teichm\"uller space
of a bordered Riemann surface is a Hilbert manifold, by showing that it covers the refined rigged \teich space and passing the structure upwards.  Furthermore, we show that the Hilbert manifold structure
passes downwards to the two versions of the rigged moduli space of conformal field theory defined by Segal \cite{Segal} and Vafa \cite{Vafa}.
\end{section}
\begin{section}{Refined Quasiconformal maps and quasisymmetries} \label{se:refined_maps}
 In Section \ref{se:collection_refined_results} we collect some known results on the refinement of the set of quasisymmetries and quasiconformal maps, from the work of Takhtajan and Teo
 \cite{Takhtajan_Teo_Memoirs}, Teo \cite{Teo_Velling} and Guo Hui \cite{GuoHui}.
 We also derive two technical lemmas which follow almost directly from
 previous work of two of the authors \cite{RSnonoverlapping}.
 In Section \ref{se:refined_quasisymmetries} we define a refined set of
  quasisymmetries between borders of Riemann surfaces in an obvious way
  and some elementary results are derived.  This is then used to define
 a refined set of quasiconformal maps between Riemann surfaces in Section \ref{se:refined_qc_maps}.
\begin{subsection}{Refined maps on the disc and circle}
\label{se:collection_refined_results}
In this section we collect some necessary results on the refined universal Teichm\"uller space of Takhtajan and Teo \cite{Takhtajan_Teo_Memoirs} and Guo Hui \cite{GuoHui}.
We need a refined class of quasiconformal and quasisymmetric mappings of the
disc and $S^1$.

In \cite{RSnonoverlapping} we defined the set $\Oqc$ of quasiconformally
extendible maps in the following way.
\begin{definition}
 Let $\Oqc$ be the set of maps $f: \mathbb{D} \rightarrow \mathbb{C}$
 such that $f$ is one-to-one, holomorphic, has quasiconformal
 extension to $\mathbb{C}$, and $f(0)=0$.
\end{definition}
A Banach space structure can be introduced on $\Oqc$ as follows.
Let
\begin{equation} \label{eq_aoneinfinity}
 \aoneinfinity = \left\{ \phi \in \mathcal{H}(\mathbb{D}) : \| \phi \|_1^\infty = \sup_{z \in
 \mathbb{D}} (1-|z|^2) |\phi(z)| < \infty \right\}.
\end{equation}
This is a Banach space.
 It follows directly from results of Teo \cite{Teo_Velling} that for
 \[  \mathcal{A}(f)=\frac{f''}{f'}  \]
 the map
\begin{align} \label{eq_chidefinition}
 \chi : \Oqc & \longrightarrow  \aoneinfinity \oplus \mathbb{C} \nonumber \\
 f & \longmapsto  \left( \mathcal{A}(f),f'(0) \right)
\end{align}
takes $\Oqc$ onto an open subset of the Banach space
$\aoneinfinity \oplus \mathbb{C}$ (see \cite{RSnonoverlapping}).  Thus $\Oqc$ inherits a complex
structure from $\aoneinfinity \oplus \mathbb{C}$.

The space $\Oqc$ can be thought of as a two complex dimensional extension of
the universal Teichm\"uller space.  We will construct a Hilbert structure on
a subset of $\Oqc$.  To do this, in place of $A_1^\infty(\mathbb{D})$ we use the Bergman
space
\[  \aonetwo = \left\{ \phi \in \mathcal{H}(\mathbb{D}) : \| \phi\|_2^2 = \iint_{\mathbb{D}}
  |\phi|^2 \, dA < \infty \right\} \]
which is a Hilbert space and a vector subspace of the Banach space
$\aoneinfinity$. Furthermore, the inclusion map from $\aonetwo$
to $\aoneinfinity$ is bounded \cite[Chapter II Lemma 1.3]{Takhtajan_Teo_Memoirs}. Here and in the rest of the paper we shall denote the Bergman space norm $\Vert \cdot\Vert_{A_1^{2}}$ by $\Vert \cdot\Vert.$

We define the class of refined quasiconformally extendible maps as follows.
\begin{definition}  Let
  \[  \Oqco= \left\{f \in \Oqc : \mathcal{A}(f) \in \aonetwo
  \right\}.  \]
\end{definition}
We will embed $\Oqco$ in the Hilbert space direct sum $\mathcal{W} = A_1^2(\mathbb{D})\oplus \mathbb{C}$.
Since $\chi(\Oqc)$ is open, $\chi(\Oqco)= \chi(\Oqc) \cap
\aonetwo$ is also open, and thus $\Oqco$ trivially inherits a Hilbert manifold
structure from $\mathcal{W}$.  We summarize this with the following theorem.
\begin{theorem}  \label{th:Oqco_open_in_Oqc} The inclusion map from
$\aonetwo \rightarrow \aoneinfinity$ is continuous.  Furthermore
$\chi(\Oqco)$ is an open subset of the vector subspace $\mathcal{W} = \aonetwo
\oplus \mathbb{C}$ of $\aoneinfinity \oplus \mathbb{C}$, and the
inclusion map from $\chi(\Oqco)$ to $\chi(\Oqc)$ is holomorphic.
Thus the inclusion map
$\iota: \Oqco \rightarrow \Oqc$ is holomorphic.
\end{theorem}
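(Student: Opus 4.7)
The plan is to establish the three statements in sequence, each following almost formally from the previous. The entire theorem is essentially a packaging result that upgrades the already-established openness of $\chi(\Oqc) \subset \aoneinfinity \oplus \mathbb{C}$ (from Teo \cite{Teo_Velling}) to a Hilbert-manifold statement via a continuous linear inclusion of the underlying Banach spaces.

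For the continuity of $\aonetwo \hookrightarrow \aoneinfinity$, the key is the pointwise estimate $(1-|z|^2)|\phi(z)| \leq C\|\phi\|$ valid for all $\phi \in \aonetwo$ with a universal constant $C$. This is precisely the content of \cite[Chapter II Lemma 1.3]{Takhtajan_Teo_Memoirs}, and follows from the sub-mean-value property of $|\phi|^2$ on pseudohyperbolic discs. Granting this, the inclusion is a bounded complex-linear map of Banach spaces, hence continuous, and consequently the induced direct sum inclusion $\mathcal{W} = \aonetwo \oplus \mathbb{C} \hookrightarrow \aoneinfinity \oplus \mathbb{C}$ is also continuous and linear.

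For the openness of $\chi(\Oqco)$ in $\mathcal{W}$, I would argue as follows. By hypothesis $\chi(\Oqc)$ is open in $\aoneinfinity \oplus \mathbb{C}$. The set $\chi(\Oqco)$ is by construction the set of $f \in \Oqc$ whose Schwarzian-type data $\mathcal{A}(f)$ lies in $\aonetwo$, that is, $\chi(\Oqco) = \chi(\Oqc) \cap (\aonetwo \oplus \mathbb{C}) = \chi(\Oqc) \cap \mathcal{W}$. This is the preimage of an open set under the continuous inclusion $\mathcal{W} \hookrightarrow \aoneinfinity \oplus \mathbb{C}$, so it is open in the Hilbert space topology of $\mathcal{W}$.

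For the holomorphicity statements, a continuous complex-linear map between complex Banach spaces is automatically holomorphic, so the restriction of the inclusion $\mathcal{W} \hookrightarrow \aoneinfinity \oplus \mathbb{C}$ to the open subset $\chi(\Oqco)$ is holomorphic as a map into $\chi(\Oqc)$. Transferring this through the charts $\chi$ (which are, by definition of the complex structures on $\Oqco$ and $\Oqc$, biholomorphisms onto their images), the map $\iota : \Oqco \to \Oqc$ is the chart-expression of this restricted inclusion and is therefore holomorphic. There is no serious obstacle in this proof; the only non-trivial analytic input is the Bergman-space pointwise estimate, and that is cited rather than proved here.
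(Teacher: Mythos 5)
Your proposal is correct and follows exactly the route the paper takes: the paper cites the same boundedness result \cite[Chapter II Lemma 1.3]{Takhtajan_Teo_Memoirs} for the inclusion $\aonetwo \hookrightarrow \aoneinfinity$, observes that $\chi(\Oqco) = \chi(\Oqc) \cap \mathcal{W}$ is the preimage of the open set $\chi(\Oqc)$ under the continuous inclusion, and obtains holomorphicity from the fact that a continuous linear map is holomorphic. Nothing essential differs between your argument and the paper's.
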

\begin{remark} Although the inclusion map is continuous, the topology of $\Oqco$ is not
the relative topology inherited from $\Oqc$.  It's enough to show that $A_1^2(\mathbb{D})$
does not have the relative topology from $A_1^\infty(\mathbb{D})$.  To see this observe that
if
\[  f_t = \frac{1}{\sqrt{|\log{(1-t)}|}(1-t^2z^2)}  \]
for $t<1$, then as $t \rightarrow 1$ $\|f_t\| \rightarrow 0$ in $A_1^2(\mathbb{D})$ whereas
$\|f_t\|_{A_1^\infty(\mathbb{D})} \rightarrow \pi/2$.
\end{remark}
\begin{lemma} \label{le:Oqco_composition_preserves}
  Let $f \in \Oqco$.  Let $h$ be a one-to-one holomorphic map
  defined on an open set $W$ containing
  $\overline{f(\mathbb{D})}$.  Then $h \circ f \in \Oqco$.
  Furthermore, there is an open neighborhood $U$ of $f$ in $\Oqco$ and a
  constant $C$ such that
  $\|\mathcal{A}(h \circ g)\| \leq C$ for all $g \in U$.
 \end{lemma}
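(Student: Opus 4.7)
The plan is built around the chain rule for the pre-Schwarzian derivative,
$$\mathcal{A}(h \circ g) \;=\; (\mathcal{A}(h)\circ g)\cdot g' \;+\; \mathcal{A}(g),$$
which reduces the problem to estimating the first summand in the Bergman norm $\|\cdot\| = \|\cdot\|_{A_1^2(\mathbb{D})}$, since the second summand is in $A_1^2(\mathbb{D})$ by hypothesis (or by continuity in a neighborhood of $f$).

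First I would verify that $h\circ f$ is quasiconformally extendible to $\mathbb{C}$, and therefore (after subtracting the constant $h(f(0))$, which does not affect $\mathcal{A}$) belongs to $\Oqc$. This follows from the facts that $f$ admits a quasiconformal extension $F$ with $F(\overline{\mathbb{D}}) = \overline{f(\mathbb{D})}\subset W$, that there is a relatively compact intermediate open set $W'$ with $\overline{f(\mathbb{D})}\subset W'\Subset W$ on which $h$ is biholomorphic and bi-Lipschitz, and that such an $h|_{\overline{W'}}$ admits a quasiconformal extension to $\mathbb{C}$; the composition of this extension with $F$ is then a quasiconformal extension of $h\circ f$. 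This argument is essentially the one used in \cite{RSnonoverlapping}.

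To show $\mathcal{A}(h\circ f)\in A_1^2(\mathbb{D})$, apply the holomorphic change of variables $w = f(z)$, whose real Jacobian equals $|f'(z)|^2$, to obtain
$$\iint_{\mathbb{D}} \bigl|(\mathcal{A}(h)\circ f)(z)\,f'(z)\bigr|^2\,dA(z) \;=\; \iint_{f(\mathbb{D})} |\mathcal{A}(h)(w)|^2\,dA(w).$$
Since $\mathcal{A}(h) = h''/h'$ is holomorphic on $W$ and $\overline{f(\mathbb{D})}$ is a compact subset of $W$, the constant $M := \sup_{\overline{W'}}|\mathcal{A}(h)|$ is finite, so the integral is bounded by $M^2\,\mathrm{Area}(W')<\infty$. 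Combined with the chain rule and the triangle inequality, this gives $\mathcal{A}(h\circ f)\in A_1^2(\mathbb{D})$, establishing the first claim.

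For the uniform bound, I would use the continuous dependence of the image $g(\overline{\mathbb{D}})$ on $g\in \Oqc$ (which follows from the local uniform convergence of the normalized quasiconformal extensions when the Beltrami coefficient varies in $L^\infty$, essentially as in \cite{RSnonoverlapping}) to produce a neighborhood $V$ of $f$ in $\Oqc$ with $g(\overline{\mathbb{D}})\subset W'$ for all $g\in V$. By Theorem~\ref{th:Oqco_open_in_Oqc}, the inclusion $\Oqco\hookrightarrow\Oqc$ is continuous, so $U_0 := V\cap\Oqco$ is open in $\Oqco$. Repeating the change-of-variables estimate for every $g\in U_0$ gives
$$\|(\mathcal{A}(h)\circ g)\cdot g'\|^2 \;\le\; M^2\cdot\mathrm{Area}(W'),$$
and shrinking $U_0$ to a smaller neighborhood $U$ on which $\|\mathcal{A}(g)\|\le\|\mathcal{A}(f)\|+1$ (using the continuity of the Hilbert-space norm in the Hilbert chart of $\Oqco$) gives, via the triangle inequality, a uniform constant $C$. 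The main obstacle is transferring the statement that nearby maps have nearby images from the Banach topology of $\Oqc$ to the Hilbert topology of $\Oqco$; this is where Theorem~\ref{th:Oqco_open_in_Oqc} does the essential work, reducing the Hilbert-side estimate to the (already known) Banach-side estimate from \cite{RSnonoverlapping}.
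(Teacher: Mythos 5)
Your proposal is correct and follows essentially the same route as the paper: the chain rule $\mathcal{A}(h\circ g)=(\mathcal{A}(h)\circ g)\,g'+\mathcal{A}(g)$ together with the triangle (Minkowski) inequality, the change of variables $w=g(z)$ to reduce the first term to $\iint_{g(\mathbb{D})}|\mathcal{A}(h)|^2\,dA$ over a fixed compact subset of $W$, and Corollary 3.5 of \cite{RSnonoverlapping} plus the continuity of the inclusion $\Oqco\hookrightarrow\Oqc$ to obtain the uniform neighborhood $U$. The only (harmless) cosmetic difference is that you explicitly normalize by subtracting $h(f(0))$ and that the paper bounds $\|\mathcal{A}(g)\|$ by taking $U$ to be an open ball rather than by continuity of the norm.
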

 \begin{proof} The map
  $h \circ f$ has a quasiconformal extension to
  $\mathbb{C}$ if and only if it has a quasiconformal extension to
  an open neighborhood of $\overline{\mathbb{D}}$ (although
  not necessarily with the same dilatation constant).  Clearly $h
  \circ f$ has a quasiconformal extension to $W$, namely $h$
  composed with the extension of $f$.  Thus $h \circ f$ has an
  extension to the plane, and so $h\circ f \in \Oqc$.

  We need only show that $\mathcal{A}(h \circ f) \in \aonetwo$.
  This follows from Minkowski's inequality:
  \begin{align} \label{eq:Minkowski}
   \left(\iint_{\mathbb{D}} | \mathcal{A}(h \circ f) |^2 dA \right)^{1/2} & \leq 
   \left(\iint_{\mathbb{D}} |\mathcal{A}(h) \circ f \cdot f'|^2 dA \right)^{1/2} +
   \left(\iint_{\mathbb{D}} |\mathcal{A}(f)|^2 dA \right)^{1/2} \\
   & =  \left( \iint_{f(\mathbb{D})}| \mathcal{A}(h)|^2 dA \right)^{1/2} +
   \left(\iint_{\mathbb{D}} |\mathcal{A}(f)|^2 dA \right)^{1/2} \nonumber
  \end{align}
  The first term on the right hand side is finite because $h$ is
  holomorphic and $h' \neq 0$ on an open set containing $\overline{f(\mathbb{D})}$
  so $\mathcal{A}(h)$ is bounded on
  $f(\mathbb{D})$.  The second term is bounded because $f \in
  \Oqco$.  This proves the first claim.

  To prove the second claim, observe that there is a compact set $K$ contained in $W$ which contains $\overline{f(\mathbb{D})}$ in its interior.  By \cite[Corollary 3.5]{RSnonoverlapping}
  there is an open set $\hat{U}$ in $\Oqc$ such that $\overline{g(\mathbb{D})}$ is contained in the
  interior of $K$ for all $g \in \hat{U}$.  Since the inclusion $\iota:\Oqco \rightarrow \Oqc$ is
  continuous, we obtain an open set $\iota^{-1}(\hat{U}) \subset \Oqco$
  with the same property.  Let $U$ be an open ball in $\iota^{-1}(\hat{U})$ containing $f$. There is a constant $C_1$ such that for any  $g\in U$
 \[  \iint_{\mathbb{D}} |\mathcal{A}(g)|^2 \,dA \leq C_1  \]
  and a constant $C_2$ such that
  \[  \iint_{g(\mathbb{D})} |\mathcal{A}(h)|^2 \,dA \leq \iint_K |\mathcal{A}(h)|^2 \,dA \leq C_2 . \]
Applying (\ref{eq:Minkowski}) completes the proof.
 \end{proof}

 We will also need a technical lemma on a certain kind of holomorphicity of left composition in $\Oqco$.
\begin{lemma}
\label{le:CompHoloOqc_manyvar}
 Let $E$ be an open subset of $\mathbb{C}$ containing $0$ and $\Delta$ an open
 subset of $\mathbb{C}$. Let $\mathcal{H}: \Delta \times E \to \mathbb{C}$ be a map which is holomorphic in both variables and let $h_{\epsilon}(z) = \mathcal{H}(\epsilon, z)$. Let $\psi \in \Oqco$ satisfy $\overline{\psi(\mathbb{D})} \subseteq E$.
 Then the map $Q: \Delta \mapsto \Oqc_0$ defined by $Q(\epsilon) = h_\epsilon \circ \psi$ is holomorphic in $\epsilon$.
 \end{lemma}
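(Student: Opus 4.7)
The plan is to use the chart $\chi$ from Theorem \ref{th:Oqco_open_in_Oqc} to reduce holomorphicity of $Q$ to holomorphicity of its two components $\epsilon \mapsto \mathcal{A}(h_\epsilon \circ \psi) \in A_1^2(\mathbb{D})$ and $\epsilon \mapsto (h_\epsilon \circ \psi)'(0) \in \mathbb{C}$, and then establish the $A_1^2$-valued component via a power series argument in $\epsilon$.

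First I would localize the argument at an arbitrary point $\epsilon_0 \in \Delta$. Since $\overline{\psi(\mathbb{D})}$ is compact in $E$, I can choose an open $V$ with $\overline{\psi(\mathbb{D})} \subset V \subset \overline{V} \subset E$ and a radius $r > 0$ with $\overline{D(\epsilon_0,r)} \subset \Delta$ such that $h_\epsilon'$ does not vanish on $V$ and $h_\epsilon$ is one-to-one on $V$ for every $\epsilon \in \overline{D(\epsilon_0,r)}$ (using joint continuity of $\mathcal{H}$ and persistence of univalence under uniform perturbation). By Lemma \ref{le:Oqco_composition_preserves}, $Q(\epsilon) \in \Oqc_0$ for all such $\epsilon$, so the map is well-defined locally. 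Using the chain-rule identity
\[
\mathcal{A}(h_\epsilon \circ \psi)(z) \;=\; \bigl(\mathcal{A}(h_\epsilon)\circ \psi\bigr)(z)\,\psi'(z) + \mathcal{A}(\psi)(z),
\]
and the fact that $\mathcal{A}(\psi) \in A_1^2(\mathbb{D})$ is a constant summand, the question reduces to showing that $\epsilon \mapsto F(\epsilon,\cdot)\circ \psi \cdot \psi'$ is holomorphic into $A_1^2(\mathbb{D})$, where $F(\epsilon,w) := h_\epsilon''(w)/h_\epsilon'(w)$ is jointly holomorphic on $D(\epsilon_0,r) \times V$. The scalar component $\epsilon \mapsto h_\epsilon'(0)\,\psi'(0)$ is holomorphic by inspection.

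For the $A_1^2$-valued component I would expand
\[
F(\epsilon,w) \;=\; \sum_{n=0}^{\infty} (\epsilon - \epsilon_0)^n\, G_n(w),
\]
where each $G_n$ is holomorphic on $V$; this expansion converges uniformly on $\overline{D(\epsilon_0,r')}\times \overline{\psi(\mathbb{D})}$ for any $r' < r$ by the standard Cauchy integral argument applied to $F$. The Cauchy estimates give $\sup_{\overline{\psi(\mathbb{D})}}|G_n| \leq M\,r^{-n}$ for some constant $M$ depending on $F$ and $r$. A change of variables (the Jacobian of $\psi$ is $|\psi'|^2$) then yields
\[
\bigl\|(G_n\circ \psi)\,\psi'\bigr\|_{A_1^2} \;=\; \biggl(\iint_{\psi(\mathbb{D})}|G_n|^2\,dA\biggr)^{1/2} \leq\; |\psi(\mathbb{D})|^{1/2}\, M\, r^{-n}.
\]
Hence $\sum_{n\geq 0} (\epsilon-\epsilon_0)^n (G_n\circ\psi)\,\psi'$ converges absolutely in $A_1^2(\mathbb{D})$ for $|\epsilon-\epsilon_0|<r'$, and the sum equals $\mathcal{A}(h_\epsilon)\circ\psi\cdot\psi'$ pointwise (hence in $A_1^2$ by the dominated convergence or by uniqueness of uniform-on-compacta limits of holomorphic functions). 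Having a convergent power series representation in $\epsilon$ with coefficients in the Banach space $A_1^2(\mathbb{D})$ is the definition of holomorphicity, so this completes the proof.

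I expect the main technical obstacle to be the verification that the series truly converges in $A_1^2(\mathbb{D})$ norm rather than just pointwise or uniformly on compact subsets of $\mathbb{D}$. The change-of-variables trick handles this cleanly because $(G_n\circ\psi)\psi'$ has an obvious $L^2$ norm on $\mathbb{D}$ given by the $L^2$ norm of $G_n$ on $\psi(\mathbb{D})$, which is a compact subset of $V$; pairing this with the geometric Cauchy estimates on $G_n$ produces the needed bound. The local univalence and quasiconformal extendibility needed to assert $Q(\epsilon)\in \Oqc_0$ follow from Lemma \ref{le:Oqco_composition_preserves} together with the perturbation argument at the start.
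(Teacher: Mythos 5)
Your proof is correct, but it takes a genuinely different route from the paper's. The paper establishes holomorphicity of $\epsilon \mapsto \mathcal{A}(h_\epsilon \circ \psi)$ by invoking the criterion that a locally bounded, weakly holomorphic map into a Banach space is holomorphic (citing Grosse-Erdmann): weak holomorphicity is checked against the point-evaluation functionals on $\aonetwo$ using the same chain-rule identity you use, and local boundedness comes from Minkowski's inequality together with the change of variables $\iint_{\mathbb{D}} |\mathcal{A}(h_\epsilon)\circ\psi|^2 |\psi'|^2\, dA = \iint_{\psi(\mathbb{D})} |\mathcal{A}(h_\epsilon)|^2\, dA$, which is uniformly bounded for $\epsilon$ in a compact set because $\overline{\psi(\mathbb{D})}$ is a compact subset of $E$. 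You instead construct an explicit $\aonetwo$-valued power series: expanding $\mathcal{A}(h_\epsilon)$ in $\epsilon$, applying Cauchy estimates on $\overline{\psi(\mathbb{D})}$, and using the same change of variables to bound $\|(G_n\circ\psi)\,\psi'\|$, you obtain geometric decay of the coefficient norms and hence norm convergence of the series, which is holomorphicity by definition. Your version is more self-contained (no abstract weak-holomorphy criterion) and gives a concrete local series representation; the paper's version is shorter because it needs only a single uniform bound rather than estimates on all Taylor coefficients. Both hinge on the identical compactness observation. One caveat, common to both arguments: the lemma as stated does not actually hypothesize that $h_\epsilon$ is univalent on a neighborhood of $\overline{\psi(\mathbb{D})}$ or that $h_\epsilon(0)=0$, both of which are needed for $Q(\epsilon)$ to lie in $\Oqco$ at all; your opening perturbation step supplies univalence only if $h_{\epsilon_0}$ is already univalent there, an implicit hypothesis that the paper likewise leaves unstated and that holds in all of its applications of the lemma.
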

 \begin{proof}
  We need to show that for fixed $\psi$, $\mathcal{A}(h_\epsilon \circ \psi)$ and $(h_\epsilon \circ \psi)'(0)$ are holomorphic
  in $\epsilon$.  First observe that all the $z$-derivatives of $h_\epsilon$ are holomorphic in $\epsilon$ for fixed $z$.  Thus
  the second claim is immediate.

  To prove holomorphicity of $\epsilon \mapsto \mathcal{A}(h_\epsilon \circ \psi)$,
  it is enough to show weak holomorphicity and local boundedness \cite{GrosseErdmann};
  that is, to show  local boundedness and that for some set of separating continuous
  functionals $\{\alpha\}$ in the dual of the Bergman space, $\alpha \circ \mathcal{A}(h_\epsilon \circ \psi)$ is holomorphic for all $\alpha$.  Let $E_z$ be the
  point evaluation function $E_z \psi = \psi(z)$.  These are continuous on the Bergman space
  and obviously separating on any open set.
  Since
  \[  \mathcal{A}(h_\epsilon \circ \psi)= \mathcal{A}(h_\epsilon) \circ \psi \cdot \psi' + \mathcal{A}(\psi)  \]
  clearly $E_z(\mathcal{A}(h_\epsilon \circ f))$ is holomorphic in $\epsilon$.

  So we only need to prove that $\mathcal{A}(h_\epsilon \circ \psi)$ and $(h_\epsilon \circ \psi)'(0)$ are locally bounded.  The second claim is obvious.  As above, by Minkowski's inequality (\ref{eq:Minkowski}) and a change of variables
  \[   \left(\iint_{\mathbb{D}} | \mathcal{A}(h_\epsilon \circ \psi) |^2 dA \right)^{1/2}  \leq
   \left( \iint_{\psi(\mathbb{D})}| \mathcal{A}(h_\epsilon)|^2 dA \right)^{1/2} +
   \left(\iint_{\mathbb{D}} |\mathcal{A}(\psi)|^2 dA \right)^{1/2}.   \]
  Since $\mathcal{A}(h_\epsilon)$ is jointly holomorphic in $\epsilon$ and $z$
and $\overline{\psi(\mathbb{D})} \subseteq E$ for any fixed $\epsilon_0$, there is a compact set $D$ containing $\epsilon_0$ such that $|\mathcal{A}(h_\epsilon)|$ is bounded on $\psi(\mathbb{D})$ by a constant independent of $\epsilon \in D$.  Since $\mathcal{A}(\psi)$ is in the Bergman
  space this proves the claim.
 \end{proof}

Next, we define a subset $\qso(S^1)$ of the quasisymmetries  in the following way.
 Briefly, a map $h:S^1 \to S^1$ is in $\qso(S^1)$ if the corresponding welding maps are in $\Oqco$.  Let $\mathbb{D}^* = \{z \,: |z|>1 \} \cup \{ \infty\}$.  For $h \in \qs(S^1)$ let $w_{\mu}(h): \disk^* \to \disk^*$  be a quasiconformal extension of $h$ with dilatation $\mu$ (such an extension exists by the Ahlfors-Beurling extension theorem). Furthermore, let $w^{\mu} : \sphere \to \sphere$
 be the quasiconformal map with dilatation $\mu$ on $\disk^*$
 and $0$ on $\disk$, with normalization $w^\mu(0)=0$, ${w^\mu}'(0)=1$ and $w^\mu(\infty)=\infty$ and set
 \[  F(h)=\left. w^\mu \right|_{\mathbb{D}}.  \]  It is a standard fact that $F(h)$ is independent of the choice of extension $w_\mu$.
\begin{definition}
 We define a subset of $\qs(S^1)$ by
$$
\qso(S^1) = \{ h \in \qs(S^1) \,:\,  F(h)  \in \Oqco \}.
$$
\end{definition}
\begin{remark}
 A change in the normalization of ${w^\mu}'(0)$ results in exactly the same set.
\end{remark}

 An alternate characterization of $\Oqco$ follows from a theorem
 proved by Guo Hui \cite{GuoHui}.
 Let
 \[  L^2_{hyp}(\mathbb{D}^*) =\left\{ \mu : \iint_{\mathbb{D}^*}
 (|z|^2-1)^{-2} | \mu(z) |^2 dA < \infty \right\},  \]
 and let
 \[  L^\infty(\mathbb{D}^*)_1 = \left\{ \mu:\mathbb{D}^* \rightarrow \mathbb{C} : \| \mu \|_\infty \leq k \text{ for some } k <1 \right\}   \]
 (that is, the unit ball in $L^\infty(\mathbb{D}^*)$).
 Note that the line element of the hyperbolic metric on $\mathbb{D}$ is $|dz|(1-|z|^2)^{-1}$
 and the line element of the hyperbolic metric on $\mathbb{D}^*$ is $|dz|(|z|^2-1)^{-1}$.
 Thus the above condition says that $\mu$ is $L^2$ with respect to hyperbolic area.
 The following two theorems follow from Theorems 1 and 2 of \cite{GuoHui}.
 \begin{theorem}[Guo Hui] \label{th:Guo_Hui_qs}   Let $f$ be a one-to-one holomorphic function on $\mathbb{D}$
 such that $f(0)=0$. Then $f \in \Oqco$ if and only if there exists a
 quasiconformal extension $\tilde{f}$ of $f$ to $\mathbb{C}$ whose
  dilatation $\mu$ is in $L^2_{hyp}(\mathbb{D}^*) \cap
  L^\infty(\mathbb{D}^*)_1$.
 \end{theorem}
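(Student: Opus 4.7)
The strategy is to reduce the theorem to the analogous characterization for the Schwarzian derivative $S(f) = \mathcal{A}(f)' - \tfrac{1}{2}\mathcal{A}(f)^2$, which is the form of the refined universal Teichm\"uller space treated by Takhtajan--Teo. Their result states that for $f \in \Oqc$, a quasiconformal extension with $\mu \in L^2_{hyp}(\mathbb{D}^*) \cap L^\infty(\mathbb{D}^*)_1$ exists if and only if $S(f)$ lies in the weighted Bergman space $A_2^2(\mathbb{D}) := \{ \phi \in \mathcal{H}(\mathbb{D}) : \iint_{\mathbb{D}} (1-|z|^2)^2 |\phi|^2 \, dA < \infty \}$. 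Thus everything reduces to proving that, for $f \in \Oqc$, $\mathcal{A}(f) \in A_1^2(\mathbb{D})$ if and only if $S(f) \in A_2^2(\mathbb{D})$.

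For the forward implication, a direct Taylor coefficient computation shows that $\phi \mapsto \phi'$ is a bijection with equivalent norms between $\{\phi \in A_1^2 : \phi(0)=0\}$ and $A_2^2$ (both norms are equivalent to $\sum_n |a_n|^2/(n+1)$), so $\mathcal{A}(f)' \in A_2^2$. For the quadratic term, the continuous embedding $A_1^2 \hookrightarrow A_1^\infty$ already recorded above gives $(1-|z|^2)|\mathcal{A}(f)(z)| \leq C$, and hence
$$\iint_{\mathbb{D}} (1-|z|^2)^2 |\mathcal{A}(f)|^4 \, dA \leq C^2 \iint_{\mathbb{D}} |\mathcal{A}(f)|^2 \, dA < \infty,$$
so $\mathcal{A}(f)^2 \in A_2^2$ as well. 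Combining the two gives $S(f) \in A_2^2$, and Takhtajan--Teo then supplies the desired $\mu$.

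For the reverse implication, one views the Riccati identity as the first-order ODE $\mathcal{A}(f)' = S(f) + \tfrac{1}{2}\mathcal{A}(f)^2$ with initial datum $\mathcal{A}(f)(0) = f''(0)/f'(0)$. From $f \in \Oqc$ one has the a priori bound $\mathcal{A}(f) \in A_1^\infty$, and from $\mu \in L^2_{hyp}$ together with the Schwarzian theorem one has $S(f) \in A_2^2$. The plan is to close the estimate via a small-norm argument: when $\|\mu\|_\infty$ is small, $\|\mathcal{A}(f)\|_{A_1^\infty}$ is small, the quadratic Riccati term is absorbed, and the antidifferentiation bijection $A_2^2 \to A_1^2$ places $\mathcal{A}(f)$ into $A_1^2$. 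The case of arbitrary $\mu \in L^\infty(\mathbb{D}^*)_1$ is then recovered by factoring $f$ into a finite composition of small-dilatation deformations, using left-composition stability in the spirit of Lemma~\ref{le:Oqco_composition_preserves}, or alternatively by a direct singular-integral representation of $\mathcal{A}(f^\mu)$ with kernel essentially $(\zeta-z)^{-3}$ on $\mathbb{D}^* \times \mathbb{D}$ and checking the $L^2_{hyp} \to A_1^2$ mapping property.

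The main obstacle is precisely this nonlinear closure: controlling the quadratic Riccati feedback for Beltrami coefficients of arbitrary dilatation less than $1$ requires either a delicate continuity-method argument, or sharp boundedness of the singular integral operator $\mu \mapsto \iint_{\mathbb{D}^*} \mu(\zeta)/(\zeta-z)^3 \, dA(\zeta)$ from $L^2_{hyp}(\mathbb{D}^*)$ into $A_1^2(\mathbb{D})$. The hyperbolic weight $(|\zeta|^2-1)^{-2}$ on $\mathbb{D}^*$ must be exploited carefully against the kernel's third-order singularity to produce the unweighted $L^2$ norm on $\mathbb{D}$; this constitutes the analytic heart of the theorem and the step that occupies the bulk of Guo Hui's original argument.
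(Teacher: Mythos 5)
The paper does not actually prove this statement: it is quoted directly from Theorems 1 and 2 of Guo Hui's paper, so there is no internal argument to compare yours against. Judged on its own terms, your proposal correctly identifies the natural architecture (reduce to the Schwarzian characterization of Takhtajan--Teo via the identity $S(f)=\mathcal{A}(f)'-\tfrac12\mathcal{A}(f)^2$ and the norm equivalence between $A_1^2$ and derivatives in the weighted space $A_2^2$), and your forward implication is complete and correct: the estimate $\iint(1-|z|^2)^2|\mathcal{A}(f)|^4\,dA\le\bigl(\sup(1-|z|^2)|\mathcal{A}(f)|\bigr)^2\iint|\mathcal{A}(f)|^2\,dA$ does handle the quadratic term.

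The reverse implication, however, is a genuine gap, and you have essentially conceded as much. The absorption argument fails for the stated reason: for a general univalent $f$ with quasiconformal extension one only has the universal Bloch bound $(1-|z|^2)|\mathcal{A}(f)(z)|\le C$ with $C$ of order $6$, not small, so the inequality $\iint(1-|z|^2)^2|\mathcal{A}(f)'|^2\le 2\iint(1-|z|^2)^2|S(f)|^2+\tfrac{C^2}{2}\iint|\mathcal{A}(f)|^2$ cannot be closed (and is in any case only meaningful after one already knows the right-hand side is finite). Neither of your proposed repairs is carried out, and neither is routine. The factorization into small-dilatation pieces does not reduce to Lemma \ref{le:Oqco_composition_preserves}, which requires the outer map to be holomorphic on a neighborhood of the closure of the inner image; in a genuine factorization $f=g\circ h$ of quasiconformal deformations the intermediate map $g$ is univalent only on the quasidisc $h(\mathbb{D})$, so one would need the whole $L^2$ theory relative to an arbitrary quasidisc rather than $\mathbb{D}$, which is circular. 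The alternative route via boundedness of $\mu\mapsto\iint_{\mathbb{D}^*}\mu(\zeta)(\zeta-z)^{-3}\,dA(\zeta)$ from $L^2_{hyp}(\mathbb{D}^*)$ to $A_1^2(\mathbb{D})$ only controls the derivative of the solution of the Beltrami equation to first order in $\mu$; promoting it to a statement about $\mathcal{A}(f^\mu)$ for $\|\mu\|_\infty$ merely less than $1$ is exactly the nonlinear content of Guo Hui's theorem. As it stands your argument proves one direction and gives a plausible but unexecuted plan for the other; to use the statement in this paper one should simply cite Guo Hui, as the authors do.
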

  \begin{theorem}[Guo Hui] \label{th:Guo_Hui_Oqco}
  Let $\phi:S^1 \rightarrow S^1$ be a quasisymmetry.  Then $\phi \in \qso(S^1)$ if and
  only if there is a quasiconformal extension $h:\mathbb{D}^* \rightarrow \mathbb{D}^*$
  of $\phi$ such that the Beltrami differential $\mu(h)$ of $h$ is in $L^2_{hyp}(\mathbb{D}^*)$.
 \end{theorem}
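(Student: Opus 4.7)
The plan is to deduce this theorem directly from Theorem \ref{th:Guo_Hui_qs} (the characterization of $\Oqco$) via the conformal welding construction that defines $F$. The crucial observation is that the integrability condition defining $L^2_{hyp}(\mathbb{D}^*)$ is intrinsic to the hyperbolic structure of $\mathbb{D}^*$ and is therefore preserved under left composition with conformal maps. This allows one to transfer between two pictures: quasiconformal self-extensions of $\phi$ on $\mathbb{D}^*$, and quasiconformal extensions of $F(\phi)$ to $\mathbb{C}$.

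For the backward direction, suppose $h: \mathbb{D}^* \to \mathbb{D}^*$ is a quasiconformal extension of $\phi$ with $\mu(h) \in L^2_{hyp}(\mathbb{D}^*)$. Since $F(\phi)$ is independent of the chosen extension, one may take $w_\mu = h$ in its definition; the resulting normalized solution $w^\mu$ on $\sphere$ of the Beltrami equation then has dilatation $\mu(h)$ on $\mathbb{D}^*$ and $0$ on $\mathbb{D}$, so $w^\mu$ itself is a quasiconformal extension of $F(\phi) = w^\mu|_\mathbb{D}$ to $\mathbb{C}$ with dilatation in $L^2_{hyp}(\mathbb{D}^*) \cap L^\infty(\mathbb{D}^*)_1$. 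Theorem \ref{th:Guo_Hui_qs} then gives $F(\phi) \in \Oqco$, i.e., $\phi \in \qso(S^1)$.

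For the forward direction, assume $\phi \in \qso(S^1)$, so $F(\phi) \in \Oqco$. By Theorem \ref{th:Guo_Hui_qs} there exists a quasiconformal extension $\tilde{F}: \mathbb{C} \to \mathbb{C}$ of $F(\phi)$ with dilatation $\nu \in L^2_{hyp}(\mathbb{D}^*) \cap L^\infty(\mathbb{D}^*)_1$. The restriction $\tilde{F}|_{\mathbb{D}^*}$ is a quasiconformal homeomorphism of $\mathbb{D}^*$ onto the exterior domain $\Omega = \sphere \setminus \overline{F(\phi)(\mathbb{D})}$, which is bounded by the quasicircle $F(\phi)(S^1)$. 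Fix a conformal isomorphism $G: \mathbb{D}^* \to \Omega$; then $k := G^{-1} \circ \tilde{F}|_{\mathbb{D}^*}: \mathbb{D}^* \to \mathbb{D}^*$ is quasiconformal with dilatation equal pointwise to $\nu$ (since $G^{-1}$ is conformal) and hence in $L^2_{hyp}(\mathbb{D}^*)$. By the uniqueness of the Riemann map up to Möbius automorphism of $\mathbb{D}^*$, together with the welding identity that defines $\phi$ from $F(\phi)$ and the companion conformal map to $\Omega$, one has $k|_{S^1} = M^{-1} \circ \phi$ for some Möbius automorphism $M$ of $\mathbb{D}^*$. Then $h := M \circ k$ is the desired quasiconformal extension of $\phi$ with dilatation still $\nu \in L^2_{hyp}(\mathbb{D}^*)$, since $M$ is conformal.

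The main analytic content is already contained in Theorem \ref{th:Guo_Hui_qs}, so the argument here is essentially a translation between two natural formulations. The only subtlety, and the one place where care is needed, is the forward direction: $\tilde{F}|_{\mathbb{D}^*}$ naturally maps $\mathbb{D}^*$ onto a quasidisc rather than onto $\mathbb{D}^*$ itself, forcing the use of a Riemann map conjugation and a Möbius correction to obtain a genuine self-extension of $\phi$. That neither operation introduces any loss of regularity relies precisely on the conformal invariance of the $L^2_{hyp}$ condition noted at the outset, so no additional analytic obstacle arises.
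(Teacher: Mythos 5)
Your proof is correct, but note that the paper does not actually prove this statement: both Theorem \ref{th:Guo_Hui_qs} and Theorem \ref{th:Guo_Hui_Oqco} are simply quoted as consequences of Theorems 1 and 2 of \cite{GuoHui}, so there is no in-text argument to compare against. What you have done is show that the second quoted result is a corollary of the first via the conformal welding underlying the definition of $F$, which makes the paper self-contained at this point. The backward direction is indeed immediate once one uses the given extension $h$ as the $w_\mu$ in the definition of $F(\phi)$. In the forward direction the step you describe somewhat loosely --- ``the welding identity that defines $\phi$'' --- does hold and is worth writing out: since $w_\mu|_{S^1}=\phi$ and $w^\mu\circ w_\mu^{-1}$ is conformal on $\mathbb{D}^*$, one has $F(\phi)|_{S^1}=G_0\circ\phi$ with $G_0=w^\mu\circ w_\mu^{-1}:\mathbb{D}^*\to\Omega$, whence $k|_{S^1}=G^{-1}\circ G_0\circ\phi$ and $G^{-1}\circ G_0$ is a conformal automorphism of $\mathbb{D}^*$; this is exactly your M\"obius correction $M^{-1}$. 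One small remark: the ``conformal invariance of the $L^2_{hyp}$ condition'' you emphasize at the outset is more than you need, since every conformal map in your argument enters by post-composition, under which the Beltrami coefficient is literally unchanged pointwise. The argument is sound.
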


It follows from Theorem 1.12 of Part II and Lemma 3.4 of Part I of \cite{Takhtajan_Teo_Memoirs} that
$\qso(S^1)$ is a group.
\begin{theorem}[Takhtajan-Teo]
\label{th:QSo_group}
 The set $\qso(S^1)$ is closed under composition and inversion.
\end{theorem}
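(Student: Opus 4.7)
The plan is to deduce the claim from Guo Hui's characterization (Theorem~\ref{th:Guo_Hui_Oqco}): a quasisymmetry $\phi$ lies in $\qso(S^1)$ precisely when it admits a quasiconformal extension $h:\mathbb{D}^*\to\mathbb{D}^*$ whose Beltrami differential lies in $L^2_{hyp}(\mathbb{D}^*)$. Since $\qs(S^1)$ is already known to form a group under composition and inversion, it suffices to show that the $L^2_{hyp}$ integrability condition persists under these operations on the quasiconformal extensions.

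The key analytic input I would invoke is the classical fact that any $K$-quasiconformal self-map $h$ of $\mathbb{D}^*$ distorts the hyperbolic area element in a controlled way:
\[  J_h(z) \;=\; |h_z|^2\bigl(1-|\mu_h(z)|^2\bigr) \;\asymp\; \left(\frac{|h(z)|^2-1}{|z|^2-1}\right)^{\!2}, \]
with multiplicative constants depending only on $\|\mu_h\|_\infty$. Equivalently, $h$ pulls the hyperbolic area element on the target back to a measure comparable to hyperbolic area on the source. With this in hand, closure under inversion is immediate: for $\phi \in \qso(S^1)$ with extension $h$, the map $h^{-1}$ extends $\phi^{-1}$, the standard identity $|\mu_{h^{-1}}(h(z))| = |\mu_h(z)|$ holds, and the substitution $w = h(z)$ in the defining integral reduces the $L^2_{hyp}$-integrability of $\mu_{h^{-1}}$ directly to that of $\mu_h$.

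For closure under composition, given $\phi_1,\phi_2\in \qso(S^1)$ with extensions $h_1,h_2$, the composite $h_1\circ h_2$ is a quasiconformal extension of $\phi_1\circ \phi_2$. The chain rule for Beltrami differentials yields the pointwise bound
\[  |\mu_{h_1\circ h_2}(z)| \;\leq\; C\bigl(\,|\mu_{h_2}(z)| + |\mu_{h_1}(h_2(z))|\,\bigr), \]
where $C$ depends only on $\|\mu_{h_1}\|_\infty$ and $\|\mu_{h_2}\|_\infty$, both strictly less than $1$. The first summand contributes a finite hyperbolic-area integral by hypothesis on $h_2$; the second is handled by the same change of variables $w=h_2(z)$ used above, which together with the hyperbolic Jacobian estimate identifies its $L^2_{hyp}$-norm with the (finite) $L^2_{hyp}$-norm of $\mu_{h_1}$.

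The main obstacle is the hyperbolic-area distortion estimate for quasiconformal self-maps of $\mathbb{D}^*$; this is a well-known but nontrivial ingredient, and every part of the argument ultimately rests on it. Once it is in place the remainder reduces to elementary measure-theoretic bookkeeping. An alternative path is to invoke Theorem~1.12 of Part~II and Lemma~3.4 of Part~I of \cite{Takhtajan_Teo_Memoirs}, where the group property is extracted as a byproduct of the Hilbert manifold structure on the refined universal Teichm\"uller space.
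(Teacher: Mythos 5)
The paper offers no proof of this statement at all: it is attributed to Takhtajan--Teo and justified by citing Theorem~1.12 of Part~II and Lemma~3.4 of Part~I of \cite{Takhtajan_Teo_Memoirs}, which is exactly the ``alternative path'' you mention in your last sentence. So the only thing to assess is your primary argument, and there is a genuine gap in it: the ``classical fact'' on which, as you yourself say, every step rests --- the pointwise two-sided comparability $J_h(z)\asymp\bigl((|h(z)|^2-1)/(|z|^2-1)\bigr)^2$ with constants depending only on $\|\mu_h\|_\infty$ --- is false for general quasiconformal self-maps of $\mathbb{D}^*$. A $K$-quasiconformal map is only a quasi-isometry of the hyperbolic metric, not bi-Lipschitz, and its Jacobian need not track the density ratio pointwise: take $h$ equal to the identity outside a sequence of tiny Euclidean balls $B(z_n,\delta_n)$ with $\delta_n\ll |z_n|-1$ accumulating on $S^1$, and inside each ball a rescaled radial stretch $z\mapsto z|z|^{K-1}$ (resp.\ compression $z\mapsto z|z|^{1/K-1}$) centered at $z_n$. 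This is a $K$-quasiconformal self-map of $\mathbb{D}^*$ for which the density ratio stays $\asymp 1$ while $J_h$ tends to $0$ (resp.\ $\infty$) at the centers; the lower bound on $J_h$ is what your composition step needs and the upper bound is what your inversion step needs, and both fail. What is true is only an averaged version of the comparability over hyperbolic balls of fixed radius, which does not feed into your change-of-variables computation without further hypotheses on $|\mu_{h_1}|$.

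The standard way to repair this line of argument (due to Cui, and implicit in Takhtajan--Teo) is to replace an arbitrary extension by a distinguished one --- e.g.\ the Douady--Earle extension --- which \emph{is} bi-Lipschitz for the hyperbolic metric, so that the pointwise Jacobian estimate you want actually holds for it; but then one must also prove the nontrivial fact that if \emph{some} extension of $\phi$ has Beltrami coefficient in $L^2_{hyp}(\mathbb{D}^*)$ then the Douady--Earle extension does as well, since Theorem~\ref{th:Guo_Hui_Oqco} only asserts existence of one good extension. Neither ingredient is supplied in your proposal, so as written the proof does not go through; the safe course here is the citation route, which is what the paper itself takes.
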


By an analytic map $h:S^1 \rightarrow S^1$ we mean that $h$ is the restriction of an analytic map
of a neighborhood of $S^1$.  Let $\mathbb{A}(r,s)$ denote the annulus $\{ z\,:\, r<|z|<s \}$ and $D(z_0,r)$ denote the disc $\{ z\,:\, |z-z_0|<r\}$.
\begin{proposition} \label{pr:analytic_in_qso}
 If $h:S^1 \rightarrow S^1$ is one-to-one and analytic, then $h$ has a quasiconformal extension to $\mathbb{D}^*$
 which is holomorphic in an annulus $\mathbb{A}(1,R)$ for some $R>1$.  Furthermore $h \in \qso(S^1)$.
\end{proposition}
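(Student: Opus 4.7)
The plan is to produce a quasiconformal self-map $\tilde{h}$ of $\mathbb{D}^*$ that extends $h$, is holomorphic in an annulus around $S^1$, and has Beltrami differential supported away from $S^1$. Both assertions of the proposition—conformality in some $\mathbb{A}(1,R)$ and membership in $\qso(S^1)$—then follow at once, the second via Theorem \ref{th:Guo_Hui_Oqco}.

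First, by the paper's convention on analytic maps, $h$ is the restriction to $S^1$ of an injective holomorphic map on a neighborhood of $S^1$, so (after shrinking) $h$ is univalent on some $\mathbb{A}(1/R_0, R_0)$ with $h(S^1) = S^1$. As $h$ is orientation-preserving and univalent with $h(S^1)=S^1$, it carries $\mathbb{A}(1, R_0)$ into $\mathbb{D}^*$. Fix $R \in (1, R_0)$ and set $\gamma := h(\{|z|=R\})$, an analytic Jordan curve lying in $\mathbb{D}^*$.

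Next, I would quasiconformally extend $h$ over the outer piece $\{|z| \geq R\} \cup \{\infty\}$ to the closure in $\sphere$ of the component $\Omega$ of $\sphere \setminus \gamma$ containing $\infty$. Both regions are Jordan domains with analytic boundary, so Riemann mapping plus Carath\'eodory yields a conformal map $\Psi$ between them fixing $\infty$; because $\gamma$ is analytic, $\Psi$ extends conformally across $\{|z|=R\}$. The identification $\Psi^{-1} \circ h$ on $\{|z|=R\}$ is then a real-analytic diffeomorphism of that circle, which extends to a quasiconformal diffeomorphism $\beta$ of $\{|z| \geq R\} \cup \{\infty\}$ equal to the identity off a compact set (for instance via Ahlfors--Beurling after a standard change of coordinates). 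Setting
\[
\tilde{h}(z) = \begin{cases} h(z), & 1 \leq |z| \leq R, \\ \Psi(\beta(z)), & |z| \geq R, \end{cases}
\]
yields a quasiconformal self-homeomorphism of $\overline{\mathbb{D}^*}$ extending $h$, which is holomorphic on $\mathbb{A}(1,R)$. This establishes the first claim.

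Finally, $\mu(\tilde{h})$ vanishes on $\mathbb{A}(1,R)$ and on the region where $\beta$ equals the identity (since $\Psi$ is conformal), so the support of $\mu(\tilde{h})$ is a compact subset of $\{|z| \geq R\}$. On this support $(|z|^2-1)^{-2} \leq (R^2-1)^{-2}$, and $\mu(\tilde{h}) \in L^\infty$ with $\|\mu(\tilde{h})\|_\infty < 1$, so $\mu(\tilde{h}) \in L^2_{hyp}(\mathbb{D}^*) \cap L^\infty(\mathbb{D}^*)_1$, and Theorem \ref{th:Guo_Hui_Oqco} gives $h \in \qso(S^1)$. The main obstacle is the gluing across $\{|z|=R\}$: matching the quasiconformal extension on the outer region to $h$ and checking that the combined map is a genuine quasiconformal homeomorphism. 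The essential point making this work is the \emph{analyticity} of $\gamma$, which forces $\Psi$ to extend conformally past $\{|z|=R\}$, so that the boundary identification $\Psi^{-1} \circ h$ is real-analytic and admits a well-behaved quasiconformal extension into the outer region.
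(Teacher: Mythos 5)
Your proof is correct and follows essentially the same route as the paper: extend $h$ analytically and univalently to a collar $\mathbb{A}(1,R)$, extend quasiconformally over the remainder of $\mathbb{D}^*$, and conclude via the Guo Hui criterion because the resulting dilatation vanishes near $S^1$ and is bounded with support away from $S^1$, hence lies in $L^2_{hyp}(\mathbb{D}^*)$. If anything, your handling of the outer extension (conjugating by the conformal map $\Psi$ onto the exterior of $\gamma = h(\{|z|=R\})$ and then quasiconformally extending the real-analytic circle diffeomorphism $\Psi^{-1}\circ h$) is more careful than the paper's, which invokes Ahlfors--Beurling to produce a self-map of $\mathbb{A}(R,\infty)$ matching $\tilde h$ on $|z|=R$ even though $\tilde h(\{|z|=R\})$ need not be the round circle $|z|=R$.
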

\begin{proof}
 To prove the first claim, observe
 that $h$ has an analytic extension $\tilde{h}$ to some annulus $\mathbb{A}(r,s)$ for $r<1<s$.  Let $R$ be such that
 $1<R<s$.  Applying the Ahlfors-Beurling extension theorem to the circle $|z|=R$, there exists a quasiconformal map $g:\mathbb{A}(R,\infty) \rightarrow \mathbb{A}(R,\infty)$
 whose boundary values agree with $\tilde{h}$ restricted to $|z|=R$. Let $H$ be the map which is equal to $\tilde{h}$ on $\mathbb{A}(1,R)$ and $g$ on $\mathbb{A}(R,\infty)$. Then $H$
 is  quasiconformal on $\mathbb{D}^*$ since it is quasiconformal on the two pieces and continuous on $\mathbb{D}$ (see \cite[V.3]{Lehto-Virtanen}). Thus, $H$ has the desired properties.

 The second claim follows from Theorem \ref{th:Guo_Hui_qs} since the dilatation of $H$ is zero in $\mathbb{A}(1,R)$.
\end{proof}
\end{subsection}
\begin{subsection}{Refined quasisymmetric mappings between boundaries of Riemann surfaces}
 \label{se:refined_quasisymmetries}
 We first clarify the meaning of ``bordered Riemann surface''.  By a half-disc, we mean a set of
 the form $\{z: |z-z_0| <r \text{ and } \operatorname{Im}(z) \geq 0 \}$ for some $z_0$ on the real axis.
 By a bordered Riemann surface, we mean a Riemann surface with boundary, such that for every point on the
 boundary there is a homeomorphism of a neighborhood of that point onto a half-disc.  It is further assumed
 that for any pair of charts $\rho_1,\rho_2$ whose domains overlap, the map $\rho_2 \circ \rho_1^{-1}$ and its inverse
 is a one-to-one holomorphic map on its domain.  Note that this implies, by the Schwarz reflection principle,
 that $\rho_2 \circ \rho_1^{-1}$ extends to a one-to-one holomorphic map of an open set containing the portion
 of the real axis in the domain of the original map.  Every bordered Riemann surface has a double which is defined in the
 standard way.  See for example \cite{AhlforsSario}.

Following standard terminology (see for example \cite{Nagbook}) we say that a Riemann surface is of \textit{finite topological type} if its fundamental group is finitely generated. A Riemann surface is said to be of finite topological type $(g,n,m)$ if it is biholomorphic to a compact genus $g$ Riemann surface with $n$ points and $m$ parametric disks removed. By a parametric disk we mean a region biholomorphic to the unit disk such that, after its removal, the resultant surface is homeomorphic to a compact surface with a point removed.

 In this paper we will be entirely concerned with Riemann surfaces of type $(g,0,n)$ and $(g,n,0)$ and we will use the following terminology. A
\textit{bordered Riemann surface} of type $(g,n)$ will refer to a bordered Riemann surface of type $(g,0,n)$ and a \textit{punctured Riemann surface} of type $(g,n)$ will refer to a Riemann surface of type $(g,n,0)$. It is furthermore assumed that the boundary curves and punctures are given a numerical ordering. Finally, a \textit{boundary curve} will be understood to mean a connected component of the boundary of a bordered Riemann surface. Note that each boundary curve is homeomorphic to $S^1$.

 \begin{remark} \label{re:extensions}
  Any quasiconformal map between bordered Riemann surfaces has a unique continuous extension taking the boundary curves to the boundary curves.
  To see this let $\riem^B_1$ and $\riem^B_2$ be bordered Riemann surfaces, and let $\riem_1^d$ and $\riem_2^d$
  denote their doubles. By
  reflecting, the quasiconformal map extends to the double: the reflected map is continuous on
  $\riem_1^d$, takes $\riem_1^d$ onto $\riem_2^d$, and is quasiconformal on the double minus the boundary curves.  Since
  each boundary curve of $\riem^B_i$ is an analytic curve in the double, the map is quasiconformal
  on $\riem_1^d$ \cite[V.3]{Lehto-Virtanen} and in particular continuous on each analytic curve.

  Throughout the paper, we will label the original map and its continuous extension with the same letter to avoid complicating
  the notation.
  When referring to a ``bordered Riemann surface'', we will be referring to the interior.  However,
 in the following all maps between bordered Riemann surfaces will be at worst quasiconformal and thus
 by Remark \ref{re:extensions} have unique continuous extensions to the
 boundary. Thus the reader could treat the border as included in the Riemann surface with only
 trivial changes to the statements in the rest of the paper.
 \end{remark}

\begin{definition} \label{de:collar_nbhd}
  Let $\riem^B$ be a bordered Riemann surface and $C$ be one of its boundary components.  A {\it collared neighborhood of $C$}
  is an open set $U$ which is biholomorphic to an annulus, and one of whose boundary curves is $C$.  A {\it collared chart of $C$}
  is a biholomorphism $H:U \rightarrow \mathbb{A} (1,r)$ where $U$ is a collared neighborhood of $C$,
  whose continuous extension to $C$ maps $C$ to $S^1$.
 \end{definition}
 Note that any collared chart must have a continuous one-to-one extension to $C$, which maps $C$ to $S^1$.  (In fact application of the
 Schwarz reflection principle shows that $H$ must have a one-to-one holomorphic extension to an open tubular neighborhood of $C$
 in the double of $\riem$.)
 We may now define the class of refined quasisymmetries between boundary curves of bordered Riemann surfaces.
 \begin{definition} \label{de:refined_qs_surfaces}
  Let $\riem^B_1$ and $\riem^B_2$ be bordered Riemann surfaces, and let $C_1$ and $C_2$ be boundary curves of $\riem^B_1$
  and $\riem^B_2$ respectively.  Let $\qso(C_1,C_2)$ denote the set of orientation-preserving
  homeomorphisms $\phi:C_1 \rightarrow C_2$ such that there are collared charts $H_i$ of $C_i$, $i=1,2$ respectively,
  such that $\left. H_2 \circ \phi \circ H_1^{-1} \right|_{S^1} \in \qso(S^1)$.
 \end{definition}
 \begin{remark}  The notation $\qso(S^1,C_1)$ will always be understood to refer to $S^1$ as the boundary of an annulus
  $\mathbb{A}(1,r)$ for $r>1$.  We will also write $\qso(S^1)=\qso(S^1,S^1)$.
 \end{remark}

 \begin{proposition} \label{pr:rqs_chart_independent}
  If $\phi \in \qso(C_1,C_2)$ then for any pair of collared charts $H_i$ of $C_i$, $i=1,2$ respectively,
  $\left. H_2 \circ \phi \circ H_1^{-1} \right|_{S^1} \in \qso(S^1)$.
 \end{proposition}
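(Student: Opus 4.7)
The plan is to reduce the statement to known facts about $\qso(S^1)$, namely the group property (Theorem \ref{th:QSo_group}) and the inclusion of one-to-one analytic maps of $S^1$ (Proposition \ref{pr:analytic_in_qso}). Given collared charts $H_1, H_2$ witnessing that $\phi \in \qso(C_1,C_2)$, and any other pair of collared charts $H_1', H_2'$ of $C_1, C_2$ respectively, I would write
\[
H_2' \circ \phi \circ (H_1')^{-1}\big|_{S^1} = \bigl(H_2' \circ H_2^{-1}\big|_{S^1}\bigr) \circ \bigl(H_2 \circ \phi \circ H_1^{-1}\big|_{S^1}\bigr) \circ \bigl(H_1 \circ (H_1')^{-1}\big|_{S^1}\bigr).
\]
The middle factor is in $\qso(S^1)$ by hypothesis, so it suffices to prove that each transition map $H_i \circ (H_i')^{-1}$, restricted to $S^1$, is in $\qso(S^1)$; then Theorem \ref{th:QSo_group} will close the argument.

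To handle the transition maps, I would first observe that if $U_i, U_i'$ are the domains of $H_i, H_i'$, the intersection $U_i \cap U_i'$ is a collared neighborhood of $C_i$, and $H_i \circ (H_i')^{-1}$ is a biholomorphism from $H_i'(U_i\cap U_i')$ onto $H_i(U_i\cap U_i')$. Both of these sets are open subsets of their respective annuli containing some one-sided annular collar $\mathbb{A}(1,\rho)$ of $S^1$. Because each collared chart is required by Definition \ref{de:collar_nbhd} to extend continuously and bijectively to $S^1$, the composition $H_i \circ (H_i')^{-1}$ extends continuously to a homeomorphism of a relative neighborhood of $S^1$ in $\overline{\mathbb{A}(1,\rho)}$, mapping $S^1$ to $S^1$.

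The key point is then to promote this continuous boundary extension to an analytic one. By the Schwarz reflection principle applied to the biholomorphism $H_i \circ (H_i')^{-1}: \mathbb{A}(1,\rho) \to H_i(U_i\cap U_i')$ across $S^1$, this map extends to a one-to-one holomorphism of a full two-sided annular neighborhood of $S^1$; in particular its restriction to $S^1$ is a one-to-one analytic self-map of $S^1$. Proposition \ref{pr:analytic_in_qso} then yields that $H_i \circ (H_i')^{-1}|_{S^1} \in \qso(S^1)$ for $i=1,2$. Composing the three factors and invoking Theorem \ref{th:QSo_group} gives $H_2' \circ \phi \circ (H_1')^{-1}|_{S^1} \in \qso(S^1)$, proving the proposition.

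The argument is essentially bookkeeping, so I do not anticipate a serious obstacle; the only mild subtlety is verifying that collared charts always yield analytic (not merely quasisymmetric) transitions on $S^1$, which is exactly what the Schwarz reflection step accomplishes once one notes that the transition $H_i \circ (H_i')^{-1}$ is \emph{holomorphic} on the interior side of $S^1$ and extends continuously across it.
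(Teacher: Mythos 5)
Your proposal is correct and follows essentially the same route as the paper: decompose $H_2'\circ\phi\circ(H_1')^{-1}$ through the witnessing charts, show the chart transitions are analytic on $S^1$ (the paper records the Schwarz-reflection extension of collared charts in the remark after Definition \ref{de:collar_nbhd}), and conclude via Proposition \ref{pr:analytic_in_qso} and Theorem \ref{th:QSo_group}.
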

 \begin{proof}
  Assume that there are collared charts $H_i'$ of $C_i$ such that $H_2' \circ \phi \circ {H_1'}^{-1} \in \qso(S^1)$.  Let $H_i$
  be any other pair of collared charts.  The composition
  \[  H_2 \circ {H_2'}^{-1} \circ H_2' \circ \phi \circ {H_1'}^{-1} \circ
  H_1' \circ H_1^{-1} = H_2 \circ \phi \circ H_1^{-1}  \]
   is defined on some collared neighborhood of $C_1$.  Since $H_2 \circ {H_2'}^{-1}$ and $H_1' \circ H_1^{-1}$ have analytic extensions to $S^1$, the result follows from Proposition \ref{pr:analytic_in_qso} and Theorem \ref{th:QSo_group}.
 \end{proof}
 \begin{proposition} \label{pr:composition_preserves_qso_surfaces}
  Let $\riem^B_i$ be bordered Riemann surfaces and $C_i$ a boundary curve on each surface for $i=1,2,3$.  If $\phi \in \qso(C_1,C_2)$
  and $\psi \in \qso(C_2,C_3)$ then $\psi \circ \phi \in \qso(C_1,C_3)$.
 \end{proposition}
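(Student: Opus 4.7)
The plan is to reduce everything to the already-established composition closure of $\qso(S^1)$ (Theorem \ref{th:QSo_group}) together with the fact that analytic diffeomorphisms of $S^1$ lie in $\qso(S^1)$ (Proposition \ref{pr:analytic_in_qso}), by inserting a transition between two collared charts of the common boundary $C_2$.

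By Proposition \ref{pr:rqs_chart_independent}, membership in $\qso(C_1,C_3)$ is independent of the choice of collared charts, so it suffices to exhibit one pair of charts $H_1$ of $C_1$ and $H_3$ of $C_3$ such that $H_3 \circ (\psi \circ \phi) \circ H_1^{-1}|_{S^1} \in \qso(S^1)$. By the hypothesis $\phi \in \qso(C_1,C_2)$ there are collared charts $H_1$ and $H_2$ with $g := H_2 \circ \phi \circ H_1^{-1}|_{S^1} \in \qso(S^1)$, and by $\psi \in \qso(C_2,C_3)$ there are collared charts $H_2'$ and $H_3$ with $f := H_3 \circ \psi \circ {H_2'}^{-1}|_{S^1} \in \qso(S^1)$. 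On a sufficiently small collared neighborhood of $C_1$ we may then factor
\[
H_3 \circ (\psi \circ \phi) \circ H_1^{-1}
= \bigl( H_3 \circ \psi \circ {H_2'}^{-1} \bigr) \circ \bigl( H_2' \circ H_2^{-1} \bigr) \circ \bigl( H_2 \circ \phi \circ H_1^{-1} \bigr),
\]
where each factor is defined on an appropriate annular neighborhood of $S^1$.

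The key observation is that the middle transition $H_2' \circ H_2^{-1}$ is a biholomorphism between two annular neighborhoods of $S^1$ that, by definition of a collared chart, maps $S^1$ to $S^1$; in particular its restriction $\tau := H_2' \circ H_2^{-1}|_{S^1}$ is an analytic diffeomorphism of $S^1$. Proposition \ref{pr:analytic_in_qso} then gives $\tau \in \qso(S^1)$. Restricting the factorization to $S^1$ yields
\[
H_3 \circ (\psi \circ \phi) \circ H_1^{-1}\big|_{S^1} \;=\; f \circ \tau \circ g,
\]
with $f, \tau, g$ all in $\qso(S^1)$. Theorem \ref{th:QSo_group} then concludes that $f \circ \tau \circ g \in \qso(S^1)$, and hence $\psi \circ \phi \in \qso(C_1,C_3)$.

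There is no real obstacle here; the only point requiring care is that the chosen charts $H_2$ and $H_2'$ of $C_2$ need not coincide, so the composition does not literally cancel, and one must verify that the transition between them is analytic on $S^1$ and hence belongs to $\qso(S^1)$. This is where Proposition \ref{pr:analytic_in_qso} is used, and after this the result is immediate from the group property of $\qso(S^1)$.
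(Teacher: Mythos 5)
Your proof is correct and follows essentially the same route as the paper: decompose $H_3 \circ \psi \circ \phi \circ H_1^{-1}$ through collared charts of $C_2$ and invoke the group property of $\qso(S^1)$ (Theorem \ref{th:QSo_group}). The only cosmetic difference is that the paper uses Proposition \ref{pr:rqs_chart_independent} to express both factors in a single common chart $H_2$ of $C_2$, whereas you keep the two charts $H_2, H_2'$ distinct and absorb their transition as a third analytic factor via Proposition \ref{pr:analytic_in_qso} --- which is exactly the content of the chart-independence proposition unfolded inline.
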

 \begin{proof}
  Let $H_i$ be collared charts of $C_i$ for $i=1,2,3$.  In that case
  \[  H_3 \circ \psi \circ \phi \circ H_1^{-1}= H_3 \circ \psi \circ H_2^{-1} \circ H_2 \circ \phi \circ H_1^{-1}  \]
  when restricted to $C_1$.  By Proposition \ref{pr:rqs_chart_independent} both $H_3 \circ \psi \circ H_2^{-1}$ and
  $H_2 \circ \phi \circ H_1^{-1}$ are in $\qso(S^1)$, so the composition is in $\qso(S^1)$ by Theorem \ref{th:QSo_group}.
  Thus $\psi \circ \phi \in \qso(C_1,C_3)$ by definition.
 \end{proof}
\end{subsection}
\begin{subsection}{A refined class of quasiconformal mappings between bordered surfaces}
\label{se:refined_qc_maps}
 We can now define a refined class of quasiconformal mappings.
 \begin{definition} \label{de:qco}
  Let $\riem^B_1$ and $\riem^B_2$ be bordered Riemann surfaces of type $(g,n)$, with boundary curves $C_1^i$ and $C_2^j$
  $i=1,\ldots,n$ and $j=1,\ldots,n$ respectively.  The class of maps $\qco(\riem^B_1,\riem^B_2)$ consists of
  those quasiconformal maps from $\riem^B_1$ onto $\riem^B_2$ such that the continuous extension to each boundary curve $C_1^i$, $i=1,\ldots,n$
  is in $\qso(C_1^i,C_2^j)$ for some $j \in \{1,\ldots,n\}$.
 \end{definition}
 Note that the continuous extension to a boundary curve $C_1^i$ {\it must}
 map onto a boundary curve $C_2^j$.

 The following two Propositions follow immediately from Definition \ref{de:qco} and Proposition \ref{pr:composition_preserves_qso_surfaces}.
 \begin{proposition} \label{pr:composition_preserves_qco_surfaces}
  Let $\riem^B_i$ $i=1,2,3$ be bordered Riemann surfaces of type $(g,n)$.  If $f \in \qco(\riem^B_1,\riem^B_2)$ and $g \in \qco(\riem^B_2,\riem^B_3)$
  then $g \circ f \in \qco(\riem^B_1,\riem^B_3)$.
 \end{proposition}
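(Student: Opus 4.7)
The plan is to unpack Definition \ref{de:qco} for both $f$ and $g$ and then combine two pieces of information: first, the composition of quasiconformal maps is quasiconformal; and second, by Remark \ref{re:extensions}, $f$ and $g$ extend continuously to the boundary curves, so the continuous extension of $g\circ f$ to each boundary curve $C_1^i$ of $\riem^B_1$ is simply the composition of the extensions of $f$ and $g$. The proposition is then ``packaging'': the map $g\circ f$ is checked to be quasiconformal and to restrict on each boundary curve to a composition of $\qso$ maps, which lies in $\qso$ by Proposition \ref{pr:composition_preserves_qso_surfaces}.

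Concretely, first I would note that $g\circ f:\riem^B_1\to\riem^B_3$ is quasiconformal, since the composition of quasiconformal maps between bordered Riemann surfaces is quasiconformal (with dilatation at most the product of the two). By Remark \ref{re:extensions} each of $f$ and $g$ has a unique continuous extension to its closed domain sending boundary curves to boundary curves, and by uniqueness this extension for $g\circ f$ agrees with the composition of the individual extensions. Next, fix a boundary curve $C_1^i$ of $\riem^B_1$. By the definition of $\qco$ applied to $f$, the continuous extension of $f$ sends $C_1^i$ to some boundary curve $C_2^{j}$ of $\riem^B_2$ with $f|_{C_1^i}\in\qso(C_1^i,C_2^{j})$. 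Applying the definition of $\qco$ to $g$, the extension of $g$ sends $C_2^{j}$ to some boundary curve $C_3^{k}$ of $\riem^B_3$ with $g|_{C_2^{j}}\in\qso(C_2^{j},C_3^{k})$. Proposition \ref{pr:composition_preserves_qso_surfaces} then gives $(g\circ f)|_{C_1^i}\in\qso(C_1^i,C_3^{k})$. Since $i$ was arbitrary, the hypothesis of Definition \ref{de:qco} is verified for $g\circ f$, which completes the proof.

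No genuine obstacle arises: the whole content of the statement has already been absorbed into Proposition \ref{pr:composition_preserves_qso_surfaces}, which in turn rests on Theorem \ref{th:QSo_group} (closure of $\qso(S^1)$ under composition, due to Takhtajan--Teo) and the chart-independence Proposition \ref{pr:rqs_chart_independent}. The only point that deserves explicit mention rather than a reference is that the continuous extension of $g\circ f$ to $\partial\riem^B_1$ coincides with the composition of the separate continuous extensions, but this is a standard consequence of the uniqueness statement in Remark \ref{re:extensions}.
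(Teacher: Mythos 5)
Your proof is correct and follows exactly the route the paper intends: the paper states this proposition as an immediate consequence of Definition \ref{de:qco} and Proposition \ref{pr:composition_preserves_qso_surfaces}, and your write-up simply makes explicit the routine verifications (quasiconformality of the composition and the compatibility of continuous boundary extensions via Remark \ref{re:extensions}) that the paper leaves unsaid.
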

 \begin{proposition} \label{pr:mixed_composition_preserves}
  Let $\riem^B_1$ and $\riem^B_2$ be bordered Riemann surfaces.  Let $C_1$ be a boundary curve of $\riem^B_1$,
  $\phi \in \qso(S^1,C_1)$, $f \in \qco(\riem^B_1,\riem^B_2)$ and $C_2=f(C_1)$ be the boundary curve of $\riem^B_2$ onto
  which $f$ maps $C_1$.
  Then $f \circ \phi \in \qso(S^1,C_2)$.
 \end{proposition}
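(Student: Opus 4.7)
The plan is to reduce the statement directly to Proposition \ref{pr:composition_preserves_qso_surfaces} by viewing the domain annulus of the rigging as a bordered Riemann surface in its own right. Recall that by the Remark preceding Proposition \ref{pr:rqs_chart_independent}, the symbol $\qso(S^1,C_1)$ already carries the interpretation that $S^1$ is the boundary of some annulus $\mathbb{A}(1,r)$ with $r>1$. This annulus is a bordered Riemann surface whose boundary component $S^1$ is a perfectly good boundary curve in the sense of Section \ref{se:refined_quasisymmetries}, with the identity map $\mathbb{A}(1,r)\hookrightarrow\mathbb{A}(1,r)$ serving as a tautological collared chart of $S^1$. Hence $\qso(S^1,C_1)$ fits into the framework of Definition \ref{de:refined_qs_surfaces}.

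First, I would unpack the hypothesis on $f$. Since $f\in\qco(\riem^B_1,\riem^B_2)$, Definition \ref{de:qco} together with Remark \ref{re:extensions} furnishes a unique continuous extension of $f$ to the boundary, whose restriction to $C_1$ maps homeomorphically onto some boundary curve $C_2$ of $\riem^B_2$; by definition of $\qco$ this extension lies in $\qso(C_1,C_2)$. Continue to denote this boundary map by $f$, in accordance with the convention of Remark \ref{re:extensions}.

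Next, combining $\phi \in \qso(S^1,C_1)$ with $f|_{C_1} \in \qso(C_1,C_2)$, I would apply Proposition \ref{pr:composition_preserves_qso_surfaces} with the three bordered surfaces taken to be the annulus $\mathbb{A}(1,r)$ (with distinguished boundary curve $S^1$), $\riem^B_1$ (with distinguished boundary curve $C_1$), and $\riem^B_2$ (with distinguished boundary curve $C_2$). That proposition yields immediately that the composition $f\circ\phi$ lies in $\qso(S^1,C_2)$, which is exactly the claim.

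There is essentially no obstacle here beyond verifying that the setup of Definition \ref{de:refined_qs_surfaces} and Proposition \ref{pr:composition_preserves_qso_surfaces} actually applies when one of the three surfaces is an annulus; the only minor point to mention explicitly is that $\mathbb{A}(1,r)$ qualifies as a bordered Riemann surface in the sense of Section \ref{se:refined_quasisymmetries}, with the inclusion serving as a collared chart of $S^1$, so no additional work involving Proposition \ref{pr:rqs_chart_independent} is needed beyond what is already built into the referenced composition proposition.
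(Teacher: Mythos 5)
Your proposal is correct and matches the paper's intended argument: the paper states that this proposition ``follows immediately from Definition \ref{de:qco} and Proposition \ref{pr:composition_preserves_qso_surfaces},'' which is precisely the reduction you carry out (boundary restriction of $f$ lies in $\qso(C_1,C_2)$ by definition of $\qco$, then compose). Your explicit verification that $\mathbb{A}(1,r)$ with the identity as collared chart fits the framework of Definition \ref{de:refined_qs_surfaces} is the only detail the paper leaves implicit.
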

\end{subsection}
\end{section}
\begin{section}{Non-overlapping mappings} \label{se:non-overlapping}
In this section we show that the class of non-overlapping holomorphic maps into a Riemann
surface, with refined quasiconformal extensions, is a Hilbert manifold.  The
class of non-overlapping mappings is the infinite-dimensional part of both
the moduli space of Friedan and Shenker and the refined Teichm\"uller space.

Let $\riem$ be a punctured Riemann surface of type $(g,n)$.
In Section \ref{se:technical_lemmas}, we define the class of non-overlapping
mappings $\Oqco(\riem)$ and establish a technical theorem which is central to the proof
that it is a Hilbert manifold.  Section \ref{se:complex_structure_non-overlapping}
is devoted to defining a topology and atlas on $\Oqco(\riem)$, and the proof
that this topology is Hausdorff, second countable, and the overlap maps of
the atlas are biholomorphisms.
\begin{subsection}{Definitions and technical results}
\label{se:technical_lemmas}
 We define a class of non-overlapping mappings into a punctured Riemann surface.  Let $\mathbb{D}_0$
 denote the punctured disc $\mathbb{D} \backslash \{0\}$. Let $\riem$ be a compact Riemann surface with punctures
  $p_1,\ldots,p_n$.
 \begin{definition} The class of non-overlapping quasiconformally extendible maps $\Oqc(\riem)$ into
  $\riem$ is the set of $n$-tuples $(\phi_1,\ldots,\phi_n)$ where
  \begin{enumerate}
   \item For all $i \in \{1,\ldots,n\}$, $\phi_i:\mathbb{D}_0 \rightarrow \riem$ is holomorphic, and has a quasiconformal extension to a neighborhood of $\overline{\mathbb{D}}$.
   \item The continuous extension of $\phi_i$ takes $0$ to $p_i$
   \item For any $i \neq j$, $\overline{\phi_i(\mathbb{D})} \cap \overline{\phi_j(\mathbb{D})}$ is empty.
  \end{enumerate}
 \end{definition}
 It was shown in \cite{RSnonoverlapping} that $\Oqc(\riem)$ is a complex Banach manifold.

 As in the previous section, we need to refine the class of non-overlapping
 mappings.
 We first introduce some terminology.
 Denote the compactification of a punctured surface $\riem$ by $\overline{\riem}$.

 \begin{definition}
 \label{de:nchart}
 An $n$-chart on $\riem$ is a collection of open sets $E_1,\ldots,E_n$ contained in the compactification of $\riem$
 such that $E_i \cap E_j$ is
  empty whenever $i \neq j$, together with local parameters $\zeta_i:E_i
  \rightarrow \mathbb{C}$ such that $\zeta_i(p_i)=0$.
 \end{definition}
 In the following, we will refer to the charts $(\zeta_i,E_i)$ as being on
 $\riem$, with the understanding that they are in fact defined on the compactification. Similarly, non-overlapping maps $(f_1,\ldots,f_n)$ will be extended by the removable singularities theorem to the compactification, without further comment.

 \begin{definition}
  Let $\Oqco(\riem)$ be the set of $n$-tuples of maps
  $(f_1,\ldots,f_n) \in \Oqc(\riem)$ such that for any choice of
  $n$-chart $\zeta_i:E_i \rightarrow \mathbb{C}$, $i=1,\ldots, n$
  satisfying $\overline{f_i(\mathbb{D})} \subset E_i$ for all
  $i=1,\ldots,n$, it holds that $\zeta_i \circ f_i \in \Oqco$.
 \end{definition}

  The space $\Oqco(\riem)$ is well-defined.
 To see this let $(\zeta_i,E_i)$ and $(\eta_i,F_i)$,
 $i=1,\ldots,n$,
 be $n$-charts satisfying $\overline{f_i(\mathbb{D})} \subset E_i
 \cap F_i$ and assume that $\zeta_i \circ f_i \in \Oqco$.  Since
 $\eta_i \circ \zeta_i^{-1}$ is holomorphic on an open set
 containing $\overline{\zeta_i \circ f_i(\mathbb{D})}$, it follows
 from Lemma \ref{le:Oqco_composition_preserves} that $\eta_i \circ
 f_i = \eta_i \circ \zeta_i^{-1} \circ \zeta_i \circ f_i \in \Oqco$.

 In order to construct a Hilbert manifold structure on $\Oqco(\riem)$ we will need some technical
 theorems.
 \begin{theorem} \label{th:into_U_is_open} Let $E$ be an open neighborhood of $0$ in
 $\mathbb{C}$.  Then the set
 \[  \left\{ f \in \Oqc : \overline{f(\mathbb{D})} \subset E
 \right\}  \]
 is open in $\Oqc$ and the set
 \[  \left\{ f \in \Oqco : \overline{f(\mathbb{D})} \subset E
 \right\}  \]
 is open in $\Oqco$.
 \end{theorem}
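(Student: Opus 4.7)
The plan is to reduce the theorem to a result already cited in the paper, namely \cite[Corollary 3.5]{RSnonoverlapping}, and then pull back openness along the continuous inclusion $\iota:\Oqco\to\Oqc$ supplied by Theorem \ref{th:Oqco_open_in_Oqc}.

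First I would prove the claim for $\Oqc$. Fix $f\in\Oqc$ with $\overline{f(\mathbb{D})}\subset E$. Since $\overline{f(\mathbb{D})}$ is a compact subset of the open set $E$, one can choose a compact set $K\subset E$ containing $\overline{f(\mathbb{D})}$ in its interior. This is exactly the situation of \cite[Corollary 3.5]{RSnonoverlapping}, already invoked in the proof of Lemma \ref{le:Oqco_composition_preserves}: that corollary produces an open neighborhood $\hat U$ of $f$ in $\Oqc$ such that $\overline{g(\mathbb{D})}\subset\operatorname{int}(K)\subset E$ for every $g\in\hat U$. Hence every $f$ in the set $\{f\in\Oqc:\overline{f(\mathbb{D})}\subset E\}$ has an $\Oqc$-open neighborhood contained in that set, proving that the set is open in $\Oqc$.

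For the second claim I would simply observe the tautology
\[
\{f\in\Oqco:\overline{f(\mathbb{D})}\subset E\}
 \;=\; \iota^{-1}\bigl(\{g\in\Oqc:\overline{g(\mathbb{D})}\subset E\}\bigr),
\]
where $\iota:\Oqco\to\Oqc$ is the inclusion. By Theorem \ref{th:Oqco_open_in_Oqc}, $\iota$ is holomorphic and in particular continuous. Since the right-hand side is open in $\Oqc$ by the first part, its preimage under $\iota$ is open in $\Oqco$.

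There is no real obstacle: the only subtle point is that the topology on $\Oqco$ is genuinely stronger than the relative topology from $\Oqc$ (as emphasized in the Remark following Theorem \ref{th:Oqco_open_in_Oqc}), so one should not try to argue directly in the $A_1^\infty$ norm. Continuity of $\iota$, however, is exactly what is needed to transport the $\Oqc$-openness of $\{\overline{f(\mathbb{D})}\subset E\}$ up to $\Oqco$, which completes the proof.
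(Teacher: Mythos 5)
Your proposal is correct and follows essentially the same route as the paper: invoke \cite[Corollary 3.5]{RSnonoverlapping} for the $\Oqc$ statement and then pull the open set back along the continuous inclusion $\iota:\Oqco\to\Oqc$ from Theorem \ref{th:Oqco_open_in_Oqc}. Your remark that the preimage under $\iota$ is exactly the refined set, and that one should not argue in the $A_1^\infty$ norm directly, matches the paper's reasoning precisely.
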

 \begin{proof}
  Let $f_0 \in \Oqc$ satisfy $\overline{f_0(\mathbb{D})} \subset E$.
  By \cite[Corollary 3.5]{RSnonoverlapping}, there exists an
  open subset $W$ of $\Oqc$ such that $\overline{f(\mathbb{D})}
  \subset E$ for all $f \in W$.  Since $f_0$ was arbitrary, this proves the first claim.

  Now let $f_0 \in \Oqco$ satisfy $\overline{f_0(\mathbb{D})} \subset
  E$.  As above, there exists an
  open subset $W$ of $\Oqc$ such that $\overline{f(\mathbb{D})}
  \subset E$ for all $f \in W$.  But by Theorem \ref{th:Oqco_open_in_Oqc}
  $W \cap \Oqco=\iota^{-1}(W)$ is open in $\Oqco$.  Thus $\overline{f(\mathbb{D})} \subset E$ for all $f$ in the open set $W \cap \Oqco$ containing $f_0$.  This proves the second claim.
 \end{proof}

 Composition on the left by $h$ is holomorphic operation in both $\Oqc$ and $\Oqco$.  This was proven in \cite{RSnonoverlapping}
 in the case of $\Oqc$. The corresponding theorem in the refined case is
 considerably more delicate, and is one of the key theorems necessary
 to demonstrate the existence of a Hilbert
 manifold structure on $\Oqco(\riem^P)$.  Before we state and prove it we need to investigate some purely analytic issues in the underlying function theory, which  will be utilized later.

We start first with the following lemma.

\begin{lemma} \label{le:prewulfslemma}
  Let $f_t(z)$ be a holomorphic curve in $\Oqco$ for $t \in N$ where $N \subset \mathbb{C}$ is an open set containing $0$.
  Then there is a domain $N' \subseteq N$ containing $0$ and a $K$ which is independent of $t\in N'$ such that
  \begin{equation} \label{eq:ftprime_estimate}
   \iint_{\mathbb{D}} |f_t'(z)|^p (1-|z|^2)^{\alpha} dA \leq K,
  \end{equation}
for all $p>0$ and $\alpha>-1.$ The constant $K$ will depend on $p$ and $\alpha$.
 \end{lemma}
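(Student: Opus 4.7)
The plan is to exploit the fact that $f_t$ being a holomorphic curve in the Hilbert manifold $\Oqco$ yields a locally uniform bound on the $A_1^2(\mathbb{D})$ norm of the pre-Schwarzian $\mathcal{A}(f_t)$, and then to translate this into integrability of $|f_t'|^p$ via exponential integrability of Dirichlet-space functions (Chang--Marshall / Moser--Trudinger). First, by Theorem \ref{th:Oqco_open_in_Oqc}, the chart $\chi(f) = (\mathcal{A}(f), f'(0))$ is a biholomorphism of $\Oqco$ onto an open subset of $A_1^2(\mathbb{D}) \oplus \mathbb{C}$. Holomorphicity of $t \mapsto f_t$ implies continuity of $t \mapsto \chi(f_t)$, so one can choose an open neighborhood $N' \subseteq N$ of $0$ and constants $R, C_0 > 0$ with $\|\mathcal{A}(f_t)\| \leq R$ and $|f_t'(0)| \leq C_0$ for all $t \in N'$.

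Next, using that $f_t$ is univalent, fix a branch of $\log f_t'$ depending continuously on $t \in N'$, and set $G_t(z) := \log f_t'(z) - \log f_t'(0)$. Then $G_t$ is holomorphic on $\mathbb{D}$, $G_t(0) = 0$, and $G_t'(z) = \mathcal{A}(f_t)(z)$, so $G_t$ lies in the Dirichlet space with
\[ \|G_t'\|_{L^2(\mathbb{D},dA)} = \|\mathcal{A}(f_t)\| \leq R. \]
At this stage I would invoke the Chang--Marshall / Moser--Trudinger exponential inequality for the Dirichlet space: there exist absolute constants $\beta_0, M_0 > 0$ such that
\[ \iint_{\mathbb{D}} \exp\!\left(\beta_0\,\frac{|G(z)|^2}{\|G'\|^2_{L^2(\mathbb{D},dA)}}\right) dA(z) \leq M_0 \]
for every $G$ analytic on $\mathbb{D}$ vanishing at $0$. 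Combined with the elementary estimate $p|G_t(z)| \leq (\beta_0/R^2)|G_t(z)|^2 + p^2R^2/(4\beta_0)$ and the bound $\|G_t'\|_{L^2} \leq R$, this yields
\[ \iint_{\mathbb{D}} e^{\,p|G_t(z)|}\, dA(z) \leq M_0\, e^{p^2 R^2/(4\beta_0)} =: K_1(p), \]
uniformly in $t \in N'$.

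To conclude, observe that $|f_t'(z)|^p = |f_t'(0)|^p e^{p\,\mathrm{Re}\,G_t(z)} \leq C_0^p\, e^{\,p|G_t(z)|}$. For $\alpha \geq 0$ the weight $(1-|z|^2)^\alpha \leq 1$, giving $\iint_{\mathbb{D}} |f_t'|^p (1-|z|^2)^\alpha\, dA \leq C_0^p\, K_1(p)$. For $-1 < \alpha < 0$, pick $q' \in (1,\,1/|\alpha|)$ (so that $\alpha q' > -1$) and its conjugate $q = q'/(q'-1)$, and apply H\"older's inequality:
\[ \iint_{\mathbb{D}} |f_t'|^p (1-|z|^2)^\alpha\, dA \leq \left(\iint_{\mathbb{D}} |f_t'|^{pq}\, dA\right)^{1/q} \left(\iint_{\mathbb{D}} (1-|z|^2)^{\alpha q'}\, dA\right)^{1/q'}. \]
The first factor is controlled by the previous step with $pq$ in place of $p$, and the second is finite because $\alpha q' > -1$. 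Taking $K = K(p,\alpha)$ to be the resulting bound completes the proof.

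The main obstacle is the correct invocation of the Chang--Marshall-type exponential inequality for the Dirichlet space with the sharp scaling in the Dirichlet norm; once this is in hand, all remaining steps reduce to elementary manipulations, the chart continuity in $\Oqco$, and H\"older's inequality.
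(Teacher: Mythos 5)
Your proof is correct, but it takes a genuinely different route from the paper's. The paper observes that $g_t=\log f_t'$ lies in the little Bloch space, upgrades the decay $(1-|z|^2)|g_t'(z)|\to 0$ to a bound that is uniform in $t$ (via the joint continuity of $(t,z)\mapsto (1-|z|^2)|g_t'(z)|$, which rests on the Takhtajan--Teo estimate $\|(1-|z|^2)g'\|_\infty\le \pi^{-1/2}\|\mathcal{A}(f)\|$), and then runs an absorption argument through a Hardy--Littlewood norm equivalence, adapting the proof of the pointwise-in-$t$ result of Gallardo-Guti\'errez et al.\ to get uniformity. You instead extract from holomorphy of the curve a uniform bound $\|\mathcal{A}(f_t)\|_{A_1^2}\le R$, note that $G_t(z)=\log f_t'(z)-\log f_t'(0)$ is then in a fixed Dirichlet ball, and invoke exponential-square integrability of Dirichlet functions (Beurling/Chang--Marshall) plus Young's and H\"older's inequalities. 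This is a clean and arguably more quantitative argument; all the individual steps check out (the Young-inequality splitting, the identity $|f_t'|^p=|f_t'(0)|^p e^{p\,\mathrm{Re}\,G_t}$, and the H\"older reduction of the case $-1<\alpha<0$ to $\alpha=0$ are all correct). The one point you flag yourself deserves a sentence of care: Chang--Marshall is usually stated for the boundary integral $\int_{S^1}e^{|G^*|^2}\,d\theta$ with normalized Dirichlet integral $\pi^{-1}\iint|G'|^2\,dA\le 1$; the area form you use follows by applying the boundary statement to the dilates $G(rz)$ (whose Dirichlet norm does not increase) and integrating in $r$, and in fact only the non-sharp Beurling-type bound is needed since any $\beta_0>0$ suffices. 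With that reference made precise, your argument is a complete and valid alternative to the one in the paper; what the paper's route buys is that it stays entirely within the Bloch/Bergman-space toolkit already set up for the rest of the section, while yours imports one classical external inequality in exchange for avoiding the somewhat delicate uniform little-Bloch continuity argument.
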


\begin{proof}
 To establish the estimate \eqref{eq:ftprime_estimate} we observe that since $\mathcal{A}(f_t) \in A_1^2(\mathbb{D})$, $\log{f_t'}$ is in the little Bloch space; that is
   \[  \lim_{|z| \rightarrow 1^-} (1-|z|^2) |g_t'(z)|=0,  \]
  see \cite[Corollary 1.4, Chapter 2]{Takhtajan_Teo_Memoirs}.  By \cite[Theorem 1 (1)]{GGPPR}, the integral in (\ref{eq:ftprime_estimate}) is finite for each $t$.  However, we need a uniform estimate in $t$.  Although this does not follow from the theorem as stated in \cite[Theorem 1 (1)]{GGPPR}, the proof of that theorem can be modified to get the uniform estimate. We proceed by providing the details of this argument.
 The claim of \cite[Theorem 1 (1)]{GGPPR} is that
  \begin{equation} \label{eq:GGPPR_claim}
   g=\log{f'} \in \mathcal{B}_0 \Longrightarrow \iint_{\mathbb{D}} |f'|^p(1-|z|^2)^\alpha \,dA <\infty
  \end{equation}
 for all $p>0$ and $\alpha >-1$ where $\mathcal{B}_0$ is the little Bloch space.

 Let $h_s(z)=g(sz)$.  This function is continuous on $\overline{\mathbb{D}}$ for $0<s<1$.
 Hence  for each fixed $s$
 the integral in question converges by an elementary estimate.  Therefore (\ref{eq:GGPPR_claim})
  will follow
 if we can show that the integral is uniformly bounded for $s$ in some interval $[s_0,1)$.

 We have that $h_s \in \mathcal{B}_0$, that is,
 \[  \lim_{|z| \rightarrow 1-} (1-|z|^2)|h_s'(z)|=0  \]
 for all $0 < s \leq 1$.
  Since $h_1(z)=g(z)$ is in the little Bloch space, and $S^1$ is compact, given any $\epsilon >0$ there is an $R>0$ such that $(1-|z|^2)|h_1'(z)| <\epsilon$ for all $|z|>R$.
 Fix any $0<s_0<1$ and let $r=R/s_0$.  Therefore, if $|z|>r$ and $s_0<s \leq 1$ then
 $|sz| > s_0 r = R$ and so for all $|z|>r$ and $s_0 < s \leq 1$ we have $(1-|z|^2)|h_s'(z)|=(1-|z|^2)s |h_1'(sz)| <s \epsilon \leq \epsilon.$

 Thus for any $\epsilon>0$ there are fixed $0<r<1$
 and $0<s_0 <1$
 such that
\begin{equation}\label{estimate for h_s}
(1-|z|^2)|h_s'(z)| < \epsilon
\end{equation}
 for all $(s,z) \in [s_0,1] \times \overline{\mathbb{D}} \backslash
 D_r$ where $D_r =\{z \,:\,|z|<r \}$.
 Now set
\begin{align*}
 I & =  \iint_{\mathbb{D}} |e^{h_s (z)}|^p (1-|z|^2)^\alpha \,dA,  \\
 I_1 & = \iint_{ D_r} |e^{h_s (z)}|^p (1-|z|^2)^\alpha \,dA,\\
 I_2 & = \iint_{\mathbb{D} \setminus D_r} |e^{h_s (z)}|^p (1-|z|^2)^\alpha \,dA.
 \end{align*}

Our goal is to show that there is a constant $C$ which is independent of $s\in [s_0 ,1)$ such that $I$ is bounded by $C.$ It is obvious that this will follow by establishing the aforementioned type of bounds for $I_1$ and $I_2$.
The estimate for $I_1$ follows  from
\begin{align}  \iint_{ D_r} |e^{h_s (z)}|^p (1-|z|^2)^\alpha \,dA & \leq \frac{(1-r^2)^{\min(\alpha,0)}}{s^2} \iint_{ D_{rs}} |e^{h_1 (z)}|^p \,dA \\ & \leq  \frac{(1-r^2)^{\min(\alpha,0)}}{s_{0}^2} \iint_{ D_r} |e^{h_1 (z)}|^p \,dA   \nonumber \\
& \leq C. \nonumber
\end{align}

Now we turn to the estimate for $I_2$.  It follows from a theorem of Hardy and Littlewood (see for example \cite[Theorem 6]{Flett} for a proof in the most general case) that there is a $C$ depending only on $p$ and $\alpha$, such that

  \begin{equation}\label{equivalent norms}
    \iint_{\mathbb{D}} |F(z)|^p(1-|z|^2)^\alpha dA \leq C \left( \iint_{\mathbb{D}} |F'(z)|^p(1-|z|^2)^{p+\alpha}dA + |F(0)|^{p}\right)
  \end{equation}
  for $p>0$ and $\alpha > -1$, whenever at least one of the integrals converges
  (in fact the two norms represented by each side are equivalent). Now for $s \in [s_0,1)$ we may apply
 \eqref{equivalent norms} and \eqref {estimate for h_s} to $e^{h_s (z)}$ which yield
 \begin{align*}
   I_2  & \leq  \iint_{\mathbb{D}} |e^{h_s (z)}|^p (1-|z|^2)^\alpha \,dA \\
    & \leq  C \left( \iint_{\mathbb{D}} |e^{h_s(z)}|^p|h_s'(z)|^p(1-|z|^2)^{p+\alpha}dA
      + |e^{h_s(0)}|^{p}\right) \\
    & \leq C \iint_{\mathbb{D}\setminus D_r} |e^{h_s(z)}|^p|h_s'(z)|^p(1-|z|^2)^{p+\alpha}\,dA
      + C  \iint_{D_r} |e^{h_s(z)}|^p|h_s'(z)|^p(1-|z|^2)^{p+\alpha}\,dA  \\
      & \qquad + C|e^{h_s(0)}|^{p} \\
    &\leq C  \epsilon I_2
      + C  \iint_{D_r} |e^{h_s(z)}|^p|h_s'(z)|^p(1-|z|^2)^{p+\alpha}\,dA +
   C|e^{h_s(0)}|^{p} \\
   & \leq \frac{1 }{2} I_2 +C  \iint_{D_r} |e^{h_s(z)}|^p|h_s'(z)|^p(1-|z|^2)^{p+\alpha}\,dA+  C |e^{h_s(0)}|^{p} ,
 \end{align*}
by choosing $\epsilon\leq \frac{1}{2C}$.
 Summarizing, we have
 \begin{equation} \label{eq:I1estimate}
  I_2 \leq 2C \left( \iint_{D_r} |e^{h_s(z)}|^p|h_s'(z)|^p(1-|z|^2)^{p+\alpha}dA
      + |e^{h_s(0)}|^{p}\right),
 \end{equation}
 where $r$ and $C$ are independent of $s$.  Since $h_s$  and $h_s'$ are continuous on $\overline{D_r}$ for $s \in [s_0,1 ) $ the integral on the right hand side is bounded by a constant which is independent of $s \in [s_0,1).$ Therefore the estimates for $I_1$ and $I_2$ yield the desired uniform estimate for $I$. Since the estimate on $I$ is uniform it extends to $s=1$.

Setting $g_t=\log f_t'$, an argument identical to the above (substituting $h_s$ with $g_t$) gives the desired uniform bound (\ref{eq:ftprime_estimate}) in $t$, provided
  that the function $(1-|z|^2) |g_t'(z)|$ is jointly continuous in $(t,z)$.
  Thus it remains to demonstrate the joint continuity. To this end fix $z_0 \in \overline{\mathbb{D}}$, $t_0 \in N$ and $\epsilon>0$.
 There is a $\delta$ such that for
 any $z \in B(z_0,\delta) \cap \overline{\mathbb{D}}$ where $B(z_0,r)$ is the ball
 of radius $\delta$ centered on $z_0$,
 \[  \|(1-|z|^2)g_{t_0}'(z) - (1-|z|^2)g_{t_0}'(z_0) \|_\infty <\frac{\epsilon}{2}.  \]

 Since $f_t$ is a holomorphic curve, there is an interval $(t_0-\delta_1,t_0+\delta_1)$ such that
 \[  \|\mathcal{A}(f_t)-\mathcal{A}(f_{t_0})\| < \epsilon/2.  \]
 By \cite[Lemma 1.3, Chapter II]{Takhtajan_Teo_Memoirs} for $g=\log{f'}$
 \[  \| (1-|z|^2)g'(z) \|_\infty \leq \frac{1}{\sqrt{\pi}} \|\mathcal{A}(f)\|  \]
 (note that in their notation the left hand side is $\|g'(z)\|_\infty$).
 So for all $z \in \mathbb{D}$ and $t \in (t_0 - \delta_1,t_0+\delta_1)$,
 \begin{equation} \label{eq:littleBloch_temp}
  \|(1-|z|^2)g_{t}'(z) - (1-|z|^2)g_{t_0}'(z) \|_\infty < \frac{\epsilon}{2}.
 \end{equation}
 Combining this with the fact that $(1-|z|^2)g_t'(z) \rightarrow 0$ as $|z| \rightarrow 1$ shows that equation (\ref{eq:littleBloch_temp}) holds on $\overline{\mathbb{D}}$.
 Thus, by the triangle inequality
 \[ \|(1-|z|^2)g_{t}'(z) - (1-|z|^2)g_{t_0}'(z_0) \|_\infty  <\epsilon \]
 on $(t_0-\delta_1,t_0+\delta_1) \times (D(z_0,r) \cap \overline{\mathbb{D}})$. This proves joint continuity and thus
 completes the proof.
\end{proof}

Before we state our next lemma we would needs some tools from the theory of Besov spaces which we recall bellow.

\begin{definition}
 For $p\in (1,\infty),$ one defines the {\it{Besov space}} $B^{p}$ as the space of holomorphic functions $f$ on $\mathbb{D}$ for which
\begin{equation*}
    \Vert f\Vert_{B^{p}} = |f(0)| + \left\{\iint_{\mathbb{D}} |f'(z)|^{p}\,(1-|z|^2)^{p-2}\, dA\right\}^{\frac{1}{p}}<\infty.
  \end{equation*}
\end{definition}
From this definition it follows at once that $B^{2}$ is the usual Dirichlet space. \noindent One also defines for $z\in \mathbb{D},$ the set $S(z)$ by
\begin{equation}\label{defn of S}
S(z) = \left\{ \zeta\in\mathbb{D} \,:\, 1-|\zeta|\leq 1-|z|, \, \left|\frac{\arg(z\,\overline{\zeta})}{2\pi}\right|\leq \frac{1-|z|}{2}\right\},
\end{equation}
which is obviously a subset of the annulus $|z|\leq |\zeta|<1.$\\
\noindent In our study we shall use the following result, concerning Carleson measures for Besov spaces, due to N. Arcozzi, R. Rochberg and E. Saywer \cite{ARS}.

\begin{theorem}\label{ARS theorem}
Given real numbers $p$ and $q$ with $1<p< q<\infty$ and a positive Borel measure $\mu$ on $\mathbb{D},$ the following two statements are equivalent:

\begin{enumerate}

\item There is a constant $C(\mu)>0$ such that
\begin{equation*}
\Vert f\Vert_{L^{q}(\mu)} \leq C(\mu) \Vert f\Vert_{B^{p}}.
\end{equation*}
\item For $S(z)$ defined above, one has
\begin{equation*}
 \mu(S(z)) ^{\frac{1}{q}}\leq C \left\{\log \frac{1+|z|}{1-|z|}\right\}^{-\frac{1}{p'}},
\end{equation*}
where $p'$ is the H\"older dual of $p.$
\end{enumerate}
\end{theorem}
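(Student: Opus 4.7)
The plan is to prove the equivalence in two directions. The necessity $(1)\Rightarrow(2)$ is the straightforward direction, proved by testing the embedding on an explicit family of logarithmic functions. The sufficiency $(2)\Rightarrow(1)$ is the substantive direction, handled through a Whitney/dyadic decomposition of $\mathbb{D}$ and discrete summation.

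For necessity, I would use the family of test functions $f_z(w)=\log\frac{1}{1-\overline{z}w}$ indexed by $z\in\mathbb{D}$. A direct computation of the standard integral $\iint_{\mathbb{D}}(1-|w|^2)^{p-2}\,|1-\overline{z}w|^{-p}\,dA(w)$ yields $\|f_z\|_{B^p}^p\asymp\log\frac{1}{1-|z|^2}$. Since $|1-\overline{z}w|\asymp 1-|z|^2$ for $w\in S(z)$, one has $|f_z(w)|\gtrsim\log\frac{1}{1-|z|^2}$ uniformly on the tent $S(z)$. Substituting these two estimates into the embedding inequality in (1), and using $\log\frac{1+|z|}{1-|z|}\asymp\log\frac{1}{1-|z|^2}$ for $|z|$ away from $0$, gives
\[
  \mu(S(z))^{1/q}\lesssim\Bigl(\log\tfrac{1+|z|}{1-|z|}\Bigr)^{1/p-1}=\Bigl(\log\tfrac{1+|z|}{1-|z|}\Bigr)^{-1/p'},
\]
which is exactly (2). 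The case of $|z|$ small is trivial since the right hand side of (2) is unbounded there.

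For sufficiency, I would introduce a dyadic tree $T$ whose vertices $\alpha$ at generation $n$ correspond to dyadic arcs of length $2^{-n}$ on $S^1$, with associated Whitney tents $\{S_\alpha\}$ satisfying $1-|z|\asymp 2^{-n}$ on $S_\alpha$. A standard comparison identifies the Besov norm with a discrete tree sum of the form $\sum_{\alpha\in T}|f(\alpha)-f(\alpha^{-})|^p$, where $\alpha^{-}$ is the parent of $\alpha$ and $f(\alpha)$ is an appropriately averaged value of $f$ on $S_\alpha$. Condition (2) translates to the discrete bound $\mu(S_\alpha)^{1/q}\lesssim n^{-1/p'}$. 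The embedding $\|f\|_{L^q(\mu)}\lesssim\|f\|_{B^p}$ then follows by summing the tree contributions, exploiting the strict gap $p<q$ to close the estimate via a discrete Hardy inequality together with monotonicity of $\ell^p$ norms.

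The main obstacle is the sufficiency direction. At the diagonal $p=q$ the Carleson box condition (2) is strictly weaker than the embedding, and one in general needs a full capacitary or tree condition. The strict inequality $p<q$ supplies enough slack for the simpler box condition to suffice, but extracting this slack requires the careful dyadic tree analysis of \cite{ARS} to pass from the continuous Carleson box condition to its discrete tree avatar and to control the Besov norm in terms of tree differences.
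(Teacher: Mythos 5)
The paper does not prove this theorem at all: it is imported verbatim as a known result of Arcozzi--Rochberg--Sawyer, with \cite{ARS} as the only justification, so there is no internal proof to compare your argument against. Your outline is nonetheless a faithful sketch of how that result is actually proved. The necessity direction is correct and complete in essence: testing on $f_z(w)=\log\frac{1}{1-\overline{z}w}$, the standard estimate $\iint_{\mathbb{D}}(1-|w|^2)^{p-2}|1-\overline{z}w|^{-p}\,dA\asymp\log\frac{1}{1-|z|^2}$ together with $|1-\overline{z}w|\leq C(1-|z|)$ on $S(z)$ gives exactly the exponent $1/p-1=-1/p'$. The sufficiency direction is where the substance lies, and your sketch correctly identifies the mechanism (dyadic tree discretization, the translation of the box condition into $\mu(S_\alpha)^{1/q}\leq Cn^{-1/p'}$, and a discrete Hardy-type inequality on the tree that closes only because $p<q$ supplies summability of $n^{-q/p'}$ along geodesics); but as written it defers all of the actual work --- the comparison of the Besov norm with tree differences and the proof of the tree Hardy inequality in the off-diagonal regime --- to the very reference the paper cites. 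That is acceptable given that the paper itself treats the theorem as external input, but be aware that your second paragraph is a plan rather than a proof, and that the honest one-line justification here is simply the citation to \cite{ARS}.
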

Using Lemma \ref{le:prewulfslemma} and Theorem \ref{ARS theorem} we can prove the following result:
 \begin{lemma} \label{le:wulfslemma}
  Let $f_t(z)$ be a holomorphic curve in $\Oqco$ for $t \in N$ where $N \subset \mathbb{C}$ is an open set containing $0$.
  For any holomorphic function $\psi:\mathbb{D} \rightarrow \mathbb{C}$ such that $\iint_{\mathbb{D}} |\psi'|^2 <\infty$ and $\psi(0)=0$, and any $\beta>1$, there is a constant $C$
  and an open set $N' \subseteq N$ containing $0$ such that for all $t \in N'$
  \[  \iint_{\mathbb{D}} |f_t'|^2 |\psi|^\beta dA \leq C. \]
 \end{lemma}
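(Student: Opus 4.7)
The plan is to interpret the integral as $\int_{\mathbb{D}} |\psi|^\beta \, d\mu_t$ where $d\mu_t = |f_t'|^2 \, dA$ is a positive Borel measure on $\mathbb{D}$, and to apply the Arcozzi--Rochberg--Sawyer theorem (Theorem \ref{ARS theorem}) with $p = 2$. The hypotheses $\psi(0)=0$ and $\iint_{\mathbb{D}} |\psi'|^2 \, dA < \infty$ give exactly $\|\psi\|_{B^2} < \infty$, so if I can verify the Carleson condition of Theorem \ref{ARS theorem} uniformly in $t$, the theorem will deliver the uniform bound.

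First I would reduce to the case $\beta > 2$. If $1 < \beta \leq 2$, pick any $q > 2$ and apply H\"older on the finite measure $\mu_t$:
\[
  \int |\psi|^\beta \, d\mu_t \leq \Bigl(\int |\psi|^q \, d\mu_t\Bigr)^{\beta/q} \mu_t(\mathbb{D})^{1 - \beta/q}.
\]
The total mass $\mu_t(\mathbb{D}) = \iint_{\mathbb{D}} |f_t'|^2 \, dA$ is bounded uniformly on some neighborhood $N' \ni 0$ by Lemma \ref{le:prewulfslemma} (with $p=2$, $\alpha = 0$), so it suffices to obtain the estimate for some $q > 2$; hence I may assume $\beta > 2$ from now on.

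Next, for $\beta > 2$, I would verify the Carleson condition
\[
  \mu_t(S(z))^{1/\beta} \leq C \Bigl\{ \log \frac{1+|z|}{1-|z|} \Bigr\}^{-1/2}
\]
with $C$ independent of $t \in N'$ and $z \in \mathbb{D}$. Fix $r > 1$ and apply H\"older on $S(z)$:
\[
  \mu_t(S(z)) = \iint_{S(z)} |f_t'|^2 \, dA \leq \Bigl( \iint_{\mathbb{D}} |f_t'|^{2r} \, dA \Bigr)^{1/r} |S(z)|^{1/r'}.
\]
Lemma \ref{le:prewulfslemma} with $p = 2r$ and $\alpha = 0$ bounds the first factor by a constant independent of $t \in N'$, while the definition of $S(z)$ immediately gives $|S(z)| \leq 2\pi(1-|z|)^2$. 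Hence $\mu_t(S(z))^{1/\beta} \leq C_1 (1-|z|)^{2/(r'\beta)}$ uniformly in $t$. Since any positive power of $1-|z|$ dominates the logarithmic quantity $\{\log((1+|z|)/(1-|z|))\}^{-1/2}$ as $|z| \to 1$, while the logarithmic quantity blows up as $|z| \to 0$ (trivializing the inequality there), the Carleson condition holds uniformly on $\mathbb{D}$ once $C$ is taken large enough. Applying Theorem \ref{ARS theorem} with $p=2$, $q=\beta$ then yields $\|\psi\|_{L^\beta(\mu_t)} \leq C\|\psi\|_{B^2}$ uniformly in $t \in N'$, which is the desired estimate.

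The main obstacle is establishing the Carleson condition uniformly in $t$. This is precisely what Lemma \ref{le:prewulfslemma} was designed to enable: it converts the holomorphicity of the curve $t \mapsto f_t$ in $\Oqco$ into uniform weighted $L^p$ bounds on $|f_t'|$, which, combined with H\"older on the Carleson boxes $S(z)$, produce the required algebraic decay in $1-|z|$.
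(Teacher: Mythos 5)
Your proof is correct in substance, but it takes a genuinely different route from the paper's. The paper first applies Cauchy--Schwarz with the weights $(1-|z|^2)^{\mp 1/2}$ to split the integral into $\bigl(\iint_{\mathbb{D}}|f_t'|^4(1-|z|^2)^{-1/2}\,dA\bigr)^{1/2}$, which is controlled by Lemma \ref{le:prewulfslemma} with $p=4$, $\alpha=-1/2$, times $\bigl(\iint_{\mathbb{D}}|\psi|^{2\beta}(1-|z|^2)^{1/2}\,dA\bigr)^{1/2}$, which is a $t$-independent quantity; Theorem \ref{ARS theorem} is then invoked only once, for the single fixed measure $d\mu=(1-|\zeta|^2)^{1/2}dA$ with $q=2\beta>2=p$ (so no case distinction on $\beta$ is needed). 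You instead apply Theorem \ref{ARS theorem} to the whole family of measures $d\mu_t=|f_t'|^2\,dA$, verifying the Carleson testing condition uniformly in $t$ by H\"older on the boxes $S(z)$ together with Lemma \ref{le:prewulfslemma} at $p=2r$, $\alpha=0$; your estimate $|S(z)|\leq 2\pi(1-|z|)^2$ and the subsequent comparison with the logarithm are correct, as is your H\"older reduction of $1<\beta\leq 2$ to $\beta>2$ using the uniform bound on $\mu_t(\mathbb{D})$. The one point you should make explicit is that Theorem \ref{ARS theorem}, as stated, only asserts an equivalence of two conditions; your argument needs the quantitative form of the Arcozzi--Rochberg--Sawyer result, namely that the embedding constant $C(\mu)$ is controlled by the constant appearing in the testing condition (2), so that a $t$-uniform Carleson constant yields a $t$-uniform $C(\mu_t)$. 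This is indeed what \cite{ARS} prove, so the gap is one of citation rather than mathematics; the paper's arrangement sidesteps the issue entirely by confining the $t$-dependence to the Cauchy--Schwarz factor and applying the Carleson embedding to a single fixed measure.
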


\begin{proof}

The Cauchy-Schwarz inequality and Lemma \ref{le:prewulfslemma} with $p=4$ and $\alpha=-\frac{1}{2}$ yield
\begin{align*}\label{rune estim 1}
 \iint_{\mathbb{D}} |f_t' (z)|^2 |\psi(z)|^\beta dA & \leq
 \left\{\iint_{\mathbb{D}} |f_t'(z)|^{4}\,(1-|z|^2)^{\frac{-1}{2}}\, dA \right\}^{\frac{1}{2}} \times \left\{\iint_{\mathbb{D}} |\psi(z)|^{2\beta}\,(1-|z|^2)^{\frac{1}{2}}\,  dA\right\}^{\frac{1}{2}} \\
 & \leq  \sqrt{K}\left\{\iint_{\mathbb{D}} |\psi(z)|^{2\beta}\,(1-|z|^2)^{\frac{1}{2}}\, dA\right\}^{\frac{1}{2}}.
\end{align*}

Therefore, since $\psi$ is in the Dirichlet space, to prove that $\iint_{\mathbb{D}} |f_t' (z)|^2 |\psi(z)|^\beta dA\leq C$, it would be enough to show that
\begin{equation}\label{sufficient condition}
\left\{\iint_{\mathbb{D}} |\psi(z)|^{2 \beta}\,(1-|z|^2)^{\frac{1}{2}}\, dA\right\}^{\frac{1}{2\beta}} \leq C' \left\{\iint_{\mathbb{D}} |\psi'(z)|^{2}\, dA\right\}^{\frac{1}{2}}.
\end{equation}

Now, since $\psi(0)=0,$ Theorem \ref{ARS theorem} with $q=2 \beta,$ $p=2$ and $d\mu=(1-|\zeta|^2)^{\frac{1}{2}}\, dA,$ yields that  \eqref{sufficient condition} holds if and only if for all $z\in \mathbb{D}$

\begin{equation}\label{perhaps the main estimate}
\left\{\iint_{S(z)} (1-|\zeta|^2)^{\frac{1}{2}}\, dA\right\}^{\frac{1}{2\beta}} \leq C'  \left\{\log \frac{1+|z|}{1-|z|}\right\}^{-\frac{1}{2}}.
\end{equation}

Moreover

 \begin{equation*}
 \iint_{S(z)} (1-|\zeta|^2)^{\frac{1}{2}}\, dA \leq \iint_{|z|\leq |\zeta|<1} (1-|\zeta|^2)^{\frac{1}{2}}\, dA =4\pi \frac{(1-|z|^2)^{\frac{3}{2}}}{3}.
 \end{equation*}
Therefore an elementary calculation yields that \eqref{perhaps the main estimate} follows from an estimate of the form

 \begin{equation}\label{final estim for L2 boundedness}
   (1-|z|^2)^{\frac{3}{2 \beta}} \log \frac{1+|z|}{1-|z|} \leq C,
 \end{equation}
 for all $|z|<1.$ Now if we set $f(r) = (1-r^2)^{{\frac{3}{2\beta}}}\log \frac{1+r}{1-r}$ then for all $\varepsilon>0,$ $f(r)$ is continuous on the compact interval $[0,1-\varepsilon].$ Indeed the continuity of $f(r)$ is obvious on $[0,1-\varepsilon)$ and moreover

\begin{equation*}
  \lim_{r\to 1^{-}} (1-r^2)^{{\frac{3}{2\beta}}}\log \frac{1+r}{1-r} =0.
\end{equation*}
From this, \eqref{final estim for L2 boundedness} follows and the proof of the lemma is now complete.
\end{proof}

 Now we will state and prove the holomorphicity of the operation of left composition in $\Oqc_0$ which will play a crucial role in the establishment of the existence of the Hilbert manifold structure on $\Oqc_0 (\Sigma^P)$.

\begin{theorem} \label{th:left_comp_holo}
  Let $K \subset \mathbb{C}$ be a compact set which is the closure of an open
  neighborhood $K_{int}$ of $0$ and let $A$ be an open set in $\mathbb{C}$
  containing $K$. If $U$ is the open set
  \[  U = \{ g \in \Oqco \,:\, \overline{g(\mathbb{D})} \subset
  K_{int} \}, \]
  and $h:A \rightarrow \mathbb{C}$ is a one-to-one holomorphic map such that
  $h(0)=0,$ then the map $f \mapsto h \circ f$ from $U$ to $\Oqco$ is holomorphic.
 \end{theorem}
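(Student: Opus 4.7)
The plan is to verify holomorphicity via the characterization of Grosse-Erdmann already cited in the paper, namely that a map between open sets of Banach spaces is holomorphic if and only if it is locally bounded and weakly holomorphic with respect to a separating family of continuous linear functionals on the target. Under the chart $\chi : \Oqco \to \mathcal{W} = A_1^2(\mathbb{D}) \oplus \mathbb{C}$, the manifold $\Oqco$ is modeled on the Hilbert space $\mathcal{W}$, so it suffices to show that the pushed-forward map
\[
(\phi, c) \;\longmapsto\; \chi(h \circ \chi^{-1}(\phi, c)) \;=\; \bigl(\mathcal{A}(h \circ f),\, h'(0)\, c\bigr)
\]
is holomorphic between open subsets of $\mathcal{W}$. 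The $\mathbb{C}$-component is linear in $c$ and thus trivially holomorphic, so the substantive work concerns the $A_1^2(\mathbb{D})$-component. Well-definedness of $f \mapsto h \circ f$ as a map from $U$ into $\Oqco$ comes directly from Lemma \ref{le:Oqco_composition_preserves}, since $f \in U$ forces $\overline{f(\mathbb{D})} \subset K_{\mathrm{int}} \subset K \subset A$, and $h$ is one-to-one holomorphic on $A$. The same lemma supplies the required local boundedness: around each $f_0 \in U$ there is a neighborhood $V \subset U$ on which $\|\mathcal{A}(h \circ g)\|$ is uniformly bounded, which via $\chi$ yields local boundedness in the $\mathcal{W}$-norm.

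For weak holomorphicity I will use, just as in the proof of Lemma \ref{le:CompHoloOqc_manyvar}, the separating family of point-evaluation functionals $\{E_z : z \in \mathbb{D}\}$ on the reproducing-kernel Hilbert space $A_1^2(\mathbb{D})$. Using the identity
\[
\mathcal{A}(h \circ f)(z) \;=\; \mathcal{A}(h)(f(z))\, f'(z) + \mathcal{A}(f)(z),
\]
we reduce weak holomorphicity to showing that the scalar function
$(\phi, c) \mapsto \mathcal{A}(h)(f(z))\, f'(z) + \phi(z)$ is a holomorphic scalar function on $\chi(U) \subset \mathcal{W}$ for each fixed $z \in \mathbb{D}$. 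To check this it suffices, again by Grosse-Erdmann in the scalar case, to verify Gâteaux holomorphicity along complex lines together with local boundedness. Along a line $t \mapsto (\phi_0 + t\eta,\, c_0 + t\xi)$ lying in $\chi(U)$, the inverse chart is given explicitly by
\[
f_t'(w) \;=\; (c_0 + t\xi) \exp\!\Bigl(\int_0^w \bigl(\phi_0(u) + t\eta(u)\bigr)\, du\Bigr), \qquad f_t(z) \;=\; \int_0^z f_t'(w)\, dw,
\]
so for each fixed $w$ and $z$ in $\mathbb{D}$, both $f_t'(w)$ and $f_t(z)$ are holomorphic functions of $t$ (a complex-linear expression inside an entire function, integrated against $w$). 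Since $\overline{f_t(\mathbb{D})} \subset K_{\mathrm{int}} \subset A$ for $t$ small and $\mathcal{A}(h)$ is holomorphic on $A$, the composition $\mathcal{A}(h)(f_t(z))$ is holomorphic in $t$, and $\phi(z)$ is linear in $\phi$; thus each scalar $E_z \circ \mathcal{A}(h \circ \cdot)$ is Gâteaux holomorphic. Scalar local boundedness follows from the vector-valued bound via $|E_z(\mathcal{A}(h \circ g))| \leq \|E_z\|\, \|\mathcal{A}(h \circ g)\|$, completing scalar holomorphicity.

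The Grosse-Erdmann theorem then promotes scalar (hence weak) holomorphicity combined with vector-valued local boundedness to norm holomorphicity of the $A_1^2(\mathbb{D})$-component, proving that $f \mapsto h \circ f$ is holomorphic from $U$ to $\Oqco$. The principal technical point is the holomorphic dependence of $f_t(z)$ and $f_t'(z)$ on the parameter $(\phi_0 + t\eta,\, c_0 + t\xi)$; this rests on the fact that path-integration $\phi \mapsto \int_0^z \phi(w)\, dw$ is continuous linear on $A_1^2(\mathbb{D})$, which follows from the continuous embedding of the Bergman space into $\mathcal{H}(\mathbb{D})$ equipped with the topology of locally uniform convergence, together with uniform control ensuring $f_t(z)$ remains in the domain $A$ of $h$ for small $t$.
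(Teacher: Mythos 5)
Your proof is correct, but it takes a genuinely different route from the paper's, and a more economical one. The paper also reduces the problem (via the result in Chae) to local boundedness plus G\^ateaux holomorphy, and also gets local boundedness from Lemma \ref{le:Oqco_composition_preserves}; but it then establishes G\^ateaux holomorphy by proving \emph{norm} convergence of the difference quotient of $t\mapsto \mathcal{A}(h\circ f_t)$ directly, via a second-order Taylor remainder estimate $\|\alpha(t)-\alpha(0)-t\dot\alpha(0)\|^2\leq C_1|t|^3$. That forces uniform bounds on $\|\dot f_t'\|$ and $\|\ddot f_t'\|$ in the Bergman norm, which is exactly where the heavy machinery enters (Lemma \ref{le:wulfslemma}, and hence Lemma \ref{le:prewulfslemma}, the little Bloch space, the Hardy--Littlewood inequality, and the Arcozzi--Rochberg--Sawyer characterization of Carleson measures for Besov spaces). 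You instead observe that, on each complex line in $\chi(U)$, the restriction $t\mapsto \mathcal{A}(h\circ f_t)$ is locally bounded in $\aonetwo$ (again by Lemma \ref{le:Oqco_composition_preserves}) and is weakly holomorphic with respect to the separating family of point evaluations, since $E_z(\mathcal{A}(h\circ f_t))=\mathcal{A}(h)(f_t(z))f_t'(z)+\mathcal{A}(f_t)(z)$ and the explicit inverse-chart formula makes $f_t(z)$ and $f_t'(z)$ holomorphic in $t$ for fixed $z$; the Grosse-Erdmann criterion then upgrades this to norm holomorphy on the line, giving G\^ateaux holomorphy for free. This is precisely the device the paper itself deploys in Lemma \ref{le:CompHoloOqc_manyvar} for the $\epsilon$-variable, so your argument is methodologically consistent with the paper and, if adopted, would make the Carleson-measure lemmas unnecessary for this theorem; what the paper's harder route buys is an explicit formula for the G\^ateaux derivative ($\dot\alpha(0)+\phi$) together with a quantitative remainder bound, neither of which is needed for the bare holomorphy statement. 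Two small points of hygiene: the assertion that a \emph{scalar} function on a Banach domain is holomorphic once it is locally bounded and G\^ateaux holomorphic is the classical Zorn/Chae-type result rather than Grosse-Erdmann (whose theorem weakens the target, not the domain), and the cleanest logical order is to fix the complex line first and apply the one-variable weak criterion there, rather than first arguing that each $E_z\circ\mathcal{A}(h\circ\cdot)$ is holomorphic on all of $\chi(U)$ --- though both orderings lead to the same conclusion.
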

 \begin{remark}  The fact that $U$ is open follows from Theorem
 \ref{th:into_U_is_open}.
 \end{remark}

 \begin{proof}
 It was shown in \cite[Lemma 3.10]{RSnonoverlapping} that composition on the left is holomorphic in the above sense on $\Oqc$.  However, this does not immediately lead to the desired result, since the norm has changed.  Nevertheless some of the computations in \cite[Lemma 3.10]{RSnonoverlapping} can be used here.

 As in \cite[Lemma 3.10]{RSnonoverlapping}, by Hartogs' theorem \cite{Mujica} it suffices to show that the maps
  $(\mathcal{A}(f),f'(0)) \mapsto \mathcal{A}(h \circ f)$ and $f'(0) \mapsto h'(0)f'(0)$ are separately holomorphic.  The second map is clearly holomorphic.  By a theorem in \cite[p 198]{Chae}, it suffices to show that $(\mathcal{A}(f),f'(0)) \mapsto \mathcal{A}(h \circ f)$ is G\^ateaux holomorphic and locally bounded.
  It is locally bounded by Lemma \ref{le:Oqco_composition_preserves}.

  To show that this map is G\^ateaux holomorphic, consider the curve $(\mathcal{A}(f_0) + t \phi,q(t))$ where $\phi
  \in A^2_1(\mathbb{D})$ and $q$ is holomorphic in $t$ with $q(0)=f_0'(0)$.  It can be easily computed that
  $(\mathcal{A}(f_t),f_t'(0))=(\mathcal{A}(f_0) + t \phi,q(t))$ if and only if
  $f_t$ is the curve
  \[  f_t(z)=\frac{q(t)}{f_0'(0)} \int_0^z f_0'(u) \exp{\left( t\int
       _0^u \phi(w)dw \right)} du.  \]
  Note that $f_t(z)$ is holomorphic in $t$ for fixed $z$.
  Since $\chi(\Oqc)$ is open and $\iota:\Oqc_0 \rightarrow \Oqc$ is continuous, there is an open neighborhood $N$ of $0$ in $\mathbb{C}$ such that $f_t \in \Oqc_0$ for all $t \in N$.  The neighborhood $N$ can also be chosen small enough that $\overline{f_t(\mathbb{D})} \subset K_{int}$ for all $t \in N$,
  since we assumed that $t \mapsto f_t$ is a holomorphic curve and the set of $f \in \Oqco$ mapping into $K_{int}$ is open by Theorem \ref{th:into_U_is_open}.

  Defining $\alpha(t)=\mathcal{A}(h) \circ f_t \cdot f_t'$
  and denoting $t$-differentiation with a dot we then have that
  \[  \lim_{t \rightarrow 0} \frac{1}{t} \left( \mathcal{A}(h \circ
     f_t) - \mathcal{A}(h \circ f_0) \right) = \dot{\alpha}(t) + \phi.
  \]
  So it is enough to show that
  \begin{equation} \label{eq:necessary_claim}
    \left\|\frac{1}{t} \left( \mathcal{A}(h \circ f_t) - \mathcal{A}(h
    \circ f_0)
   \right) -  (\dot{\alpha}(t) + \phi)  \right\| = \left\|
   \frac{1}{t}\left( \alpha(t) - \alpha(0) - t \dot{\alpha}(0) \right)
   \right\| \rightarrow 0
  \end{equation}
  as $t \rightarrow 0$. For any fixed $z$ (recall that $\alpha(t)$ is also a function of $z$) we have
  \[  \alpha(t)-\alpha(0) - t \dot{\alpha}(0) = \int_0^t \ddot{\alpha}(s) (t-s) ds.  \]

  We claim that there is a constant $C_0$
  such that $\| \ddot{\alpha}\|<C_0$ for all $t$ in some neighborhood of $0$.
  Assuming for the moment that this is true, for $|s|<|t|<C$ we set $t=e^{i\theta}u$
  and $s=e^{i\theta}v$, and integrating along a ray, we have
  \begin{align*}
   \| \alpha(t) - \alpha(0) - t \dot{\alpha}(0) \|^2 & =  \left\| \int_0^t \ddot{\alpha}(s) (t-s) ds \right\|^2 \\
   & = \iint_{\mathbb{D}} \left| \int_0^t \ddot{\alpha}(s) (t-s) \, ds\right|^2 dA  \\
   & \leq
   \iint_{\mathbb{D}} \left(  \int_0^{u} | \ddot{\alpha}(e^{i\theta}v)| (u-v) dv \right)^2 dA \\
   & \leq  \iint_{\mathbb{D}} \int_0^u u | \ddot{\alpha}(e^{i\theta}v)|^2 (u-v)^2 dv\, dA \\
 & \leq  C \iint_{\mathbb{D}} \int_0^u | \ddot{\alpha}(e^{i\theta}v)|^2 (u-v)^2 dv\, dA
  \end{align*}
  where we have used Jensen's inequality and the assumption that $u<C$. Therefore Fubini's theorem and the assumption that $v<u<|t|$ yield
  \begin{align*}
   \| \alpha(t) - \alpha(0) - t \dot{\alpha}(0) \|^2 &\leq
     4C  |t|^2\int_0^{|t|} \left(\iint_{\mathbb{D}} | \ddot{\alpha}(s)|^2 dA\right)   d|s| \\
   & \leq  C_1 |t|^3.
  \end{align*}

  Fubini's theorem can be applied since the second to last
  integral converges by the final inequality.  This would prove (\ref{eq:necessary_claim}).
  Thus the proof reduces to establishing a bound on $\| \ddot{\alpha} \|$ which is uniform in $t$ in
  some neighborhood of $0$.

  By \cite[equation 3.2]{RSnonoverlapping},
  \begin{align} \label{eq:alphadoubledot}
   \ddot{\alpha}(t) & =   \mathcal{A}(h)'' \circ f_t \cdot f_t'  \cdot
   {\dot{f}_t}^2
   + \mathcal{A}(h)'
   \circ f_t \cdot f_t'  \cdot   \ddot{f}_t  \\ \nonumber 
   &  \quad + 2 \mathcal{A}(h)'
   \circ f_t \cdot \dot{f}_t  \cdot
    {\dot{f}_t}'  +  \mathcal{A}(h) \circ f_t \cdot {\ddot{f}_t'} \\ \nonumber
    &= I + II + III + IV
  \end{align}
  where
  \[  \mathcal{A}(h)' = \frac{h'''}{h'} - \frac{{h''}^2}{{h'}^2}  \]
  and
  \[  \mathcal{A}(h)'' = \frac{h''''}{h'} - 3 \frac{h'''h''}{{h'}^2} - \frac{{h''}^3}{{h'}^3}. \]

  We will uniformly bound all the terms on the right side of (\ref{eq:alphadoubledot}) in the $A_1^2(\mathbb{D})$ norm.
  For all $t \in N$ we have $\overline{f_t(\mathbb{D})} \subset K$ and $h$ is holomorphic on an open set containing the compact set $K$, and $h' \neq 0$ since $h$ is one-to-one on $A$.  Thus there is a uniform bound for $\mathcal{A}(h)$, $\mathcal{A}(h)'$ and $\mathcal{A}(h)''$ on $f_t(\mathbb{D})$.  So by a change of variables, there is an $M$ such that
  \begin{equation} \label{eq:Abound}
   \| \mathcal{A}(h) \circ f_t \cdot f_t' \| = \left( \iint_{f_t(\mathbb{D})} |\mathcal{A}(h)|^2 dA\right)^{1/2} \leq M.
  \end{equation}
  Similarly there are $M'$ and $M''$ such that
  \begin{equation} \label{eq:Aprimebound}
    \| \mathcal{A}(h)' \circ f_t \cdot f_t' \| \leq M' \quad \text{and} \quad 
     \| \mathcal{A}(h)'' \circ f_t \cdot f_t' \| \leq M''.
  \end{equation}

  Since $\overline{f_t(\mathbb{D})}$ is contained in the compact set $K$, $|f_t(z)|$  is bounded
  by a constant $C$ which is independent of $t$.  By applying Cauchy estimates in the variable $t$ on a curve $|t|=r_2$, we see that for $0<r_1<r_2$ and $|t| \leq r_1$,
  \[  |\dot{f}_t(z)| \leq \frac{r_2}{(r_1-r_2)^2} \sup_{|s|=r_2}|f_s(z)| \]
  and thus we can
  find a constant $C'$ such that $|\dot{f}_t(z)| \leq C'$ for $|t| \leq r_1$.  Similarly, there is a $C''$ such that
  $|\ddot{f}_t(z)| \leq C''$ for all $z \in \mathbb{D}$ and $|t| \leq r_1$. Combining with (\ref{eq:Aprimebound}), we have that $\|I\|$ and $\|II\|$ are uniformly bounded on $|t|\leq r_1$.

  Next, observe that $\| \mathcal{A}(h)' \circ f_t\|_\infty \leq D$ and $\|\mathcal{A}(h) \circ f_t\|_\infty \leq D'$ for some
  constants $D$ and $D'$ which are independent of $t$, since $f_t(\mathbb{D})$ is contained inside a compact set in the
  interior of the domain of $h$, and $h$ is holomorphic and one-to-one.  Therefore, to get a uniform bound on $\|\ddot{\alpha}\|$ we only need to show that
  $\|\dot{f}_t'\|$ and $\|\ddot{f}_t'\|$ are bounded by some constant which is independent of $t$ on a neighborhood of $0$.

  A simple computation yields
  \[  \dot{f}_t'(z) = \frac{\dot{q}(t)}{q(t)}f_t'(z) + \left( \int_0^z \phi(w) dw \right) f_t'(z).  \]
  Since $q(t)$ is holomorphic and non-zero, $\dot{q}/q$ is uniformly bounded on a neighborhood of $0$.  Furthermore,
  \[  \iint_{\mathbb{D}} |f_t'|^2 dA = \operatorname{Area}(f_t(\mathbb{D}))  \]
  which is uniformly bounded since $f_t(\mathbb{D})$ is contained in a fixed compact set.
  Since $\psi(z)=\int_0^z \phi(w)dw$ is in the Dirichlet space, we can apply Lemma \ref{le:wulfslemma} with $\beta=2$,
  which proves that $\|\dot{f}_t'\|$ is uniformly bounded for $t$ in some neighborhood of $0$.  We further
  compute that
  \[  \ddot{f}_t'(z)= \frac{\ddot{q}(t)}{q(t)} f_t'(z) + 2 \frac{\dot{q}(t)}{q(t)} \left(\int_0^z \phi(w) dw \right) f_t'(z)+
    \left( \int_0^z \phi(w) dw \right)^2 f_t'(z),  \]
  so the same reasoning (this time using Lemma \ref{le:wulfslemma} with $\beta=2$ and $\beta=4$) yields a uniform bound for $\|\ddot{f}_t'\|$. This completes the proof.
 \end{proof}

\end{subsection}
\begin{subsection}{Complex Hilbert manifold structure on $\Oqco(\riem)$}
\label{se:complex_structure_non-overlapping}
The idea behind the complex Hilbert space structure is as follows. Any element $(f_1,\ldots,f_n)$ of $\Oqco(\riem)$ maps $n$ closed discs onto closed sets
 containing the punctures.  We choose charts $\zeta_i$, $i=1,\ldots,n$,
 which map non-overlapping open neighborhoods of
 the closed discs into $\mathbb{C}$.  The maps $\zeta_i \circ f_i$ are in $\Oqco$, which is an open subset of a Hilbert space.   By Theorem \ref{th:into_U_is_open} the components $g_i$ of
 an element $g$ nearby to $f$
 will also have images in the domains of the charts $\zeta_i$.  Thus we can model $\Oqco(\riem)$
 locally by $\Oqco \times \cdots \times \Oqco$.  Theorem \ref{th:left_comp_holo} will ensure that the transition functions of the charts are biholomorphisms.

 We now turn to the proofs, beginning with the topology on $\Oqco(\riem)$.
 Before defining a topological basis we need some notation.
 \begin{definition} \label{de:compatible_open_set}
 For any $n$-chart $(\zeta,E)=(\zeta_1,E_1,\ldots,\zeta_n,E_n)$ (see Definition \ref{de:nchart}), we say that an $n$-tuple
 $U=(U_1,\ldots,U_n) \subset \Oqco \times \cdots \times \Oqco$, with $U_i$ open in $\Oqco$, is compatible with $(\zeta,E)$ if
 $\overline{f(\mathbb{D})} \subset \zeta_i(E_i)$ for all $f \in U_i$.
 \end{definition}
 For any $n$-chart $(\zeta,E)$ and compatible open subset $U$ of ${\Oqco} \times \cdots \times \Oqco$ let
 \begin{align} \label{eq:Oqcoriem_base_definition}
   V_{\zeta,E,U} & =  \{ g \in \Oqco(\riem) \,:\, \zeta_i \circ g_i \in U_i, \ \ i=1,\ldots,n \} \\
    & =  \{ (\zeta^{-1}_1 \circ h_1,\ldots,\zeta^{-1}_n \circ h_n) \,:\, h_i \in U_i, \ \
    i=1,\ldots,n \}.   \nonumber
 \end{align}
 \begin{definition}[base a for topology on $\Oqco(\riem)$]   \label{de:Oqco_base} Let
 \[ \mathcal{V}=\{ V_{\zeta,E,U} \,:\, (\zeta,E)   \text{ an $n$-chart, }
    U  \text{ compatible with }   (\zeta,E) \}. \]
 \end{definition}
 \begin{theorem} \label{th:Oqco_base}
 The set $\mathcal{V}$ is the base for a topology on $\Oqco(\riem)$.  This topology is
  Hausdorff and second countable.
 \end{theorem}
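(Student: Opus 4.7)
My plan is to verify the two base axioms first and then Hausdorffness and second countability in turn, with the intersection axiom being the real substance. For the covering property, given $f = (f_1,\ldots,f_n) \in \Oqco(\riem)$, the sets $\overline{f_i(\mathbb{D})}$ are pairwise disjoint, compact, simply connected subsets of $\overline{\riem}$, each containing a single puncture $p_i$. Using the quasiconformal extension of $f_i$ to a neighborhood of $\overline{\mathbb{D}}$ followed by the uniformization theorem, I would produce pairwise disjoint simply connected open sets $E_i \supset \overline{f_i(\mathbb{D})}$ together with biholomorphisms $\zeta_i : E_i \to \mathbb{C}$ satisfying $\zeta_i(p_i) = 0$. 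This defines an $n$-chart $(\zeta, E)$, and the definition of $\Oqco(\riem)$ places $\zeta_i \circ f_i$ in $\Oqco$. Theorem \ref{th:into_U_is_open} then supplies, for each $i$, an open set $U_i \subset \Oqco$ compatible with $(\zeta_i, E_i)$ and containing $\zeta_i \circ f_i$, so that $f \in V_{\zeta, E, U} \in \mathcal{V}$.

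For the intersection axiom, suppose $f \in V_{\zeta^a, E^a, U^a} \cap V_{\zeta^b, E^b, U^b}$. I would construct a common refinement by picking an $n$-chart $(\zeta^c, E^c)$ with $E^c_i$ a simply connected open neighborhood of $\overline{f_i(\mathbb{D})}$ contained in $E^a_i \cap E^b_i$, and $\zeta^c_i$ any uniformizer with $\zeta^c_i(p_i) = 0$. The transitions $h^a_i = \zeta^a_i \circ (\zeta^c_i)^{-1}$ and $h^b_i = \zeta^b_i \circ (\zeta^c_i)^{-1}$ are one-to-one holomorphic maps fixing the origin, defined on open neighborhoods of $\overline{\zeta^c_i \circ f_i(\mathbb{D})}$. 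The key input is Theorem \ref{th:left_comp_holo}: the left composition operators $g \mapsto h^a_i \circ g$ and $g \mapsto h^b_i \circ g$ are holomorphic, hence continuous, on the relevant open subsets of $\Oqco$. Since they carry $\zeta^c_i \circ f_i$ to $\zeta^a_i \circ f_i \in U^a_i$ and $\zeta^b_i \circ f_i \in U^b_i$ respectively, continuity furnishes an open neighborhood $U^c_i$ of $\zeta^c_i \circ f_i$ mapped into $U^a_i$ and $U^b_i$ by the two operators, and Theorem \ref{th:into_U_is_open} lets me shrink $U^c_i$ so as to remain compatible with $(\zeta^c_i, E^c_i)$. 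Then $f \in V_{\zeta^c, E^c, U^c} \subset V_{\zeta^a, E^a, U^a} \cap V_{\zeta^b, E^b, U^b}$.

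For Hausdorffness, if $f \neq g$ in $\Oqco(\riem)$ some component differs, say $f_k \neq g_k$. I would apply the same uniformization construction as in the covering step, this time to the compacta $\overline{f_i(\mathbb{D})} \cup \overline{g_i(\mathbb{D})}$, to produce a single $n$-chart $(\zeta, E)$ containing both configurations. Since $\zeta_k \circ f_k \neq \zeta_k \circ g_k$ in $\Oqco$ and $\Oqco$ is Hausdorff as an open subset of $\aonetwo \oplus \mathbb{C}$, disjoint open neighborhoods in $\Oqco$ provide disjoint base elements separating $f$ and $g$. For second countability, I would rely on two facts: $\Oqco$ itself is second countable, being an open subset of a separable Hilbert space; and one can select a countable family of $n$-charts $\{(\zeta^{(\ell)}, E^{(\ell)})\}_\ell$ such that every $f \in \Oqco(\riem)$ satisfies $\overline{f_i(\mathbb{D})} \subset E^{(\ell)}_i$ for all $i$ and some $\ell$. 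The second fact follows from the second countability of $\overline{\riem}$: take a countable family of simply connected open disks from a countable atlas together with chosen uniformizers, and form all finite tuples that are pairwise disjoint. Taking a countable basis within each resulting chart-slice of $\Oqco(\riem)$ yields a countable basis for the entire topology.

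The main obstacle is the intersection axiom, which depends critically on the holomorphicity (and thus continuity) of left composition in the refined norm, i.e.\ Theorem \ref{th:left_comp_holo}; this is precisely where the hard analysis of the preceding lemmas is used. The second-countability step also requires some care in extracting a countable cofinal family of $n$-charts, since the simultaneous non-overlap and uniformizer data must be chosen coherently, but once the countable atlas on $\overline{\riem}$ is fixed this is essentially combinatorial.
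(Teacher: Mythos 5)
Your treatment of the covering property, the intersection axiom, and second countability follows essentially the same route as the paper: the intersection axiom is handled, as in the paper, by combining Theorem \ref{th:into_U_is_open} with the holomorphy (hence continuity) of left composition from Theorem \ref{th:left_comp_holo}, and second countability comes from a countable cofinal family of $n$-charts built from a countable basis of $\riem$ together with a countable basis of $\Oqco$. Those three parts are sound.

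The Hausdorff step, however, has a genuine gap. You propose to build a \emph{single} $n$-chart $(\zeta,E)$ with $E_i \supset \overline{f_i(\mathbb{D})} \cup \overline{g_i(\mathbb{D})}$ for every $i$. But the non-overlapping condition holds only within each tuple, not across the two tuples: it can perfectly well happen that $\overline{f_i(\mathbb{D})} \cap \overline{g_j(\mathbb{D})} \neq \emptyset$ for some $i \neq j$ (for instance, $g_j(\mathbb{D})$ may be a large region sweeping through the part of the surface occupied by $f_i(\mathbb{D})$). In that case no $n$-chart of the kind you describe exists, since $E_i$ and $E_j$ would be forced to intersect, violating Definition \ref{de:nchart}. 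The paper's device is to use \emph{two} $n$-charts $(\zeta|_{E},E)$ and $(\zeta|_{F},F)$, with $E_i \supset \overline{f_i(\mathbb{D})}$ and $F_i \supset \overline{g_i(\mathbb{D})}$, each family pairwise disjoint but with $E_i$ allowed to meet $F_j$, while arranging that $\zeta_i$ is a single injective holomorphic map on all of $E_i \cup F_i$. Because the \emph{same} coordinate $\zeta_k$ is used in both charts at an index $k$ where $f_k \neq g_k$, disjoint neighborhoods of $\zeta_k \circ f_k$ and $\zeta_k \circ g_k$ in $\Oqco$ then force $V_{\zeta,E,U}$ and $V_{\zeta,F,W}$ to be disjoint; with unrelated uniformizers for the two charts this implication would fail. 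This common-coordinate trick is the missing ingredient; once it is supplied, your argument (which correctly isolates a distinguishing index $k$) goes through.
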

 \begin{proof}
  We first establish that $\mathcal{V}$ is a base.  For any element $f$ of $\Oqco(\riem^P)$, since
  $\overline{f_i(\mathbb{D})}$ is compact for all $i$, there is an $n$-chart $(\zeta,E)$ such
  that $\overline{f_i(\mathbb{D})} \subset E_i$ for each $i$.  By Theorem \ref{th:into_U_is_open}
  there is a $U=(U_1,\ldots,U_n)$ compatible with $(\zeta,E)$.  Thus $\mathcal{V}$ covers
  $\Oqco(\riem^P)$.

  Now let $V_{\zeta,E,U}$ and $V_{\zeta',E',U'}$ be two elements of $\mathcal{V}$ containing
  a point $f \in \Oqco(\riem^P)$.   Define $E''$ by $E_i''=E_i \cap E_i'$.
  For each $i$ choose a compact set $\kappa_i$ such that $\overline{f_i(\mathbb{D})}
  \subseteq \kappa_i \subseteq E''_i$.  Let $K_i=\zeta_i(\kappa_i)$, $K_i'=\zeta'_i(\kappa_i)$,
  \[  W_i = \{ \phi \in \Oqco : \overline{\phi(\mathbb{D})} \subseteq K_i^{int} \} \]
  and
  \[  W'_i = \{ \phi \in \Oqco : \overline{\phi(\mathbb{D})} \subseteq {K'_i}^{int} \} \]
  where $K_i^{int}$ and ${K'_i}^{int}$ are the interiors of $K_i$ and $K_i'$ respectively.
  By Theorem \ref{th:into_U_is_open} $W_i$ and $W_i'$ are open, and by Theorem \ref{th:left_comp_holo} the map $\phi \mapsto \zeta'_i \circ \zeta_i^{-1} \circ \phi$
  is a biholomorphism from $W_i$ onto $W'_i$.
  So the set
  \[  U_i'' = U_i \cap \left( \zeta_i \circ {\zeta_i'}^{-1} \left( W_i' \cap U_i' \right) \right) \subseteq U_i \cap W_i  \]
  is an open subset of $\Oqco$ (by ${\zeta_i'}^{-1}( W_i' \cap U_i' )$ we mean the set of ${\zeta_i'}^{-1} \circ \phi$ for $\phi \in W_i' \cap U_i'$).
  Setting $\zeta_i''=\left. \zeta \right|_{E_i''}$
  we have that $f \in V_{\zeta'',E'',U''} \subseteq V_{\zeta,E,U} \cap V_{\zeta',E',U'}$
  by construction.
  Thus $\mathcal{V}$ is a base.

  To show that the topology generated by $\mathcal{V}$ is Hausdorff, let $f,g \in \Oqco(\riem^P)$.
  Choose open, simply connected sets $E_i$ and $F_i$, $i=1,\ldots,n$ such that
  $\overline{f_i(\mathbb{D})} \subset E_i$ and $\overline{g_i(\mathbb{D})} \subset F_i$ and
  $E_i \cap E_j = F_i \cap F_j = \emptyset$ whenever $i \neq j$.  For each $i$ let $\zeta_i:E_i \cup F_i \rightarrow  \mathbb{C}$ be a biholomorphism taking $p_i$ to $0$.  Thus $\left. \zeta_i \right|_{E_i}$
  defines an $n$-chart $(\zeta,E)$, and similarly for $\left. \zeta_i \right|_{F_i}$.
  (The collection $\left. \zeta_i \right|_{E_i \cup F_i}$ does not necessarily form an $n$-chart, but
  this is inconsequential).

  Since $\Oqco$ is a Hilbert space, it is Hausdorff, so for all $i$ there are open sets $U_i$ and $W_i$ such
  that $\zeta_i \circ f_i \in U_i$, $\zeta_i \circ g_i \in W_i$, and $U_i \cap W_i = \emptyset$.
  By Theorem \ref{th:into_U_is_open}, by shrinking $U_i$ and $W_i$ if necessary, we can assume that
  $\overline{h_i(\mathbb{D})} \subset \zeta_i(E_i)$ for all $h_i \in U_i$ and
  $\overline{h_i(\mathbb{D})} \subset \zeta_i(F_i)$ for all $h_i \in W_i$.  That is, $U$ is compatible
  with $(\zeta,E)$ and $W$ is compatible with $(\zeta,F)$.  Furthermore $f \in V_{\zeta,E,U}$,
  $g \in V_{\zeta,F,W}$ and $V_{\zeta,E,U} \cap V_{\zeta,F,W} = \emptyset$ by construction.
  Thus $\Oqco(\riem)$ is Hausdorff with the topology defined by $\mathcal{V}$.

  To see that $\Oqco(\riem)$ is second countable, we proceed as follows.
  First observe that $\riem$ is second countable by Rado's Theorem (see for example \cite{Hubbard}).
  Thus it has a countable basis $\mathfrak{B}$
  of open sets.  Let $\mathfrak{B}^n = \{ (B_1,\ldots,B_n) \}$ where each $B_i$ (1) is
  a finite union of elements of $\mathfrak{B}$ and (2) contains $p_i$.  Clearly $\mathfrak{B}^n$
  is countable.  Consider the set of $n$-tuples $C=(C_1,\ldots,C_n)$ such that (1) $(C_1,\ldots,C_n) \in
  \mathfrak{B}^n$ and (2) $C_i \cap C_j$ is
  empty whenever $i \neq j$.  Since this is a subset of $\mathfrak{B}^n$, it is countable.
  Furthermore, for each $(C_1,\ldots,C_n)$, we can
  fix a chart $\zeta_i:C_i \rightarrow \mathbb{C}$.  Let
  $\mathfrak{C}$ be the collection of $n$-charts $\{ (\zeta_1,C_1,\ldots,\zeta_n,C_n)\}$
  where $\zeta_i$ and $C_i$ are as above.

  Next, since $\Oqco$ is a Hilbert space (and hence a separable metric space), it has a countable basis of open sets $\mathfrak{O}$.  We define a countable basis for
  the topology of $\Oqco(\riem)$ as follows:
  \[  \mathcal{V}' =\{ V_{(\zeta,C,W)} \,:\, (\zeta,C) \in \mathfrak{C}, \ W \
     \mathrm{compatible} \ \mathrm{with} \ (\zeta,C), \ W_i \in \mathfrak{O}, \ i=1,\ldots,n \}.  \]
  Each $V' \in \mathcal{V}'$ is open by Theorem \ref{th:into_U_is_open}.  Furthermore $\mathcal{V}'$ is
  countable since $\mathfrak{C}$ and $\mathfrak{O}$ are countable.  We need to show that $\mathcal{V}'$ is a base for the topology of
  $\Oqco(\riem)$.
  Clearly $\mathcal{V}' \subset \mathcal{V}$.  Thus it is enough to show that for every
  $f=(f_1,\ldots,f_n) \in \Oqco(\riem)$ and $V \in \mathcal{V}$ containing $f$, there is a $V' \in \mathcal{V}'$
  such that $f \in V' \subset V$.

  Let $V_{\zeta,E,U} \in \mathcal{V}$ contain $f$.  We claim that
  there is an $n$-chart $(\eta,C) \in \mathfrak{C}$
  such that $\overline{f_i(\mathbb{D})} \subset C_i \subset E_i$ for all $i$.
  To see this, fix $i$ and observe that since $\mathfrak{B}$ is a base for $\riem$, for each point $x \in \overline{f_i(\mathbb{D})}$ there is an open set $B_{i,x} \in \mathfrak{B}$ such that
  $x \in B_{i,x} \subset E_i$.  The set $\{B_{i,x}\}_{x \in \overline{f_i(\mathbb{D})}}$ is a cover of
  $\overline{f_i(\mathbb{D})}$; since it is compact there is a finite subcover say $\{B_{i,\alpha}\}$.
  Set $C_i=\cup_\alpha B_{i,\alpha}$ and perform this procedure for each $i=1,\ldots,n$.  By construction
  the $C_i$ are non-overlapping and $C=(C_1,\ldots,C_n) \in \mathfrak{B}^n$.  It follows that
  $(\eta,C)=(\eta_1,C_1,\ldots,\eta_n,C_n) \in \mathfrak{C}$ where $\eta_i$ are the charts
  corresponding to $C_i$.  This proves the claim.

  Since $\mathfrak{O}$
  is a basis of $\Oqco$, by Theorems \ref{th:into_U_is_open} and \ref{th:left_comp_holo} (using an argument similar to the one earlier in the proof),  for each $i$ there is a $W_i \in \mathfrak{O}$
  satisfying $\eta_i \circ f_i \in W_i \subset \eta_i \circ \zeta_i^{-1}(U_i)$.
  If $g \in V'_{\eta,C,W}$ then $g_i=\eta_i^{-1} \circ h_i$ for some $h_i \in W_i$ for all $i=1,\ldots, n$
  by (\ref{eq:Oqcoriem_base_definition}).  But $h_i \in \eta_i \circ \zeta_i^{-1}(U_i)$,  so
  $g_i \in \zeta^{-1}_i (U_i)$ and hence $g \in V_{\zeta,E,U}$ by (\ref{eq:Oqcoriem_base_definition}).
  Thus $V'_{\eta,C,W} \subset V_{\zeta,E,U}$ which completes the proof.
 \end{proof}
 \begin{remark}
  In particular, $\Oqco(\riem)$ is separable since it is second countable and Hausdorff.
 \end{remark}
 We make one final simple but useful observation regarding the
 base $\mathcal{V}$.

 For a Riemann surface $\riem$ denote by
  $\mathcal{V}(\riem)$ the base for $\Oqco(\riem)$ given in Definition
  \ref{de:Oqco_base}.  For a biholomorphism $\rho:\riem \rightarrow \riem_1$ of Riemann surfaces $\riem$ and $\riem_1$, and
  for any $V \in \mathcal{V}(\riem)$, let
  \[  \rho(V) = \{ \rho \circ \phi \,:\, \phi \in V \} \]
  and
  \[  \rho(\mathcal{V}(\riem)) = \{ \rho(V)\,:\, V \in \mathcal{V} \}.  \]
 \begin{theorem}  \label{th:biholo_preserve_Oqco}
  If $\rho:\riem \rightarrow \riem_1$ is a biholomorphism between
  punctured Riemann surfaces $\riem$ and $\riem_1$ then $\rho(\mathcal{V}(\riem)) = \mathcal{V}(\riem_1)$.
 \end{theorem}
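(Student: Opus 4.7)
The plan is to prove the forward inclusion $\rho(\mathcal{V}(\riem)) \subseteq \mathcal{V}(\riem_1)$ by constructing a pushforward $n$-chart and checking that basic sets are preserved; the reverse inclusion will follow from running the same argument with $\rho^{-1}$.

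Given a basic set $V_{\zeta, E, U} \in \mathcal{V}(\riem)$, I would define a candidate $n$-chart on $\riem_1$ by setting $E'_i = \rho(E_i)$ and $\zeta'_i = \zeta_i \circ \rho^{-1}\big|_{E'_i}$. Since $\rho$ extends to a biholomorphism of the compactifications that sends each puncture of $\riem$ to a puncture of $\riem_1$, the sets $E'_i$ are pairwise disjoint and each $\zeta'_i$ vanishes at the image puncture. Moreover $\zeta'_i(E'_i) = \zeta_i(E_i)$, so the same open tuple $U$ is compatible with $(\zeta', E')$, and hence $V_{\zeta', E', U} \in \mathcal{V}(\riem_1)$.

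The core observation is then that $\rho$ intertwines the old and new chart systems, namely $\zeta'_i \circ \rho = \zeta_i$ on $E_i$. Consequently, for any $h \in V_{\zeta, E, U}$, one computes $\zeta'_i \circ (\rho \circ h_i) = \zeta_i \circ h_i \in U_i \subset \Oqco$, which both puts $\rho \circ h$ in $\Oqco(\riem_1)$ (by the chart-independence of the definition of $\Oqco(\cdot)$ established right after the definition) and identifies it as an element of $V_{\zeta', E', U}$. Conversely, the same computation applied to $g \in V_{\zeta', E', U}$ and to $\rho^{-1}$ shows $\rho^{-1} \circ g \in V_{\zeta, E, U}$, giving the containment $V_{\zeta', E', U} \subseteq \rho(V_{\zeta, E, U})$. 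Together these equalities yield $\rho(V_{\zeta, E, U}) = V_{\zeta', E', U}$.

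There is no substantial obstacle here: the entire argument is a bookkeeping exercise once the pushforward chart is in place, because the biholomorphism $\rho$ converts the condition defining a basic set in $\Oqco(\riem)$ directly into the analogous condition on the pushed-forward chart in $\Oqco(\riem_1)$. Even Lemma \ref{le:Oqco_composition_preserves} is not needed, since the chart intertwining $\zeta'_i \circ \rho = \zeta_i$ eliminates any composition issue at the level of refined quasiconformality and reduces everything to the already-known statement $\zeta_i \circ h_i \in \Oqco$.
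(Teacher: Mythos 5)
Your proof is correct and follows essentially the same route as the paper, which disposes of the forward inclusion $\rho(\mathcal{V}(\riem)) \subseteq \mathcal{V}(\riem_1)$ as an immediate consequence of the definition of the base and then obtains equality by applying the same inclusion to $\rho^{-1}$; your pushforward chart $(\zeta_i\circ\rho^{-1},\rho(E_i))$ is precisely the bookkeeping that makes the paper's ``immediate'' step explicit. The only caveat is that your closing remark overstates the independence from Lemma \ref{le:Oqco_composition_preserves}: concluding $\rho\circ h\in\Oqco(\riem_1)$ still passes through the chart-independence of the definition of $\Oqco(\cdot)$, and that chart-independence is itself established using that lemma.
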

 \begin{proof}  It is an immediate consequence of Definition \ref{de:Oqco_base}
  and Theorem \ref{th:left_comp_holo} that $\rho(\mathcal{V}(\riem)) \subseteq
  \mathcal{V}(\riem_1)$.  Similarly $\rho^{-1}(\mathcal{V}(\riem_1)) \subseteq
  \mathcal{V}(\riem)$.  Since  $\rho (\rho^{-1}(\mathcal{V}(\riem_1)))
  = \mathcal{V}(\riem_1)$ and $\rho^{-1} ( \rho(\mathcal{V}(\riem))) =
  \mathcal{V}(\riem)$ the result follows.
 \end{proof}

 \begin{definition}[standard charts on $\Oqco(\riem)$] \label{de:standard_charts Oqco}
  Let $(\zeta,E)$ be an $n$-chart on $\riem$ and let $\kappa_i \subset E_i$
  be compact sets containing $p_i$.  Let $K_i=\zeta_i(\kappa_i)$.  Let
  $U_i=\{ \psi \in \Oqco \,:\, \overline{\psi(\mathbb{D})} \subset \mathrm{interior}(K_i) \}$.
  Each $U_i$ is open by Theorem \ref{th:into_U_is_open} and $U = (U_1,\ldots,U_n)$ is
  compatible with $(\zeta,E)$ so we have $V_{\zeta,E,U} \in \mathcal{V}$.  A standard
  chart on $\Oqco(\riem)$ is a map
  \begin{align*}
   T: V_{\zeta,E,U} & \longrightarrow  \Oqco \times \cdots \times \Oqco \\
   (f_1,\ldots,f_n) & \longmapsto  (\zeta_1 \circ f_1, \ldots, \zeta_n \circ f_n).
  \end{align*}
 \end{definition}
 \begin{remark}  To obtain a chart into a Hilbert space, one simply composes with $\chi$ as defined by (\ref{eq_chidefinition}). Abusing notation somewhat and defining $\chi^n$ by
 \begin{align*}
   \chi^n \circ T : V_{\zeta,E,U} &\longrightarrow \bigoplus^n \aonetwo \oplus
   \mathbb{C} \\
   (f_1,\ldots, f_n) & \longmapsto  (\chi \circ \zeta_1 \circ f_1,\ldots,
 \chi \circ \zeta_n \circ g_n )
 \end{align*}
 we obtain a chart into $\bigoplus^n \aonetwo \oplus
   \mathbb{C}$.  Since $\chi(\Oqco)$ is an open subset of
   $\aonetwo \oplus \mathbb{C}$ by Theorem \ref{th:Oqco_open_in_Oqc}, and $\chi$ defines the complex
   structure $\Oqco$, we may treat $T$ as a chart with the
   understanding that the true charts are obtained by composing with
   $\chi^n$.
 \end{remark}
 \begin{theorem} \label{th:complex_structure_Oqco}
  Let $\riem$ be a punctured Riemann surface of type $(g,n)$.
  With the atlas consisting of the standard charts of Definition \ref{de:standard_charts Oqco},
  $\Oqco(\riem)$ is a complex Hilbert manifold, locally
  biholomorphic to $\Oqco \times \cdots \times \Oqco$.
 \end{theorem}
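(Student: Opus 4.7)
The plan is to verify that the collection of standard charts from Definition \ref{de:standard_charts Oqco} endows $\Oqco(\riem)$ with the structure of a complex Hilbert manifold, locally biholomorphic to $\Oqco \times \cdots \times \Oqco$. Since Theorem \ref{th:Oqco_base} already establishes that the topology generated by $\mathcal{V}$ is Hausdorff and second countable, only two items remain: (a) each standard chart $T$ is a homeomorphism onto an open subset of $\Oqco \times \cdots \times \Oqco$, and (b) every transition map between two standard charts is a biholomorphism on its domain of definition.

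For (a), consider a standard chart $T: V_{\zeta,E,U} \to \Oqco \times \cdots \times \Oqco$. By construction $T$ is a bijection onto $U_1 \times \cdots \times U_n$, with inverse $(h_1,\ldots,h_n) \mapsto (\zeta_1^{-1} \circ h_1,\ldots,\zeta_n^{-1} \circ h_n)$, which lies in $V_{\zeta,E,U}$ because $\overline{h_i(\mathbb{D})} \subset K_i^{int} \subset \zeta_i(E_i)$. Each $U_i$ is open in $\Oqco$ by Theorem \ref{th:into_U_is_open}, so the image is open. Bicontinuity is built into the definition of $\mathcal{V}$: a neighborhood basis of any point of $V_{\zeta,E,U}$ is given by sets $V_{\zeta,E,U'}$ with $U'_i$ open in $U_i$, which $T$ carries bijectively onto $U'_1 \times \cdots \times U'_n$.

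For (b), let $T,T'$ be standard charts on $V_{\zeta,E,U}$ and $V_{\zeta',E',U'}$ whose domains overlap, and fix $f=(f_1,\ldots,f_n)$ in the overlap. For each $i$, choose a compact set $L_i$ with $\overline{(\zeta_i \circ f_i)(\mathbb{D})} \subset L_i^{int} \subset L_i \subset \zeta_i(E_i \cap E'_i)$. By Theorem \ref{th:into_U_is_open}, the set of $g \in \Oqco$ with $\overline{g(\mathbb{D})} \subset L_i^{int}$ is an open neighborhood of $\zeta_i \circ f_i$. On the product of these neighborhoods the transition $T' \circ T^{-1}$ acts coordinate-wise as
\[
(h_1,\ldots,h_n) \longmapsto \bigl(\zeta'_1 \circ \zeta_1^{-1} \circ h_1,\ldots,\zeta'_n \circ \zeta_n^{-1} \circ h_n\bigr).
\]
Since $\zeta'_i \circ \zeta_i^{-1}$ is one-to-one holomorphic on an open neighborhood of $L_i$ and sends $0$ to $0$, Theorem \ref{th:left_comp_holo} shows that left composition by it is holomorphic into $\Oqco$ on the $i$-th factor; composing a holomorphic map of product Hilbert spaces out of its coordinates then gives holomorphy of $T' \circ T^{-1}$ near $T(f)$. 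The inverse transition has the same form with the roles of $\zeta_i$ and $\zeta'_i$ swapped and is therefore also holomorphic, so $T' \circ T^{-1}$ is a biholomorphism.

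The substantive ingredient is already in place: Theorem \ref{th:left_comp_holo} is the nontrivial assertion about left composition in $\Oqco$, and once it is available the remaining argument amounts to a coordinate-wise application together with a brief compactness step to arrange the uniform containment $\overline{h_i(\mathbb{D})} \subset L_i^{int}$ required to invoke it on each overlap.
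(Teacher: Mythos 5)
Your proposal is correct and follows essentially the same route as the paper: the topological work is delegated to Theorem \ref{th:Oqco_base}, the charts are homeomorphisms by construction of the basis $\mathcal{V}$, and the transition maps act coordinate-wise as left composition by $\zeta_i'\circ\zeta_i^{-1}$, with holomorphicity supplied by Theorem \ref{th:left_comp_holo}. Your extra localization step (choosing the compact sets $L_i$ so that the hypotheses of Theorem \ref{th:left_comp_holo} are met near each point of the overlap) is only a minor variant of the paper's direct choice $K=\zeta_i'\circ\zeta_i^{-1}(K_i)\cap K_i'$.
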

 \begin{proof}
  We have already shown that $\Oqco(\riem)$ is Hausdorff and separable (in fact second countable).
  So we need only show that the charts above form an atlas of homeomorphisms with biholomorphic
  transition functions.

  Let $V=V_{\zeta,E,U}$ and $V'=V_{\zeta',E',U'}$ where $U$ and $U'$ are
  determined by compact sets $\kappa_i$ and $\kappa_i'$ respectively, as in Definition
  \ref{de:standard_charts Oqco}.
   With the topology from the basis $\mathcal{V}$ of Definition \ref{de:Oqco_base} the charts are automatically
  homeomorphisms.
  It suffices to show that for two standard charts
  $T:V \rightarrow \Oqco \times \cdots \Oqco$ and $T':V' \rightarrow
  \Oqco \times \cdots \times \Oqco$
  the overlap maps $T \circ T'^{-1}$  and $T' \circ T^{-1}$ are
  holomorphic.

  Assume that $V \cap V'$ is non-empty.  For $(\psi_1,\ldots,\psi_n) \in T'(V \cap V')$
  \[  T \circ {T'}^{-1} (\psi_1,\ldots,\psi_n) =
       (\zeta_1 \circ {\zeta'_1}^{-1} \circ \psi_1,\ldots,\zeta_n \circ {\zeta'_n}^{-1} \circ \psi_n). \]

  The maps $\psi_i \mapsto \zeta_i \circ {\zeta_i'}^{-1} \circ \psi_i$ are
  holomorphic maps of $\zeta_i'(V_i \cap V_i')$ by Theorem
  \ref{th:left_comp_holo} with $A=\zeta_i'(E_i \cap E_i')$, $U=\zeta_i'(\zeta_i^{-1}(U_i) \cap
  {\zeta_i'}^{-1}(U_i'))=\zeta_i' \circ \zeta_i^{-1}(U_i) \cap U_i'$, $K=\zeta_i' \circ \zeta_i^{-1}(K_i)
  \cap K_i'$ and
  $h=\zeta_i \circ {\zeta_i'}^{-1}$.
  Similarly $T' \circ T^{-1}$ is holomorphic.
 \end{proof}
 \begin{remark}[chart simplification] \label{re:chart_simplification}
  Now that this theorem is proven, we can simplify the definition of the charts.
  For an $n$-chart $(\zeta,E)$, if we let $U_i=\{ f \in \Oqco \, :\, \overline{f(\mathbb{D})} \subset
  \zeta_i(E_i) \}$, then the charts $T$ are defined on $V_{\zeta,E,U}$.  It is easy to show
  that $T$ is a biholomorphism on $V_{\zeta,E,U}$, since any $f \in V_{\zeta,E,U}$ is contained
  in some $V_{\zeta,E,W} \subset V_{\zeta,E,U}$ which satisfies Definition \ref{de:standard_charts Oqco},
  and thus $T$ is a biholomorphism on $V_{\zeta,E,W}$ by Theorem \ref{th:complex_structure_Oqco}.
 \end{remark}
 \begin{remark}[standard charts on $\Oqc(\riem)$]
  \label{re:standard_charts_Oqc} A standard chart on $\Oqc(\riem)$ is
  defined in the same way as Definition \ref{de:standard_charts
  Oqco} and its preamble, by replacing $\Oqco$ with $\Oqc$
  everywhere.  Furthermore with this atlas $\Oqc(\riem)$
  is a complex Banach manifold \cite{RSnonoverlapping}.
 \end{remark}

 Finally, we show that the inclusion map $I:\Oqco \rightarrow \Oqc$
 is holomorphic.
 \begin{theorem} \label{th:Oqco_holo_in_Oqc} The complex manifold
  $\Oqco(\riem)$ is holomorphically contained in $\Oqc(\riem)$ in the
  sense that the inclusion map $I:\Oqco(\riem) \rightarrow \Oqc(\riem)$
  is holomorphic.
 \end{theorem}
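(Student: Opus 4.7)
The plan is to verify holomorphicity of $I$ in local coordinates by choosing standard charts on the domain $\Oqco(\riem)$ and on the codomain $\Oqc(\riem)$ that are built from a common $n$-chart $(\zeta,E)$ on $\riem$. With such a compatible choice, the local expression of $I$ will reduce to the $n$-fold product of the coordinate-wise inclusion $\iota:\Oqco\to\Oqc$, which is already known to be holomorphic by Theorem \ref{th:Oqco_open_in_Oqc}.

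First, fix $f=(f_1,\ldots,f_n) \in \Oqco(\riem)$, and select an $n$-chart $(\zeta,E)=(\zeta_1,E_1,\ldots,\zeta_n,E_n)$ on $\riem$ together with compact sets $\kappa_i \subset E_i$ whose interiors contain both $p_i$ and $\overline{f_i(\disk)}$. Using this single pair of data, I would construct, simultaneously, a standard chart $T:V_{\zeta,E,U} \to \Oqco \times \cdots \times \Oqco$ on $\Oqco(\riem)$ via Definition \ref{de:standard_charts Oqco} and a standard chart $T':V_{\zeta,E,U'} \to \Oqc \times \cdots \times \Oqc$ on $\Oqc(\riem)$ via Remark \ref{re:standard_charts_Oqc}, where, writing $K_i=\zeta_i(\kappa_i)$, we set
\[
U_i = \{\psi \in \Oqco : \overline{\psi(\disk)} \subset \mathrm{int}(K_i)\}, \qquad U_i' = \{\psi \in \Oqc : \overline{\psi(\disk)} \subset \mathrm{int}(K_i)\}.
\]
Openness of the $U_i$ in $\Oqco$ and of the $U_i'$ in $\Oqc$ is guaranteed by Theorem \ref{th:into_U_is_open}.

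Since $\Oqco \subset \Oqc$ as sets and $U_i = U_i' \cap \Oqco$, one checks immediately that $I(V_{\zeta,E,U}) \subseteq V_{\zeta,E,U'}$ and the local representative of $I$ is the map
\[
T' \circ I \circ T^{-1}:(h_1,\ldots,h_n) \longmapsto (\iota(h_1),\ldots,\iota(h_n)),
\]
that is, the $n$-fold product of $\iota:\Oqco \to \Oqc$. By Theorem \ref{th:Oqco_open_in_Oqc} the inclusion $\iota$ is holomorphic, so the $n$-fold product is holomorphic between the relevant open subsets of the corresponding Hilbert and Banach direct sums, and therefore $I$ is holomorphic at $f$; since $f$ was arbitrary this completes the argument.

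I do not expect a substantive obstacle: the argument is essentially bookkeeping in the atlases of Theorem \ref{th:complex_structure_Oqco} and Remark \ref{re:standard_charts_Oqc}. The only real care required is to build both standard charts from a common $n$-chart $(\zeta,E)$ and a common family of compact sets $\kappa_i$, so that no nontrivial transition map appears in the local expression of $I$; all the analytic content has already been isolated in Theorem \ref{th:Oqco_open_in_Oqc}.
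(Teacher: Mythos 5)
Your proposal is correct and follows essentially the same route as the paper: both build compatible standard charts on $\Oqco(\riem)$ and $\Oqc(\riem)$ from a common $n$-chart so that the local expression of $I$ reduces to the $n$-fold product of the coordinatewise inclusion $\iota:\Oqco\rightarrow\Oqc$, and then invoke Theorem \ref{th:Oqco_open_in_Oqc}. The only cosmetic difference is that the paper starts from a standard chart on $\Oqc(\riem)$ and restricts it to the refined subset, whereas you construct both charts simultaneously; the content is identical.
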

 \begin{proof}  This follows directly from the construction of the
  charts on $\Oqc(\riem)$.  Let $T:V \rightarrow \Oqc
  \times \cdots \times \Oqc$ be a standard chart on $\Oqc(\riem)$
  as specified in Remark \ref{re:standard_charts_Oqc}.  Let $U=T(V)$
  and $U_0=U \cap \Oqco \times \cdots \times \Oqco$.  Let $V_0=T^{-1}(U_0)$.
  The map $\left. T \right|_{V_0}$ is a chart on $V_0 \subseteq \Oqco(\riem)$,
  so it is holomorphic in the refined setting.
  Since the inclusion map $\iota:U_0 \rightarrow U$ is holomorphic by Theorem
  \ref{th:Oqco_open_in_Oqc}, the inclusion map $I=T^{-1} \circ \iota
  \circ \left(\left. T \right|_{V_0}\right)$ is holomorphic on $V_0$.  Since $\Oqco(\riem)$ is covered by
  charts of this form, $I$ is holomorphic.
 \end{proof}
\end{subsection}
\end{section}
\begin{section}{The rigged Teichm\"uller space is a Hilbert manifold} \label{se:rigged_Teich_space}

In \cite{RS05}, two of the authors proved that the Teichm\"uller space of a bordered surface is
 (up to a quotient by a discrete group) the same as a certain rigged Teichm\"uller space whose corresponding rigged moduli space appears
 naturally in two-dimensional conformal field theory \cite{FriedanShenker, Huang, Segal}.  We will use this fact to define a Hilbert
 manifold structure on the refined Teichm\"uller space in Section \ref{se:refined_Teich_space}.

 First we must define an atlas on rigged Teichm\"uller space, and this is the main task of the current section. We will achieve this by using universality of the universal \teich curve together with a  variational
 technique called \textit{Schiffer variation} as adapted to the quasiconformal Teichm\"uller setting by Gardiner \cite{Gardiner} and Nag \cite{NagSchiffer, Nagbook}. This overall approach was first developed in the thesis of the
 first author \cite{RadThesis} for the case of analytic riggings.

\begin{subsection}{Definition of rigged Teichm\"uller space}

We first recall the definition of the usual \teich space. The reader is referred to Section \ref{se:refined_quasisymmetries} for terminology regarding Riemann surfaces.
\begin{definition}
\label{de:Teichspace}
Fix a Riemann surface $X$ (of any topological type).  Let
$$
T(X) = \{ (X,f,X_1) \} / \sim
$$
   where
   \begin{enumerate}
    \item $X_1$ is a Riemann surface of the same topological type as $X$.
    \item $f: X \rightarrow X_1$ is a quasiconformal homeomorphism (the \textit{marking map}).
   \item the equivalence relation $(\sim)$ is defined by $(X,f_1,X_1) \sim (X,f_2,X_2)$ if and only if there exists a biholomorphism
   $\sigma:X_1 \rightarrow X_2$ such that $f^{-1}_2
   \circ \sigma \circ f_1$ is homotopic to the identity rel boundary.
  \end{enumerate}
  The term \textit{rel boundary} means that the homotopy is the identity on the boundary throughout the homotopy.
\end{definition}
It is a standard fact of \teich theory (see for example \cite{Nagbook}) that if $X$ is a punctured surface of type $(g,n)$ then $T(X)$ is a complex manifold of dimension $3g-3+n$, and if $X$ is a bordered surface of type $(g,n)$ then $T(X)$ is an infinite-dimensional complex Banach manifold.

 Using the set $\Oqco(\riem)$ we now define the \textit{(refined) rigged Teichm\"uller space}, denoted by $\ttildeop(\riem)$.
 \begin{definition} \label{de:rigged_Teich0}
   Fix a punctured Riemann surface of type $(g,n)$.  Let
   \[  \ttildeop(\riem) = \{ (\riem,f,\riem_1,\phi) \}
   / \sim \]
   where
   \begin{enumerate}
    \item $\riem_1$ is a punctured Riemann surface of type $(g,n)$
    \item $f: \riem \rightarrow \riem_1$ is a quasiconformal homeomorphism
    \item $\phi \in \Oqco(\riem_1)$.
   \item
   Two quadruples  are said
   to be equivalent, denoted by  $(\riem,f_1,\riem_1,\phi_1) \sim (\riem,f_2,\riem_2,\phi_2)$, if and only if there exists a biholomorphism
   $\sigma:\riem_1 \rightarrow \riem_2$ such that $f^{-1}_2
   \circ \sigma \circ f_1$ is homotopic to the identity rel boundary and $\phi_2
   = \sigma \circ \phi_1$.
  \end{enumerate}
  The equivalence class of $(\riem,f_1,\riem_1,\phi_1)$ will be denoted $[\riem,f_1,\riem_1,\phi_1]$
 \end{definition}
 Condition (2) can be stated in two alternate ways.  One is to require that $f$ maps the
 compactification of $\riem$ into the compactification of $\riem_1$, and takes the
 punctures of $\riem$ to the punctures of $\riem_1$ (now thought of as marked points).  The other is to say simply that $f$ is a quasiconformal map between $\riem$
 and $\riem_1$.  Since $f$ is quasiconformal its extension to the compactification
 will take punctures to punctures. Thus condition (2) does  not explicitly mention the punctures.

 In \cite{RS05}, two of the authors defined a rigged Teichm\"uller space $\ttildep(\riem)$ obtained by replacing
 $\Oqco(\riem_1)$ with $\Oqc(\riem_1)$ in the above definition. It was demonstrated in \cite{RS05} that $\ttildep(\riem)$ has a complex Banach manifold structure, which comes from the fact that it is a quotient of the Teichm\"uller space of a bordered surface by a properly discontinuous, fixed-point free group of biholomorphisms.  In \cite{RS_fiber} we demonstrated that it is fibred over $T(\riem)$, where the fiber over a point $[\riem,f_1,\riem_1]$ is biholomorphic to $\Oqc(\riem_1)$.  Furthermore, the complex structure of $\Oqc(\riem_1)$ is compatible with the complex structure that the fibres inherit from $\ttildep(\riem)$.

 This notion of a \textit{rigged \teich space} was first defined,  in the case of analytic riggings, by one of the authors in \cite{RadThesis}, and it was used to obtain a complex Banach manifold structure on the analytically rigged moduli space. However, in this case the connection to the complex structure of the infinite-dimensional \teich space of bordered surfaces can not be made.

 From now on, any punctured Riemann surface is assumed to satisfy $2g+2-n>0$.
 We would now like to demonstrate that $\ttildeop(\riem)$ has a natural complex Hilbert manifold structure which arises from $\Oqco(\riem)$, and that this also passes to the rigged Riemann moduli space. In Section \ref{se:refined_Teich_space}, we will use it to construct a complex Hilbert manifold structure on a refined Teichm\"uller space of a bordered surface.
 To accomplish these tasks we use a natural coordinate system developed in \cite{RadThesis, RS_fiber}, which is based on Gardiner-Schiffer variation and the complex structure on $\Oqc(\riem)$.  We will refine these coordinates to $\ttildeop(\riem)$.

We end this section with a basic result concerning the above definition.
Since $\riem$ satisfies $2g+2-n>0$ we have the following well known theorem \cite{Nagbook}.
\begin{theorem}
\label{th:homotopic_zero}
If $\sigma : \riem \to \riem$ is a biholomorphism that is homotopic to the identity then $\sigma$ is the identity.
\end{theorem}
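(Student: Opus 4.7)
The plan is to pass to the universal cover and exploit the rigidity of Fuchsian groups. Under the standing assumption $2g+2-n>0$, $\riem$ is hyperbolic, so it may be written as $\Gamma \backslash \mathbb{H}$ for a non-elementary Fuchsian group $\Gamma$ acting on the upper half-plane. A biholomorphism of $\riem$ then lifts to a Möbius transformation of $\mathbb{H}$, and the hypothesis that $\sigma$ is homotopic to the identity will be used to select a lift that commutes with every deck transformation.

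First, given a homotopy $H : \riem \times [0,1] \to \riem$ from $\mathrm{id}_\riem$ to $\sigma$, I would use the homotopy lifting property of the covering $\mathbb{H} \to \riem$ to produce a continuous family $\tilde H_t : \mathbb{H} \to \mathbb{H}$ with $\tilde H_0 = \mathrm{id}_{\mathbb{H}}$. Set $\tilde\sigma := \tilde H_1$; this is a Möbius transformation lifting $\sigma$. For each $\gamma \in \Gamma$, the map $t \mapsto \tilde H_t \, \gamma \, \tilde H_t^{-1}$ is a continuous path in the discrete set $\Gamma$ starting at $\gamma$, hence constant. Thus $\tilde\sigma \, \gamma = \gamma \, \tilde\sigma$ for every $\gamma \in \Gamma$.

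Next I would invoke the standard fact that a non-elementary Fuchsian group has trivial centralizer in $\mathrm{Aut}(\mathbb{H})$: any Möbius transformation commuting with a given loxodromic element lies in the one-parameter subgroup fixing its pair of fixed points on $\partial \mathbb{H}$, and intersecting two such subgroups coming from loxodromic elements of $\Gamma$ with distinct axes collapses to $\{\mathrm{id}\}$. Therefore $\tilde\sigma = \mathrm{id}_{\mathbb{H}}$, which descends to $\sigma = \mathrm{id}_\riem$.

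The one point that really requires care is the choice of lift of $\sigma$: a priori $\sigma$ has many lifts to $\mathbb{H}$, differing by post-composition with elements of $\Gamma$, and an arbitrary lift only conjugates $\Gamma$ into itself, it does not commute with $\Gamma$. What the homotopy hypothesis buys us, via continuous path-lifting from the identity, is precisely the distinguished lift for which true commutation (not merely normalization) holds. Once that lift is fixed, the remainder is a routine consequence of Fuchsian group theory as found in the references cited in the paper.
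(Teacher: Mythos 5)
The paper does not prove this statement at all: it is quoted as a ``well known theorem'' with a citation to \cite{Nagbook}, so there is no in-paper argument to compare against. Your proof is correct and is essentially the standard one that the cited reference supplies: uniformize $\riem = \Gamma\backslash\mathbb{H}$, use the homotopy to single out the lift of $\sigma$ that commutes (rather than merely normalizes) with $\Gamma$, and conclude from the triviality of the centralizer of a non-elementary Fuchsian group in $\operatorname{Aut}(\mathbb{H})$. Two small points of hygiene. First, the intermediate maps $H_t$ of the homotopy need not be homeomorphisms, so $\tilde H_t^{-1}$ need not exist; the deck-transformation comparison should be phrased as $\tilde H_t \circ \gamma = \theta_t(\gamma)\circ \tilde H_t$, where $\theta_t(\gamma)\in\Gamma$ is determined by evaluating both sides at a single point, and then $t\mapsto\theta_t(\gamma)$ is locally constant by discreteness and proper discontinuity of the action. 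Second, hyperbolicity and non-elementarity of $\Gamma$ come from the condition $2g-2+n>0$ (which the paper also assumes throughout, e.g.\ in Theorem \ref{th:UniversalProperty}), not from $2g+2-n>0$ by itself: types such as $(0,0)$, $(0,1)$, $(0,2)$ and $(1,0)$ satisfy the latter but are not hyperbolic, and indeed the torus shows the statement fails without hyperbolicity. With those two adjustments the argument is complete: every finite-type hyperbolic surface with $2g-2+n>0$ has free or surface fundamental group of rank at least two, hence $\Gamma$ is non-elementary, contains hyperbolic elements with distinct axes, and the intersection of the corresponding one-parameter centralizers is trivial.
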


\begin{corollary}
\label{co:rigged_equiv}
If $[\riem, f_1, \riem_1, \phi_1] = [\riem, f_1, \riem_1, \phi_2] \in \ttildeop(\riem)$ then $\phi_1 = \phi_2$.
\end{corollary}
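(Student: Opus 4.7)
The plan is to unwrap the equivalence relation in Definition \ref{de:rigged_Teich0} and reduce the claim to Theorem \ref{th:homotopic_zero}. Since $(\riem,f_1,\riem_1,\phi_1) \sim (\riem,f_1,\riem_1,\phi_2)$, there exists a biholomorphism $\sigma:\riem_1 \to \riem_1$ such that $f_1^{-1} \circ \sigma \circ f_1$ is homotopic to the identity on $\riem$ rel boundary (here the ``marking'' map is $f_1$ in both quadruples) and $\phi_2 = \sigma \circ \phi_1$. Once I can show that $\sigma$ itself is the identity on $\riem_1$, the second equality immediately forces $\phi_1 = \phi_2$.

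To show $\sigma = \operatorname{id}_{\riem_1}$, first I would transport the homotopy across $f_1$. Concretely, if $H:\riem \times [0,1] \to \riem$ is a homotopy with $H(\cdot,0) = f_1^{-1} \circ \sigma \circ f_1$ and $H(\cdot,1) = \operatorname{id}_{\riem}$, then
\[
  \widetilde{H}(p,t) \;=\; f_1\bigl(H(f_1^{-1}(p),t)\bigr)
\]
is a well-defined homotopy on $\riem_1 \times [0,1]$ (since $f_1$ is a homeomorphism between the punctured surfaces, extending continuously to take punctures to punctures) satisfying $\widetilde{H}(\cdot,0) = \sigma$ and $\widetilde{H}(\cdot,1) = \operatorname{id}_{\riem_1}$. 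Thus $\sigma$ is a biholomorphism of $\riem_1$ homotopic to the identity. Since $\riem_1$ has the same topological type $(g,n)$ as $\riem$ and $2g+2-n > 0$ is assumed throughout, Theorem \ref{th:homotopic_zero} applies to $\riem_1$ and yields $\sigma = \operatorname{id}_{\riem_1}$.

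Finally, substituting $\sigma = \operatorname{id}_{\riem_1}$ into $\phi_2 = \sigma \circ \phi_1$ gives $\phi_2 = \phi_1$, as claimed. There is no serious obstacle here; the only tiny subtlety is verifying that conjugation by $f_1$ carries homotopies faithfully from $\riem$ to $\riem_1$, but this is routine since $f_1$ is a homeomorphism (indeed quasiconformal) between the two surfaces, and ``homotopic to the identity rel boundary'' only strengthens what we need to invoke Theorem \ref{th:homotopic_zero}.
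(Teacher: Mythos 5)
Your argument is correct and is exactly the intended one: the paper states this as an immediate corollary of Theorem \ref{th:homotopic_zero} without writing out the details, and your unwrapping of the equivalence relation, conjugation of the homotopy by $f_1$, and application of that theorem to $\riem_1$ is precisely the implicit proof. No issues.
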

\end{subsection}
\begin{subsection}{Marked families}
In this section we collect some standard definitions and facts about marked holomorphic families of Riemann surfaces and the universality of the \teich curve. These will play a key role in the construction of an atlas on rigged Teichm\"uller space. A full treatment appears in \cite{EarleFowler}, and also in the books \cite{Hubbard, Nagbook}.

\begin{definition} \label{de:holo_family}
A holomorphic family of complex manifolds is a pair of connected complex manifolds $(E,B)$ together with a surjective holomorphic map  $\pi : E \to B$ such that (1) $\pi$ is topologically a locally trivial fibre bundle, and (2) $\pi$ is a split submersion (that is, the derivative is a surjective map whose kernel is a direct summand).
\end{definition}

\begin{definition}  A \textit{morphism of holomorphic families} from $(E',B')$ and $(E,B)$ is a pair of holomorphic maps $(\alpha, \beta)$ with $\alpha : B' \to B$ and $\beta: E' \to E$ such that
$$
\xymatrix{
E' \ar[r]^{\beta} \ar[d]_{\pi'} & E \ar[d]^{\pi} \\
B' \ar[r]^{\alpha} & B
}
$$
commutes, and for each fixed $t\in B'$, the restriction of $\beta$ to the
fibre $\pi'^{-1}(t)$   is a biholomorphism onto $\pi^{-1}(\alpha(t))$.
\end{definition}

Throughout, $(E,B)$ will be a holomorphic family of Riemann surfaces; that is, each fibre $\pi^{-1}(t)$ is a Riemann surface.  Moreover, since our trivialization will always be global we specialize the standard definitions (see \cite{EarleFowler}) to this case in what follows.

Let $\riem$ be a punctured Riemann surface of type $(g,n)$.  This fixed surface $\riem$ will serve as a model of the fibre.

\begin{definition} \label{de:strong_triv}\text{}
 \begin{enumerate}
\item  A  \textit{global trivialization} of $(E,B)$ is a homeomorphism $\theta: B \times \riem \to E$ such that $\pi(\theta(t,x)) = t$ for all $(t,x) \in B\times \riem$.
\item A global trivialization $\theta$ is a \textit{strong trivialization} if for fixed $x \in \riem$, $t\mapsto \theta(t,x)$ is holomorphic, and for each $t\in B$, $x \mapsto \theta(t,x)$ is a quasiconformal map from $\riem$ onto $\pi^{-1}(t)$.
\item $\theta : B\times \riem \to E$ and $\theta' : B\times \riem \to E$ are \textit{compatible} if and only if $\theta'(t,x) = \theta(t,\phi(t,x))$ where for each fixed $t$, $\phi(t,x) : \riem \to \riem$ is a quasiconformal homeomorphism that is homotopic to the identity rel boundary.
\item A \textit{marking} $\mathcal{M}$ for $\pi : E \to B$ is an equivalence class of compatible strong trivializations.
\item A \textit{marked family of Riemann surfaces} is a holomorphic family of Riemann surfaces with a specified marking.
\end{enumerate}
\end{definition}

\begin{remark} Let $\theta$ and $\theta'$ be compatible strong trivializations. For each fixed $t\in B$, $[\riem, \theta(t,\cdot), \pi^{-1}(t)] = [\riem, \theta'(t,\cdot), \pi^{-1}(t)]$ in $T(\riem)$ (see Definition \ref{de:Teichspace}). So a marking specifies a \teich equivalence class for each $t$.
\end{remark}

We now define the equivalence of marked families.

\begin{definition}
A \textit{morphism of marked families} from $\pi':E' \to B'$ to $\pi: E \to B$ is a pair of holomorphic maps $(\alpha,\beta)$ with $\beta : E' \to E$ and $\alpha: B' \to B$ such that
\begin{enumerate}
\item $(\alpha,\beta)$ is a morphism of holomorphic families, and
\item the markings $B' \times \riem \to E$ given by  $\beta(\theta'(t,x))$ and $\theta(\alpha(t), x)$ are compatible.
\end{enumerate}
\end{definition}
The second condition says that $(\alpha,\beta)$ preserves the marking.

\begin{remark}[relation to Teichm\"uller equivalence]
\label{re:marking_preserving}
Define $E = \{(s,Y_s)\}_{s\in B}$ and $E' = \{(t,X_t)\}_{t\in B'}$ to be marked families of Riemann surfaces with markings $\theta(s, x) = (s, g_s(x))$ and $\theta'(t, x) = (t, f_t(x))$ respectively. Say $(\alpha,\beta)$ is a morphism of marked families, and define $\sigma_t$ by
$\beta(t, y) = (\alpha(t), \sigma_t(y))$. Then $\beta(\theta'(t,x)) = (\alpha(t), \sigma_t(f_t(x)))$ and $\theta(\alpha(t), x) = (\alpha(t), g_{\alpha(t)}(x))$. The condition that $(\alpha, \beta)$ is a morphism of marked families is simply that $\sigma_t \circ f_t$ is homotopic rel boundary to $g_{\alpha(t)}$.
That is, when $s = \alpha(t)$,  $[\riem, f_t, X_t] = [\riem, g_s, Y_s]$  via the biholomorphism $\sigma_t : X_t \to Y_s$.
\end{remark}

The universal \teich curve, denoted by $\pi_T : \mathcal{T}(\riem) \to T(\riem)$,
is a marked holomorphic family of Riemann surfaces with fibre model $\riem$. The following universal property
of $\mathcal{T}(\riem)$ (see \cite{EarleFowler, Hubbard, Nagbook}) is all that we need for our purposes.
\begin{theorem}[Universality of the \teich curve]
\label{th:UniversalProperty}
Let $\pi : E \to B$ be a marked holomorphic family of Riemann surfaces with fibre model $\riem$ of type $(g,n)$ with $2g-2+n>0$, and trivialization $\theta$.
Then there exists a unique map $(\alpha, \beta)$ of marked families from $\pi : E \to B$ to $\pi_T : \mathcal{T}(\riem) \to T(\riem)$. Moreover, the canonical ``classifying" map $\alpha: B \to T(\riem)$ is given by $\alpha(t) = [\riem, \theta(t,\cdot), \pi^{-1}(t)]$.
\end{theorem}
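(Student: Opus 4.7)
The plan is to construct $(\alpha,\beta)$ directly from the marked trivialization $\theta$, verify that it is a morphism of marked families, and then use Theorem~\ref{th:homotopic_zero} to obtain uniqueness.

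First I would define the classifying map $\alpha:B\to T(\riem)$ by the stated formula $\alpha(t)=[\riem,\theta(t,\cdot),\pi^{-1}(t)]$ and check that this is independent of the choice of representative within the marking class: if $\theta'(t,x)=\theta(t,\phi(t,x))$ is a compatible strong trivialization, then $\theta(t,\cdot)^{-1}\circ\theta'(t,\cdot)=\phi(t,\cdot)$ is homotopic to the identity rel boundary by Definition~\ref{de:strong_triv}(3), so the two quadruples represent the same Teichm\"uller class. Holomorphicity of $\alpha$ follows because the Beltrami coefficient $\mu_t$ of $\theta(t,\cdot)^{-1}$ depends holomorphically on $t$ (since $\theta$ is holomorphic in the base variable for each fixed fibre point), and $\alpha$ factors as the assignment $t\mapsto \mu_t$ composed with the standard holomorphic projection from Beltrami differentials on $\riem$ to $T(\riem)$.

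Next I would construct $\beta:E\to \mathcal{T}(\riem)$ fibrewise, using that the universal Teichm\"uller curve carries a canonical marking $\Theta$ for which each fibre $\pi_T^{-1}([\riem,f,X])$ is canonically identified, as a marked surface, with $X$. For each $t\in B$ this determines a biholomorphism $\beta_t:\pi^{-1}(t)\to\pi_T^{-1}(\alpha(t))$ by the requirement that $\beta_t\circ\theta(t,\cdot)$ agree with $\Theta(\alpha(t),\cdot)$ up to a quasiconformal self-map of $\riem$ homotopic to the identity rel boundary. Setting $\beta(e)=\beta_t(e)$ for $e\in\pi^{-1}(t)$ defines $\beta$ globally; its holomorphicity follows from the joint holomorphic dependence of $\Theta$ on $\mathcal{T}(\riem)$, the holomorphicity of $\alpha$, and the holomorphicity of $\theta$ in the base variable.

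To conclude, the pair $(\alpha,\beta)$ is a morphism of holomorphic families by construction, and marking compatibility holds because $\beta\circ\theta(t,\cdot)$ and $\Theta(\alpha(t),\cdot)$ agree up to a homotopy rel boundary, which is exactly the condition in Definition~\ref{de:strong_triv}(4). For uniqueness, let $(\alpha',\beta')$ be another such morphism. Marking compatibility forces the fibre of $\mathcal{T}(\riem)$ over $\alpha'(t)$ to be Teichm\"uller-equivalent to $(\riem,\theta(t,\cdot),\pi^{-1}(t))$, so $\alpha'=\alpha$. Then $\beta_t^{-1}\circ\beta'_t$ is a biholomorphism of $\pi^{-1}(t)$ which, transported to $\riem$ via $\theta(t,\cdot)$, is homotopic to the identity rel boundary; Theorem~\ref{th:homotopic_zero} forces it to be the identity, so $\beta'=\beta$.

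The main obstacle is verifying the joint holomorphic dependence of the fibre biholomorphisms $\beta_t$ in the total space variable, not just fibrewise. This is a genuinely analytic input and seems to require either the explicit construction of $\mathcal{T}(\riem)$ as the Bers fibre space, in which holomorphicity of the fibre identifications is built into the Bers embedding together with Ahlfors--Bers holomorphic dependence of solutions of the Beltrami equation on parameters, or the existence of local holomorphic sections of $\pi_T$ combined with the same analytic input. Once this is granted, all remaining steps are formal consequences of the definitions and Theorem~\ref{th:homotopic_zero}.
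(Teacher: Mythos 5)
The paper does not prove this theorem at all: it is quoted as a known result and attributed to Earle--Fowler, Hubbard and Nag, so there is no internal proof to compare against. Your proposal is an attempt to reprove the cited result from scratch, and while its skeleton (define $\alpha$ by the stated formula, check independence of the representative of the marking, build $\beta$ fibrewise, and get uniqueness from Theorem \ref{th:homotopic_zero}) matches the standard architecture, it has a genuine gap precisely at the point that makes the theorem nontrivial.

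The gap is your justification of the holomorphicity of $\alpha$. You assert that the Beltrami coefficient $\mu_t$ of $\theta(t,\cdot)$ (or its inverse) depends holomorphically on $t$ ``since $\theta$ is holomorphic in the base variable for each fixed fibre point.'' Definition \ref{de:strong_triv} only gives you that $t\mapsto\theta(t,x)$ is holomorphic for each fixed $x$ and that $x\mapsto\theta(t,x)$ is quasiconformal for each fixed $t$; it gives no holomorphy of $t\mapsto\mu_t$ as a map into $L^\infty$, and not even a locally uniform bound on the dilatation of $\theta(t,\cdot)$ in $t$. Without that, the factorization of $\alpha$ through the Bers projection from Beltrami differentials is not available, and indeed the whole difficulty of the Earle--Fowler theorem is to prove holomorphicity of the classifying map from exactly these weak hypotheses (their argument goes through local holomorphic sections of the submersion and the finite-dimensionality of $T(\riem)$, not through holomorphic dependence of $\mu_t$). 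You do flag the analogous issue for the joint holomorphicity of $\beta$ at the end, but you treat the holomorphicity of $\alpha$ as routine when it is the crux. The parts of your argument that are sound are the well-definedness of $\alpha$ on marking classes and the uniqueness step: compatibility of markings pins down $\alpha$, and then $\beta_t^{-1}\circ\beta_t'$ is a self-biholomorphism of a fibre homotopic to the identity, hence the identity by Theorem \ref{th:homotopic_zero} (using $2g-2+n>0$). For the purposes of this paper the correct move is simply to cite the result, as the authors do.
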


\end{subsection}
\begin{subsection}{Schiffer variation}
The use of Schiffer variation to construct holomorphic coordinates for Teichm\"uller space by using quasiconformal deformations is due to Gardiner \cite{Gardiner} and Nag \cite{NagSchiffer, Nagbook}. We review the construction in some detail, as it will be used in a crucial way.

Let $B_R = \{z \in \mathbb{C} : |z|<R\}$, and for $r<R$ let $\mathbb{A}(r,R)= \{z \in \mathbb{C} : r<|z|<R \}$ as before.
Choose  $r$ and $R$ such that $0<r<1<R$.
Let $\riem$ be a (possibly punctured) Riemann surface and $\xi: U \to \mathbb{C}$ be  local holomorphic coordinate on an open connected set $U \subset \riem$ such that $B_R \subset \text{Image}(\xi)$. Let $D = \xi^{-1}(\mathbb{D})$, which we call a parametric disk.

Define $v^{\epsilon} : \mathbb{A}(r,R ) \longrightarrow \Bbb{C}$ by $ v^{\epsilon}(z) = z + \epsilon/z$ where $\epsilon \in \mathbb{C}$.
For $|\epsilon|$ sufficiently small $v$ is a biholomorphism onto its image. Let $D^{\epsilon}$ be the interior of the analytic Jordan curve $v^{\epsilon}(\partial \mathbb{D})$. We regard $\overline{D^{\epsilon}}$ as a bordered Riemann surface (with the standard complex structure
inherited from $\Bbb{C}$) with analytic boundary parametrization
given by $v^{\epsilon} : S^1 \to  \partial D^{\epsilon}$. We also have the Riemann surface $\riem \setminus D$ with the boundary analytically parametrized by $\xi^{-1} |_{S^1} : S^1 \to \partial (\riem \setminus D)$.

We now sew $\overline{D^{\epsilon}}$ and $\riem \setminus D$ along their boundaries by identifying $x \in \partial (\riem \setminus D)$ with $x' \in \partial \overline{D^{\epsilon}}$ if and only if
$x' = (v^{\epsilon} \circ \xi)(x)$. Let
$$
\riem^{\epsilon} = (\riem \setminus D) \sqcup \overline{D^{\epsilon}} \ / \ \text{boundary
identification}
$$
and we say this Riemann surface is obtained from $\riem$ by
\textit{Schiffer variation} of complex structure on $D$. Let
$$\iota^{\epsilon} : \riem \setminus D  \longrightarrow \riem^{\epsilon} \quad  \text{and} \quad \iota_D^{\epsilon} : D^{\epsilon} \longrightarrow \riem^{\epsilon}$$
be the holomorphic inclusion maps. With a slight abuse of notation we could use the identity map in place of $\iota^{\epsilon}$, however the extra notation will make
the following exposition clearer.

Define $w^{\epsilon}: \overline{\mathbb{D}} \to \overline{D^{\epsilon}}$ by
$
w^{\epsilon}(z) = z + \epsilon \overline{z} $. Note that $w^{\epsilon}$ is a homeomorphism, and on the boundary $v^{\epsilon}=w^{\epsilon}$.
Define the quasiconformal homeomorphism $\nu^{\epsilon} : \riem \to \riem^{\epsilon}$ by
$$
\nu^{\epsilon}(x) =
\begin{cases}
\iota^{\epsilon}(x), &  x \in \riem \setminus D \\
(\iota_D^{\epsilon} \circ w \circ \xi)(x), & x \in D.
\end{cases}
$$
So we now have a point
$[\riem,\nu^{\epsilon},\riem^{\epsilon}] \in T(\riem)$ obtained by
Schiffer variation of the base point $[\riem, \text{id},\riem]$.

To get coordinates on $T(\riem)$ we perform Schiffer variation on $d$ disks
where $d = 3g-3+n$ is the complex dimension of $T(\riem)$. Let $(D_1, \ldots, D_d)$ be $d$ disjoint parametric disks on
$\riem$, where $D_i = (\xi_i)^{-1}(\mathbb{D})$ for suitably chosen local
coordinates $\xi_i$. Let $D = D_1 \cup \cdots \cup D_d$ and
let $\epsilon = (\epsilon_1,\ldots, \epsilon_d) \in \Bbb{C}^d$.
Schiffer variation can be performed on the $d$ disks to get a new
surface which we again denote by $\riem^{\epsilon}$.
The map $\nu^{\epsilon}$ becomes
\begin{equation}
\label{eq:multi_schiffer}
\nu^{\epsilon}(x) =
\begin{cases}
\iota^{\epsilon}(x), &  x \in \riem \setminus D \\
(\iota_D^{\epsilon} \circ w^{\epsilon_i} \circ \xi_i)(x), &  x \in D_i \ , i=1,\ldots,d.
\end{cases}
\end{equation}

The following theorem is the main result on Schiffer variation  \cite{Gardiner, Nagbook}.
Let $\Omega \subset \Bbb{C}^d$ be an open neighborhood of $0$ such that Schiffer variation is defined for $\epsilon \in \Omega $.
Define
\begin{align}
\label{Schiffer_map}
\mathcal{S} : \Omega & \longrightarrow T(\riem) \\
\epsilon & \longmapsto [\riem, \nu^{\epsilon}, \riem^{\epsilon}]. \nonumber
\end{align}

\begin{theorem}
\label{th:Schiffer}
Given any $d$ disjoint parametric disks on $\riem$, it is possible to choose the local parameters $\xi_i$ such that on some open neighborhood $\Omega$ of $0\in \mathbb{C}^n$,  $\mathcal{S}: \Omega \to \mathcal{S}(\Omega)$ is a biholomorphism. That is, the parameters $(\epsilon_1, \ldots, \epsilon_d)$ provide local holomorphic coordinates for \teich space in a neighborhood of $[\riem, \text{id}, \riem]$
\end{theorem}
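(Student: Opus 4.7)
The plan is to establish the theorem in three stages: (i) show $\mathcal{S}$ is holomorphic via the universal property of the \teich curve; (ii) compute the derivative $d\mathcal{S}|_0 : \mathbb{C}^d \to T_{[\riem,\mathrm{id},\riem]} T(\riem)$ as a pairing against holomorphic quadratic differentials; and (iii) arrange, via a suitable choice of the charts $\xi_i$, that this derivative is a linear isomorphism. Since both source and target have complex dimension $d = 3g-3+n$, the holomorphic inverse function theorem then yields the desired local biholomorphism.

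For (i), I would realise $\epsilon \mapsto (\riem^\epsilon, \nu^\epsilon)$ as a marked holomorphic family $\pi : E \to \Omega$ with fibre model $\riem$. Build $E$ from $\Omega \times (\riem \setminus \overline{D})$ (with its product complex structure) and the open subsets $\{(\epsilon, w) : w \in D^{\epsilon_i}\}$ of $\Omega \times \mathbb{C}$, glued along their boundaries by the holomorphically varying welding maps $v^{\epsilon_i}(z) = z + \epsilon_i/z$. Joint holomorphicity of $v^{\epsilon_i}$ on an annulus containing $S^1$ endows $E$ with a complex structure for $\Omega$ small enough, and makes $\pi$ a split submersion. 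The map $\theta(\epsilon, x) = (\epsilon, \nu^\epsilon(x))$ is then a strong trivialization: for $x \in \riem \setminus D$ it is the evident inclusion, while for $x \in D_i$ it is $(\epsilon, \xi_i(x) + \epsilon_i \overline{\xi_i(x)})$, which is complex-linear in $\epsilon_i$, and for fixed $\epsilon$ the map $x \mapsto \nu^\epsilon(x)$ is quasiconformal with Beltrami coefficient of norm $\max_i|\epsilon_i|$. By Theorem \ref{th:UniversalProperty}, the classifying map $\Omega \to T(\riem)$ of this family is holomorphic, and it coincides with $\mathcal{S}$ by construction.

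For (ii), the Beltrami coefficient of $\nu^\epsilon$ in each chart $z_i = \xi_i$ is the constant $\epsilon_i$ (since $\partial_{\bar{z}} w^{\epsilon_i} = \epsilon_i$ and $\partial_z w^{\epsilon_i} = 1$), so the Beltrami differential $\mu^\epsilon = \sum_i \epsilon_i\,\chi_{D_i}\,d\bar{z}_i/dz_i$ depends complex-linearly on $\epsilon$. Under the Ahlfors--Bers identification of $T_{[\mathrm{id}]}T(\riem)$ with $L^\infty(\riem)$ modulo Beltrami differentials that annihilate the space $Q(\riem)$ of holomorphic quadratic differentials on the compactification $\overline{\riem}$ (of complex dimension $d$ by Riemann--Roch), the derivative $d\mathcal{S}|_0$ sends $\dot\epsilon$ to the functional
\begin{equation*}
   \ell_{\dot\epsilon}(\varphi) \;=\; \sum_{i=1}^d \dot\epsilon_i \iint_{D_i} \varphi_i(z)\, dA(z),
\end{equation*}
where $\varphi = \varphi_i(z_i)\, dz_i^2$ in the chart $\xi_i$.

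For (iii), it suffices to choose the $\xi_i$ so that the $d$ functionals $\ell_i : \varphi \mapsto \iint_{D_i} \varphi_i\, dA$ are linearly independent on $Q(\riem)$, which I arrange inductively. Having chosen $\xi_1, \ldots, \xi_k$ with $\ell_1, \ldots, \ell_k$ independent and $k<d$, the subspace $W_k = \bigcap_{i \leq k} \ker \ell_i \subset Q(\riem)$ has positive dimension. Pick $0 \neq \varphi_0 \in W_k$; its zeros form a discrete set, so I may choose $\xi_{k+1}$ centered at a point $p_{k+1}$ where $\varphi_0(p_{k+1}) \neq 0$ and with image disk $D_{k+1}$ disjoint from the previously chosen $D_i$ and sufficiently contracted that $\iint_{D_{k+1}} (\varphi_0)_{k+1}\, dA$ equals (to leading order) a nonzero multiple of $(\varphi_0)_{k+1}(0)$; this forces $\ell_{k+1} \notin \operatorname{span}(\ell_1, \ldots, \ell_k)$. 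The principal obstacle is step (ii): rigorously identifying $d\mathcal{S}|_0$ with the quadratic-differential pairing rests on the Ahlfors--Bers theorem that two Beltrami differentials determine the same tangent vector to $T(\riem)$ precisely when their difference pairs to zero with every $\varphi \in Q(\riem)$. Once this identification is in hand, combined with holomorphicity from (i) and linear independence from (iii), the holomorphic inverse function theorem applied to $\mathcal{S}: \Omega \to T(\riem)$ yields the biholomorphism onto $\mathcal{S}(\Omega)$.
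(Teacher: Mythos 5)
The paper does not prove this statement at all: it is quoted as the ``main result on Schiffer variation'' with citations to Gardiner and to Nag, so there is no in-paper argument to compare against. Your reconstruction is essentially the standard Gardiner--Nag proof and is correct in outline and in all essential steps: the Beltrami coefficient of $\nu^{\epsilon}$ is exactly $\epsilon_i$ on $D_i$ in the coordinate $\xi_i$ and $0$ elsewhere, so $\mathcal{S}$ factors as the Bers projection composed with the linear map $\epsilon\mapsto\sum_i\epsilon_i\chi_{D_i}\,d\bar z_i/dz_i$ (which, incidentally, already gives holomorphy of $\mathcal{S}$ without appealing to the universal property of the Teichm\"uller curve --- your step (i) is valid but is the long way around), the derivative is the stated pairing by the Ahlfors--Bers description of infinitesimally trivial differentials, and the inductive choice of centers makes the $d$ functionals independent, so the finite-dimensional inverse function theorem finishes. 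Two small points deserve correction. First, $Q(\riem)$ must be the space of \emph{integrable} holomorphic quadratic differentials on the punctured surface $\riem$, i.e.\ those meromorphic on $\overline{\riem}$ with at most simple poles at the punctures; quadratic differentials genuinely holomorphic on the compactification form a space of dimension $3g-3$, not $3g-3+n$, so as literally written your identification of the cotangent space is wrong even though your dimension count is the intended one. Second, no ``leading order'' approximation is needed in step (iii): since $\varphi_0$ is holomorphic on a neighborhood of $\overline{D_{k+1}}$ (the disks avoid the punctures), the mean value property gives $\iint_{\mathbb{D}}(\varphi_0)_{k+1}\,dA=\pi(\varphi_0)_{k+1}(0)$ exactly, so $\ell_{k+1}(\varphi_0)\neq 0$ on the nose and the independence argument closes without any limiting procedure.
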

It is important to note that we are free to choose the domains $D_i$ on which the Schiffer variation is performed.

By a standard change of base point argument we can use Schiffer variation to produce a neighborhood of any point
$[\riem, f, \riem_1] \in T(\riem)$. Performing Schiffer variation on $\riem_1$ gives a neighborhood $\mathcal{S}(\Omega)$ of $[\riem_1, \text{id}, \riem_1] \in T(\riem_1)$. Consider the change of base point biholomorphism (see \cite[Sections 2.3.1
and 3.2.5]{Nagbook}) $f^* : T(\riem_1) \to T(\riem)$ given by $f^*([\riem_1, g, \riem_2] = [\riem, g \circ f, \riem_2]$. Then $f^* \circ \mathcal{S}$ is a biholomorphism onto its image $f^*(\mathcal{S}(\Omega)) = \{[\riem, \nu^{\epsilon} \circ f, \riem_1^{\epsilon}] \}$ which is a neighborhood of $[\riem, f, \riem_1] \in T(\riem)$.

Thus, denoting $f^* \circ \mathcal{S}$ itself by $\mathcal{S}$, the
Schiffer variation
\begin{align}
\label{Schiffer_map2}
\mathcal{S} : \Omega & \longrightarrow T(\riem) \\
\epsilon & \longmapsto [\riem, \nu^{\epsilon} \circ f, \riem^{\epsilon}]. \nonumber
\end{align}
produces a neighborhood of $[\riem, f, \riem_1] \in T(\riem)$.

\end{subsection}

\begin{subsection}{Marked Schiffer family}
\label{se:MarkedSchifferFamily}
Fix a point $[\riem, f, \riem_1] \in T(\riem)$.  We will show that
Schiffer variation on $\riem_1$ produces a marked holomorphic family of Riemann surfaces with fiber $\riem_1^{\epsilon}$ over the point $\epsilon$ and marking $\nu^{\epsilon} \circ f$.
Since this construction does not appear in the literature, we present it here in some detail as it is an essential ingredient in our later proofs. An efficient way to describe the family is to do the sewing for all $\epsilon$ simultaneously.

For $i = 1,\ldots, d$, let $\Omega_i$ be connected open neighborhoods of $0 \in \mathbb{C}$ such that  $\Omega = \Omega_1 \times \cdots \times \Omega_d$ is an open subset of $\mathbb{C}^d$  for which Schiffer variation is defined and Theorem \ref{th:Schiffer} implies that $\mathcal{S}:\Omega \to \mathcal{S}(\Omega) \subset T(\riem)$ is a biholomorphism.

Define, for each $i = 1,\ldots, d$,
\begin{align*}
w_i : \Omega_i \times \mathbb{D} & \longrightarrow \mathbb{C} \times \mathbb{C} \\
(\epsilon_i, z) & \longmapsto (\epsilon_i, w^{\epsilon_i}(z)),
\end{align*}
\begin{align*}
v_i : \Omega_i \times A_r^1 & \longrightarrow \mathbb{C} \times \mathbb{C} \\
(\epsilon_i, z) & \longmapsto (\epsilon_i, v^{\epsilon_i}(z)),
\end{align*}
and let
$$
Y_i = w_i(\Omega_i \times \mathbb{D}).
$$
Since $w_i$ is a homeomorphism, $Y_i$ is open and so inherits a complex manifold structure from $\mathbb{C} \times \mathbb{C}$. Note that for fixed $\epsilon_i$, $\{z \st (\epsilon_i , z) \in Y_i \} = D^{\epsilon_i}$.

With $r<1$ as in the construction of Schiffer variation, let $D^r_i = \xi_i^{-1}(B(0,r))$ and $D^r = D^r_1 \cup \cdots \cup D^r_d$.
Let
$$
X = \Omega \times (\riem_1 \setminus \overline{D^r})
$$
and endow it with the product complex manifold structure. Define the map
\begin{align*}
\rho_i : \Omega \times (D_i \setminus \overline {D^r_i}) & \longrightarrow v(\Omega \times A_r^1) \\
(\epsilon_i, x) & \longmapsto (\epsilon, v^{\epsilon_i}(\xi_i(x)).
\end{align*}
From the definition of $v^{\epsilon_i}$ it follows directly that $\rho_i$ is a biholomorphism from an open subset of $X$ to an open subset of $Y_i$.

Using the standard gluing procedure for complex manifolds (see for example \cite[page 170]{FG_GTM213}) we can make the following definition.
\begin{definition}
\label{de:schiffer_family}
Let $S(\Omega, D)$ be the complex manifold obtained by gluing $X$ to  $Y_1,\dots, Y_d$ using the biholomorphisms $\rho_1,\ldots,\rho_d$.
\end{definition}
The inclusions $\iota_X: X \hookrightarrow S(\Omega, D)$ and $\iota_{Y_i} : Y_i \hookrightarrow S(\Omega, D)$ are holomorphic. Moreover, since $r$ just determines the size of the overlap, $S(\Omega, D)$ is independent of $r$.

Equivalently, we can think of gluing $\Omega \times (\riem_1 \setminus D)$ and $w(\Omega \times \overline{\mathbb{D}})$ using the $\rho_i$ restricted to $\Omega \times \partial D_i$ to identify the boundary components.  For each fixed $\epsilon$ this gluing is precisely that used to define $\riem_1^{\epsilon}$.
So we see that
$$
S(\Omega, D) = \{ (\epsilon, x) : \epsilon \in \Omega, x \in \riem_1^{\epsilon} \}.
$$

Define the projection map
\begin{align*}
\pi_S: S(\Omega,D) & \to \Omega \\
(\epsilon, x) & \mapsto \epsilon
\end{align*}
and the trivialization
\begin{align} \label{eq:trivialization_definition}
\theta : \Omega \times \riem &\to S(\Omega,D)\\
(\epsilon,x) & \mapsto (\epsilon, (\nu^{\epsilon} \circ f)(x)). \nonumber
\end{align}

It is immediate that $\pi_S$ is onto, holomorphic and defines a topologically trivial bundle.

\begin{definition}
We call $\pi_S:S(\Omega,D) \rightarrow \Omega$ with trivialization $\theta$
a marked Schiffer family.
\end{definition}
We will have use for explicit charts on $S(\Omega, D)$, but only on the part that is disjoint from the Schiffer variation.
Let $(U,\zeta)$ be a chart on $\riem_1 \setminus \overline{D}$. Recall that $\iota^{\epsilon} : \riem_1 \setminus \overline{D} \to \riem_1^{\epsilon}$ is the holomorphic inclusion map. Let
\begin{equation}
\label{family_open_set}
\tilde{U} = \{ (\epsilon, x) \st \epsilon \in \Omega, x \in \iota^{\epsilon}(U) \} \subset S(\Omega, D)
\end{equation}
and define
\begin{align}
\label{family_chart_map}
\tilde{\zeta}: \tilde{U} & \longrightarrow \mathbb{C} \times \mathbb{C} \\
(\epsilon, x) &\longmapsto \left( \epsilon, (\zeta \circ (\iota^{\epsilon})^{-1})(x)\right). \nonumber
\end{align}
Then $(\tilde{\zeta}, \tilde{U})$ is a holomorphic chart on $S(\Omega, D)$.

Note that with a slight of abuse of notation we could simply write $\tilde{U} = \Omega \times U$ and define $\tilde{\zeta}$ by $(\epsilon, x) \mapsto ( \epsilon, \zeta(x))$, but we will refrain from doing so.

\begin{theorem}
\label{th:SchifferMarkedFamily}
A marked Schiffer family is a marked holomorphic family of Riemann surfaces.
\end{theorem}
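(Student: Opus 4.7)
The plan is to verify, step by step, the conditions of Definition \ref{de:holo_family} for $\pi_S$ and those of parts (1)--(4) of Definition \ref{de:strong_triv} for $\theta$. Since the charts on $S(\Omega,D)$ are explicit (the two families coming from $X = \Omega \times (\riem_1 \setminus \overline{D^r})$ and from $Y_i = w_i(\Omega_i \times \mathbb{D})$), essentially everything can be checked by hand in charts.

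First I would verify that $\pi_S$ is holomorphic, surjective, and a split submersion. In the chart $(\tilde{U},\tilde{\zeta})$ of (\ref{family_chart_map}), the map $\pi_S$ is, after chart identification, the projection $(\epsilon,\zeta(x))\mapsto \epsilon$, which is visibly a holomorphic split submersion. In the $Y_i$ charts $\pi_S$ is again the projection onto the first factor of $\mathbb{C}\times\mathbb{C}$, so holomorphic and a split submersion there as well; the two descriptions are compatible because the gluing map $\rho_i$ preserves the first coordinate by definition. Topological local triviality (in fact global triviality) will be obtained at the end from the map $\theta$.

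Second I would show that $\theta$ of (\ref{eq:trivialization_definition}) is a homeomorphism satisfying $\pi_S\circ\theta(\epsilon,x)=\epsilon$. The identity $\pi_S(\theta(\epsilon,x))=\epsilon$ is immediate, since $\theta(\epsilon,x)=(\epsilon,(\nu^\epsilon\circ f)(x))$. Injectivity follows because $\pi_S$ separates the first coordinate and then $\nu^\epsilon\circ f:\riem\to\riem_1^\epsilon$ is a homeomorphism for every fixed $\epsilon$. Surjectivity follows from the description $S(\Omega,D)=\{(\epsilon,x)\,:\,\epsilon\in\Omega,\ x\in\riem_1^\epsilon\}$. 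For continuity of $\theta$ and $\theta^{-1}$ one reads off that, in the $\tilde{U}$ chart, $\theta$ has the form $(\epsilon,x)\mapsto(\epsilon,\zeta(f(x)))$, while in $Y_i$ it has the form $(\epsilon,x)\mapsto(\epsilon,w^{\epsilon_i}(\xi_i(f(x))))$; both are continuous with continuous inverses, and the two representations agree on overlaps via $\rho_i$.

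Third I would check the two defining properties of a strong trivialization. For fixed $x\in\riem$, the map $\epsilon\mapsto\theta(\epsilon,x)$ is holomorphic: if $f(x)\in\riem_1\setminus\overline{D}$, the chart $\tilde\zeta$ makes it $\epsilon\mapsto(\epsilon,\zeta(f(x)))$, which is holomorphic; if $f(x)\in D_i$, then with $z_0=\xi_i(f(x))\in\overline{\mathbb{D}}$ fixed, the chart $Y_i$ makes it
\[
\epsilon\mapsto(\epsilon,w^{\epsilon_i}(z_0))=(\epsilon,z_0+\epsilon_i\overline{z_0}),
\]
which is affine, hence holomorphic in $\epsilon$. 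The two descriptions are consistent on the overlap $f(x)\in D_i\setminus\overline{D_i^r}$ because $v^{\epsilon_i}=w^{\epsilon_i}$ on $\partial\mathbb{D}$ and $\rho_i$ was designed to identify these pieces. For fixed $\epsilon\in\Omega$, $x\mapsto\theta(\epsilon,x)=(\epsilon,(\nu^\epsilon\circ f)(x))$ is quasiconformal from $\riem$ onto $\pi_S^{-1}(\epsilon)=\{\epsilon\}\times\riem_1^\epsilon$ as a composition of quasiconformal maps: $f$ is quasiconformal because $[\riem,f,\riem_1]\in T(\riem)$, and $\nu^\epsilon$ is quasiconformal from $\riem_1$ to $\riem_1^\epsilon$ since it is holomorphic on $\riem_1\setminus D$ and has the bounded dilatation of the smooth map $w^{\epsilon_i}(z)=z+\epsilon_i\bar z$ on each $D_i$ (for $|\epsilon_i|$ small enough that $w^{\epsilon_i}$ is a quasiconformal homeomorphism).

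The main obstacle I expect is the bookkeeping around the gluing: one must keep straight that the two chart descriptions of $\theta$ — via $\tilde{\zeta}$ on $\tilde{U}$ and via $Y_i$ — are related precisely by $\rho_i$ on the overlap, and that the smoothness checks (holomorphic in $\epsilon$, quasiconformal in $x$) have to be reconciled across these pieces. Once this is handled, topological local triviality of $\pi_S$ follows from $\theta$ being a homeomorphism with $\pi_S\circ\theta=\mathrm{pr}_1$, completing the verification that $(S(\Omega,D),\Omega,\pi_S,\theta)$ is a marked holomorphic family with fibre model $\riem$.
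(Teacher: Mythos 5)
Your proposal is correct and follows essentially the same route as the paper: the same case split (image in $\riem_1\setminus\overline{D}$ versus in $D_i$) for holomorphicity of $\epsilon\mapsto\theta(\epsilon,x)$, and the same composition argument for quasiconformality of $x\mapsto\theta(\epsilon,x)$. The only (harmless) divergence is in verifying that $\pi_S$ is a split submersion: you read this off directly from the explicit charts, in which $\pi_S$ is a coordinate projection, whereas the paper deduces it from the existence of a holomorphic section through every point — both are valid, and yours is arguably more self-contained.
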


\begin{proof} We must check the conditions in Definitions \ref{de:holo_family} and \ref{de:strong_triv}.

Because $\nu^{\epsilon}$ is a quasiconformal homeomorphism, $\theta(\epsilon, z)$ is a homeomorphism, and for fixed $\epsilon$, $\theta(\epsilon, z)$ is quasiconformal.
Next, we show  that for fixed $x$, $\theta(\epsilon, x)$ is holomorphic in $\epsilon$.
\begin{enumerate}
\item If $x \in \riem \setminus f^{-1}(D_i)$ then $\theta(\epsilon, x) \in \iota_{X}(X)$. Let $\zeta$ and $\zeta'$ be a local coordinates in  neighborhoods of $x$ and $f(x)$ respectively, and let $z = \zeta(x)$. Use these to form the product charts on $\Omega \times \riem$ and $X$. From the definition of $\nu^{\epsilon}$ (see (\ref{eq:multi_schiffer})) it follows directly that in terms of local coordinates $\theta(\epsilon, x)$ is the map $(\epsilon, z) \mapsto (\epsilon,  (\zeta' \circ f \circ \zeta^{-1}(z))$. Since the second entry is independent of $\epsilon$ the map is clearly holomorphic in $\epsilon$.
\item If $x \in f^{-1}(D_i)$ then $\theta(\epsilon, x) \in \iota_{Y_i}(Y_i)$. Let $\eta$ be a coordinate map on $f^{-1}(D_i)$ and let $z = \eta(x)$. Use $(\epsilon, t) \mapsto (\epsilon, \zeta(t))$ as the product chart on $\Omega \times f^{-1}(D_i)$. Let $y = \xi_i \circ f \circ \eta^{-1}(z)$ which is independent of $\epsilon$. Then in terms of local coordinates, $\theta$ becomes $(\epsilon, z) \mapsto (\epsilon, w^{\epsilon_i}(y))$.
Since $w^{\epsilon_i}(y) = y + \epsilon_i \bar{y}$, it is certainly holomorphic in $\epsilon$ for fixed $y$.
\end{enumerate}
Conditions (1) and (2) of Definition \ref{de:strong_triv} are thus satisfied. It remains to prove condition (2) of Definition \ref{de:holo_family}.

Because $\theta(\epsilon,x)$ is holomorphic in $\epsilon$, $S(\Omega, D)$ has a holomorphic section though every point.  This implies that $\pi_S: S(\Omega, D) \to \Omega$ is a holomorphic split submersion (see for example \cite[section 1.6.2]{Nagbook}, and also \cite[Section 6.2]{Hubbard} for an alternate definition of marked families).

So $\theta(\epsilon, z)$ is a strong trivialization and hence $S(\Omega, D)$ is a marked family of Riemann surfaces.
\end{proof}

We will need the following lemma regarding maps between marked Schiffer families.
We consider two Schiffer families, whose corresponding neighborhoods in Teichm\"uller space intersect on an open se,t and the morphism between these families.

For $i=1,2$, let $\pi_1: S_i(\Omega_i, D_i) \rightarrow
 \Omega_i$ be marked Schiffer
 families based at $[\riem,f_i,\riem_i]$. Let $\mathcal{S}_i: \Omega_i \rightarrow T(\Sigma)$ be the corresponding variation maps defined by (\ref{Schiffer_map2}), and assume that $\mathcal{S}_1(\Omega_1) \cap \mathcal{S}_2(\Omega_2)$ is non-empty. Let $N$ be any connected component of $\mathcal{S}_1(\Omega_1) \cap \mathcal{S}_2(\Omega_2)$, and let
 $\Omega_i'= \mathcal{S}_i^{-1}(N)$.

  Consider the marked Schiffer families $S_i(\Omega'_i, D_i) = \pi_i^{-1}(\Omega'_i)$
with trivializations
$\theta_i : \Omega'_i \times \riem \to S_i(\Omega'_i, D_i)$ defined by $\theta_i(\epsilon, x) = (\epsilon, (\nu^{\epsilon}_i \circ f_i)(x))$.
For ease of notation we write $S'_i = S_i(\Omega'_i,  D_i)$.

 Recall that throughout we are assuming that $\riem$ is of type $(g,n)$ with $2g-2+n>0$.

\begin{lemma} \label{le:marked_families_lemma}
There is a unique invertible morphism of marked families $(\alpha, \beta)$ from $\pi_1 : S'_1 \to \Omega'_1$ to $\pi_2 : S'_2 \to \Omega'_2$. In particular, the following hold:
 \begin{enumerate}
 \item
 There is a unique map
 $\alpha:\Omega_1' \rightarrow \Omega_2'$
 such that $[\riem,\nu_1^\epsilon \circ f_1, \riem_1^\epsilon]=[\riem,\nu_2^{\alpha(\epsilon)} \circ f_2,\riem_2^{\alpha(\epsilon)}]$, and $\alpha$ is a biholomorphism.
 \item For each $\epsilon \in \Omega_1'$, there is a unique biholomorphism
 $\sigma_\epsilon:\riem_1^\epsilon \rightarrow \riem_2^{\alpha(\epsilon)}$
 realizing the equivalence in (1).
 \item The function $\beta(\epsilon,z)=( \alpha(\epsilon), \sigma_\epsilon(z))$ is a
 biholomorphism on $\pi_1^{-1}(\Omega_1') \subset S_1(\Omega_1, D_1)$.
 \end{enumerate}
\end{lemma}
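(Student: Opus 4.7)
The plan is to realize both marked Schiffer families as essentially the same family by way of the universal Teichm\"uller curve $\pi_T: \mathcal{T}(\riem) \to T(\riem)$. By Theorem \ref{th:SchifferMarkedFamily}, each $\pi_i: S_i' \to \Omega_i'$ is a marked holomorphic family of Riemann surfaces of type $(g,n)$, so (since $2g-2+n > 0$) Theorem \ref{th:UniversalProperty} applies. Item (1) is then immediate: Theorem \ref{th:Schiffer} tells us $\mathcal{S}_i: \Omega_i' \to N$ is a biholomorphism, so $\alpha := \mathcal{S}_2^{-1} \circ \mathcal{S}_1$ is a biholomorphism $\Omega_1' \to \Omega_2'$; unwinding the definition \eqref{Schiffer_map2} of $\mathcal{S}_i$ yields the Teichm\"uller-equivalence asserted in (1), and uniqueness of $\alpha$ is forced by the injectivity of $\mathcal{S}_2$ on $\Omega_2'$.

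For (2), the equivalence of Teichm\"uller classes in (1) is, by Definition \ref{de:Teichspace}, precisely the assertion that there exists a biholomorphism $\sigma_\epsilon: \riem_1^\epsilon \to \riem_2^{\alpha(\epsilon)}$ for which $(\nu_2^{\alpha(\epsilon)} \circ f_2)^{-1} \circ \sigma_\epsilon \circ (\nu_1^\epsilon \circ f_1)$ is homotopic to the identity rel boundary. If $\sigma_\epsilon'$ is a second such biholomorphism, then $\sigma_\epsilon^{-1} \circ \sigma_\epsilon'$ is a self-biholomorphism of $\riem_1^\epsilon$ homotopic to the identity, and Theorem \ref{th:homotopic_zero} applied to $\riem_1^\epsilon$ (which is again of type $(g,n)$ with $2g-2+n>0$) forces $\sigma_\epsilon' = \sigma_\epsilon$.

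The main work is (3), the biholomorphicity of $\beta(\epsilon,z) = (\alpha(\epsilon), \sigma_\epsilon(z))$. I would apply Theorem \ref{th:UniversalProperty} to each $S_i'$ to obtain a unique morphism of marked families $(\mathcal{S}_i, \tilde{\mathcal{S}}_i): S_i' \to \mathcal{T}(\riem)$, and then argue that $\tilde{\mathcal{S}}_i: S_i' \to \pi_T^{-1}(N)$ is itself a biholomorphism. Bijectivity onto $\pi_T^{-1}(N)$ follows because $\mathcal{S}_i$ is a biholomorphism onto $N$ and $\tilde{\mathcal{S}}_i$ is a fiberwise biholomorphism. Since both $\pi_i$ and $\pi_T$ are split submersions, the tangent space at each point of $S_i'$ splits as horizontal $\oplus$ vertical; the derivative of $\tilde{\mathcal{S}}_i$ sends vertical to vertical (a fiberwise isomorphism) and, on horizontal components, projects under $d\pi_T$ to $d\mathcal{S}_i$ (an isomorphism). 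So in the block form with respect to these splittings the derivative is invertible at every point, making $\tilde{\mathcal{S}}_i$ a holomorphic bijection with invertible derivative, hence a biholomorphism. Setting $\beta := \tilde{\mathcal{S}}_2^{-1} \circ \tilde{\mathcal{S}}_1$ yields a biholomorphism $S_1' \to S_2'$ whose base map is $\alpha$; because it is a morphism of marked families, its fiber components are biholomorphisms realizing the equivalence in (1), and the uniqueness from (2) forces them to coincide with the $\sigma_\epsilon$.

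The main obstacle I anticipate is the verification that $\tilde{\mathcal{S}}_i$ is a genuine biholomorphism (not merely a fiberwise one): this rests on the derivative computation using the split submersion structure, which is standard but must be done carefully since the universality theorem is stated only as the existence of a unique morphism. Uniqueness of the full invertible morphism $(\alpha, \beta)$ then follows from the same universality, since post-composition of any such morphism with $(\mathcal{S}_2, \tilde{\mathcal{S}}_2)$ yields the unique classifying morphism $S_1' \to \mathcal{T}(\riem)$, forcing $\alpha$ and $\beta$ to match the construction above.
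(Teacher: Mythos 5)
Your proposal follows essentially the same route as the paper: both apply universality of the Teichm\"uller curve to obtain the classifying morphisms $(\alpha_i,\beta_i)$ from each marked Schiffer family, show these are biholomorphisms onto their images, define $(\alpha,\beta)=(\alpha_2^{-1}\circ\alpha_1,\beta_2^{-1}\circ\beta_1)$, and deduce (1)--(3), with uniqueness in (2) coming from Theorem \ref{th:homotopic_zero}. The only divergence is in justifying that the total-space maps are biholomorphisms: the paper observes that $\beta_i$ is injective (fibrewise injectivity plus injectivity of $\alpha_i=\mathcal{S}_i$) and invokes the standard fact that an injective holomorphic map between equidimensional finite-dimensional complex manifolds is a biholomorphism onto its image, whereas you carry out the equivalent derivative computation via the split-submersion block structure --- both arguments are valid.
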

\begin{proof}
By Theorem \ref{th:UniversalProperty} there are unique mappings of marked families $(\alpha_i, \beta_i)$ from $\pi_i : S'_i \to \Omega'_i$ to $\pi_T : \mathcal{T}(\riem) \to T(\riem)$. By Theorem \ref{th:Schiffer} and the fact that $\alpha_i = \mathcal{S}_i$ from equation (\ref{Schiffer_map2}) we see that $\alpha_i$ is injective. Since $\beta_i$ is injective fibrewise and $\alpha_i \circ \pi_i  = \pi_T \circ \beta_i$ it follows that $\beta_i$ is injective. So $\alpha_i$ and $\beta_i$ are biholomorphisms onto their images since they are holomorphic and injective functions on finite-dimensional complex spaces.

Let  $\alpha = \alpha_2^{-1} \circ \alpha_1$ and $\beta = \beta_2^{-1} \circ \beta_1$; these are biholomorphisms from $\Omega'_1 \to \Omega'_2$ and $S'_1 \to S'_2$ respectively. Then $(\alpha, \beta)$ is the unique map of marked families from $\pi_1 : S'_1 \to \Omega'_1$ to $\pi_2 : S'_2 \to \Omega'_2$, and has inverse $(\alpha^{-1}, \beta^{-1})$.

The proof of (1) is completed by noting that the equation
$$
[\riem,\nu_1^\epsilon \circ f_1, \riem_1^\epsilon]=[\riem,\nu_2^{\alpha(\epsilon)} \circ f_2,\riem_2^{\alpha(\epsilon)}]
$$
is precisely  $\alpha_1(\epsilon) = \alpha_2(\alpha(\epsilon))$, which is true by the definition of $\alpha$.

Because $\beta$ restricted to the fibres is a biholomorphism and $\alpha_1 \circ \pi_1 = \pi_2 \circ \beta$ we can write (as in Remark \ref{re:marking_preserving}) $\beta$ in the form
$$
\beta(\epsilon, x) = (\alpha(\epsilon), \sigma_{\epsilon}(x))
$$
where
$
\sigma_{\epsilon}: \riem_1^{\epsilon} \to \riem_2^{\alpha(\epsilon)}
$
is a biholomorphism.

Since $2g-2+n>0$, the uniqueness in (2) follows directly from Theorem \ref{th:homotopic_zero}.

We have already proved that $\beta : S'_1 \to S'_2$ is a biholomorphism and so (3) is proved.
\end{proof}
\begin{remark}  Part (3) of the above lemma is the reason for introducing
the theory of marked families.  Without this theory, it is impossible to prove (or even formulate the notion of) holomorphicity in $\epsilon$ of the map $\sigma_\epsilon$ realizing the Teichm\"uller equivalence.  The holomorphicity
in $\epsilon$ is necessary for the proof that the transition functions on the rigged \teich space are
biholomorphisms (Theorem \ref{th:tilde_top_base} below).
\end{remark}
\end{subsection}

\begin{subsection}{Topology and atlas for the rigged Teichm\"uller space}
We will now give the rigged Teichm\"uller space a Hilbert manifold structure.

We begin by defining a base for the topology.
Let $\riem$ be a punctured Riemann surface of type $(g,n)$.
We fix a point $[\riem, f, \riem_1] \in T(\riem)$.
Let $(\zeta,E)$ be an $n$-chart on $\riem_1$, let $U \subset \Oqco \times \cdots \times \Oqco$ be compatible
with $(\zeta,E)$, and let $V=V_{\zeta,E,U}$ (defined in equation (\ref{eq:Oqcoriem_base_definition})).
\begin{definition} \label{de:Schiffer_variation_compatible}
 We say that a marked Schiffer family $S(\Omega,D)$ is compatible with an $n$-chart $(\zeta,E)$ if the closure of
 each disc $D_i$ is disjoint from the closure of $E_j$ for all $i$ and $j$.
\end{definition}

For any punctured Riemann surface $\riem'$ denote by $\mathcal{V}(\riem')$
  the basis of $\Oqco(\riem')$ as in Definition \ref{de:Oqco_base}.
\begin{lemma} \label{le:nu_preserves_base}
  Let $S(\Omega,D)$ be a marked Schiffer family based at
  $[\riem,f,\riem_1]$ and let $V \in \mathcal{V}(\riem_1)$.
  If $S(\Omega,D)$ is compatible with $V$ then $\nu^\epsilon(V)=\{ \nu^\epsilon
  \circ \phi \,:\, \phi \in V \}$ is an element of $\mathcal{V}(\riem_1^\epsilon)$.
\end{lemma}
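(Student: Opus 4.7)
The plan is to show that $\nu^\epsilon(V)$ is literally equal to a basic open set $V_{\zeta^\epsilon,E^\epsilon,U}$ on $\riem_1^\epsilon$, obtained by transporting the $n$-chart $(\zeta,E)$ across the holomorphic inclusion $\iota^\epsilon:\riem_1\setminus D \hookrightarrow \riem_1^\epsilon$ while keeping the model data $U$ unchanged.

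First, I unpack the hypothesis. Write $V=V_{\zeta,E,U}$ with $U=(U_1,\dots,U_n)$ compatible with $(\zeta,E)$, so that for each $f\in U_i$ one has $\overline{f(\disk)}\subset \zeta_i(E_i)$. Compatibility of the Schiffer family with $V$ means $\overline{D_j}\cap\overline{E_i}=\emptyset$ for all $i,j$. Recalling the definition \eqref{eq:multi_schiffer} of $\nu^\epsilon$, this entails that on every $E_i$ the map $\nu^\epsilon$ coincides with the holomorphic inclusion $\iota^\epsilon$. In particular, for any $\phi=(\phi_1,\dots,\phi_n)\in V$, since $\phi_i(\disk)\subset E_i\subset \riem_1\setminus\overline{D}$, we get $\nu^\epsilon\circ\phi_i=\iota^\epsilon\circ\phi_i$.

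Next, I construct the transported $n$-chart on $\riem_1^\epsilon$. Define
\[
  E_i^\epsilon := \iota^\epsilon(E_i), \qquad \zeta_i^\epsilon := \zeta_i\circ(\iota^\epsilon)^{-1}:E_i^\epsilon\to\mathbb{C}.
\]
Since the $E_i$ are pairwise disjoint and disjoint from $D$, and $\iota^\epsilon$ is a biholomorphism onto its image, the $E_i^\epsilon$ are pairwise disjoint open subsets of (the compactification of) $\riem_1^\epsilon$. Because the Schiffer variation is performed away from the punctures, the puncture $p_i$ of $\riem_1$ corresponds via $\iota^\epsilon$ to a puncture $p_i^\epsilon$ of $\riem_1^\epsilon$ lying in $E_i^\epsilon$, and $\zeta_i^\epsilon(p_i^\epsilon)=\zeta_i(p_i)=0$. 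Hence $(\zeta^\epsilon,E^\epsilon)$ is a genuine $n$-chart on $\riem_1^\epsilon$ in the sense of Definition~\ref{de:nchart}. Moreover $\zeta_i^\epsilon(E_i^\epsilon)=\zeta_i(E_i)$, so $U$ is automatically compatible with $(\zeta^\epsilon,E^\epsilon)$ in the sense of Definition~\ref{de:compatible_open_set}.

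Finally, I identify the image. Using the parametrization in \eqref{eq:Oqcoriem_base_definition}, any $\phi\in V$ is of the form $\phi_i=\zeta_i^{-1}\circ h_i$ with $h_i\in U_i$. Applying the first step,
\[
  \nu^\epsilon\circ\phi_i \;=\; \iota^\epsilon\circ\zeta_i^{-1}\circ h_i \;=\; (\zeta_i^\epsilon)^{-1}\circ h_i.
\]
Thus $\nu^\epsilon(V)$ coincides with the set of $n$-tuples $((\zeta_1^\epsilon)^{-1}\circ h_1,\dots,(\zeta_n^\epsilon)^{-1}\circ h_n)$, $h_i\in U_i$, which is exactly $V_{\zeta^\epsilon,E^\epsilon,U}$, an element of $\mathcal{V}(\riem_1^\epsilon)$. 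There is essentially no analytic obstacle here; the whole argument is unwinding the definitions and exploiting the fact that compatibility forces $\nu^\epsilon$ to act as a holomorphic inclusion on the parts of $\riem_1$ carrying the non-overlapping maps. The only point that requires a moment's care is checking that the punctures and the disjointness of the $E_i^\epsilon$ are preserved, which follows from disjointness of $D$ from the $E_i$ and injectivity of $\iota^\epsilon$.
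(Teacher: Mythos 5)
Your proof is correct and takes essentially the same route as the paper, which disposes of the lemma in one sentence by noting that $\nu^\epsilon$ is holomorphic (indeed acts as the inclusion $\iota^\epsilon$) on the sets $E_i$; you simply spell out the resulting identification $\nu^\epsilon(V)=V_{\zeta^\epsilon,E^\epsilon,U}$ explicitly.
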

\begin{proof}
 Writing $V$ in terms of its corresponding $n$-chart $(\zeta,E)$ and
 $W \subset \Oqco \times \cdots \times \Oqco$,
 this is an immediate consequence of
 the fact that $\nu^\epsilon$ is holomorphic on
 the sets $E_i$.
\end{proof}

Define the set
\[  F(V,S,\Delta)= \{ [\riem,\nu^\epsilon \circ f,\riem_1^\epsilon,\phi]\,:\,
    \epsilon \in \Delta, \phi \in \nu^\epsilon(V)\}  \]
where $V \in \mathcal{V}$, $S=S(\Omega,D)$ is a Schiffer variation compatible with
$V$, and $\Delta$ is a connected open subset of $\Omega$.   The base $\mathcal{F}$ consists of
such sets.
\begin{definition} \label{de:base_rigged_Teich}
 The base for the topology of $\ttildeop(\riem)$ is
 \[  \mathcal{F} = \{ F(V,S,\Delta) \,:\, S(\Omega,D)  \text{ compatible with }  V,\  \Delta \subseteq \Omega \text{ open and connected} \}.  \]
\end{definition}
It is an immediate consequence of the definition that the restriction of
any $F \in \mathcal{F}$ to a fibre is open in
in the following sense.
 \begin{lemma} \label{le:restriction_open}
  Let $\riem$ and $\mathcal{F}$ be as above.  For any $F \in \mathcal{F}$
  and representative $(\riem,f_1,\riem_1)$ of any point $[\riem,f_1,\riem_1] \in T(\riem)$
  \[  \{ \phi \in \Oqco(\riem_1) \,:\, [\riem,f_1,\riem_1,\phi] \in F \}\]
  is an open subset of $\Oqco(\riem_1)$.
 \end{lemma}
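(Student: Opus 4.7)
Write $F = F(V, S, \Delta)$, where $S = S(\Omega, D)$ is a marked Schiffer family based at some $[\riem, f_0, \riem_0] \in T(\riem)$ (relabelling to avoid conflict with the fibre representative $\riem_1$), $V \in \mathcal{V}(\riem_0)$ is compatible with $S$, and $\Delta \subseteq \Omega$ is open and connected. The strategy is to identify the fibre set explicitly as the image of $V$ under a single biholomorphism, after which openness follows from Lemma \ref{le:nu_preserves_base} and Theorem \ref{th:biholo_preserve_Oqco}.

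Unwinding the definitions, a $\phi \in \Oqco(\riem_1)$ lies in the set in question if and only if there exist $\epsilon \in \Delta$, $\psi \in V$, and a biholomorphism $\tau : \riem_0^{\epsilon} \to \riem_1$ such that $f_1^{-1} \circ \tau \circ \nu^{\epsilon} \circ f_0$ is homotopic to the identity rel boundary and $\phi = \tau \circ \nu^{\epsilon} \circ \psi$. The existence of such a $\tau$ is precisely the statement that $[\riem, f_1, \riem_1] = [\riem, \nu^{\epsilon} \circ f_0, \riem_0^{\epsilon}]$ in $T(\riem)$. By Theorem \ref{th:Schiffer} applied after the change-of-base-point construction producing \eqref{Schiffer_map2}, the associated Schiffer map $\mathcal{S} : \Omega \to T(\riem)$ is injective, so at most one $\epsilon_0 \in \Omega$ can satisfy this. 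If no such $\epsilon_0$ lies in $\Delta$, the set is empty and hence trivially open, so we may assume $\epsilon_0 \in \Delta$.

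Because we assume $2g + 2 - n > 0$, Theorem \ref{th:homotopic_zero} forces the biholomorphism realizing the equivalence to be unique; denote it by $\sigma : \riem_0^{\epsilon_0} \to \riem_1$. The set in question then collapses to
\[
  \{\, \sigma \circ \nu^{\epsilon_0} \circ \psi \,:\, \psi \in V \,\} = \sigma(\nu^{\epsilon_0}(V)).
\]
The compatibility of $S$ with $V$ (Definition \ref{de:Schiffer_variation_compatible}) guarantees that $\nu^{\epsilon_0}$ is holomorphic on the images of the maps in $V$, so Lemma \ref{le:nu_preserves_base} gives $\nu^{\epsilon_0}(V) \in \mathcal{V}(\riem_0^{\epsilon_0})$. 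Applying Theorem \ref{th:biholo_preserve_Oqco} to $\sigma$ then yields $\sigma(\nu^{\epsilon_0}(V)) \in \mathcal{V}(\riem_1)$, which is by definition an open subset of $\Oqco(\riem_1)$.

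The main subtlety is bookkeeping rather than analysis: pinning down the unique $\epsilon_0 \in \Delta$ from injectivity of $\mathcal{S}$, and the unique $\sigma$ from Theorem \ref{th:homotopic_zero}. Once these are in hand, the result reduces entirely to previously established structural properties of the base $\mathcal{V}$.
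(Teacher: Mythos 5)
Your proof is correct and follows essentially the same route as the paper, which disposes of the lemma by citing Lemma \ref{le:nu_preserves_base} directly. Your explicit unwinding — pinning down the unique $\epsilon_0$ via injectivity of $\mathcal{S}$, the unique $\sigma$ via Theorem \ref{th:homotopic_zero}, and then transporting openness along $\sigma$ via Theorem \ref{th:biholo_preserve_Oqco} to handle an arbitrary representative $(\riem,f_1,\riem_1)$ — just supplies the details the paper leaves implicit.
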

 \begin{proof}  This follows immediately from Lemma \ref{le:nu_preserves_base}.
 \end{proof}

It is necessary to show that $\mathcal{F}$ is indeed a base.  This will be accomplished
in several steps, together with the proof that the overlap maps of the charts are biholomorphisms.  The charts are given in the following definition.
\begin{definition}
\label{de:riggedcharts}
For each open set $F(V,S,\Delta) \subset \ttildeop(\riem)$ we define the chart
\begin{align} \label{eq:Gdefinition}
  G: \Delta \times U & \longrightarrow  F(V,S,\Delta) \\ \nonumber
  (\epsilon, \psi) & \longmapsto  [\riem,\nu^\epsilon \circ f_1, \riem_1^\epsilon, \nu^{\epsilon} \circ \zeta^{-1} \circ \psi].
 \end{align}
 where $U \subset (\Oqco)^n$ is related to $V$ as in Definition \ref{de:Oqco_base}
 and $S=S(\Omega,D)$ is compatible with $V$.
 \end{definition}

 \begin{lemma}
 The map $G$ is a bijection.
 \end{lemma}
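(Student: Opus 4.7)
The plan is to verify the two implications directly from the definitions: surjectivity is essentially a rewriting, while injectivity reduces to the injectivity of the Schiffer map $\mathcal{S}$ on $\Omega$ together with Theorem \ref{th:homotopic_zero}.

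First I would verify surjectivity. By Definition \ref{de:base_rigged_Teich}, every point of $F(V,S,\Delta)$ is represented by a quadruple $(\riem,\nu^\epsilon \circ f_1,\riem_1^\epsilon,\phi)$ with $\epsilon \in \Delta$ and $\phi \in \nu^\epsilon(V)$. Writing $\phi = \nu^\epsilon \circ \phi'$ with $\phi' \in V = V_{\zeta,E,U}$, the parametrisation in (\ref{eq:Oqcoriem_base_definition}) produces a unique $\psi \in U$ with $\phi' = \zeta^{-1} \circ \psi$, and then $G(\epsilon,\psi)$ is exactly this point.

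For injectivity, suppose $G(\epsilon_1,\psi^{(1)}) = G(\epsilon_2,\psi^{(2)})$. By Definition \ref{de:rigged_Teich0} there is a biholomorphism $\sigma : \riem_1^{\epsilon_1} \to \riem_1^{\epsilon_2}$ such that $(\nu^{\epsilon_2} \circ f_1)^{-1} \circ \sigma \circ (\nu^{\epsilon_1} \circ f_1)$ is homotopic to the identity rel boundary, and $\sigma \circ \nu^{\epsilon_1} \circ \zeta^{-1} \circ \psi^{(1)} = \nu^{\epsilon_2} \circ \zeta^{-1} \circ \psi^{(2)}$. The first condition says precisely that $\mathcal{S}(\epsilon_1) = \mathcal{S}(\epsilon_2)$ in $T(\riem)$, and by Theorem \ref{th:Schiffer} the map $\mathcal{S}$ is a biholomorphism on $\Omega$, hence injective on $\Delta \subseteq \Omega$. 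Thus $\epsilon_1 = \epsilon_2 =: \epsilon$.

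With $\epsilon_1 = \epsilon_2 = \epsilon$, the map $\sigma : \riem_1^\epsilon \to \riem_1^\epsilon$ is a self-biholomorphism, and conjugating the given homotopy by the homeomorphism $\nu^\epsilon \circ f_1$ shows that $\sigma$ is itself homotopic to the identity on $\riem_1^\epsilon$. Since $\riem_1^\epsilon$ has the same topological type $(g,n)$ as $\riem$ and the standing assumption $2g+2-n>0$ depends only on this type, Theorem \ref{th:homotopic_zero} forces $\sigma = \mathrm{id}$. The second condition then becomes $\nu^\epsilon \circ \zeta^{-1} \circ \psi^{(1)} = \nu^\epsilon \circ \zeta^{-1} \circ \psi^{(2)}$, and since $\nu^\epsilon$ is a homeomorphism and each $\zeta_j$ is invertible on $E_j$, we obtain $\psi^{(1)} = \psi^{(2)}$ componentwise. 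I expect no real obstacle here; the whole argument is bookkeeping, and the one point needing care is simply the reminder that Theorem \ref{th:homotopic_zero} applies to the deformed surface $\riem_1^\epsilon$ because the topological type, and hence the hyperbolicity condition, is preserved under Schiffer variation.
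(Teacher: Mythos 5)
Your proof is correct and follows essentially the same route as the paper: injectivity of $\epsilon$ via the injectivity of the Schiffer map $\mathcal{S}$ (Theorem \ref{th:Schiffer}), and injectivity of $\psi$ via Theorem \ref{th:homotopic_zero} forcing $\sigma=\mathrm{id}$, with surjectivity read off from the definition of $F(V,S,\Delta)$. The only difference is that the paper packages the second step as Corollary \ref{co:rigged_equiv}, whose short proof you have simply inlined.
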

 \begin{proof}
 If $G(\epsilon_1, \psi_1) = G(\epsilon_1, \psi_1)$ then $\epsilon_1 = \epsilon_2$ by Theorem \ref{th:Schiffer}.
 Because $2g-2+n>0$, Corollary \ref{co:rigged_equiv} implies $\psi_1 = \psi_2$. This proves injectivity. Surjectivity of $G$ follows from the definition of $F(V,S,\Delta)$.
\end{proof}

 It was shown in \cite{RS_fiber}, that if in the above map $\Oqco$ and $\Oqco(\riem)$ are replaced
 by $\Oqc$ and $\Oqc(\riem)$, and the corresponding changes are made to the sets $U_i$ and $V_i$,
 then these coordinates can be used to form an atlas on $\widetilde{T}^P(\riem)$.  We need to show the same result
 in the refined setting.
 \begin{remark} \label{re:index_suppression} Between here and the end of
 the proof of Lemma \ref{le:DavidsLemma}, we will suppress the subscripts
 on $n$-charts $(\zeta_i, E_i)$ and elements of $\Oqco(\riem_1)$  to avoid clutter.
 The subscripts which remain will distinguish $n$-charts on different Riemann surfaces.

 When clarification is necessary we will use the notation, for example $(\zeta_{i,j}$, $E_{i,j})$, where
 the first index labels the Riemann surface and the second labels the puncture.
 \end{remark}

We proceed as follows.  We first prove two lemmas, whose purpose is to
show that in a neighborhood of any point, the transition functions are defined
and holomorphic on some open set.  Once this is established, we show that $\mathcal{F}$
is a base, the topology is Hausdorff and separable, and the charts form a holomorphic
atlas.

Some notation is necessary regarding the transition functions.
Fix two points $[\riem, f_1, \riem_1]$
and $[\riem, f_2, \riem_2]$ in $T(\riem)$. Let $G_1$ and $G_2$ be two corresponding parametrizations as in (\ref{eq:Gdefinition}) above, defined on $\Delta_1 \times U_1$ and $\Delta_2 \times U_2$ respectively and using the two Schiffer families $S_1(\Delta_1, D_1)$ and $S_2(\Delta_2, D_2)$. We assume that the intersection $G_1(\Delta_1 \times U_1) \cap G_1(\Delta_2 \times U_2)$  is non-empty. From the definitions of $\ttildeop(\riem)$ and $\mathcal{S}$ it follows that $\mathcal{S}(\Delta_1) \cap \mathcal{S}(\Delta_2)$ is also non-empty.
We follow the notation and setup of Lemma \ref{le:marked_families_lemma} and the paragraph immediately preceding it, with $\Delta'_i = \mathcal{S}_i^{-1}(N)$ replacing $\Omega'_i$, where $N$ is any connected component of $\mathcal{S}(\Delta_1) \cap \mathcal{S}(\Delta_2)$.

Recall that in $\widetilde{T}_0^P(\riem)$, $[\riem, g_1, \riem_1, \phi_1] = [\riem, g_2, \riem_2, \phi_2]$ if and only if $[\riem, g_1, \riem_1] = [\riem, g_2, \riem_2]$ via the biholomorphism $\sigma : \riem_1 \to \riem_2$ and $ \sigma \circ \phi_1 = \phi_2$. Lemma \ref{le:marked_families_lemma} now implies that
$G_1(\epsilon, \psi) = G_2(\epsilon', \psi')$ if and only if $\epsilon' = \alpha(\epsilon)$ and
$$ \nu_2^{\alpha(\epsilon)} \circ \zeta_2^{-1} \circ \psi' = \sigma_{\epsilon} \circ  \nu_1^\epsilon \circ \zeta_1 ^{-1} \circ \psi.$$

Let
\begin{equation}
\label{eq:littleh_definition}
\mathcal{H}(\epsilon, z) = \mathcal{H}_{\epsilon}(z) = \left(\zeta_2 \circ (\nu^{\alpha(\epsilon)}_2)^{-1} \circ \sigma_{\epsilon} \circ  \nu^\epsilon_1 \circ \zeta_1^{-1} \right) (z)
\end{equation}
which is a function of two complex variables.
We also define
$$ \mathcal{G}(\epsilon,z) =  (\alpha(\epsilon),\mathcal{H}(\epsilon,z)).
$$
Note that this is shorthand for a collection of maps $\mathcal{H}^j(\epsilon,z)$
and $\mathcal{G}^j(\epsilon,z)$, $j=1,\ldots,n$, where $j$ indexes
 the punctures (cf. Remark \ref{re:index_suppression}).
Define further
\begin{align} \label{eq:bigH_definition}
 H:\Omega'_1 \times (\Oqco)^n & \longrightarrow ({\Oqco})^n \\
 (\epsilon, \psi) & \longmapsto \mathcal{H}_\epsilon\circ \psi. \nonumber
\end{align}
The overlap maps can then be written
\begin{equation} \label{eq:G_overlap_definition}
 (G_2^{-1} \circ G_1)(\epsilon, \psi) = (\alpha(\epsilon), \mathcal{H}_{\epsilon} \circ \psi )=(\alpha(\epsilon),H(\epsilon,\psi)).
\end{equation}

\begin{lemma} \label{le:DavidsLemma} Let $[\riem,f_1,\riem_1]$ and $[\riem,f_2,\riem_2] \in \widetilde{T}_0^P(\riem)$ for a punctured Riemann surface $\riem$.  For $i=1,2$ let $\mathcal{V}_i$ be the base for the topology on $\Oqco(\riem_i)$ as in Definition \ref{de:Oqco_base}.  Again
 for $i=1,2$ let $(\zeta_i,E_i)$ be $n$-charts on $\riem_i$, let $V_i \in \mathcal{V}_i$ be compatible with the $n$-charts $(\zeta_i,E_i)$, and let $S_i(\Omega_i,D_i)$ be Schiffer variations compatible with $V_i$.
 Finally, for open connected sets $\Delta_i \subseteq \Omega_i$ consider the sets $F(V_i,S_1,\Delta_i)$ which we assume have non-empty intersection.

Choose any $e_1 \in \Delta_1$ and $\phi_1 \in V_1$ such that $[\riem,\nu_1^{e_1} \circ f_1,\riem_1^{e_1},\nu_1^{e_1} \circ \phi_1] \in F(V_1,S_1,\Delta_1) \cap F(V_2,S_2,\Delta_2)$. Then, there exists a $\Delta \subset
 \mathcal{S}_1^{-1}( \mathcal{S}_1(\Delta_1) \cap \mathcal{S}_2(\Delta_2))$ containing $e_1$, and an
 open set $E'_1 \subseteq \zeta_1(E_1)$
 containing $\overline{\zeta_1 \circ \phi_1(\mathbb{D})}$, such that $\mathcal{H}$
 is holomorphic in $\epsilon$ and $z$ on $\Delta \times E'_1$
 and $\mathcal{G}(\epsilon,z)=(\alpha(\epsilon),\mathcal{H}(\epsilon,z))$
 is a biholomorphism onto $\mathcal{G}(\Delta \times E'_1 )$.
\end{lemma}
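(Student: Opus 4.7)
The core idea is to recognize $\mathcal{G}(\epsilon,z)=(\alpha(\epsilon),\mathcal{H}(\epsilon,z))$ as the coordinate expression of the biholomorphism $\beta$ of Lemma \ref{le:marked_families_lemma}(3) in the natural charts on the Schiffer families constructed in \eqref{family_open_set}--\eqref{family_chart_map}. The compatibility assumption of Definition \ref{de:Schiffer_variation_compatible} is crucial: it guarantees that each chart domain $E_{i,j}$ is disjoint from the Schiffer variation disks $D_i$, so that on $E_{i,j}$ the quasiconformal map $\nu_i^\epsilon$ reduces to the holomorphic inclusion $\iota_i^\epsilon$. As a consequence, the antiholomorphic dependence on $z$ which appears in $w^\epsilon(z)=z+\epsilon\bar z$ is never felt in this part of the construction.

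First, I would fix the data at the base point. Since the point $[\riem,\nu_1^{e_1}\circ f_1,\riem_1^{e_1},\nu_1^{e_1}\circ\phi_1]$ also lies in $F(V_2,S_2,\Delta_2)$, there is some pair $(e_2,\phi_2)\in\Delta_2\times V_2$ realizing this equivalence. Part (1) of Lemma \ref{le:marked_families_lemma} then forces $e_2=\alpha(e_1)$, and part (2) identifies the comparison biholomorphism as $\sigma_{e_1}$, so $\sigma_{e_1}\circ\nu_1^{e_1}\circ\phi_1=\nu_2^{\alpha(e_1)}\circ\phi_2$. Compatibility of $V_2$ with $(\zeta_2,E_2)$ then gives $\overline{\phi_{2,j}(\mathbb{D})}\subset\zeta_{2,j}(E_{2,j})$ for each puncture $j$.

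Next, for each puncture $j$, I would use the natural charts $\tilde\zeta_{i,j}:\tilde U_{i,j}\to\Omega_i\times\zeta_{i,j}(E_{i,j})$ on $S_i(\Omega_i,D_i)$ defined by $\tilde\zeta_{i,j}(\epsilon,x)=(\epsilon,\zeta_{i,j}\circ(\iota_i^\epsilon)^{-1}(x))$ from \eqref{family_chart_map}, which are biholomorphisms. A direct unwinding of the definitions, using that $\nu_i^\epsilon=\iota_i^\epsilon$ on $E_{i,j}$, shows
\begin{equation*}
(\tilde\zeta_{2,j}\circ\beta\circ\tilde\zeta_{1,j}^{-1})(\epsilon,z)=(\alpha(\epsilon),\mathcal{H}^j(\epsilon,z))
\end{equation*}
wherever both sides are defined. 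Since $\beta$ and $\tilde\zeta_{i,j}$ are biholomorphisms between open sets of complex manifolds (by Lemma \ref{le:marked_families_lemma}(3)), the composition $\mathcal{G}$ is automatically holomorphic in $(\epsilon,z)$ jointly, and biholomorphic onto its image, on its domain of definition. In particular this gives the joint holomorphicity of $\mathcal{H}$.

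Finally, I would produce the explicit neighborhoods $\Delta$ and $E_1'$. For every $z_0\in\overline{\zeta_{1,j}\circ\phi_{1,j}(\mathbb{D})}$, the image $\beta(\tilde\zeta_{1,j}^{-1}(e_1,z_0))$ has fibre over $\alpha(e_1)$ equal to $\iota_2^{\alpha(e_1)}(\phi_{2,j}(\zeta_{1,j}^{-1}(z_0)))$, which lies in $\tilde U_{2,j}$ because $\phi_{2,j}(\mathbb{D})\subset E_{2,j}$. Continuity of $\beta$ and openness of $\tilde U_{2,j}$ then give each $(e_1,z_0)$ a product neighborhood inside the domain of $\tilde\zeta_{2,j}\circ\beta\circ\tilde\zeta_{1,j}^{-1}$; compactness of $\overline{\zeta_{1,j}\circ\phi_{1,j}(\mathbb{D})}$ together with a tube-lemma argument extracts a single $\Delta_j\subseteq\mathcal{S}_1^{-1}(\mathcal{S}_1(\Delta_1)\cap\mathcal{S}_2(\Delta_2))$ about $e_1$ and an open $E'_{1,j}\supseteq\overline{\zeta_{1,j}\circ\phi_{1,j}(\mathbb{D})}$ on which $\mathcal{G}^j$ is defined and biholomorphic onto its image. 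Intersecting over the finitely many punctures yields the required $\Delta$ and $E_1'$. The main obstacle is producing the joint holomorphicity in $\epsilon$; this cannot be read off from the pointwise biholomorphisms $\sigma_\epsilon$ alone and is precisely why the marked-family machinery of Section \ref{se:MarkedSchifferFamily} and Lemma \ref{le:marked_families_lemma}(3) had to be developed.
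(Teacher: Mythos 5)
Your proposal is correct and follows essentially the same route as the paper: identify $\mathcal{G}$ with the marked-family biholomorphism $\beta$ of Lemma \ref{le:marked_families_lemma}(3) expressed in the charts $\tilde{\zeta}_{i}$ of (\ref{family_chart_map}) (using that $\nu_i^\epsilon=\iota_i^\epsilon$ away from the variation disks), then extract the product neighborhood $\Delta\times E_1'$ by a tube-lemma argument over the compact set $\{e_1\}\times\overline{\zeta_1\circ\phi_1(\mathbb{D})}$. The paper packages the tube-lemma step slightly differently, via the open set $\widetilde{C}=\beta^{-1}(\widetilde{E}_2)\cap\widetilde{E}_1$ and $J=\tilde{\zeta}_1(\widetilde{C})$, but the argument is the same.
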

\begin{proof} Let $N$ be the connected component of $\mathcal{S}_1(\Delta_1) \cap \mathcal{S}_2(\Delta_2)$ that contains $\mathcal{S}_1(e_1)$.
For $i = 1,2$, let  $ \Delta'_i= \mathcal{S}_i^{-1}(N)$,

$$
E_i^{\epsilon_i} = \nu_i^{\epsilon_i}(E_i)
$$
and
$$
A_i^{\epsilon_i} = (\nu_i^{\epsilon_i} \circ \phi_{1}) (\mathbb{D}).
$$
Note that $\overline{A_i^{\epsilon_i}} \subset E_i^{\epsilon_i}$. By construction $\Delta_1'$ contains $e_1$.

Let
$$
\widetilde{E}_i = \{(\epsilon_i, z) : \epsilon_i \in \Delta'_i, z \in  E_i^{\epsilon_i} \}
$$
and
$$
\widetilde{A}_i = \{(\epsilon_i, z) : \epsilon_i \in \Delta'_i, z \in  A_i^{\epsilon_i} \}.
$$
Both of these sets are open by definition of $S(\Omega_i, D_i)$.

Now by Lemma \ref{le:marked_families_lemma} there is a biholomorphism $\beta: S(\Delta'_1, D_1) \to S(\Delta'_2, D_2)$ and  moreover, $\beta(\widetilde{A}_1) = \widetilde{A}_2$. The last assertion follows
from the definition of equivalence in the rigged \teich space.

Let
$$
\widetilde{C} = \beta^{-1}(\widetilde{E}_2) \cap \widetilde{E}_1
$$
and note that $\overline{\widetilde{A}}_1 \subset \widetilde{C}$.

Since $\widetilde{C}$ is open, so is
$$
J = \tilde{\zeta}_1(\widetilde{C}) \subset \Delta'_1 \times \zeta_1(E_1),
$$
where $\tilde{\zeta}_1$ is defined in (\ref{family_chart_map}).
Let $J^{\epsilon} = \{ z : (\epsilon, z) \in J \}$. Then
$$\overline{\psi_1(\disk)} \subset J^{\epsilon} \subset \zeta_1(E_1)$$
for all $\epsilon$, where $\psi_1 = \zeta_1 \circ \phi_1$. By the definition of $\widetilde{C}$, $\mathcal{H}$ is defined on $J^{\epsilon}$.

We claim that there are connected open sets $\Delta$ and $E'$ such that the
closure of $\Delta \times E'$ is contained in $J$, $e_1 \in \Delta$
and $\overline{\psi_1(\mathbb{D})} \subset E'$.
Since $J$ is open and
$\{e_1\} \times \overline{\psi_1(\mathbb{D})}$ is compact the existence of such sets $\Delta$ and $E'$ follow from a standard topological argument.

Since $\mathcal{H}$, and therefore $\mathcal{G}$ are defined on $J$ they are defined on $\Delta \times E'$.
We will prove that $\mathcal{G}$ is biholomorphic by showing that it is equal to $\beta$ expressed in terms of local coordinates.
Using the coordinates defined in (\ref{family_chart_map}), noting that on $E'$, $\nu^{\epsilon} = \iota^{\epsilon}$, and applying Lemma \ref{le:marked_families_lemma}, we have for $(\epsilon, z) \in \Delta \times E'$ that
\begin{align*}
(\tilde{\zeta}_2 \circ \beta \circ \tilde{\zeta}_1^{-1})(\epsilon, z) & = \left(\alpha(\epsilon), (\zeta_2 \circ (\nu_2^{\alpha(\epsilon)})^{-1} \circ \sigma_{\epsilon} \circ \nu_1^{\epsilon} \circ \zeta_1^{-1})(z) \right) \\
& = (\alpha(\epsilon), \mathcal{H}(\epsilon, z)) \\
& = \mathcal{G}(\epsilon, z).
\end{align*}
Since $\beta$ is a biholomorphism we see that on the domain $\Delta \times E'$, $\mathcal{G}$ is a biholomorphism and $\mathcal{H}$ is holomorphic.
\end{proof}
 \begin{theorem}  \label{th:the_lemma_theorem}
  With notation as in Lemma \ref{le:DavidsLemma}, assume that
   $p=[\riem,\nu_1^{e_1} \circ f_1,\riem_1^{e_1},\nu_1^{e_1} \circ \phi_1]$ is
   an arbitrary point in $F(V_1,S_1,\Delta_1) \cap F(V_2,S_2,\Delta_2)$.
   There exists a $V_1' \in \mathcal{V}_1$ and a $\Delta_1'$ such that
   \begin{enumerate}
    \item $p \in F(V_1' ,S_1, \Delta'_1) \subseteq F(V_1,S_1,\Delta_1) \cap F(V_2,S_2,\Delta_2)$
    \item For all $\psi \in U_1'$ (where $U_1'$ is associated to $V_1'$ as in
    Definition \ref{de:Oqco_base}), $\overline{\psi(\mathbb{D})}$ is contained in an
    open set $E'$ satisfying the consequences of Lemma \ref{le:DavidsLemma}
    \item $G_2^{-1} \circ G_1$ is holomorphic on $\Delta_1' \times U_1'$.
   \end{enumerate}
 \end{theorem}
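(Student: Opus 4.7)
The plan is as follows. First, apply Lemma \ref{le:DavidsLemma} at the point $(e_1, \zeta_1 \circ \phi_1)$ to obtain connected open sets $\Delta \subseteq \Omega_1$ containing $e_1$ and $E' \subseteq \zeta_1(E_1)$ containing $\overline{\zeta_1 \circ \phi_1(\mathbb{D})}$ on which $\mathcal{H}$ is jointly holomorphic in $(\epsilon, z)$ and on which $\mathcal{G}(\epsilon,z) = (\alpha(\epsilon), \mathcal{H}(\epsilon, z))$ is a biholomorphism onto its image. The explicit construction in that lemma (via $\widetilde{C} = \beta^{-1}(\widetilde{E}_2) \cap \widetilde{E}_1$, whose image under $\beta$ lies in $\widetilde{E}_2$) also gives $\alpha(\Delta) \subseteq \Delta_2' \subseteq \Delta_2$ and $\mathcal{H}_\epsilon(E') \subseteq \zeta_2(E_2)$ for every $\epsilon \in \Delta$. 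This $E'$ will be the open set advertised in (2).

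Next I construct a refined base element $V_1' \in \mathcal{V}_1$. Choose an $n$-chart $(\zeta_1|_{\widehat{E}}, \widehat{E})$ on $\riem_1$ with $\widehat{E} \subseteq E_1$, $\overline{\phi_1(\mathbb{D})} \subset \widehat{E}$, and with $\overline{\zeta_1(\widehat{E})}$ a compact subset of $E'$. Set
\[
U_1' = \{\psi \in \Oqco \times \cdots \times \Oqco : \overline{\psi(\mathbb{D})} \subset \zeta_1(\widehat{E})\},
\]
which is open and compatible with $(\zeta_1|_{\widehat{E}}, \widehat{E})$ by Theorem \ref{th:into_U_is_open}, and let $V_1' = V_{\zeta_1|_{\widehat{E}}, \widehat{E}, U_1'}$. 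By construction $\zeta_1 \circ \phi_1 \in U_1'$, and every $\psi \in U_1'$ satisfies $\overline{\psi(\mathbb{D})} \subset E'$, which secures (2).

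I then study the candidate overlap formula $F(\epsilon, \psi) = (\alpha(\epsilon), \mathcal{H}_\epsilon \circ \psi)$ on $\Delta \times U_1'$. Well-definedness, i.e.\ $\mathcal{H}_\epsilon \circ \psi \in \Oqco$ in each component, follows from Lemma \ref{le:Oqco_composition_preserves}, since $\mathcal{H}_\epsilon$ is one-to-one holomorphic on the open set $E' \supset \overline{\psi(\mathbb{D})}$ and satisfies $\mathcal{H}_\epsilon(0)=0$. The first component $\alpha$ is holomorphic by Lemma \ref{le:marked_families_lemma}. For the second component I verify separate holomorphicity: in $\psi$ for fixed $\epsilon$ by Theorem \ref{th:left_comp_holo} (applied componentwise with $A = E'$, $K = \overline{\zeta_1(\widehat{E})}$ and $h = \mathcal{H}_\epsilon$), and in $\epsilon$ for fixed $\psi$ by Lemma \ref{le:CompHoloOqc_manyvar} (applied with $E = E'$ and $\mathcal{H}$ jointly holomorphic on $\Delta \times E'$). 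Local boundedness follows, via the Minkowski estimate from Lemma \ref{le:Oqco_composition_preserves}, from the uniform bound on $|\mathcal{A}(\mathcal{H}_\epsilon)|$ over the compact set $\overline{\zeta_1(\widehat{E})}$, uniform in $\epsilon$ on compact subsets of $\Delta$. Hartogs' theorem for Banach-valued mappings then delivers joint holomorphicity, hence continuity, of $F$ on $\Delta \times U_1'$.

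Finally, since $p \in F(V_2, S_2, \Delta_2)$, Lemma \ref{le:marked_families_lemma} and Corollary \ref{co:rigged_equiv} produce a unique $\psi_2^\ast \in U_2$ with $G_2(\alpha(e_1), \psi_2^\ast) = p$, namely $\psi_2^\ast = \mathcal{H}_{e_1} \circ \zeta_1 \circ \phi_1$; thus $F(e_1, \zeta_1 \circ \phi_1) \in \Delta_2 \times U_2$. By continuity of $F$ and openness of $\Delta_2 \times U_2$, I shrink $\Delta$ to a connected open $\Delta_1' \ni e_1$ and, if necessary, $\widehat{E}$ (and hence $U_1'$) so that $F(\Delta_1' \times U_1') \subseteq \Delta_2 \times U_2$. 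On this refined domain $F$ coincides with $G_2^{-1} \circ G_1$, which simultaneously gives the inclusion $F(V_1', S_1, \Delta_1') \subseteq F(V_1, S_1, \Delta_1) \cap F(V_2, S_2, \Delta_2)$ of (1) and the holomorphicity (3). The chief technical obstacle is the joint holomorphicity of $F$; once Theorem \ref{th:left_comp_holo} and Lemma \ref{le:CompHoloOqc_manyvar} are combined via Hartogs, everything else is a routine exercise in continuity and openness of the relevant sets.
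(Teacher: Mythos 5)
Your proposal is correct and follows essentially the same route as the paper: apply Lemma \ref{le:DavidsLemma} to get the domain $\Delta\times E'$ of joint holomorphicity of $\mathcal{H}$, build a base element whose maps have image closures in a compact subset of $E'$, establish joint holomorphicity of $(\epsilon,\psi)\mapsto(\alpha(\epsilon),\mathcal{H}_\epsilon\circ\psi)$ via Hartogs together with Theorem \ref{th:left_comp_holo} and Lemma \ref{le:CompHoloOqc_manyvar}, and then use continuity to shrink the domain so the image lands in $\Delta_2\times U_2$. The only minor imprecision is in the last step: rather than shrinking $\widehat{E}$ (which need not produce a neighborhood of the required special form inside $F^{-1}(\Delta_2\times U_2)$), one should, as the paper does, simply intersect $U_1'$ with the open preimage of $\Delta_2\times U_2$ under the continuous map $F$ and take any compatible open product neighborhood inside it.
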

 \begin{proof}
  By Lemma \ref{le:DavidsLemma}, there is an open set $\Delta_1'' \times E_1'$
  such that $\overline{\zeta_1 \circ \phi_1(\mathbb{D})} \subset E_1' $, $e_1 \in \Delta_1''$,  $\mathcal{H}$ is holomorphic on $\Delta_1'' \times E_1'$ and $\mathcal{G}$ is biholomorphic
  on $\Delta_1'' \times E_1'$.  This immediately implies that there is an open set
  $\Delta_2' \times E_2' \subset \mathcal{G}(\Delta_1'' \times E_1')$
  such that $\alpha(e_1) \in \Delta_2'$ and for $\psi_2=H(e_1,\zeta_1 \circ \phi_1)$, $\overline{\psi_2(\mathbb{D})} \subseteq E_2'$.
  Now let
  $
  W_2 =\{\psi \in \Oqco \,:\, \overline{\psi(\mathbb{D})} \subseteq E_2' \}.
  $
  By Theorem \ref{th:into_U_is_open} and Remark \ref{re:chart_simplification}, $W_2 \cap U_2$ is open in $\Oqco$.  Note that
  $H(e_1,\zeta_1 \circ \phi_1) \in W_2 \cap U_2$.

  Choose a compact set $K \subset E_1'$ which contains $\overline{\zeta_1 \circ \phi_1(\mathbb{D})}$ in its interior $K_{int}$.  If we let $W_1 = \{ \psi \in \Oqco \,:\,
  \overline{\psi(\mathbb{D})} \subseteq K_{int} \},$ then $W_1$ is open by Theorem \ref{th:into_U_is_open}.  We claim that $H$ is holomorphic
  on $\Delta_1' \times W_1$.  By Hartogs' theorem (see \cite{Mujica} for
  a version in a suitably general setting), it is enough to check
 holomorphicity separately in $\epsilon$ and $\psi$.  By Lemma \ref{le:CompHoloOqc_manyvar}, $H$ is holomorphic in  $\epsilon$ for fixed $\psi$.  On the other hand, by Theorem \ref{th:left_comp_holo}, $H$ is holomorphic in $\psi$ for fixed $\epsilon$ by our
careful choice of $W_1$.

  In particular, $H$ is continuous and therefore $H^{-1}(W_2 \cap U_2) \cap (\Delta_1'' \times (W_1 \cap U_1))$ is open
  and contains $(e_1, \zeta_1 \circ \phi_1)$, hence we may choose an open subset $\Delta_1'
  \times U_1'$ containing $(e_1, \zeta_1 \circ \phi_1)$. Let $V_1'$
  be the element of $\mathcal{V}_1$ associated to $U_1'$.  Clearly $U_1' \subseteq  U_1$, and $H(\Delta_1' \times U_1') \subseteq U_2$ by construction; thus $F(V_1' ,S_1, \Delta'_1 ) \subseteq F(V_1,S_1,\Delta_1) \cap F(V_2,S_2,\Delta_2)$ so
  the first condition is satisfied.  By construction, (2) is  also satisfied.  Since $U_1' \subseteq W_1$, $H$ is holomorphic on $\Delta_1' \times U_1'$ and the fact that $\alpha$
 is holomorphic on $\Delta'$ yields that $G_2^{-1} \circ G_1$ is holomorphic on $\Delta_1' \times U_1'$. This concludes the proof.
 \end{proof}

 \begin{theorem} \label{th:tilde_top_base} The set
  $\mathcal{F}$ is a base for a Hausdorff, separable topology on $\ttildeop(\riem)$.  Furthermore, with the atlas of charts given
  by (\ref{eq:Gdefinition}), $\ttildeop(\riem)$ is a Hilbert manifold.
 \end{theorem}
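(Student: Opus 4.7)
The plan is to verify four properties in sequence: that $\mathcal{F}$ covers $\ttildeop(\riem)$ and is closed under finite intersections (so that it is a base), that the resulting topology is Hausdorff and second countable, and that the charts $G$ form a holomorphic atlas with values in a Hilbert space. The analytic heart of the argument has already been done in Theorem \ref{th:the_lemma_theorem}, so what remains is largely a topological assembly.

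First I would verify that $\mathcal{F}$ is a base. For the covering, given any $p=[\riem,f,\riem_1,\phi]$, use compactness of each $\overline{\phi_i(\mathbb{D})}$ to choose an $n$-chart $(\zeta,E)$ on $\riem_1$ with $\overline{\phi_i(\mathbb{D})}\subset E_i$, then use Definition \ref{de:Oqco_base} to obtain a $V\in\mathcal{V}(\riem_1)$ containing $\phi$ and compatible with $(\zeta,E)$; finally construct a marked Schiffer family $S(\Omega,D)$ based at $[\riem,f,\riem_1]$ whose parametric disks $D_i$ are disjoint from the closures of the $E_j$. Since the Schiffer disks can be chosen inside arbitrarily small open sets disjoint from $E$, this is possible. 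Then $p=G(0,\zeta\circ\phi)\in F(V,S,\Omega)$. The intersection property is precisely what Theorem \ref{th:the_lemma_theorem} delivers: at every $p\in F_1\cap F_2$ it produces an $F\in\mathcal{F}$ with $p\in F\subseteq F_1\cap F_2$.

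Second, for Hausdorffness, consider distinct $p_1,p_2\in\ttildeop(\riem)$. If they project to distinct points of $T(\riem)$, then since $T(\riem)$ is Hausdorff and the Schiffer variations $\mathcal{S}_i$ give local homeomorphisms onto open neighborhoods in $T(\riem)$, I can shrink the $\Delta_i$ so that $\mathcal{S}_1(\Delta_1)\cap\mathcal{S}_2(\Delta_2)=\emptyset$, giving disjoint $F$'s. If $p_1$ and $p_2$ lie over the same point of $T(\riem)$, they admit representatives $[\riem,f,\riem_1,\phi_1]$ and $[\riem,f,\riem_1,\phi_2]$ (by applying the biholomorphism realizing the equivalence), and Corollary \ref{co:rigged_equiv} forces $\phi_1\neq\phi_2$ in $\Oqco(\riem_1)$. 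Using that $\Oqco(\riem_1)$ is Hausdorff (Theorem \ref{th:Oqco_base}), choose disjoint $V_1,V_2\in\mathcal{V}(\riem_1)$ separating $\phi_1,\phi_2$ and a common Schiffer family $S$ compatible with both; then $F(V_i,S,\Omega)$ are disjoint by Lemma \ref{le:restriction_open}, which guarantees the fibre restrictions agree with $V_i$. Second countability follows by combining second countability of $T(\riem)$ (being finite-dimensional) with second countability of $\Oqco(\riem_1)$ (Theorem \ref{th:Oqco_base}): one can select a countable subfamily of $\mathcal{F}$ by restricting to countably many $n$-charts, countably many Schiffer families whose images cover $T(\riem)$, and countably many $V\in\mathcal{V}(\riem_1)$.

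Finally, for the Hilbert manifold structure, composing each $G^{-1}$ with the identity on $\Delta$ and with $\chi^n$ on the fibre factor yields a chart into the Hilbert space $\mathbb{C}^d\oplus(\aonetwo\oplus\mathbb{C})^n$ landing in the open set $\Delta\times(\chi(\Oqco))^n$. Theorem \ref{th:the_lemma_theorem}(3) shows the overlap $G_2^{-1}\circ G_1$ is defined and holomorphic on a neighborhood of every point of its domain, of the form $(\epsilon,\psi)\mapsto(\alpha(\epsilon),H(\epsilon,\psi))$ with both components holomorphic; applying the same argument to $G_1^{-1}\circ G_2$ yields holomorphicity of the inverse. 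The main obstacle is conceptual rather than computational: reconciling the topological base property with the holomorphicity of transition maps, which is handled precisely by Theorem \ref{th:the_lemma_theorem} via the marked Schiffer family machinery and Lemma \ref{le:marked_families_lemma}, whose invocation of universality of the \teich curve produces the joint holomorphic dependence in $\epsilon$ of the biholomorphisms $\sigma_\epsilon$ underlying $\mathcal{H}$.
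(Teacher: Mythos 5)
Your proposal follows the paper's architecture almost exactly: Theorem \ref{th:the_lemma_theorem} supplies both the refinement property making $\mathcal{F}$ a base and the holomorphicity of the overlaps $G_2^{-1}\circ G_1$, and your two-case Hausdorff argument (separating either in $T(\riem)$ via disjoint Schiffer images or fibrewise via the Hausdorffness of $\Oqco$) is the same as the paper's, though the paper routes the second case through Theorem \ref{th:the_lemma_theorem} to first place both points in a single chart before separating, and justifies disjointness of the resulting $F$'s by injectivity of the fibres (Corollary \ref{co:rigged_equiv}) rather than by Lemma \ref{le:restriction_open}, which only gives openness of fibre restrictions. The one genuine divergence is the countability axiom: the theorem asserts only separability, which the paper proves directly by fixing a countable dense subset $\mathfrak{A}\subset T(\riem)$, choosing a representative surface over each point, transporting a countable dense subset of $\Oqco(\riem_1)$ to all other representatives via the unique realizing biholomorphism, and checking density using Lemma \ref{le:restriction_open}. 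You instead assert second countability, which is strictly stronger and which the paper deliberately relegates to a sketched remark (Remark \ref{re:tildeTP_second_countable}) because of the bookkeeping needed to make the countable family of Schiffer families and $n$-charts well-defined across choices of representatives and to verify the refinement property against arbitrary basis elements. Your one-sentence version of that argument is plausible but under-detailed; since separability is all that is required, the paper's direct construction of a countable dense set is both simpler and fully rigorous, and you would do better to adopt it (or to flesh out the second-countability claim along the lines of the paper's remark).
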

 \begin{proof}
   It follows directly from part (1) of Theorem \ref{th:the_lemma_theorem} that
  $\mathcal{F}$ is a base for a topology on $\tilde{T}_0(\riem)$.  From part (3),
  we have that the inverses of the maps (\ref{eq:Gdefinition}) form an atlas
  with holomorphic transition functions.  Thus it remains only to show that this
  topology is Hausdorff and separable.  We first show that it is Hausdorff.

  For $i=1,2$, let $p_i=[\riem,\nu_i^{e_i} \circ f_i,\riem_i^{e_i},\nu_i^{e_i} \circ \phi_i]$
  be distinct points, in sets $F(V_i,S_i,\Delta_i)$.  If $F(V_i,S_i,\Delta_i)$ are disjoint, we are done.
  If not, by Lemma \ref{le:marked_families_lemma} setting $\Delta_i'$ to be the connected component of  $\mathcal{S}_i^{-1} \left( \mathcal{S}_1(\Delta_1)
  \cap \mathcal{S}_2(\Delta_2) \right)$ containing $e_i$,  there is a biholomorphism $\alpha:\Delta_1' \rightarrow \Delta_2'$
 such that $[\riem,\nu_2^{\alpha(\epsilon)} \circ f_2, \riem_2^{\alpha(\epsilon)}] = [\riem,\nu_1^\epsilon \circ f_1, \riem_1^\epsilon]$
  for all $\epsilon \in \Delta_1'$.

  There are two cases to consider.  If $[\riem,\nu_2^{\alpha(e_1)} \circ f_2, \riem_2^{\alpha(e_1)}] \neq [\riem,\nu_2^{e_2} \circ f_2, \riem_2^{e_2}]$, then one can find $\Omega_1 \subset \Delta_1$ and $\Omega_2 \subset \Delta_2$ such that
  $\mathcal{S}_1(\Delta_1)$ and $\mathcal{S}_2(\Delta_2)$ are disjoint and $F(V_i,S_i,\Omega_i)$ still contains
  $[\riem,\nu_1^{e_i} \circ f_1,\riem_1^{e_i},\nu_1^{e_i} \circ \phi_i]$ for $i=1,2$.  But then
  $F(V_i,S_i,\Omega_i)$ are disjoint, which takes care of the first case.

  If on the other hand $[\riem,\nu_2^{\alpha(e_1)} \circ f_2, \riem_2^{\alpha(e_1)}] = [\riem,\nu_2^{e_2} \circ f_2, \riem_2^{e_2}]$,
  then by Theorem \ref{th:the_lemma_theorem} there are sets $F(V'_1,S_1,\Omega_1)$ and $F(V'_2,S_1,\Omega_2)$
  in $F(V_1,S_1,\Delta_1) \cap F(V_2,S_2,\Delta_2)$
  containing $p_1$ and
  $p_2$ respectively.  Thus we may write
  \[  p_1 = [\riem,\nu_1^{e_1} \circ f_1,\riem_1^{e_1},\nu_1^{e_1} \circ \psi_1] \quad \text{and} \quad  p_2=[\riem,\nu_1^{e_1} \circ f_1,\riem_1^{e_1},\nu_1^{e_1} \circ \psi_2].  \]

  For $i=1,2$, let $U_i'$ be the subsets of $(\Oqco)^n$ associated
  with $V_i'$ as in Definition \ref{de:Oqco_base}.  Since $\Oqco$ is an open subset of a
  Hilbert space, it is Hausdorff, so there are open sets $W_i$ in $U_i'$
  containing $p_i$
  for $i=1,2$ and such that $W_1 \cap W_2$ is empty.  In that case if $V''_i$ are the
  elements of $\mathcal{V}$ associated to $W_i$, then $V''_1 \cap V''_2$ is empty.
  This in turn implies that $F(V''_1,S_1,\Omega_1) \cap F(V''_2,S_1,\Omega_2)$ is empty which
  proves the claim in the second case.

  We now prove that $\ttildeop(\riem)$ is separable. Since $T(\riem)$ is a finite
  dimensional complex manifold it is, in particular, separable.  Choose a countable
  dense subset $\mathfrak{A}$ of $T(\riem)$.  For each $p=[\riem,f_1,\riem_1] \in \mathfrak{A}$, choose a specific representative $(\riem,f_1,\riem_1)$.  The space $\Oqco(\riem_2)$ is second countable and, in particular, it has a countable dense subset
  $\mathfrak{B}_p(\riem_1)$.  Now if $(\riem,f_2,\riem_2)$ is any other representative,
  there exists a unique biholomorphism $\sigma:\riem_1 \rightarrow \riem_2$ (if $\sigma_1$ is another such biholomorphism,
   since by hypothesis $\sigma_1^{-1} \circ \sigma$ is homotopic to the identity and $2g-2+n>0$, it follows from Theorem \ref{th:homotopic_zero} that $\sigma_1^{-1} \circ \sigma$ is the identity).
  We set
  \[  \mathfrak{B}_p(\riem_2)= \left\{ (\sigma \circ \phi_1,\ldots,\sigma \circ \phi_n) \,:\, (\phi_1,\ldots,\phi_n) \in \mathfrak{B}_p(\riem_1)  \right\}.   \]
  This is easily seen to be itself a countable dense set in $\Oqco(\riem_2)$ and it is not hard to see that
  \[  \Upsilon =
      \{ [\riem,f_1,\riem_1,\psi_1] \,:\, [\riem,f_1,\riem_1] \in \mathfrak{A}, \ \psi_1 \in
      \mathfrak{B}_p(\riem_1) \}  \]
  is well-defined.  We will show that it is dense.
  Note that for any fixed $[\riem,f_1,\riem_1]$, the set of
  $[\riem,f_1,\riem_1,\psi_1] \in \Upsilon$ is entirely determined by any particular representative $(\riem,f_1,\riem_1)$, and so this is a countable set.

  Let $F(V,S, \Delta) \in \mathcal{F}$.  Since $\mathfrak{A}$ is dense, there is some $[\riem,f_2,\riem_2] \in \mathfrak{A} \cap S(\Delta)$.  For a specific
  representative $(\riem,f_2,\riem_2)$ there is a $\psi_2 \in \Oqco(\riem_2)$
  such that $[\riem,f_2,\riem_2,\psi_2] \in F(V,S,\Delta)$.  By Lemma \ref{le:restriction_open} the set of points in $F$ over $[\riem,f_2,\riem_2]$
  is open. Thus since $\mathfrak{B}_p(\riem_2)$ is dense in $\Oqco(\riem_2)$ there is a $\psi_3 \in \mathfrak{B}_p(\riem_2)$ such that $[\riem,f_2,\riem_2,\psi_3] \in F$.
  By definition $[\riem,f_2,\riem_2,\psi_3] \in \Upsilon$, which completes the proof.
 \end{proof}
 \begin{remark} \label{re:tildeTP_second_countable} It can be shown that $\ttildeop(\riem)$ is second countable.  The proof involves somewhat tedious notational difficulties, so we only give a sketch of the proof.  No results in
 this paper depend on second countability of $\ttildeop(\riem)$.

  Fix a countable basis $\mathfrak{O}$ for
  $\Oqco$.
  For any $[\riem,f_1,\riem_1] \in
  \mathfrak{A}$, choose a representative $(\riem,f_1,\riem_1)$, and fix
  the following objects.  Let $\mathfrak{C}(\riem_1)$ be a countable collection
  of $n$-charts on $\riem_1$ constructed as in the proof of Theorem \ref{th:Oqco_base}. Let $\mathcal{V}_c(\riem_1)$ be the countable dense subset of $\mathcal{V}(\riem_1)$ corresponding to $\mathfrak{O}$ and $\mathfrak{C}(\riem_1)$ as in the proof
  of Theorem \ref{th:Oqco_base}.  Finally, fix a countable base $\mathfrak{B}(\riem_1)$
  of open sets in $\riem_1$.

  Now if $(\riem,f_2,\riem_2)$ is any other representative, there is a unique biholomorphism $\sigma:\riem_1 \rightarrow \riem_2$ as in the proof
  of Theorem \ref{th:tilde_top_base}.
   Transfer each of the preceding objects to $\riem_2$ by composition with
   $\sigma$ in the appropriate way; for example, $\mathfrak{C}(\riem_2)$ is the set of
   $n$-charts $(\zeta_1 \circ \sigma^{-1},\sigma(E_1), \ldots, \zeta_n \circ \sigma^{-1},\sigma(E_n))$ and so on.  Finally fix a countable base $\mathfrak{D}$
   of $\mathbb{C}^n$ (for example, the set of discs of rational radius centered at rational points).

   We now define the subset $\mathcal{F}_c$ of $\mathcal{F}$ to
   be the set of $F(V,S,\Delta) \in \mathcal{F}$ such that
   \begin{enumerate}
    \item the variation $S(\Omega)$ is based at a point $[\riem,f_1,\riem_1] \in
     \mathfrak{A}$
    \item $S(\Omega)$ is compatible with some fixed $n$-chart in $\mathfrak{C}(\riem_1)$
    \item $\Omega$ and $\Delta$ are both in $\mathfrak{D} \times \cdots \times \mathfrak{D}$
    \item $V \in \mathcal{V}(\riem_1)$.
   \end{enumerate}
   The set $\mathcal{F}_c$ is countable by construction, and does not depend on the choice
   of representative.  It can be shown with some work that $\mathcal{F}_c$ is a base compatible with $\mathcal{F}$.
 \end{remark}

\end{subsection}

\begin{subsection}{Compatibility with the non-refined rigged \teich space}

In \cite{RS05} the following rigged \teich space was defined.
\begin{definition}
Let $\ttildep(\riem)$ be defined by replacing $\Oqco(\riem_1)$ with $\Oqc(\riem_1)$ in Definition \ref{de:rigged_Teich0}.
\end{definition}
It was shown in \cite{RSnonoverlapping} that $\ttildep(\riem)$ is a complex Banach manifold with charts as in Definition \ref{de:riggedcharts} with $U \subset (\Oqc)^n$, and $\Oqc$ replacing $\Oqco$ in all the preceding definitions and constructions. Furthermore, the complex structure on $\Oqc$ is given by the embedding $\chi$ defined by (\ref{eq_chidefinition}). We use the same notation for the charts and constructions on $\ttildep(\riem)$ as for $\ttildeop(\riem)$ without further comment.

 The complex structures on $\ttildeop(\riem)$ and
 $\ttildep(\riem)$  are compatible in the following sense.
 \begin{theorem} \label{th:top in tp}
  The inclusion map $I_T: \ttildeop(\riem) \rightarrow \ttildep(\riem)$ is
  holomorphic.
 \end{theorem}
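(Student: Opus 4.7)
The plan is to verify holomorphicity by passing to local coordinates based on Schiffer variations compatible with the same $n$-chart on both sides. The atlases on $\ttildeop(\riem)$ and $\ttildep(\riem)$ are given by essentially the same formula (Definition \ref{de:riggedcharts}), differing only in whether the rigging factor lives in $\Oqco$ or $\Oqc$. Since Theorem \ref{th:Oqco_open_in_Oqc} already provides holomorphicity of the fibrewise inclusion $\iota : \Oqco \to \Oqc$, the whole statement should reduce to organizing these charts correctly.

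First I will fix an arbitrary point $p = [\riem, f_1, \riem_1, \phi_1] \in \ttildeop(\riem)$ and choose a chart $G_0 : \Delta \times U_0 \to F(V_0, S, \Delta)$ around $p$, where $S=S(\Omega,D)$ is a Schiffer variation based at $[\riem, f_1, \riem_1]$, $(\zeta, E)$ is an $n$-chart on $\riem_1$ compatible with $S$, and $U_0 \subset (\Oqco)^n$ is compatible with $(\zeta, E)$. Using the same Schiffer variation $S$ and same $n$-chart $(\zeta, E)$, I will then form the corresponding chart $G : \Delta \times U \to F(V, S, \Delta)$ on $\ttildep(\riem)$ by taking $U = \{\psi \in (\Oqc)^n : \overline{\psi_i(\disk)} \subset \zeta_i(E_i)\}$. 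This $U$ is open in $(\Oqc)^n$ by \cite[Corollary 3.5]{RSnonoverlapping}, and the coordinatewise inclusion $(\Oqco)^n \hookrightarrow (\Oqc)^n$ carries $U_0$ into $U$.

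In these paired coordinates the expression for $I_T$ reads
\[
   G^{-1} \circ I_T \circ G_0(\epsilon, \psi) = (\epsilon, \iota_n(\psi)),
\]
where $\iota_n : (\Oqco)^n \to (\Oqc)^n$ is the product of the inclusions $\iota : \Oqco \to \Oqc$. By Theorem \ref{th:Oqco_open_in_Oqc} each factor $\iota$ is holomorphic, so $\iota_n$ is holomorphic, and hence the coordinate expression of $I_T$ is holomorphic on $\Delta \times U_0$. Since the chart $G_0$ was arbitrary and charts of this form cover $\ttildeop(\riem)$, this will establish holomorphicity of $I_T$ globally.

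I do not anticipate any real obstacle: the argument is essentially a bookkeeping exercise that exploits the fact that both atlases are built from identical Schiffer data on the base and parallel non-overlapping-mapping data on the fibre. The only nontrivial analytic input is holomorphicity of the fibrewise inclusion $\iota : \Oqco \to \Oqc$, which is already packaged in Theorem \ref{th:Oqco_open_in_Oqc}; everything else amounts to matching the formulas from Definition \ref{de:riggedcharts} in the two settings.
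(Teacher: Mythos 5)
Your proposal is correct and follows essentially the same route as the paper: both arguments pair charts on the two rigged Teichm\"uller spaces built from the same Schiffer variation and $n$-chart, so that the coordinate expression of $I_T$ becomes the identity on the $\Delta$-factor times the fibrewise inclusion $(\Oqco)^n \hookrightarrow (\Oqc)^n$, whose holomorphicity is exactly Theorem \ref{th:Oqco_open_in_Oqc}. The only cosmetic difference is that the paper starts from the non-refined chart and restricts it to the refined locus (verifying that this locus is $\Omega \times W_0$ with $W_0 = \iota^{-1}(W)$, via the observation that $\nu^{\epsilon}\circ\phi \in \Oqco(\riem)$ iff $\phi \in \Oqco$), whereas you start from the refined chart and enlarge; both reduce to the same analytic input.
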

 \begin{proof}
  Choose any point $[\riem,f,\riem_*,\phi] \in
  \ttildeop(\riem)$. There is a parametrization
  $G: \Omega \times U \rightarrow \ttildep(\riem)$ onto a
  neighborhood of this point (see Definition \ref{de:riggedcharts}).
  We choose $U$ small enough that $\nu^\epsilon$ is
  holomorphic on $\overline{\phi(\mathbb{D})}$ for all $\phi \in U$.

  Let $W = \chi^n(U)$ where $\chi^n:\Oqc \times \cdots \times \Oqc
  \rightarrow \bigoplus^n(\aoneinfinity \oplus \mathbb{C})$ is defined by
  $$
  \chi^n(\phi_1,\ldots,\phi_n) =
  (\chi(\phi_1),\ldots,\chi(\phi_n)).
  $$
  Define $F:\Omega \times W \rightarrow \ttildep(\riem)$ by
  $$
  F = G \circ
  (\text{id}, (\chi^n)^{-1} )
  $$
  where $\text{id}$ is the identity map on
  $\Omega$. These are coordinates on $\ttildep(\riem)$.

  Let $W_0 = W \cap \Oqco = \iota^{-1}(W)$ (recall that $\iota$ is the
  inclusion map of $\Oqco$ in $\Oqc$).  The set $W_0$ is open by Theorem
  \ref{th:Oqco_open_in_Oqc}.  We further have that $F(\Omega
  \times W_0) = \ttildeop \cap W$.  To see this note that $F(\Omega
  \times W_0) = G (\Omega \times (\chi^n)^{-1}(W_0))$.  By
  definition $\nu^\epsilon \circ \phi \in \Oqco(\riem)$ if and
  only if for a parameter $\eta:A \rightarrow \mathbb{C}$ defined on
  an open neighborhood $A$ of
  $\overline{\nu^\epsilon(\phi(\mathbb{D}))}$ it holds that $\eta
  \circ \nu^\epsilon \circ \phi \in \Oqco$. This holds if and only
  if $\phi \in \Oqco$ since $\nu^\epsilon$ is holomorphic on a
  neighborhood of $\overline{\phi(\mathbb{D})}$.

  It follows from Theorem \ref{th:into_U_is_open} that $F^{-1} \circ
  I_T \circ F$ is holomorphic.  Since $F$ are local coordinates,
  $I_T$ is holomorphic on the image of $F$.  Since coordinates of
  the form $F$ cover $\ttildep(\riem)$, this proves the theorem.
 \end{proof}
 Note that this does not imply that $\ttildeop(\riem)$ is a
 complex submanifold of $\ttildep(\riem)$.
\end{subsection}
\end{section}
\begin{section}{A refined Teichm\"uller space of bordered surfaces} \label{se:refined_Teich_space}
 We are at last in a position to define the refined Teichm\"uller space of a bordered surface and demonstrate that it
 has a natural complex Hilbert manifold structure.  In Section \ref{se:definition_refined} we define the refined Teichm\"uller space $T_0(\riem^B)$ of a bordered
 surface $\riem^B$, and define some ``modular groups'' which act on it.  In Section \ref{se:sewing_on_caps} we show how to obtain a punctured surface by sewing ``caps'' onto the bordered surface using the riggings.  It is also demonstrated that sewing on
 caps takes the refined Teichm\"uller space into the refined rigged Teichm\"uller space
 $\ttildeop(\riem)$.  In Section \ref{se:complex_structure_refined_Teich} we prove that the refined Teichm\"uller space of bordered surfaces is a Hilbert manifold.  We do this by showing that the refined rigged Teichm\"uller space $\ttildeop(\riem)$ is
 a quotient of $T_0(\riem^B)$ by a properly discontinuous, fixed point free
 group of local homeomorphisms, and passing the charts on $\ttildeop(\riem)$ upwards.  Finally, in Section \ref{se:Friedan_Shenker_refined} we show that the
 rigged moduli space of Friedan and Shenker is a Hilbert manifold.  This follows
 from the fact that the rigged moduli space is a quotient of $T_0(\riem^B)$ by a
 properly discontinuous fixed-point free group of biholomorphisms.

\begin{subsection}{Definition of the refined Teichm\"uller space and modular groups} \label{se:definition_refined}

The reader is referred to Section \ref{se:refined_quasisymmetries} for some of the notation and definitions used below.

We now define the refined \teich space of a bordered Riemann surface which is obtained by replacing the quasiconformal marking maps in the usual \teich space (see Definition \ref{de:Teichspace}) with refined quasiconformal maps.
 \begin{definition}  \label{de:refined_Teich_space}
  Fix a bordered Riemann surface $\riem^B$ of type $(g,n)$.  Let
  \[  T_0(\riem^B) = \{ (\riem^B,f,\riem^B_1) \} / \sim \]
  where $\riem^B_1$ is a bordered Riemann surface of the same type, $f \in \qco(\riem^B,\riem^B_1)$, and two triples
  $(\riem^B,f_i,\riem^B_i)$, $i=1,2$ are equivalent if there is a biholomorphism $\sigma:\riem^B_1 \rightarrow \riem^B_2$ such that
  $f_2^{-1} \circ \sigma \circ f_1$ is homotopic to the identity rel boundary.

The space $T_0(\riem^B)$ is called the \textit{refined \teich space} and its elements are denoted by equivalence classes of the form $[\riem^B, f_1, \riem^B_1]$.
 \end{definition}

 An important ingredient in the construction of the complex Hilbert manifold structure is a kind of modular group (or mapping class group). To distinguish between the different possible boundary condition we use some slightly non-standard notation following \cite{RS05}; we recall the definitions here.

 Let $\riem^B$ be a bordered Riemann surface and $\operatorname{QCI}(\riem^B)$ denote the set of quasiconformal maps from $\riem^B$
 onto $\riem^B$ which are the identity on the boundary.  This is a group which acts on the marking maps  by right composition. Let $\operatorname{QCI}_n(\riem^B)$ denote the subset of $\operatorname{QCI}(\riem^B)$ which are homotopic to
 the identity rel boundary (the subscript $n$ stands for ``null-homotopic'').
 \begin{definition} \label{de:pmodi}
  Let $\pmodi(\riem^B) = \operatorname{QCI}(\riem^B)/\sim$ where two elements $f$ and $g$ of $\operatorname{QCI}(\riem^B)$ are equivalent
  ($f \sim g$) if and only if $f \circ g^{-1} \in \operatorname{QCI}_n(\riem) $.
 \end{definition}
 The ``P'' stands for ``pure'', which means that the mappings preserve the ordering of the boundary components, and ``I''
 stands for ``identity''.

 There is a natural action of $\pmodi(\riem^B)$ on $T(\riem^B)$ by right composition, namely
 \begin{equation}
 \label{eq:MCG_action}
  [\rho] [\riem^B,f,\riem^B_1] = [\riem^B,f \circ \rho,\riem^B_1].
 \end{equation}
 This is independent of the choice of representative $\rho \in \operatorname{QCI}(\riem^B)$ of $[\rho] \in \pmodi(\riem^B).$
It is a standard fact that $\pmodi(\riem^B)$ is finitely generated by Dehn twists. Using these twists we can define two natural subgroups of $\pmodi(\riem^B)$  (see \cite{RS05} for details).
 \begin{definition} \label{de:dbdi}
  Let $\riem^B$ be a bordered Riemann surface.
  Let $\db(\riem^B)$ be the subgroup of $\pmodi(\riem^B)$ generated by Dehn twists around simple closed curves $\riem$ which are homotopic to a boundary curve.
  Let $\di(\riem^B)$ be the subgroup of $\pmodi(\riem^B)$ generated by Dehn twists around simple closed curves in $\riem^B$ which are neither homotopic to
  a boundary curve nor null-homotopic.
 \end{definition}
 Here ``B'' stands for ``boundary'' and ``I'' stands for ``internal''.

The next Lemma implies that we can consider $\pmodi(\riem^B)$ and $\db(\riem^B)$ as acting on $T_0(\riem^B)$.
\begin{lemma}
\label{le:MCG_reduced_action}
Every element of $\operatorname{QCI}(\riem^B)$ is in $\qco(\riem^B,\riem^B)$.  Thus, the group action of $\pmodi(\riem^B)$ on $T(\riem^B)$ preserves
 $T_0(\riem^B)$.
\end{lemma}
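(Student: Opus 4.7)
The plan is to reduce the statement to the triviality that the identity map on $S^1$ lies in $\qso(S^1)$, and then combine it with the composition result already proven for refined quasiconformal maps between bordered surfaces.

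First, I would verify the claim that every $\rho \in \operatorname{QCI}(\riem^B)$ lies in $\qco(\riem^B,\riem^B)$. By hypothesis $\rho$ is quasiconformal on $\riem^B$ and equals the identity on each boundary curve $C^i$, $i=1,\ldots,n$; in particular $\rho$ takes each $C^i$ to itself. According to Definition \ref{de:qco}, what remains to check is that the continuous boundary extension of $\rho$ restricted to each $C^i$ belongs to $\qso(C^i,C^i)$. Since that restriction is the identity on $C^i$, one just needs to show the identity map on $C^i$ lies in $\qso(C^i,C^i)$. To verify this via Definition \ref{de:refined_qs_surfaces}, pick any collared chart $H: U \to \mathbb{A}(1,r)$ of $C^i$ and choose $H_1 = H_2 = H$. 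Then $H_2 \circ \mathrm{id} \circ H_1^{-1}$ restricted to $S^1$ is the identity on $S^1$, which is trivially in $\qso(S^1)$ (for example, its welding map is the identity on $\mathbb{D}$, and $\mathrm{id} \in \Oqco$ since $\mathcal{A}(\mathrm{id}) = 0 \in A_1^2(\mathbb{D})$).

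Next, I would derive the preservation of $T_0(\riem^B)$ under the $\pmodi(\riem^B)$-action defined in (\ref{eq:MCG_action}). Given $[\riem^B,f,\riem_1^B] \in T_0(\riem^B)$, by definition $f \in \qco(\riem^B,\riem_1^B)$. For $[\rho] \in \pmodi(\riem^B)$ with representative $\rho \in \operatorname{QCI}(\riem^B)$, the first part of the lemma gives $\rho \in \qco(\riem^B,\riem^B)$. Applying Proposition \ref{pr:composition_preserves_qco_surfaces} (with $\riem^B_3 = \riem^B_1$, $g = f$, and the role of $f$ in that proposition played by $\rho$), we obtain $f \circ \rho \in \qco(\riem^B,\riem_1^B)$, hence $[\riem^B,f\circ\rho,\riem_1^B] \in T_0(\riem^B)$. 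Independence of the representative $\rho$ follows from the definition of $\pmodi(\riem^B)$ combined with the homotopy-rel-boundary equivalence built into Definition \ref{de:refined_Teich_space}.

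There is no real obstacle here; the entire content of the lemma rests on the observation that the identity is a refined quasisymmetry together with the previously established fact that $\qco$ is closed under composition. The only point requiring a moment of care is ensuring that one is allowed to use a single chart $H$ on both sides in Definition \ref{de:refined_qs_surfaces}; but this is permitted since the definition only requires the existence of some pair of collared charts, and chart-independence is in any case guaranteed by Proposition \ref{pr:rqs_chart_independent}.
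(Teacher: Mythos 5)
Your proof is correct and follows essentially the same route as the paper: the paper likewise checks the first claim by applying Definition \ref{de:refined_qs_surfaces} with $H_1=H_2$ (so that the conjugated boundary map is the identity on $S^1$, which lies in $\qso(S^1)$) and deduces the second claim from Proposition \ref{pr:composition_preserves_qco_surfaces}. Your version merely spells out the details that the paper leaves implicit.
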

\begin{proof}
The first statement follows from Definition \ref{de:qco}, and Definition \ref{de:refined_qs_surfaces} with $H_1 = H_2$. The second statement follows from Proposition \ref{pr:composition_preserves_qco_surfaces}.
\end{proof}

\end{subsection}
\begin{subsection}{Sewing on caps} \label{se:sewing_on_caps}
 Given a bordered Riemann surface $\riem^B$ together with quasisymmetric parametrizations of its boundaries by the circle, one can sew on copies
 of the punctured disc to obtain a punctured Riemann surface $\riem$.  The collection of parametrizations extend to an element of
 $\Oqc(\riem)$.  In \cite{RS05}, two of the authors showed that this operation can be used to exhibit a natural correspondence between the rigged Teichm\"uller
 space $\ttildep(\riem)$ and the Teichm\"uller space $T(\riem^B)$, and showed in \cite{RS_fiber} that this results in a natural
 fibre structure on $T(\riem^B)$. We will be using this fibre structure as the principle framework for constructing the Hilbert manifold structure
 on $T_0(\riem^B)$.  It is thus necessary to describe sewing on caps here, in the setting of refined quasisymmetries.

 \begin{definition} \label{de:riggings}
  Let $\riem^B$ be a bordered Riemann surface with boundary curves $C_i$, $i=1,\ldots,n$.
  The {\it riggings} of $\riem^B$ is the collection $\rig(\riem^B)$ of $n$-tuples $\psi=(\psi_1,\ldots,\psi_n)$ such
  that $\psi_i \in \qs(S^1,C_i)$.  The refined riggings is the collection $\rigo(\riem^B)$ of $n$-tuples $\psi=(\psi_1,\ldots,\psi_n)$ such
  that $\psi_i \in \qso(S^1,C_i)$
 \end{definition}

 Let $\riem^B$ be a fixed bordered Riemann surface of type $(g,n)$ say, and $\psi \in \rig(\riem^B)$.  Let $\mathbb{D}_0$
 denote the punctured unit disc $\mathbb{D} \backslash \{0\}$. We obtain a new topological space
 \begin{equation} \label{sew}
 \riem = \overline{\riem^B} \sqcup \overline{\mathbb{D}}_0 \sqcup \cdots \sqcup \overline{\mathbb{D}}_0/\sim .
 \end{equation}
 Here we treat the $n$ copies of $\mathbb{D}_0$ as distinct
 and ordered, and two points $p$ and $q$ are equivalent($p \sim q$) if $p$ is in the boundary of the $i$th disc, $q$ is in the $i$th boundary $C_i$, and
 $q=\psi_i(p)$.  By \cite[Theorems 3.2, 3.3]{RS05} this topological space has a unique complex structure which is compatible with the complex structures
 on $\riem^B$ and each copy of $\mathbb{D}_0$.  We will call the image of a boundary curve in $\riem$ under inclusion
 (which is also the image of $\partial \mathbb{D}$ under inclusion) a \textit{seam}.  We will call the copy of each
 disc in $\riem$ a \textit{cap}. Finally, we will denote equation (\ref{sew}) by
 $$
 \riem=\riem^B \#_\psi \mathbb{D}_0^n
 $$
 to emphasize the underlying element of $\rig(\riem^B)$ used to sew.

For each $i=1,\ldots,n$ the map $\psi_i$ can be extended to a map $\tilde{\psi}_i : \overline{\mathbb{D}}_0 \to \riem$ defined by
\begin{equation} \label{eq:rigging_ext}
\tilde{\psi}_i(z) = \begin{cases}
\psi(z), & \text{for } z \in \partial \mathbb{D} \\
z, & \text{for } z \in \mathbb{D} .
\end{cases}
\end{equation}
Note that $\tilde{\psi}_i$ is well defined and continuous because the map $\psi_i$ is used to identify $\partial{\mathbb{D}}$ with $C_i$. Moreover, $\tilde{\psi}$ is holomorphic on $\mathbb{D}_0$. It is important to keep in mind that if the seam in $\riem$ is viewed as $\partial \mathbb{D}$ then in fact $\tilde{\psi}_i$ is also the identity on $\partial \mathbb{D}$.

 \begin{remark} \label{re:complex_structure_sewn_surface}
  The complex structure on the sewn surface is easily described in terms of conformal welding.  Choose a seam $C_i$ and let $H$ be a
  collared chart (see Definition \ref{de:collar_nbhd}) with respect to $C_i$ with domain $A$ say.  We have that $H \circ \psi_i$ is in $\qs(S^1)$.   Let $F:\mathbb{D} \rightarrow \mathbb{C}$
  and $G:\mathbb{D}^* \rightarrow \overline{\mathbb{C}}$ be the unique holomorphic welding maps such that $G^{-1} \circ F = H \circ \psi_i$
  when restricted to $S^1$, $F(0)=0$, $G(\infty)=\infty$ and $G'(\infty)=1$.  Note that $F$ and $G$ have quasiconformal extensions to
  $\mathbb{C}$ and $\overline{\mathbb{C}}$ respectively.

  Let $\zeta_i$ be the continuous map on $A \cup \overline{\tilde{\psi}_i(\mathbb{D})}$ defined by
  \begin{equation} \label{eq:chart_containing_cap}
    \zeta_i =
   \begin{cases}
     F \circ \tilde{\psi}_i^{-1} & \text{on } \tilde{\psi}(\mathbb{D}) \\
      G \circ H & \text{on } A.
   \end{cases}
  \end{equation}
  It is easily checked that there is such a continuous extension.  Since $\zeta_i$ is $0$-quasiconformal on $\tilde{\psi}_i(\mathbb{D})$
  and $A$, by removability of quasicircles \cite[V.3]{Lehto-Virtanen} $\zeta_i$ is $0$-quasiconformal   (that is, holomorphic and one-to-one), on $A \cup \overline{\tilde{\psi}_i(\mathbb{D})}$.  Thus $\zeta$ is a local coordinate on $\riem$ containing the closure of the cap.
 \end{remark}

 The crucial fact about the extension $\tilde{\psi}=(\tilde{\psi}_1,\ldots,
 \tilde{\psi}_n)$ is that it is in $\Oqco(\riem)$.   In fact we have the following proposition.
 \begin{proposition} \label{pr:two_kinds_riggings}  Let $\riem^B$ be a bordered Riemann surface, and let $\psi=(\psi_1,\ldots,\psi_n)$ be in $\qs(S^1,\riem^B)$.  Let $\riem = \riem^B\#_{\psi} \mathbb{D}_0^n$ and $\tilde{\psi}=(\tilde{\psi}_1,\ldots,\tilde{\psi}_n)$ be the
 $n$-tuple of holomorphic extensions to $\mathbb{D}_0$.  Then $\psi \in \rigo(\riem^B)$ if and only if $\tilde{\psi} \in \Oqco(\riem)$.
 \end{proposition}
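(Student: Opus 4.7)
The plan is to reduce the proposition to a single-puncture analysis and then identify the relevant chart composition with a conformal welding map, so that the equivalence becomes essentially tautological via the definition of $\qso(S^1)$. Since $\Oqco(\riem)$ is characterized by an $n$-tuple of conditions, one per puncture, and $\rigo(\riem^B)$ decomposes analogously, I would fix an index $i$ and drop subscripts. The non-refined parts of membership in $\Oqc(\riem)$ (holomorphy, injectivity, quasiconformal extendibility, non-overlapping, and $\tilde\psi(0)=p$) are already built into the sewing construction and the fact that $\psi_i \in \qs(S^1,C_i)$; this was established in \cite{RS05, RSnonoverlapping}. Thus the entire content to prove is the equivalence between $\mathcal{A}(\zeta\circ\tilde\psi)\in A_1^2(\mathbb{D})$ for an admissible chart $\zeta$ around $p$, and $\psi \in \qso(S^1,C)$.

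The key step is to take $\zeta$ to be the specific chart constructed in Remark \ref{re:complex_structure_sewn_surface}. For a collared chart $H$ of $C$, let $F:\mathbb{D}\to\mathbb{C}$ and $G:\mathbb{D}^*\to\overline{\mathbb{C}}$ be the normalized conformal welding maps with $G^{-1}\circ F = H\circ\psi$ on $S^1$. By construction $\zeta = F\circ \tilde\psi^{-1}$ on the cap, so
\[
\zeta \circ \tilde\psi \;=\; F \qquad \text{on } \mathbb{D}.
\]
Therefore $\zeta\circ\tilde\psi \in \Oqco$ if and only if $F \in \Oqco$. On the other hand, by Proposition \ref{pr:rqs_chart_independent} and Definition \ref{de:refined_qs_surfaces}, $\psi \in \qso(S^1,C)$ if and only if $H\circ\psi \in \qso(S^1)$, which by the very definition of $\qso(S^1)$ is equivalent to the associated welding map lying in $\Oqco$. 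The Remark's normalization ($G'(\infty)=1$, $F(0)=0$) differs from the $w^\mu{}'(0)=1$ normalization used to define $\qso(S^1)$ only by a global nonzero scalar on $F$; since $\mathcal{A}(\lambda F) = \mathcal{A}(F)$ and $F(0)=0$ is preserved, the two welding maps represent the same element of $\Oqco$ (see the remark following the definition of $\qso(S^1)$). This delivers the biconditional on the distinguished chart $\zeta$ simultaneously in both directions.

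Finally, to pass from the distinguished chart to arbitrary $n$-charts, I would invoke Lemma \ref{le:Oqco_composition_preserves}. Any other chart $\eta$ with $\overline{\tilde\psi(\mathbb{D})}\subset\operatorname{dom}(\eta)$ differs from $\zeta$ by the biholomorphism $\eta\circ\zeta^{-1}$, which is holomorphic on an open neighborhood of the compact set $\overline{F(\mathbb{D})}$, so the lemma guarantees that $\eta\circ\tilde\psi = (\eta\circ\zeta^{-1})\circ(\zeta\circ\tilde\psi)$ is in $\Oqco$ whenever $\zeta\circ\tilde\psi$ is, and applying the lemma to $\zeta\circ\eta^{-1}$ gives the converse. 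The main obstacle I anticipate is purely bookkeeping: tracking that the normalization mismatch between the welding map appearing in the Remark and the one appearing in the definition of $\qso(S^1)$ is genuinely harmless, and correctly invoking the chart-independence of $\Oqco(\riem)$ so that the argument yields an \emph{iff} rather than a one-way implication. Conceptually, however, once the identification $\zeta\circ\tilde\psi = F$ is made, the proposition is essentially a restatement of the definition of $\qso(S^1)$.
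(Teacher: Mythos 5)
Your proposal is correct and follows essentially the same route as the paper: both proofs reduce to the chart $\zeta_i$ of Remark \ref{re:complex_structure_sewn_surface}, observe that $\zeta_i\circ\tilde\psi_i=F$ is the conformal welding map of $H\circ\psi_i$, and conclude via the definition of $\qso(S^1)$ together with Proposition \ref{pr:rqs_chart_independent}. The extra care you take with the normalization of the welding map and with chart-independence (via Lemma \ref{le:Oqco_composition_preserves}) is exactly the bookkeeping the paper leaves implicit, having already established the well-definedness of $\Oqco(\riem)$ and noted that a change of normalization does not affect $\qso(S^1)$.
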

 \begin{proof}
  Let $H$ be a collared chart with respect to the $i$th boundary curve $C_i$, and let $F$, $G$ and $\zeta_i$ be as in Remark \ref{re:complex_structure_sewn_surface}.  By definition $\psi_i \in \qso(S^1,C_i)$ if and only if $H \circ \psi_i \in \qso(S^1)$
  which holds if and only if the welding map $F$ is in $\Oqco$.  Since $F=\zeta \circ \tilde{\psi}_i$ this proves the claim.
 \end{proof}
 The following Proposition is a consequence of Proposition \ref{pr:mixed_composition_preserves} and Theorem \ref{th:QSo_group}.
 \begin{proposition} \label{pr:mixed_composition_riggings}
  Let $\riem^B_1$ and $\riem^B_2$ be bordered Riemann surfaces, and let $\tau \in \rigo(\riem^B_1)$. Then $f \in \qco(\riem^B_1,\riem^B_2)$ if and only if $f \circ \tau \in \rigo(\riem^B_2)$.
 \end{proposition}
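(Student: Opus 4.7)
The plan is to show both implications by unpacking the definitions and then appealing to the two tools mentioned in the hint: Proposition \ref{pr:mixed_composition_preserves} for one direction, and Theorem \ref{th:QSo_group} (closure of $\qso(S^1)$ under inversion) together with Proposition \ref{pr:composition_preserves_qso_surfaces} for the other. Throughout, I read $f$ as a quasiconformal map from $\riem^B_1$ onto $\riem^B_2$ (this is implicit in the setting, and in either side of the iff $f$ has a continuous boundary extension). Write $\tau=(\tau_1,\ldots,\tau_n)$ with $\tau_i\in\qso(S^1,C_1^i)$, and let $C_2^{j(i)}$ denote the boundary curve of $\riem^B_2$ to which $f$ sends $C_1^i$.

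\textbf{Forward direction.} Assume $f\in\qco(\riem^B_1,\riem^B_2)$. By Definition \ref{de:qco}, the continuous extension of $f$ on each $C_1^i$ lies in $\qso(C_1^i,C_2^{j(i)})$. Applying Proposition \ref{pr:mixed_composition_preserves} with $\phi=\tau_i$ and $C_1=C_1^i$ gives $f\circ\tau_i\in\qso(S^1,C_2^{j(i)})$ for each $i$, hence $f\circ\tau\in\rigo(\riem^B_2)$.

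\textbf{Reverse direction.} Assume $f\circ\tau\in\rigo(\riem^B_2)$, so $f\circ\tau_i\in\qso(S^1,C_2^{j(i)})$ for each $i$. The key algebraic identity is
\[
f\big|_{C_1^i} \;=\; (f\circ\tau_i)\circ\tau_i^{-1}.
\]
First I would verify $\tau_i^{-1}\in\qso(C_1^i,S^1)$: pick a collared chart $H_1$ of $C_1^i$; then $H_1\circ\tau_i\in\qso(S^1)$ by hypothesis on $\tau$, so by Theorem \ref{th:QSo_group} its inverse $\tau_i^{-1}\circ H_1^{-1}$ is in $\qso(S^1)$. Taking the identity as collared chart of $S^1$ (viewed as the boundary of some annulus $\mathbb{A}(1,r)$, per the remark following Definition \ref{de:refined_qs_surfaces}), this means precisely $\tau_i^{-1}\in\qso(C_1^i,S^1)$. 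Now Proposition \ref{pr:composition_preserves_qso_surfaces} applied to $\tau_i^{-1}\in\qso(C_1^i,S^1)$ followed by $f\circ\tau_i\in\qso(S^1,C_2^{j(i)})$ yields $f|_{C_1^i}\in\qso(C_1^i,C_2^{j(i)})$ for every $i$, so by Definition \ref{de:qco} we conclude $f\in\qco(\riem^B_1,\riem^B_2)$.

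There is no substantial obstacle: the proposition is essentially a bookkeeping statement that the two compatibility properties (closure of $\qso$ under composition and inversion between boundary curves, and the mixed composition Proposition \ref{pr:mixed_composition_preserves}) fit together. The only mild care needed is in the reverse direction, where one must identify $\tau_i^{-1}$ as living in $\qso(C_1^i,S^1)$ — this is precisely where Theorem \ref{th:QSo_group} is used. The factorization $f|_{C_1^i}=(f\circ\tau_i)\circ\tau_i^{-1}$ then reduces the claim to Proposition \ref{pr:composition_preserves_qso_surfaces}, completing the proof.
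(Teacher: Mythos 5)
Your proof is correct and follows exactly the route the paper intends: the paper states the proposition as an immediate consequence of Proposition \ref{pr:mixed_composition_preserves} and Theorem \ref{th:QSo_group} without giving details, and your argument (forward direction via Proposition \ref{pr:mixed_composition_preserves}; reverse direction via the factorization $f|_{C_1^i}=(f\circ\tau_i)\circ\tau_i^{-1}$, closure under inversion, and Proposition \ref{pr:composition_preserves_qso_surfaces}) is a faithful and complete filling-in of that sketch.
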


 We now have enough tools to describe the relation between $T_0(\riem^B)$ and $\ttildeop(\riem)$.
 \begin{definition} \label{de:pdb}
  Let
 $\riem^B$ be a bordered Riemann surface, let $\tau \in \rigo(\riem)$ be a fixed rigging, and  let $\riem = \riem^B \#_{\tau} \mathbb{D}_0^n$.  We define
 \begin{align*}
  \pdb: T(\riem^B) & \longrightarrow  \ttildep(\riem) \\
  {[{\riem^B},f,{\riem_1^B}]} & \longmapsto  [\riem,\tilde{f},\riem_1,\tilde{f} \circ \tilde{\tau}].
 \end{align*}
 where $\tilde{\tau}$ is the extension defined by (\ref{eq:rigging_ext}),
 \begin{equation}
 \label{eq:marking_ext}
   \tilde{f}(z) = \begin{cases} f(z),  & z \in \overline{\riem^B} \\ z, & z \in \  \text{cap}, \end{cases}
 \end{equation}
 and $\riem_1 = \riem^B_1 \#_{f \circ \tau} \mathbb{D}_0^n$ is the Riemann surface obtained by sewing caps onto $\riem^B_1$ using $f \circ \tau$.
 \end{definition}
 The map $\tilde{f}$ is quasiconformal, since it is quasiconformal
 on $\riem^B$ and the cap, and is continuous on the seam \cite[V.3]{Lehto-Virtanen}.
 \begin{remark} \label{re:tilde_commutes}
  If $\widetilde{f \circ \tau}$ denotes the holomorphic extension of $f \circ \tau$ as in equation (\ref{eq:rigging_ext}), then $\widetilde{f \circ \tau}= \tilde{f} \circ \tilde{\tau}$.
 \end{remark}

 It was shown in \cite{RS05} that $\pdb$ is invariant under the action of $\db$, and in fact
 $$
 \pdb([\riem^B,f,\riem^B_1])=\pdb([\riem^B,f_2,\riem^B_2]) \Longleftrightarrow
 [\riem^B,f_2,\riem^B_2]=[\rho][\riem^B,f_1,\riem^B_1]
 $$
 for some $[\rho] \in \db$.
 (The reader is warned that the direction of the riggings in \cite{RS05} is opposite to the convention used here).
 Thus $\ttildep(\riem)= T(\riem^B)/\db$ as sets.  Furthermore, the group action by $\db$ is properly discontinuous
 and fixed point free, and the map $\pdb$ is holomorphic with local holomorphic inverses.  Thus $\ttildep(\riem)$ inherits
 a complex structure from $T(\riem^B)$.

 On the other hand, in the refined setting, instead of having a complex
 structure on Teichm\"uller space in the first place, we are trying to construct one.  In the next section, we will reverse the argument above and lift the complex Hilbert manifold structure on $\ttildeop(\riem)$ to
 $T_0(\riem^B)$.  To this end we need the following facts.
 \begin{proposition} \label{pr:pdb_preserves_refined} Let $p = [\riem^B, f, \riem^B_1] \in T(\riem^B)$. Then
  $p \in T_0(\riem^B)$ if and only if $\pdb(p) \in \ttildeop(\riem)$.
 \end{proposition}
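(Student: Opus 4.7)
The plan is to chain together the three elementary results already set up in the paper, Proposition \ref{pr:mixed_composition_riggings}, Proposition \ref{pr:two_kinds_riggings}, and Remark \ref{re:tilde_commutes}. Once these are lined up, the equivalence is essentially tautological, so the main content is (a) choosing a good representative and (b) checking that the refined property is independent of the choice of representative within the Teichm\"uller class.

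First I would reduce to a statement about representatives. By Definition \ref{de:refined_Teich_space}, a point $p = [\riem^B, f, \riem^B_1] \in T(\riem^B)$ lies in $T_0(\riem^B)$ if and only if its Teichm\"uller class contains a refined marking, i.e.\ some $(\riem^B, f', \riem^B_{1'})$ with $f' \in \qco(\riem^B, \riem^B_{1'})$. So I will pick a representative and show that the $\qco$-property for $f$ is equivalent to the $\Oqco$-property for $\tilde f \circ \tilde\tau$. The chain runs as follows: by Proposition \ref{pr:mixed_composition_riggings}, since $\tau \in \rigo(\riem^B)$ by hypothesis, we have $f \in \qco(\riem^B, \riem^B_1)$ if and only if $f \circ \tau \in \rigo(\riem^B_1)$; by Proposition \ref{pr:two_kinds_riggings} applied to the bordered surface $\riem^B_1$ and the rigging $f\circ\tau$ (noting that $\riem_1 = \riem^B_1 \#_{f\circ\tau} \mathbb{D}_0^n$ by construction in Definition \ref{de:pdb}), $f \circ \tau \in \rigo(\riem^B_1)$ if and only if $\widetilde{f\circ\tau} \in \Oqco(\riem_1)$; and by Remark \ref{re:tilde_commutes}, $\widetilde{f\circ\tau} = \tilde f \circ \tilde\tau$. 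Unwinding the definition of $\pdb$, this last statement is exactly $\pdb(p) \in \ttildeop(\riem)$.

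To finish, I would verify that the $\qco$-property is invariant under Teichm\"uller equivalence, so that the above reasoning is representative-independent. Suppose $(\riem^B, f_1, \riem^B_1) \sim (\riem^B, f_2, \riem^B_2)$ via a biholomorphism $\sigma : \riem^B_1 \to \riem^B_2$ such that $f_2^{-1} \circ \sigma \circ f_1$ is homotopic to the identity rel boundary. Then $\sigma$ extends to a biholomorphism of the doubles (and hence is analytic across each boundary curve), so $\sigma \in \qco(\riem^B_1, \riem^B_2)$ by Proposition \ref{pr:analytic_in_qso} together with Definition \ref{de:qco}. Moreover $\sigma \circ f_1$ and $f_2$ agree on the boundary, and membership in $\qco$ depends only on the boundary behavior (Definitions \ref{de:refined_qs_surfaces} and \ref{de:qco}). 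Hence $f_1 \in \qco$ iff $\sigma \circ f_1 \in \qco$ (by Proposition \ref{pr:composition_preserves_qco_surfaces}) iff $f_2 \in \qco$. An analogous check on the rigged side, using the $\sigma$-side of the equivalence relation in Definition \ref{de:rigged_Teich0}, shows that the condition $\tilde f \circ \tilde\tau \in \Oqco(\riem_1)$ is a property of the class in $\ttildep(\riem)$.

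I do not anticipate any serious obstacle: every ingredient has been established as an ``if and only if'' earlier in the paper, and the sewing operation was set up precisely so that refined riggings on $\riem^B$ correspond to non-overlapping maps in $\Oqco(\riem)$ via the extension $\tilde{\cdot}$. The only mildly subtle point is keeping straight which surface plays the role of ``$\riem^B_1$'' versus ``$\riem^B_2$'' when quoting Proposition \ref{pr:mixed_composition_riggings}, and remembering that the sewing of $\riem_1$ was done with $f\circ\tau$, not $\tau$, so that the image of the extension $\widetilde{f\circ\tau}$ really lands in the proper cap of $\riem_1$.
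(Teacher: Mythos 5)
Your proof is correct and takes essentially the same route as the paper's, which is exactly the chain Proposition \ref{pr:mixed_composition_riggings} $\Rightarrow$ Proposition \ref{pr:two_kinds_riggings} $\Rightarrow$ Remark \ref{re:tilde_commutes} applied to the given representative. Your additional check that the $\qco$-property is invariant under Teichm\"uller equivalence is a point the paper leaves implicit, and your argument for it (a biholomorphism $\sigma$ is analytic across the boundary, hence lies in $\qco$, and $\sigma\circ f_1$ agrees with $f_2$ on the boundary) is sound.
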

 \begin{proof}  Since $\tau \in \rigo(\riem^B)$,  $f \in \qco(\riem^B,\riem^B_1)$ if and only if  $f \circ \tau \in \rigo(\riem_1^B)$  by Proposition \ref{pr:mixed_composition_riggings}. And this holds if and only if $\widetilde{f\circ \tau} \in \Oqco(\riem_1)$ by Proposition \ref{pr:two_kinds_riggings}.  By Remark \ref{re:tilde_commutes},
  $\tilde{f} \circ \tilde{\tau} \in \Oqco(\riem_1)$ which proves the claim.
 \end{proof}
 We now define the map $\Pi_0$ by
 $$\Pi_0 = \Pi|_{T_0(\riem^B)},
 $$
 and as a result of this proposition we have
 \begin{equation}
 \label{eq:Pio}
 \Pi_0  : T_0(\riem^B) \longrightarrow \ttildeop(\riem).
 \end{equation}

 \begin{proposition} \label{pr:db_fpf_and_kernel_is_db}
The action of $\db$ is fixed point free, and for $[\riem^B,f_i,\riem_i^B] \in T_0(\riem^B)$, $i=1,2$, $\pdb_0([\riem^B,f_1,\riem_1^B])=\pdb_0([\riem^B,f_2,\riem_2^B])$ if and only
  if there is a $[\rho] \in \db$ such that $[\rho][\riem^B,f_1,\riem_1^B]=[\riem^B,f_2,\riem_2^B]$.  The map $\Pi : T_0(\riem^B) \to \ttildeop(\riem)$ is onto and thus, as sets, $T_0(\riem^B)/\db$ and $\ttildeop(\riem)$ are in one-to-one correspondence.
 \end{proposition}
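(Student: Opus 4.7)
The plan is to bootstrap all three assertions from the corresponding theorem for the unrefined map $\pdb : T(\riem^B) \to \ttildep(\riem)$ established in \cite{RS05}, exploiting three compatibilities: $\pdb_0 = \pdb|_{T_0(\riem^B)}$ by \eqref{eq:Pio}; the equivalence relation defining $T_0(\riem^B)$ in Definition \ref{de:refined_Teich_space} is literally the restriction of the one defining $T(\riem^B)$; and by Lemma \ref{le:MCG_reduced_action} the $\db$-action restricts to $T_0(\riem^B)$.

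Fixed-point freeness of $\db$ on $T_0(\riem^B)$ is then automatic from the corresponding property on the ambient $T(\riem^B)$ established in \cite{RS05}, since freeness descends to any invariant subset. For the fibre identification, suppose $\pdb_0(p_1) = \pdb_0(p_2)$ for $p_i \in T_0(\riem^B)$. Viewed as an equality in $\ttildep(\riem)$, the \cite{RS05} result yields $[\rho] \in \db$ with $[\rho] p_1 = p_2$ in $T(\riem^B)$; since both sides already lie in $T_0(\riem^B)$ and the equivalence relation is inherited, this equality persists in $T_0(\riem^B)$. The reverse implication, that $\db$-equivalent points share a $\pdb_0$-value, is part of the $\db$-invariance of $\pdb$ proved in \cite{RS05}.

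The surjectivity is where the genuinely new input, Proposition \ref{pr:pdb_preserves_refined}, enters. Given $q \in \ttildeop(\riem) \subseteq \ttildep(\riem)$, the surjectivity of $\pdb$ shown in \cite{RS05} supplies some $p \in T(\riem^B)$ with $\pdb(p) = q$; since $\pdb(p)$ lies in $\ttildeop(\riem)$, Proposition \ref{pr:pdb_preserves_refined} upgrades $p$ to an element of $T_0(\riem^B)$, and then $\pdb_0(p) = q$. Combining the three pieces shows that $\pdb_0$ descends to a well-defined bijection $T_0(\riem^B)/\db \to \ttildeop(\riem)$. The only non-trivial ingredient beyond the \cite{RS05} machinery is Proposition \ref{pr:pdb_preserves_refined}; once it is granted, the remainder of the proof is a compatibility check, and the principal pitfall to guard against is conflating the different equivalence relations on the refined and unrefined spaces rather than any real analytic obstacle.
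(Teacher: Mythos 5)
Your proposal is correct and follows essentially the same route as the paper: the paper's own proof simply observes that all three claims hold in the non-refined setting by \cite[Lemma 5.1, Theorem 5.6]{RS05} and then invokes Proposition \ref{pr:pdb_preserves_refined} to transfer them to the refined setting, which is exactly the bootstrapping you carry out in more detail. Your explicit identification of Proposition \ref{pr:pdb_preserves_refined} as the one genuinely new input (needed for surjectivity) matches the paper's reasoning precisely.
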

 \begin{proof} These claims are all true in the non-refined setting \cite[Lemma 5.1, Theorem 5.6]{RS05}.
  Thus by Proposition \ref{pr:pdb_preserves_refined} they are true in the refined setting.
 \end{proof}
\end{subsection}
\begin{subsection}{Complex Hilbert manifold structure on refined Teichm\"uller
space}  \label{se:complex_structure_refined_Teich}
 Next we describe how to construct the complex structure on $T_0(\riem^B)$.  Let $\riem^B$ be a bordered
 Riemann surface, and let $\tau \in \rigo(\riem^B)$.  Let $\riem$ be the Riemann surface obtained by sewing on caps
 via $\tau$ as in the previous section.

 We define a base $\mathcal{B}$ for a topology on $T_0(\riem^B)$ as follows.
 Recall that $\mathcal{F}$ is the base for $\ttildeop(\riem)$ (Definition
  \ref{de:base_rigged_Teich}).
 \begin{definition} \label{de:base_refined_bordered}
 A set $B \in \mathcal{B}$ if and only if
 \begin{enumerate}
 \item $\pdb_0(B) \in \mathcal{F}$
 \item $\pdb_0$ is one-to-one
 on $B$.
 \end{enumerate}
 \end{definition}

 \begin{theorem} \label{th:refined_Teich_base} The set $\mathcal{B}$ is a base.  With the topology corresponding to $\mathcal{B}$, $\ttildeop(\riem)$
  has the quotient topology with respect to $\pdbo$ and $\db$ is properly discontinuous.
 \end{theorem}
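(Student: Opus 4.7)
The plan is to transfer the topological and complex structure from the refined rigged Teichm\"uller space $\ttildeop(\riem)$, already shown to be a Hilbert manifold in Theorem \ref{th:tilde_top_base}, to $T_0(\riem^B)$ through the surjection $\pdbo$ and the set-theoretic quotient identification $T_0(\riem^B)/\db \cong \ttildeop(\riem)$ supplied by Proposition \ref{pr:db_fpf_and_kernel_is_db}. The essential input is that in the non-refined Banach setting, the map $\Pi: T(\riem^B) \to \ttildep(\riem)$ was shown in \cite{RS05} to be a covering map whose deck transformations are given by $\db$, acting properly discontinuously and freely; by Proposition \ref{pr:pdb_preserves_refined} the restriction of $\Pi$ to $T_0(\riem^B)$ is precisely $\pdbo$, so the combinatorial data of a covering is already available at the level of sets, and only the compatibility with the refined topology has to be checked.

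To verify that $\mathcal{B}$ covers $T_0(\riem^B)$, I would take any $p\in T_0(\riem^B)$, choose some $F\in \mathcal{F}$ containing $\pdbo(p)$, and use the Banach proper discontinuity of $\db$ on $T(\riem^B)$ to select an evenly covered open neighborhood $W$ of $p$ in $T(\riem^B)$ on which $\Pi$ is injective. Shrinking $F$ within $\mathcal{F}$ so that $F\subseteq \Pi(W)$ (using Theorem \ref{th:tilde_top_base}), the set $B:=W\cap \pdbo^{-1}(F)$ automatically lies in $T_0(\riem^B)$ by Proposition \ref{pr:pdb_preserves_refined}, maps onto $F$, and is injective under $\pdbo$ because $\Pi|_W$ is. For the intersection property, given $B_1,B_2\in \mathcal{B}$ with $p\in B_1\cap B_2$, I would again choose an evenly covered $W\ni p$, use the base property of $\mathcal{F}$ to find $F_3\in \mathcal{F}$ with $\pdbo(p)\in F_3\subseteq \pdbo(B_1)\cap \pdbo(B_2)\cap \Pi(W)$, and set $B_3:=W\cap \pdbo^{-1}(F_3)$; the verification $B_3\subseteq B_1\cap B_2$ reduces to observing that the section of $\pdbo$ determined by each $B_i$ must agree inside $W$ with the canonical section furnished by $(\Pi|_W)^{-1}$, since both sections agree at $p$ and any two lifts of the same point differ by an element of the discrete group $\db$.

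The remaining two assertions follow quickly from the covering setup. For the quotient topology, given any $B\in \mathcal{B}$ with $\pdbo(B)=F$, Proposition \ref{pr:db_fpf_and_kernel_is_db} together with the injectivity of $\pdbo|_B$ yields the disjoint decomposition
\[
   \pdbo^{-1}(F)=\bigsqcup_{[\rho]\in\db}[\rho]B,
\]
where each $[\rho]B$ again belongs to $\mathcal{B}$. Hence $\pdbo$ sends basic open sets of $\mathcal{B}$ into $\mathcal{F}$ and has preimages of basic open sets of $\mathcal{F}$ equal to unions of basic open sets of $\mathcal{B}$, so $\pdbo$ is both continuous and open, giving the quotient topology. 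For the proper discontinuity of $\db$, the neighborhoods $B$ produced in the covering step satisfy $[\rho]B\cap B=\emptyset$ for $[\rho]\neq[\mathrm{id}]$: if $q$ were in the intersection, then $q$ and $[\rho]^{-1}q$ would both lie in $B$ with the same $\pdbo$-image, forcing $q=[\rho]^{-1}q$ by injectivity of $\pdbo|_B$, contradicting the fixed-point-freeness asserted in Proposition \ref{pr:db_fpf_and_kernel_is_db}.

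The hard part of this program is the intersection property for $\mathcal{B}$: the definition requires only set-theoretic injectivity of $\pdbo|_{B_i}$, so one must use the non-refined Banach covering structure from \cite{RS05} as an external tool to pin down the behavior of $B_1,B_2$ in a neighborhood of $p$ and extract a common refinement lying in $\mathcal{B}$. Once the intersection property is in place, the covering, quotient, and proper discontinuity assertions are all formal consequences of the covering-plus-basis picture already available for $\mathcal{F}$.
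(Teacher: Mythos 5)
Your proposal follows the same overall strategy as the paper: everything is pulled back from the Banach-level covering $\Pi: T(\riem^B) \to \ttildep(\riem)$ of \cite{RS05} via Propositions \ref{pr:pdb_preserves_refined} and \ref{pr:db_fpf_and_kernel_is_db}. Two of your sub-arguments are genuinely different from (and arguably cleaner than) the paper's. For the quotient topology the paper chases elements directly in both directions, whereas you observe the sheet decomposition $\pdbo^{-1}(F)=\bigsqcup_{[\rho]\in\db}[\rho]B$ and conclude that $\pdbo$ is a continuous open surjection, hence a quotient map; this decomposition is correct (disjointness uses injectivity of $\pdbo|_B$ together with freeness of the action, and each translate $[\rho]B$ is again in $\mathcal{B}$ because $\pdbo$ is $\db$-invariant). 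For proper discontinuity the paper restricts a Banach-level evenly covered neighborhood, whereas you derive $[\rho]B\cap B=\emptyset$ directly from injectivity of $\pdbo|_B$ and fixed-point-freeness; this also works. One small omission in the covering step: to shrink $F$ so that $F\subseteq\Pi(W)$ you need to know that $\Pi(W)\cap\ttildeop(\riem)$ is open in the refined topology of $\ttildeop(\riem)$, which is exactly what Theorem \ref{th:top in tp} (continuity of the inclusion $\ttildeop(\riem)\hookrightarrow\ttildep(\riem)$) provides; the paper cites it explicitly at this point and your argument needs it too.

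The one place where your argument is genuinely incomplete is the step you yourself flag as the hard part, the intersection property. You claim that the section $s_i=(\pdbo|_{B_i})^{-1}$ must agree with $(\Pi|_W)^{-1}$ on $F_3$ ``since both sections agree at $p$ and any two lifts of the same point differ by an element of the discrete group $\db$.'' That inference is not valid as stated: the deck transformation relating $s_i(y)$ to $(\Pi|_W)^{-1}(y)$ can a priori depend on $y$. To promote pointwise agreement at $p$ to agreement on all of $F_3$ one needs $s_i$ to be continuous and $F_3$ connected (or some substitute), and at this stage of the construction neither is available: membership of $B_i$ in $\mathcal{B}$ requires only set-theoretic injectivity of $\pdbo|_{B_i}$, the sets $F\in\mathcal{F}$ need not be connected (the fibre factor $U\subset(\Oqco)^n$ is an arbitrary open set), and the topology on $T_0(\riem^B)$ with respect to which $s_i$ might be continuous is the very thing being defined. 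To be fair, the paper's own proof is equally terse at exactly this point --- it simply asserts $B_3=\left(\left.\pdbo\right|_{B_1}\right)^{-1}(U)\subset B_1\cap B_2$ without justifying the containment in $B_2$ --- so you have correctly located the delicate step, but your proposed resolution does not yet close it.
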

 \begin{proof}
  Let $x \in T_0(\riem^B)$.  We show that there is a $B \in \mathcal{B}$ containing $x$. There is a neighborhood $U$ of $x$ in $T(\riem^B)$ on which $\Pi$ is one-to-one \cite{RS05}.
  Let $U'=\Pi(U)$; this is open in $\widetilde{T}(\riem)$ \cite{RS05}.  By Theorem \ref{th:top in tp}, the set $U' \cap \ttildeop(\riem)$
  is open in $\ttildeop(\riem)$. Thus there is an element $F \subset U' \cap \ttildeop(\riem)$ of the base $\mathcal{F}$ which
  contains $\Pi(x)$.  Since $\left. \Pi \right|_{U}$ is invertible, we can set $B=\left(\left. \Pi \right|_{U}\right)^{-1}(F)$,
  and $B$ is in $\mathcal{B}$ and contains $x$.

  Next, fix $q \in T_0(\riem^B)$ and let $B_1, B_2 \in \mathcal{B}$ contain $q$. We show that the intersection contains an element of $\mathcal{B}$.  Let
  $U \subset \pdbo(B_1) \cap \pdbo(B_2)$ be a set in $\mathcal{F}$ containing $\pdb_0(q)$.  Set $B_3 = \left(\left. \pdbo \right|_{B_1} \right)^{-1}(U)
  \subset B_1 \cap B_2$.  We then have that $\pdbo$ is one-to-one on $B_3$ (since $B_3 \subset B_1$) and $\pdbo(B_3)=U$.  So $B_3 \in \mathcal{B}$.  Thus $\mathcal{B}$ is a base.

  Now we show that $\ttildeop(\riem)$ has the quotient topology with respect to $\pdbo$.  Let $U$ be open in $\ttildeop(\riem)$
  and let $x \in \pdbo^{-1}(U)$.  There is a $B_x \in \mathcal{B}$ containing $x$ such that $\pdbo$ is one-to-one on $B_x$, and $\pdbo(B_x)$ is
  open and in $\mathcal{F}$.  Since $\pdbo(B_x) \cap U$ is open and non-empty (it contains $\pdbo(x)$), there is a $F_x \in \mathcal{F}$ such that $\pdbo(x) \in F_x$
  and $F_x \subset \pdbo(B_x) \cap U$.  By definition $\tilde{B}_x = \left( \left. \pdbo \right|_{B_x} \right)^{-1}(F_x) \in \mathcal{B}$.
  By construction $x \in \tilde{B}_x$ and $\tilde{B}_x$ is open and contained in $U$.  Since $x$ was arbitrary, $\pdbo^{-1}(U)$ is open.

  Let $U \in \ttildeop(\riem)$ be such that $\pdbo^{-1}(U)$ is open.  Let $x \in U$ and $y \in \pdbo^{-1}(U)$ be such that $\pdbo(y)=x$.
  There is a $B_y \in \mathcal{B}$ such that $y \in B_y \subset \pdbo^{-1}(U)$.  So $\pdbo(B_y) \subset U$ and $x \in \pdbo(B_y)$.
  Since $B_y$ is in $\mathcal{B}$, $\pdbo(B_y) \in \mathcal{F}$, so $\pdbo(B_y)$ is open.  Since $x$ was arbitrary, $U$ is open.  This
  completes the proof
  that $\ttildeop(\riem)$ has the quotient topology.

  Finally, we show that $\db$ acts properly discontinuously on $T_0(\riem^B)$.  Let $x \in T_0(\riem^B)$.
  By \cite[Lemma 5.2]{RS05}, $\db$ acts properly discontinuously
  on $T(\riem^B)$ in its topology.  Thus there is an open set $U \subset T(\riem^B)$ containing $x$ such that
  $g(U) \cap U$ is empty for all $g \in \db$, and on which $\pdb$ is one-to-one.  Furthermore, $\pdb(U)$ is open in
  $\ttildep(\riem)$ since $\pdb$ is a local homeomorphism \cite{RS05}.  By Theorem \ref{th:top in tp},
  $\pdb(U) \cap \ttildeop(\riem)$ is open in $\ttildeop(\riem)$, so there exists an $F \in \mathcal{F}$ such that
  $F \subset \pdb(U) \cap \ttildeop(\riem)$ and $\pdb(x) \in F$ (note that $\pdb(x) \in \ttildeop(\riem)$ by
  Proposition \ref{pr:pdb_preserves_refined}).  So $W=\left( \left. \pdb \right|_{U} \right)^{-1}(F)$ is in $\mathcal{B}$
  by definition, and contains $x$.  In particular $W$ is open, and since $W \subset U$ by construction,
  $g(W) \cap W$ is empty for all $g \in \db$.  This completes the proof.
 \end{proof}
 \begin{corollary} \label{co:Teichzero_Haus_2nd}
  With the topology defined by $\mathcal{B}$, $T_0(\riem^B)$ is
  Hausdorff and separable.
 \end{corollary}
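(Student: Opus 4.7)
The plan is to transfer the Hausdorff and separability properties from $\ttildeop(\riem)$ to $T_0(\riem^B)$ via $\pdbo$, exploiting the fact established in Theorem~\ref{th:refined_Teich_base} that $\ttildeop(\riem)$ carries the quotient topology with respect to $\pdbo$ and that $\db$ acts properly discontinuously on $T_0(\riem^B)$.

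For the Hausdorff property, I would take two distinct points $x,y \in T_0(\riem^B)$ and split into cases. If $\pdbo(x)\neq\pdbo(y)$, I would use that $\ttildeop(\riem)$ is Hausdorff (Theorem~\ref{th:tilde_top_base}) to find disjoint open neighborhoods of their images in $\ttildeop(\riem)$; their preimages under $\pdbo$ are then disjoint open neighborhoods of $x$ and $y$ by the quotient-topology property. If $\pdbo(x)=\pdbo(y)$, Proposition~\ref{pr:db_fpf_and_kernel_is_db} provides a non-trivial $[\rho]\in\db$ with $[\rho]x=y$. The key observation here is that each $[\rho]$ acts as a homeomorphism of $T_0(\riem^B)$: since $\pdbo\circ[\rho]=\pdbo$ and $[\rho]$ is a bijection, the two conditions defining a set to lie in $\mathcal{B}$ (namely that its image under $\pdbo$ lies in $\mathcal{F}$ and that $\pdbo$ restricts to an injection on it) are preserved by $[\rho]$. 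Proper discontinuity from Theorem~\ref{th:refined_Teich_base} then yields a $B\in\mathcal{B}$ containing $x$ with $[\rho](B)\cap B=\emptyset$, and $[\rho](B)$ is an open neighborhood of $y$ disjoint from $B$.

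For separability, I would use that $\ttildeop(\riem)$ is separable (Theorem~\ref{th:tilde_top_base}) and that $\db$, being generated by the finitely many Dehn twists around boundary curves, is countable. Choosing a countable dense $\mathfrak{D}\subset\ttildeop(\riem)$, the preimage $\mathfrak{D}'=\pdbo^{-1}(\mathfrak{D})$ is then a countable subset of $T_0(\riem^B)$. To see it is dense, for any non-empty open $U\subseteq T_0(\riem^B)$ I would pick $B\in\mathcal{B}$ with $B\subseteq U$; then $\pdbo(B)\in\mathcal{F}$ is non-empty and open in $\ttildeop(\riem)$, so it meets $\mathfrak{D}$ in some point $d$, and the unique preimage of $d$ in $B$ (using the injectivity of $\pdbo|_B$) lies in $\mathfrak{D}'\cap U$.

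I expect the main obstacle to be the verification that each $[\rho]\in\db$ acts as a homeomorphism of $T_0(\riem^B)$ in the topology defined by $\mathcal{B}$; the argument sketched above is short, but one must track carefully the interaction between the base $\mathcal{B}$ and the relation $\pdbo\circ[\rho]=\pdbo$. Once this is in hand, everything else is an essentially formal transfer through the quotient map, and I do not foresee further complications.
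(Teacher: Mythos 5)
Your proposal is correct and follows essentially the same route as the paper: the same two-case argument for the Hausdorff property (disjoint images handled via the Hausdorffness of $\ttildeop(\riem)$, equal images handled via proper discontinuity of $\db$ and the fact that each $[\rho]$ acts by homeomorphisms), and the same pullback of a countable dense set through the countable-to-one map $\pdbo$ for separability. Your explicit check that $[\rho]$ preserves the base $\mathcal{B}$ (via $\pdbo \circ [\rho] = \pdbo$) fills in a small step the paper leaves implicit.
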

 \begin{proof}  Let $x, y \in T_0(\riem^B)$, $x \neq y$.  If $\pdbo(x) \neq \pdbo(y)$, then since $\ttildeop(\riem)$ is Hausdorff by Theorem \ref{th:tilde_top_base}, there are disjoint open sets $F_x, F_y \in \mathcal{F}$ such that $\pdbo(x) \in F_x$ and
 $\pdbo(y) \in F_y$.
 Since $\mathcal{B}$ is a base there are sets $B_x, B_y \in \mathcal{B}$ such that $x \in B_x$, $y \in B_y$, $\pdbo(B_x) \subset F_x$ and $\pdbo(B_y)
 \subset F_y$.  Thus $B_x$ and $B_y$ are disjoint.

 Now assume that $\pdbo(x)= \pdbo(y)$.  Thus there is a
 non-trivial $[\rho] \in \db$ such that $[\rho]x =y$.  Since by Theorem \ref{th:refined_Teich_base} $\db$ acts properly discontinuously
 there is an open set $V$ containing $x$ such that $[\rho]V \cap
 V$ is empty; $[\rho]V$ is open and contains $y$.  This completes
 the proof that $T_0(\riem^B)$ is Hausdorff.

 To see that $T_0(\riem^B)$ is separable, let $\mathfrak{A}$ be a countable
 dense subset of $\ttildeop(\riem)$.  Define $\mathfrak{B} = \{ p \in T_0(\riem^B)\,:\, \Pi(p) \in \mathfrak{A} \}$.  Since $\db$ is countable,
 $\mathfrak{B}$ is countable.  To see that $\mathfrak{B}$ is dense, observe that
 if $U$ is open in $T_0(\riem^B)$ then, since $\db$ acts properly discontinuously
 by Theorem \ref{th:refined_Teich_base}, there is a $V \subseteq U$ on which $\Pi$ is a homeomorphism onto its image.  So there is a $q \in \mathfrak{A} \cap \Pi(V)$, and
 thus for a local inverse $\Pi^{-1}$ on $\Pi(V)$ we can set $p=\Pi^{-1}(q) \in V \cap \mathfrak{B} \subseteq U \cap \mathfrak{B}$.  This completes the proof.
 \end{proof}
 \begin{remark}  It can also be shown that $T_0(\riem^B)$ is second countable.
 To see this, let $\mathcal{F}'$
 be a countable base for $\ttildeop(\riem)$. Such a base exists
 by Remark \ref{re:tildeTP_second_countable}.  Let $\mathcal{B}'=
 \{B \in \mathcal{B} \,:\, \pdbo(B) \in \mathcal{F}'\}$.  It is elementary
 to verify that $\mathcal{B}'$ is a base.  The fact that $\mathcal{B}'$ is countable follows from the facts that $\mathcal{F}'$ is countable and
 $\db$ is countable. Indeed, for each
 element $F$ of $\mathcal{F}'$ we can choose an element $B_F$ of $\mathcal{B}'$.
 Each $B$ in $\mathcal{B}'$ is $[\rho] B_F$ for some $F \in \mathcal{F}'$
 and $\rho \in \db$.
 \end{remark}

Using this base, we now define the charts on $T_0(\riem^B)$ that will give it a complex Hilbert space structure.
For any $x \in T_0(\riem^B)$, let $B$ be in the base $\mathcal{B}$; therefore
  $F=\pdb(B)$ is in the base $\mathcal{F}$ of $\ttildeop(\riem)$ (see Definition \ref{de:base_rigged_Teich}).  From Definition \ref{de:riggedcharts}
  there is the chart $G^{-1} : F \rightarrow \mathbb{C}^d \otimes (\Oqco)^n$,
  where $d = 3g-3+n$ is the dimension of $T(\riem)$ and $n$ is the number
  of boundary curves of $\riem^B$.
\begin{definition}[Charts for $T_0(\riem^B)$]
\label{de:TB_charts}
Given $x \in B \subset T_0(\riem^B)$ as above, we define the chart
$$
S : B \longrightarrow \mathbb{C}^d \otimes (\Oqco)^n
$$
by $S=G^{-1} \circ \pdb_0$.
\end{definition}
Note that to get a true chart into a Hilbert space we need to compose $S$ with maps $\chi : \Oqco \to A_1^2(\mathbb{D}) \oplus \mathbb{C}$ (see (\ref{eq_chidefinition}) and Theorem \ref{th:Oqco_open_in_Oqc}) as in the proof of  Theorem \ref{th:top in tp}.

 \begin{theorem} \label{th:Hilbert_manifold_on_TB} The refined \teich space
  $T_0(\riem^B)$ with charts given in the above definition is a complex Hilbert manifold.  With this given complex structure,
  $\pdb_0$ is locally biholomorphic in the sense that for every
  point $x \in T_0(\riem^B)$ there is a neighborhood $U$ of $x$
  such that $\pdb_0$ restricted to $U$ is a biholomorphism onto its
  image.
 \end{theorem}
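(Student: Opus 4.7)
The plan is to observe that the topological and analytic machinery needed for this theorem is already in place, and the result will follow by unwinding the definitions. Corollary \ref{co:Teichzero_Haus_2nd} gives that $T_0(\riem^B)$ is Hausdorff and separable in the topology generated by $\mathcal{B}$, so only two things remain: verify that the charts $S = G^{-1} \circ \pdbo$ of Definition \ref{de:TB_charts} form a holomorphic atlas valued in the Hilbert space $\mathbb{C}^d \oplus (\aonetwo \oplus \mathbb{C})^n$, and verify that $\pdbo$ is locally biholomorphic with respect to this structure. The key idea is that the serious analytic work has already been carried out in Theorem \ref{th:tilde_top_base}, which gives the Hilbert manifold structure on $\ttildeop(\riem)$, and we pull this structure back through the local homeomorphism $\pdbo$.

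First, I would confirm that $\mathcal{B}$ covers $T_0(\riem^B)$ and that each chart $S$ is a homeomorphism onto its image. This is built into the base: for any $x \in T_0(\riem^B)$, the argument at the start of the proof of Theorem \ref{th:refined_Teich_base} produces a $B \in \mathcal{B}$ containing $x$, and by construction $\pdbo|_B$ is a bijection onto $F = \pdbo(B) \in \mathcal{F}$. Since $\ttildeop(\riem)$ carries the quotient topology with respect to $\pdbo$ (Theorem \ref{th:refined_Teich_base}), $\pdbo|_B$ is a homeomorphism onto $F$, and composing with the chart $G^{-1}$ of Definition \ref{de:riggedcharts} (which is a homeomorphism onto an open set in the model space by Theorem \ref{th:tilde_top_base}) gives that $S$ is a homeomorphism onto an open subset of $\mathbb{C}^d \oplus (\aonetwo \oplus \mathbb{C})^n$.

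Next I would address the transition functions. Suppose $S_i = G_i^{-1} \circ \pdbo$ are charts on $B_i \in \mathcal{B}$, $i = 1, 2$, with $B_1 \cap B_2$ nonempty. Because $\pdbo$ is injective on each $B_i$, its local inverses exist, and on $S_1(B_1 \cap B_2)$ one computes
\begin{equation*}
  S_2 \circ S_1^{-1} \;=\; G_2^{-1} \circ \pdbo \circ (\pdbo|_{B_1})^{-1} \circ G_1 \;=\; G_2^{-1} \circ G_1.
\end{equation*}
This is precisely a transition function between two $\ttildeop(\riem)$-charts, and is therefore biholomorphic by Theorem \ref{th:tilde_top_base}. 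This shows the atlas is holomorphic and makes $T_0(\riem^B)$ a complex Hilbert manifold.

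Finally, local biholomorphicity of $\pdbo$ falls out of the same computation: using the chart $S = G^{-1} \circ \pdbo$ on $B$ and the chart $G^{-1}$ on $F = \pdbo(B)$, the local expression of $\pdbo|_B$ is
\begin{equation*}
  G^{-1} \circ \pdbo \circ S^{-1} \;=\; G^{-1} \circ \pdbo \circ (\pdbo|_B)^{-1} \circ G \;=\; \mathrm{id},
\end{equation*}
which is trivially holomorphic with holomorphic inverse. Since $\pdbo|_B$ is already a homeomorphism onto $F$ by Theorem \ref{th:refined_Teich_base}, this identifies it as a biholomorphism. I do not expect any real obstacles here, because all of the analytic difficulty — joint holomorphicity of Schiffer variation with composition in $\Oqco$, and the Besov/Bergman estimates in Section \ref{se:non-overlapping} — is already absorbed into the construction of the Hilbert structure on $\ttildeop(\riem)$; the only thing this final theorem requires is the careful bookkeeping sketched above.
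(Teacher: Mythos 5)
Your proposal is correct and follows essentially the same route as the paper: invoke Corollary \ref{co:Teichzero_Haus_2nd} for the topological hypotheses, observe that the charts $S=G^{-1}\circ\pdbo$ are homeomorphisms because $\pdbo$ is a local homeomorphism onto sets of $\mathcal{F}$, and reduce the transition functions to $G_2^{-1}\circ G_1$, which are biholomorphic by Theorem \ref{th:tilde_top_base}. Your explicit check that $\pdbo$ is the identity in these charts, giving local biholomorphicity, is a point the paper leaves implicit but is handled correctly here.
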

 \begin{proof}
  By Corollary \ref{co:Teichzero_Haus_2nd}, we need only to
  show that $T_0(\riem^B)$ is locally homeomorphic to a Hilbert
  space, and exhibit an atlas of charts with holomorphic transition
  functions. Since Definition \ref{de:TB_charts} defines a chart for any $x \in T_0(\riem^B)$, the set of such charts clearly covers
  $T_0(\riem^B)$. The maps $S$ are clearly homeomorphisms, since $G$'s
  are biholomorphisms by
  Theorem \ref{th:tilde_top_base} and $\pdb_0$'s are local homeomorphisms by the definition of the topology on $\ttildeop(\riem)$.

  Assume that two such charts $(S,B)$ and $(S',B')$
  have overlapping domains.  We show that $S' \circ S^{-1}$ is
  holomorphic on $B \cap B'$.  Let $x \in B \cap B'$.  Since
  $\mathcal{B}$ is a base, there is a $B_1 \in B \cap B'$
  containing $x$.  So $\pdb$ is one-to-one on $B_1$; note also
  that the determination of $\pdb^{-1}$ on $\pdb(B_1)$ agrees with
  those on $\pdb(B)$ and $\pdb(B')$.  So $S' \circ S^{-1} = (G')^{-1}
  \circ \pdb \circ \pdb^{-1} \circ G = (G')^{-1} \circ G^{-1}$
  which is holomorphic by Theorem \ref{th:tilde_top_base}.  The same
  proof applies to ${S \circ S}'^{-1}$.
 \end{proof}

 The construction of the Hilbert manifold structure on $T_0(\riem^B)$ made use of an arbitrary choice of a \textit{base rigging} $\tau \in \rigo(\riem^B)$, but in fact the resulting complex structure is independent of this choice.  We will show a slightly stronger result.
 If one considers a base Riemann surface together with a base rigging $(\riem_b^B,\tau_b)$ to define a base point, then
 the \textit{change of base point} to another such pair $(\riem^B_a,\tau_a)$ is a biholomorphism. We proceed by first examining the change of base point map for $\ttildeop(\riem)$.

 Fix two punctured Riemann surfaces $\riem_a$ and $\riem_b$ of the same topological type, and let $\alpha: \riem_a \to \riem_b$ be a quasiconformal map.
 The change of base point map $\alpha^*$ is defined by
 \begin{align}
 \label{eq:tp_base_point}
  \alpha^*: \ttildeop(\riem_b) & \longrightarrow  \ttildeop(\riem_a) \\
   {[\riem_b,g, \riem_1, \phi]} & \longmapsto [\riem_a, g \circ \alpha, \riem_1, \phi].  \nonumber
 \end{align}
 This is completely analogous to the usual change of base point biholomorphism for the \teich space $T(\riem)$ (see the paragraph following Theorem \ref{th:Schiffer}). From the general definition of the Schiffer variation map in (\ref{Schiffer_map2}), it is worth noting that the coordinates for $\ttildeop(\riem_0)$ as defined in (\ref{eq:Gdefinition}) actually have this change of base point biholomorphism built in.
 From this observation we easily obtain the following theorem.
 \begin{theorem} \label{th:change_base_Tp}
 The change of base point map in (\ref{eq:tp_base_point}) is a biholomorphism.
 \end{theorem}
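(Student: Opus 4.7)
The plan is to exploit the observation (already foreshadowed in the paragraph preceding the theorem) that the Schiffer-variation charts of Definition \ref{de:riggedcharts} are built on an arbitrary base point in $T(\riem)$, and that the marking of a quadruple enters only through its first and second factors. Once one picks a chart on $\ttildeop(\riem_b)$ based at $[\riem_b,g,\riem_1]$, using a Schiffer family on $\riem_1$ and an $n$-chart on $\riem_1$, \emph{the very same Schiffer family and $n$-chart} produce a chart on $\ttildeop(\riem_a)$ based at $[\riem_a,g\circ\alpha,\riem_1]$, and the two charts are intertwined by $\alpha^*$ to give the identity map.

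First I would verify that $\alpha^*$ is well-defined and a bijection.  If $(\riem_b,g_1,\riem_1,\phi_1)\sim(\riem_b,g_2,\riem_2,\phi_2)$ via a biholomorphism $\sigma:\riem_1\to\riem_2$, then $g_2^{-1}\circ\sigma\circ g_1$ is homotopic to the identity rel boundary, and conjugating by $\alpha$ yields
\[
 (g_2\circ\alpha)^{-1}\circ\sigma\circ(g_1\circ\alpha)=\alpha^{-1}\circ(g_2^{-1}\circ\sigma\circ g_1)\circ\alpha,
\]
which is again homotopic to the identity rel boundary since homotopies are preserved by conjugation with a fixed homeomorphism.  The rigging condition $\phi_2=\sigma\circ\phi_1$ is untouched, and $\phi_i\in\Oqco(\riem_i)$ persists, so the image lies in $\ttildeop(\riem_a)$.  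The map $(\alpha^{-1})^*$ is a two-sided set-theoretic inverse, so $\alpha^*$ is a bijection.

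Next, fix a point $p=[\riem_b,g,\riem_1,\phi]\in\ttildeop(\riem_b)$ and choose a chart
\[
 G_b:\Delta\times U\to F(V,S,\Delta)\subset\ttildeop(\riem_b),\qquad (\epsilon,\psi)\mapsto [\riem_b,\nu^\epsilon\circ g,\riem_1^\epsilon,\nu^\epsilon\circ\zeta^{-1}\circ\psi]
\]
about $p$ as in Definition \ref{de:riggedcharts}, where the Schiffer family $S=S(\Omega,D)$ and the $n$-chart $(\zeta,E)$ live on $\riem_1$ and are mutually compatible.  Since the Schiffer family data, the $n$-chart, and the set $U\subset(\Oqco)^n$ all depend only on $\riem_1$ (not on how it is marked from $\riem_a$ or $\riem_b$), the same data define a chart
\[
 G_a:\Delta\times U\to\ttildeop(\riem_a),\qquad (\epsilon,\psi)\mapsto [\riem_a,\nu^\epsilon\circ g\circ\alpha,\riem_1^\epsilon,\nu^\epsilon\circ\zeta^{-1}\circ\psi]
\]
about $\alpha^*(p)=[\riem_a,g\circ\alpha,\riem_1,\phi]$, and by Theorem \ref{th:tilde_top_base} this $G_a$ is a biholomorphism onto its image.

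Directly from the definitions of $G_b$, $G_a$, and $\alpha^*$, one has $\alpha^*\circ G_b=G_a$ on $\Delta\times U$, hence
\[
 G_a^{-1}\circ\alpha^*\circ G_b=\operatorname{id}_{\Delta\times U}.
\]
This is the local-coordinate expression of $\alpha^*$ in a neighbourhood of $p$, and being the identity it is trivially holomorphic.  Applying the same argument to $(\alpha^{-1})^*$ in place of $\alpha^*$ gives local holomorphicity of the inverse.  Hence $\alpha^*$ is a biholomorphism, completing the proof.  There is essentially no obstacle: all the real work has already been done in establishing Theorem \ref{th:tilde_top_base} and in arranging the definition of the charts so that the base point $[\riem,f,\riem_1]$ appears only through the marking $\nu^\epsilon\circ f$; what remains is the single observation that right-composition with $\alpha$ turns one such marking into the other while leaving every other ingredient of the chart unchanged.
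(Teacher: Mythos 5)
Your proposal is correct and follows essentially the same route as the paper: both arguments note that $(\alpha^{-1})^*$ is a two-sided inverse and then choose charts around $p$ and $\alpha^*(p)$ built from the \emph{same} Schiffer variation (and hence the same $\nu^{\epsilon}$) on $\riem_1$, in which coordinates $\alpha^*$ becomes the identity map. Your write-up merely spells out the well-definedness check and the identity $\alpha^*\circ G_b=G_a$ more explicitly than the paper does.
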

 \begin{proof} The map $\alpha^*$ has inverse $(\alpha^*)^{-1} = (\alpha^{-1})^*$ and hence is a bijection.
 Consider the points $p = [\riem_b,g, \riem_1, \phi]$ and  $q = \alpha^*(p) = [\riem_a, g \circ \alpha, \riem_1, \phi]$. One can choose coordinates, as in equation (\ref{eq:Gdefinition}), for neighborhoods of $p$ and $q$  which use the same Schiffer variation on $\riem_1$, and thus the same map $\nu^{\epsilon}$. In terms of these local coordinates, the map $\alpha^*$ is the identity map and so is certainly holomorphic. The same argument shows that $(\alpha^{-1})^*$ is holomorphic and hence $\alpha^*$ is biholomorphic.
 \end{proof}
 The next task is to relate the preceding change of base point map to the one between bordered surfaces.
 Let $\riem_b^B$ and $\riem_a^B$ be bordered Riemann surfaces of type $(g,n)$ and fix riggings $\tau_b \in \rigo(\riem_b^B)$
 and $\tau_a \in \rigo(\riem_a^B)$. Then there exists $\rho \in \qco(\riem_a^B,\riem_b^B)$ such that $\rho \circ \tau_a=\tau_b$. In fact one can prove a stronger statement \cite[Corollary 4.7 and Lemma 4.17]{RS05}: Given any quasiconformal map $\rho': \riem_a^B \to \riem_b^B$, there exists $\rho \in \qco(\riem_a^B,\riem_b^B)$ such that $\rho \circ \tau_a=\tau_b$ and $\rho$ is homotopic (not rel boundary) to $\rho'$. The map $\rho'$ is obtained by deforming $\rho$ in a neighborhood of the boundary curves so as to have the required boundary values.

 For such a $\rho$, define the change of base point map
 \begin{align} \label{eq:change_of_base_TB}
   \rho^*:T^B_0(\riem^B_b)  & \longrightarrow  T^B_0(\riem_a^B) \\
   [\riem_b^B,f,\riem^B_1] & \longmapsto  [\riem_a^B,f \circ \rho,\riem^B_1] \nonumber
  \end{align}
 which is just the usual change of base point map restricted to the refined \teich space, together with the added condition of compatibility with the fixed base riggings.

Let $\riem_b$ and $\riem_a$ be the punctured surfaces obtained from $\riem_b^B$ and $\riem_a^B$ by sewing on caps via $\tau_b$ and $\tau_a$ respectively.
Given $\rho$ as above we have its quasiconformal extension $\tilde{\rho} : \riem_a \to \riem_b$ defined by
$$
\tilde{\rho} =
\begin{cases} \rho & \text{on } \overline{\riem_a^B} \\
\text{id} & \text{on } \mathbb{D}
\end{cases}
$$
as in (\ref{eq:marking_ext}).  Let $\tilde{\rho}^*$ be the change of base point biholomorphism as in (\ref{eq:tp_base_point}).

 \begin{lemma} \label{le:change_of_base_commutes}
  Let $(\riem_b^B,\tau_b)$, $(\riem_a^B,\tau_a)$, $\rho$, $\rho^*$, $\tilde{\rho}$ and $\tilde{\rho}^*$ be as above.
  Then the diagram
  $$
   \xymatrix{
   T^B_0(\riem_b^B) \ar[r]^{\rho_*} \ar[d]_{\Pi_0} & T^B_0(\riem_a^B) \ar[d]^{\Pi_0} \\
   \ttildeop(\riem_b)  \ar[r]^{\tilde{\rho}^*} &
   \ttildeop(\riem_a)
   }
  $$
  commutes.
 \end{lemma}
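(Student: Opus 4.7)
The plan is to unwind both compositions in the diagram and verify they produce the same equivalence class in $\ttildeop(\riem_a)$. Fix an arbitrary representative $(\riem_b^B, f, \riem_1^B)$ of a point in $T_0^B(\riem_b^B)$. Going via $\Pi_0$ then $\tilde{\rho}^*$ yields
\[
[\riem_a,\, \tilde{f}\circ \tilde{\rho},\, \riem_1,\, \tilde{f}\circ \tilde{\tau}_b],
\]
where $\riem_1 = \riem_1^B \#_{f\circ \tau_b} \mathbb{D}_0^n$. Going via $\rho^*$ then $\Pi_0$ yields
\[
[\riem_a,\, \widetilde{f\circ \rho},\, \riem_1',\, \widetilde{f\circ \rho}\circ \tilde{\tau}_a],
\]
where $\riem_1' = \riem_1^B \#_{(f\circ \rho)\circ \tau_a} \mathbb{D}_0^n$.

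First I would observe that the hypothesis $\rho\circ \tau_a = \tau_b$ gives $f\circ \rho\circ \tau_a = f\circ \tau_b$, so the sewing recipes for $\riem_1'$ and $\riem_1$ use identical riggings; hence $\riem_1' = \riem_1$ as bordered-cap sewings, and the targets agree literally. This reduces the problem to verifying that the two quadruples are equivalent in $\ttildeop(\riem_a)$ via the biholomorphism $\sigma = \operatorname{id}_{\riem_1}$.

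Next I would check the two conditions of Definition \ref{de:rigged_Teich0}. For the rigging condition $\operatorname{id}\circ \tilde{f}\circ \tilde{\tau}_b = \widetilde{f\circ \rho}\circ \tilde{\tau}_a$, I would split according to the sewing decomposition of $\overline{\mathbb{D}}_0$. On $\mathbb{D}$ the extensions in \eqref{eq:rigging_ext} and \eqref{eq:marking_ext} act as the identity-inclusion into the appropriate cap, so both sides equal $z$ as points in the cap of $\riem_1$. On $\partial \mathbb{D}$, Remark \ref{re:tilde_commutes} together with $\rho\circ \tau_a = \tau_b$ gives $\tilde{f}\circ \tilde{\tau}_b = \widetilde{f\circ \tau_b} = \widetilde{f\circ \rho\circ \tau_a} = \widetilde{f\circ \rho}\circ \tilde{\tau}_a$. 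For the homotopy condition, I would compute $(\widetilde{f\circ \rho})^{-1}\circ \tilde{f}\circ \tilde{\rho}$ directly: on $\riem_a^B$ it is $(f\circ \rho)^{-1}\circ f\circ \rho = \operatorname{id}$, and on each cap all three maps restrict to the identity inclusion, so the composition is the identity map on the nose, which is a fortiori homotopic to the identity rel boundary.

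I do not expect any serious obstacle here; the lemma is a bookkeeping exercise in the definitions of $\Pi_0$, the sewing construction, and the two change-of-base-point maps. The only minor care needed is to distinguish the copies of the identity/inclusion on the various caps (each living inside a different Riemann surface) and to use the compatibility $\rho\circ \tau_a = \tau_b$ at precisely the step where the sewing identification on $\partial \mathbb{D}$ is invoked; everything else is literal equality.
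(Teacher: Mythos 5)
Your proposal is correct and follows essentially the same route as the paper: unwind both compositions, use $\rho\circ\tau_a=\tau_b$ to identify the sewn surfaces and riggings, and conclude via the identity $\widetilde{f\circ\rho}=\tilde{f}\circ\tilde{\rho}$ (which the paper cites as Remark \ref{re:tilde_commutes} and you verify directly by splitting into the surface and cap pieces). The only cosmetic difference is that you phrase the final step as checking equivalence via $\sigma=\operatorname{id}$, whereas the paper observes the two quadruples are literally equal.
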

 \begin{proof}
  Let $[\riem_b^B,f,\riem_1^B] \in T^B_0(\riem_b^B)$.  We have that
  \begin{align*}
   \Pi_0 \circ \rho^* ([\riem_b^B,f,\riem_1^B]) &= \Pi_0 ([\riem_a^B, f\circ \rho,\riem_1^B])  \\
   &= [\riem_a^B \#_{\tau_a} \mathbb{D}, \widetilde{f \circ \rho}, \riem_1^B \#_{f \circ \rho \circ \tau_a} \mathbb{D},\widetilde{f \circ \rho \circ \tau_a}] \\
   & =  [\riem_a^B \#_{\tau_a} \mathbb{D}, \widetilde{f \circ \rho}, \riem_1^B \#_{f \circ \tau_b} \mathbb{D},\widetilde{f \circ \tau_b}] \\
   &= [\riem_a, \widetilde{f \circ \rho}, \riem_1,\widetilde{f \circ \tau_b}]
  \end{align*}
  since $\rho \circ \tau_a=\tau_b$.
  On the other hand
  \begin{equation*}
   \tilde{\rho}^* \circ \Pi_0 ([\riem_b^B,f,\riem_1^B])  =  \tilde{\rho}^*([\riem_b^B \#_{\tau_b} \mathbb{D}, \tilde{f},
   \riem_1^B \#_{f \circ \tau_b} \mathbb{D}, \widetilde{f \circ \tau_b}]) = [\riem_b,\tilde{f} \circ \tilde{\rho},\riem_1,
   \widetilde{f \circ \tau_b}].
  \end{equation*}
  The claim follows from the fact that $\widetilde{f \circ \rho} = \tilde{f} \circ \tilde{\rho}$ (Remark \ref{re:tilde_commutes}).
 \end{proof}

 Theorem \ref{th:Hilbert_manifold_on_TB}, Theorem \ref{th:change_base_Tp}, and Lemma \ref{le:change_of_base_commutes} immediately imply the following theorem.
 \begin{theorem} \label{th:change_of_base_TB}
  Let $(\riem_b^B,\tau_b)$ and $(\riem_a^B,\tau_a)$ be a pair of rigged bordered Riemann surfaces, with $\tau_b \in \rigo(\riem_b^B)$
  and $\tau_a \in \rigo(\riem_a^B)$.  Let $\rho \in \qco(\riem_1^B,\riem^B_b)$ satisfy $\rho \circ \tau_a=\tau_b$.  Then
  the change of base point map $\rho^*$ given by equation (\ref{eq:change_of_base_TB}) is a biholomorphism.
 \end{theorem}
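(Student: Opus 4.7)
The plan is to show that $\rho^*$ is a bijection and that it is locally biholomorphic, using the commutative diagram from Lemma \ref{le:change_of_base_commutes} to transfer the known biholomorphicity of $\tilde{\rho}^*$ (Theorem \ref{th:change_base_Tp}) upward through $\Pi_0$, which is locally biholomorphic by Theorem \ref{th:Hilbert_manifold_on_TB}.

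First I would verify that $\rho^*$ is a bijection. Indeed, the inverse map is $(\rho^{-1})^*$: given any $\rho \in \qco(\riem_a^B,\riem_b^B)$ with $\rho \circ \tau_a = \tau_b$, the map $\rho^{-1} \in \qco(\riem_b^B,\riem_a^B)$ satisfies $\rho^{-1} \circ \tau_b = \tau_a$, and a direct computation on representatives shows $\rho^* \circ (\rho^{-1})^* = \mathrm{id}$ and vice versa. Moreover, $\rho^*$ sends $T_0^B(\riem_b^B)$ into $T_0^B(\riem_a^B)$ because left-composition of a map in $\qco(\riem_b^B,\riem_1^B)$ with $\rho$ remains in $\qco(\riem_a^B,\riem_1^B)$, by Proposition \ref{pr:composition_preserves_qco_surfaces}.

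Next I would show holomorphicity locally. Fix $p \in T_0^B(\riem_b^B)$. By Theorem \ref{th:Hilbert_manifold_on_TB}, there is an open neighborhood $U$ of $p$ on which $\Pi_0$ restricts to a biholomorphism $U \to \Pi_0(U) \subset \ttildeop(\riem_b)$, and similarly an open neighborhood $V$ of $\rho^*(p)$ on which $\Pi_0$ restricts to a biholomorphism $V \to \Pi_0(V) \subset \ttildeop(\riem_a)$. Since $\rho^*$ is continuous (which follows easily from the definition of the topology on $T_0^B$ via $\mathcal{B}$ and the continuity of $\tilde{\rho}^*$), we may shrink $U$ so that $\rho^*(U) \subset V$. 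On this neighborhood, Lemma \ref{le:change_of_base_commutes} gives
\begin{equation*}
\rho^*|_U \;=\; \bigl(\Pi_0|_V\bigr)^{-1} \circ \tilde{\rho}^* \circ \bigl(\Pi_0|_U\bigr).
\end{equation*}
By Theorem \ref{th:change_base_Tp} the middle map is a biholomorphism, and the outer two are local biholomorphisms by construction, so $\rho^*|_U$ is holomorphic.

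Applying the identical argument to $(\rho^*)^{-1} = (\rho^{-1})^*$ yields that the inverse is also holomorphic, so $\rho^*$ is a biholomorphism. The main subtlety I expect is ensuring the continuity statement needed to arrange $\rho^*(U) \subset V$; this is cleanest to extract directly from the definition of the base $\mathcal{B}$ (Definition \ref{de:base_refined_bordered}) together with the commuting square, since any $B \in \mathcal{B}$ satisfies $\Pi_0(B) \in \mathcal{F}$ and $\tilde{\rho}^*(\Pi_0(B)) \in \mathcal{F}$ by Theorem \ref{th:change_base_Tp}, so $(\rho^*)^{-1}$ of a basic open set around $\rho^*(p)$ can be written as a union of basic open sets around $p$ by lifting through the local inverses of $\Pi_0$. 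Once this is in hand, the proof is essentially a diagram chase.
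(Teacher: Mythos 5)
Your proposal is correct and is exactly the argument the paper intends: the paper states that Theorem \ref{th:Hilbert_manifold_on_TB}, Theorem \ref{th:change_base_Tp}, and Lemma \ref{le:change_of_base_commutes} "immediately imply" the result, and your factorization $\rho^*|_U = (\Pi_0|_V)^{-1} \circ \tilde{\rho}^* \circ (\Pi_0|_U)$ together with the symmetric argument for $(\rho^{-1})^*$ is precisely that diagram chase, spelled out. The only added content is your careful handling of the continuity needed to shrink $U$ so that $\rho^*(U) \subset V$, which the paper leaves implicit.
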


 \begin{corollary} \label{th:independence}
  The complex Hilbert manifold structure on $T_0(\riem^B)$ is independent of the choice of rigging $\tau \in \qso(\riem^B)$.
 \end{corollary}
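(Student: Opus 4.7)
The plan is to deduce the corollary directly from Theorem \ref{th:change_of_base_TB} by specializing to the case where the underlying bordered Riemann surface is fixed but the choice of rigging varies. Given two riggings $\tau_a, \tau_b \in \rigo(\riem^B)$, I would first invoke the existence of a map $\rho \in \qco(\riem^B,\riem^B)$ satisfying $\rho \circ \tau_a = \tau_b$. This is precisely the fact recorded in the discussion preceding Theorem \ref{th:change_of_base_TB} (originating from \cite[Corollary 4.7 and Lemma 4.17]{RS05}); one first produces any quasiconformal self-map of $\riem^B$ and then deforms it in a collar neighborhood of the boundary so that its boundary values intertwine the two riggings. The existence of such $\rho$ is the only external ingredient needed.

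With this $\rho$ in hand, applying Theorem \ref{th:change_of_base_TB} with $\riem^B_a = \riem^B_b = \riem^B$ immediately yields that the change-of-base-point map $\rho^*$ defined by equation (\ref{eq:change_of_base_TB}) is a biholomorphism from the complex Hilbert manifold structure on $T_0(\riem^B)$ defined using the rigging $\tau_b$ onto the one defined using $\tau_a$. Consequently, although the atlas of Definition \ref{de:TB_charts} and even the base topology of Definition \ref{de:base_refined_bordered} a priori depend on the choice of base rigging $\tau$, any two such Hilbert manifold structures are canonically biholomorphic via an explicit change-of-base-point map. Hence the complex Hilbert manifold $T_0(\riem^B)$ is well-defined up to biholomorphism, independently of the choice of rigging in $\rigo(\riem^B)$. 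There is no substantive obstacle to overcome at this stage: all the difficult analysis—in particular the holomorphicity of $\rho^*$, which is inherited via Lemma \ref{le:change_of_base_commutes} from the biholomorphicity of $\tilde{\rho}^*$ in Theorem \ref{th:change_base_Tp} and the local biholomorphicity of $\Pi_0$ in Theorem \ref{th:Hilbert_manifold_on_TB}—has already been carried out in the proof of Theorem \ref{th:change_of_base_TB}.
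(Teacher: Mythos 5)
Your proposal is correct and follows exactly the paper's argument: the paper's proof is the one-line application of Theorem \ref{th:change_of_base_TB} with $\riem_b^B=\riem_a^B=\riem^B$, relying on the previously established existence of $\rho\in\qco(\riem^B,\riem^B)$ with $\rho\circ\tau_a=\tau_b$. You have simply unpacked the same steps in more detail.
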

 \begin{proof}
  Apply Theorem \ref{th:change_of_base_TB} with $\riem_b^B=\riem_a^B=\riem^B$.
 \end{proof}
 \begin{theorem} \label{th:Teich_inclusion_holo}
  The inclusion map from $T_0(\riem^B)$ to $T(\riem^B)$ is holomorphic.
 \end{theorem}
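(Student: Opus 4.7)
The plan is to reduce the statement to Theorem~\ref{th:top in tp} by exploiting the commutative square
\[
\begin{array}{ccc}
T_0(\riem^B) & \xrightarrow{\ i\ } & T(\riem^B) \\
{\scriptstyle\Pi_0}\big\downarrow & & \big\downarrow {\scriptstyle\Pi} \\
\ttildeop(\riem) & \xrightarrow{\ I_T\ } & \ttildep(\riem)
\end{array}
\]
where $i$ denotes the inclusion to be shown holomorphic. Commutativity is immediate from the definitions in Section~\ref{se:sewing_on_caps}: both $\Pi$ and $\Pi_0$ are defined by the same cap-sewing construction (Definition~\ref{de:pdb}) and $i$ is the set-theoretic inclusion, so both legs send $[\riem^B,f,\riem^B_1]$ to $[\riem,\tilde f,\riem_1,\tilde f\circ\tilde\tau]$ in $\ttildep(\riem)$. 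All three other arrows already have good properties: $\Pi_0$ is a local biholomorphism by Theorem~\ref{th:Hilbert_manifold_on_TB}, $\Pi$ is a local biholomorphism by the non-refined theory from \cite{RS05, RS_fiber}, and $I_T$ is holomorphic by Theorem~\ref{th:top in tp}.

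First I would verify continuity of $i$. Given an open $V\subset T(\riem^B)$ and $p\in V\cap T_0(\riem^B)$, choose a neighborhood $W$ of $p$ in $T(\riem^B)$ on which $\Pi$ is a homeomorphism; then $\Pi(V\cap W)$ is open in $\ttildep(\riem)$, and by Theorem~\ref{th:top in tp} its intersection with $\ttildeop(\riem)$ is open there. Since $\mathcal{F}$ is a base for $\ttildeop(\riem)$, one can then extract an element $F\in\mathcal{F}$ with $\Pi_0(p)\in F$ lying inside this intersection, and lift $F$ through a local inverse of $\Pi_0$ to produce a basic open set $B\in\mathcal{B}$ with $p\in B\subset V$.

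Having continuity, the holomorphicity is a one-line observation in charts. Given $p\in T_0(\riem^B)$, pick a chart $S=G^{-1}\circ\Pi_0$ on a neighborhood $U_0$ of $p$ as in Definition~\ref{de:TB_charts}, with $G$ a Schiffer-type chart on $\ttildeop(\riem)$ coming from (\ref{eq:Gdefinition}). Using the analogous parametrization $G'$ on $\ttildep(\riem)$ (same Schiffer variation, but with the $\Oqc$ fibres of \cite{RS_fiber} replacing the $\Oqco$ fibres), one obtains a chart $S'=G'^{-1}\circ\Pi$ on a neighborhood of $i(p)$ in $T(\riem^B)$, which by continuity of $i$ may be chosen so that $i(U_0)$ is contained in the domain of $S'$. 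The commutative square then yields
\[
S'\circ i\circ S^{-1} \;=\; G'^{-1}\circ I_T\circ G,
\]
a composition of the holomorphic biholomorphisms $G,G'^{-1}$ with the holomorphic inclusion $I_T$ of Theorem~\ref{th:top in tp}. Thus $i$ is holomorphic at $p$, and since $p$ is arbitrary the theorem follows.

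There is no genuine obstacle here; the real work was done in Theorems~\ref{th:top in tp} and~\ref{th:Hilbert_manifold_on_TB}. The only point demanding any care is ensuring that the charts $G$ and $G'$ can be chosen from the same Schiffer family so that $G'^{-1}\circ I_T\circ G$ reduces to the product of the identity on the finite-dimensional Schiffer parameters with $n$ copies of the inclusion $\chi(\Oqco)\hookrightarrow\chi(\Oqc)$, which is holomorphic by Theorem~\ref{th:Oqco_open_in_Oqc}.
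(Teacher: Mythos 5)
Your proposal is correct and follows essentially the same route as the paper: the paper's proof simply writes the inclusion locally as $\pdb^{-1}\circ\iota\circ\pdbo$ and invokes Theorem~\ref{th:top in tp} together with the fact that $\pdb$ has local holomorphic inverses, which is exactly your commutative square and chart computation $S'\circ i\circ S^{-1}=G'^{-1}\circ I_T\circ G$ spelled out in more detail. Your added continuity check and the remark about matching Schiffer families are harmless elaborations of the same argument.
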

 \begin{proof}
  Since $\pdb$ has local holomorphic inverses the inclusion map from $T_0(\riem^B)$ to
  $T(\riem^B)$ can be locally written as $\pdb^{-1} \circ \iota
  \circ \pdbo$ where $\iota:\ttildeop(\riem) \rightarrow \ttildep(\riem)$ is
  inclusion.  The theorem follows from the facts that $\pdb^{-1}$ and $\pdbo$ are holomorphic and
  $\iota$ is holomorphic by Theorem \ref{th:top in tp}.
 \end{proof}
\end{subsection}
\begin{subsection}{Rigged moduli space is a Hilbert manifold}
\label{se:Friedan_Shenker_refined}
 In this section we show that the rigged moduli space of conformal field theory originating with Friedan and Shenker \cite{FriedanShenker}, with riggings chosen as in this paper, have Hilbert
 manifold structures.

 First we define the moduli spaces.  There are two models, which we will
 refer to as the border and the puncture model. These models are defined as follows:
 \begin{definition} Fix integers $g$ and $n$, $2g-2+n >0$.
\begin{enumerate}
\item  The border model of the refined rigged moduli space is
  \[ \mathcal{M}_0^B(g,n) = \{ (\riem^B,\psi) \,:\, \riem^B \text{ bordered of type } \ (g,n), \ \
     \psi \in \rigo(\riem^B)  \} / \sim  \]
  where $(\riem^B_1,\psi) \sim (\riem^B_2,\phi)$ if and only if there is a
  biholomorphism $\sigma:\riem^B_1 \rightarrow \riem^B_2$ such that
  $\phi = \sigma \circ \psi$.

\item   The puncture model of the rigged moduli space is
  \[ \mathcal{M}_0^P(g,n) = \{ (\riem,\psi) \,:\, \riem \text{ punctured of type }  (g,n), \ \  \psi \in \Oqco(\riem)  \} /\sim \]
   where $(\riem_1,\psi) \sim (\riem_1,\phi)$ if and only if
   there is a biholomorphism $\sigma:\riem_1 \rightarrow \riem_2$ such that
  $\phi = \sigma \circ \psi$.
\end{enumerate}
 \end{definition}
The puncture and border models (but with different classes of riggings) were used by \cite{Vafa} and \cite{Segal} respectively, in the study of conformal field theory. It was understood from their inception that these rigged moduli spaces are in bijective correspondence, as can be seen by cutting and sewing caps. However, one needs to careful about the exact classes of riggings used to make this statement precise. Replacing ``bijection'' with ``biholomorphism'' in this statement of course requires the careful construction of a complex structure on at least one of these spaces. It was shown in \cite{RS05} that these two moduli spaces are
 quotient spaces of $T(\riem^B)$ by a fixed-point-free properly
 discontinuous group, and thus inherit a complex
 Banach manifold structure from $T(\riem^B)$. Similarly, we will demonstrate that the refined
 rigged moduli spaces inherits a complex Hilbert manifold structure from
 $T_0(\riem^B)$. We first need to show that the action of $\pmodi(\riem^B)$ defined by (\ref{eq:MCG_action}) is fixed
 point free and properly discontinuous.

  \begin{theorem} \label{th:pmodi_prop_disc_biholo}
  The modular group $\pmodi(\riem^B)$ acts properly
  discontinuously and fixed-point-freely on $T_0(\riem^B)$.  The
  action of each
  element of $\pmodi(\riem^B)$ is a biholomorphism of $T_0(\riem^B)$.
 \end{theorem}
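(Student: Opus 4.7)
The plan is to decompose the three assertions (fixed-point-free, properly discontinuous, biholomorphic) and pull the first two from the classical (Banach) setting on $T(\riem^B)$ via the inclusion $T_0(\riem^B) \hookrightarrow T(\riem^B)$, while proving biholomorphicity via the change-of-base-point map on $\ttildeop(\riem)$.

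For the fixed-point-freeness, I would invoke the standard fact that $\pmodi(\riem^B)$ acts fixed-point-freely on $T(\riem^B)$ (this is where the hypothesis $2g-2+n>0$ enters via Theorem \ref{th:homotopic_zero}). Since, by Lemma \ref{le:MCG_reduced_action}, the action of $\pmodi(\riem^B)$ restricts to $T_0(\riem^B) \subset T(\riem^B)$, the fixed-point-free property is inherited. For the proper discontinuity, I would use that $\pmodi(\riem^B)$ acts properly discontinuously on $T(\riem^B)$, so for each $x \in T_0(\riem^B)$ there is an open neighborhood $V$ of $x$ in $T(\riem^B)$ with $[\rho]V \cap V = \emptyset$ for every nontrivial $[\rho]$. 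By Theorem \ref{th:Teich_inclusion_holo} the inclusion $T_0(\riem^B) \hookrightarrow T(\riem^B)$ is continuous, so $V' = V \cap T_0(\riem^B)$ is open in the refined topology and satisfies $[\rho]V' \cap V' \subseteq [\rho]V \cap V = \emptyset$.

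The main (and essentially only) new content is the biholomorphicity. Here the key observation is the functorial identity
\[
 \pdb_0 \circ [\rho] = \tilde{\rho}^* \circ \pdb_0,
\]
where $\tilde{\rho}: \riem \to \riem$ is the extension of $\rho \in \operatorname{QCI}(\riem^B)$ to $\riem = \riem^B \#_\tau \mathbb{D}_0^n$ by the identity on each cap, and $\tilde{\rho}^*$ is the change-of-base-point map of (\ref{eq:tp_base_point}). To verify this identity, one unwinds $\pdb_0([\rho]x) = [\riem, \widetilde{f\circ\rho}, \riem^B_1 \#_{f\circ\rho\circ\tau}\mathbb{D}_0^n, \widetilde{f\circ\rho\circ\tau}]$; since $\rho$ is the identity on $\partial \riem^B$ and $\tau(S^1) \subseteq \partial \riem^B$, one has $\rho \circ \tau = \tau$, so the sewn surface and the rigging in the third and fourth slots agree with those for $\pdb_0(x)$. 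Remark \ref{re:tilde_commutes} then yields $\widetilde{f \circ \rho} = \tilde{f} \circ \tilde{\rho}$, which is exactly the effect of applying $\tilde{\rho}^*$ to $\pdb_0(x)$. The map $\tilde{\rho}$ is quasiconformal on $\riem$ because $\rho$ is quasiconformal on $\riem^B$, the identity on each cap is holomorphic, and the seams are analytic, so one can apply the removability-across-analytic-curves result \cite[V.3]{Lehto-Virtanen}.

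Given the identity, Theorem \ref{th:change_base_Tp} implies $\tilde{\rho}^*$ is a biholomorphism of $\ttildeop(\riem)$, and Theorem \ref{th:Hilbert_manifold_on_TB} tells us $\pdb_0$ is locally biholomorphic. Hence in a neighborhood of each point we may write $[\rho] = \pdb_0^{-1} \circ \tilde{\rho}^* \circ \pdb_0$, a composition of biholomorphisms, which shows that $[\rho]$ acts biholomorphically. The main obstacle in this plan is not any single step but rather keeping careful track of which sewing-data is used on each side of the commutative diagram; once one notices that $\rho \circ \tau = \tau$, the caps are sewn identically on both sides and the computation reduces to a direct application of the base-change biholomorphism already established in Theorem \ref{th:change_base_Tp}.
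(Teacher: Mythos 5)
Your proposal is correct and follows essentially the same route as the paper: fixed-point-freeness and proper discontinuity are pulled back from the classical action on $T(\riem^B)$ using Lemma \ref{le:MCG_reduced_action} and the continuity of the inclusion (Theorem \ref{th:Teich_inclusion_holo}), and biholomorphicity rests on the observation $\rho \circ \tau = \tau$ together with the change-of-base-point machinery. The only cosmetic difference is that you re-derive the commuting square $\pdb_0 \circ [\rho] = \tilde{\rho}^* \circ \pdb_0$ by hand, whereas the paper simply cites Theorem \ref{th:change_of_base_TB}, which already packages that diagram (Lemma \ref{le:change_of_base_commutes}) with Theorem \ref{th:change_base_Tp}.
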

 \begin{proof}
 Recall that $\db(\riem^B)$ preserves $T_0(\riem^B)$ by Lemma \ref{le:MCG_reduced_action}.
 By \cite[Lemma 5.2]{RS05}, $\db(\riem^B)$ acts properly
 discontinuously and fixed-point freely on $T(\riem^B)$.  Thus
 $\db(\riem^B)$ acts fixed-point freely on $T_0(\riem^B)$.  Now let $x \in T_0(\riem^B)$. There is a neighborhood $U$ of $x$ in $T(\riem^B)$ such that $[\rho]U \cap U$
 is empty for all $[\rho] \in \db(\riem^B)$.  Clearly $V = U \cap T_0(\riem^B)$ has
 the same property, and is open in $T_0(\riem^B)$ by Theorem \ref{th:Teich_inclusion_holo}.

 Each element $[\rho] \in \pmodi(\riem^B)$ is a biholomorphism of $T_0(\riem^B)$, by observing that $\rho \circ \tau = \tau$ and applying
 Theorem \ref{th:change_of_base_TB}.
 \end{proof}

 We now show that the rigged moduli spaces are Hilbert manifolds.
 Let $\riem^B$ be a fixed bordered Riemann surface of type $(g,n)$
 and let $\tau \in \rig(\riem^B)$ be a fixed rigging.  Define the
 mapping
 \begin{align*}
   P: T(\riem^B) & \longrightarrow  \mathcal{M}^B(g,n) \\
   {[\riem^B,f,\riem^B_1]} & \longmapsto  (\riem_1^B,f \circ \tau)
 \end{align*}
 where $f \circ \tau = (f \circ \tau_1,\ldots, f \circ \tau_n)$.
 Note that this map depends on the choice of $\riem^B$ and $\tau$.
 If we choose $\tau \in \rigo(\riem^B)$, we have the map
 \[  P_0 = \left. P \right|_{T_0(\riem^B)}.  \]
 It follows immediately from Proposition \ref{pr:mixed_composition_riggings}
 that $P_0$ maps into $\mathcal{M}^B_0(g,n)$.
 \begin{theorem} \label{th:P0_surjection_etc} Given any $p,q \in T_0(\riem^B)$,
  $P_0(p)=P_0(q)$ if and only if $q = [\rho] p$ for some $[\rho] \in \pmodi(\riem^B)$.  Moreover, $P_0$ is a surjection onto $\mathcal{M}_0^B(g,n)$.
 \end{theorem}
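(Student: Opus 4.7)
The plan is to handle the forward implication, the reverse implication, and surjectivity in three separate steps, leaning on Proposition \ref{pr:db_fpf_and_kernel_is_db} for the last of these.

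For the forward direction, suppose $q = [\rho]p$ with $[\rho] \in \pmodi(\riem^B)$ and choose a representative $\rho \in \operatorname{QCI}(\riem^B)$. Because $\rho$ is the identity on $\partial \riem^B$ and each $\tau_i$ maps $S^1$ onto the boundary curve $C_i$, we have $\rho \circ \tau_i = \tau_i$ for every $i$. Writing $p = [\riem^B, f, \riem_1^B]$, it follows that $P_0([\rho]p) = (\riem_1^B, f \circ \rho \circ \tau) = (\riem_1^B, f \circ \tau) = P_0(p)$.

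For the reverse direction, take $p = [\riem^B, f_1, \riem_1^B]$ and $q = [\riem^B, f_2, \riem_2^B]$ with $P_0(p) = P_0(q)$. The definition of $\mathcal{M}_0^B(g,n)$ yields a biholomorphism $\sigma:\riem_1^B \to \riem_2^B$ with $\sigma \circ f_1 \circ \tau = f_2 \circ \tau$. Define $\rho = f_1^{-1} \circ \sigma^{-1} \circ f_2$. Since $\tau$ parametrizes the boundary, the identity above forces $\rho|_{\partial \riem^B} = \operatorname{id}$, so $\rho \in \operatorname{QCI}(\riem^B)$, and $\rho$ lies in the refined class by Lemma \ref{le:MCG_reduced_action} (and Proposition \ref{pr:composition_preserves_qco_surfaces}). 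Then $[\rho]p = [\riem^B, f_1 \circ \rho, \riem_1^B] = [\riem^B, \sigma^{-1} \circ f_2, \riem_1^B]$, which equals $q$ via the biholomorphism $\sigma$.

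For surjectivity, given $(\riem_1^B, \psi) \in \mathcal{M}_0^B(g,n)$, form $\riem_1 = \riem_1^B \#_\psi \mathbb{D}_0^n$; by Proposition \ref{pr:two_kinds_riggings}, $\tilde{\psi} \in \Oqco(\riem_1)$. Choose any quasiconformal $F: \riem \to \riem_1$ (available because both surfaces are of type $(g,n,0)$), so $[\riem, F, \riem_1, \tilde{\psi}] \in \ttildeop(\riem)$. By Proposition \ref{pr:db_fpf_and_kernel_is_db}, $\Pi_0$ is surjective, so there is $[\riem^B, g, \riem_0^B] \in T_0(\riem^B)$ mapping to this point. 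Unpacking the equivalence in $\ttildeop(\riem)$, we obtain a biholomorphism $\alpha: \riem_0 \to \riem_1$, where $\riem_0 = \riem_0^B \#_{g \circ \tau} \mathbb{D}_0^n$, satisfying $\alpha \circ \tilde{g} \circ \tilde{\tau} = \tilde{\psi}$.

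The main obstacle is the final identification: showing that $\alpha$ restricts to a biholomorphism $\alpha^B: \riem_0^B \to \riem_1^B$ of the bordered surfaces. The identity $\alpha \circ \tilde{g} \circ \tilde{\tau}_i = \tilde{\psi}_i$ forces $\alpha$ to send the $i$-th cap of $\riem_0$ bijectively onto the $i$-th cap of $\riem_1$, and therefore the complementary regions map to each other; since $\alpha$ is a biholomorphism, $\alpha^B$ is too. Then $\alpha^B \circ g \in \qco(\riem^B, \riem_1^B)$ by Proposition \ref{pr:composition_preserves_qco_surfaces}, and restricting the rigging equality to the seam $S^1$ gives $(\alpha^B \circ g) \circ \tau = \psi$. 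Hence $P_0([\riem^B, \alpha^B \circ g, \riem_1^B]) = (\riem_1^B, \psi)$, completing surjectivity.
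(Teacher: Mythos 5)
Your proof is correct, but it follows a genuinely different route from the paper's, which is much shorter: the paper disposes of the entire ``if and only if'' statement by citing the non-refined version \cite[Theorem 5.2]{RS05} and noting it restricts to $T_0(\riem^B)$, and for surjectivity it again invokes \cite[Theorem 5.2]{RS05} to produce a preimage $[\riem^B,f_1,\riem^B_1]\in T(\riem^B)$ with $f_1\circ\tau=\psi$, then checks that this preimage is actually refined by writing $f_1=\psi\circ\tau^{-1}$ on the boundary and applying Proposition \ref{pr:composition_preserves_qso_surfaces}. You instead prove the equivalence directly (the computation $\rho\circ\tau=\tau$ forcing $\rho|_{\partial\riem^B}=\operatorname{id}$, and conversely $\rho=f_1^{-1}\circ\sigma^{-1}\circ f_2$), and you obtain surjectivity by routing through the rigged Teichm\"uller space: sewing caps via $\psi$, lifting through the surjectivity of $\Pi_0$ from Proposition \ref{pr:db_fpf_and_kernel_is_db}, and then cutting back along the seams. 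Both arguments ultimately rest on \cite{RS05} (yours via Proposition \ref{pr:db_fpf_and_kernel_is_db}), but yours makes the boundary bookkeeping explicit while the paper's isolates the single genuinely new point (refinement of the marking map) at the cost of leaving the non-refined mechanics to the reference. One small step you should make explicit: when you conclude $\alpha^B\circ g\in\qco(\riem^B,\riem^B_1)$ from Proposition \ref{pr:composition_preserves_qco_surfaces}, you need $\alpha^B\in\qco(\riem^B_0,\riem^B_1)$, which holds because a biholomorphism of bordered surfaces has analytic boundary values (Proposition \ref{pr:analytic_in_qso}); alternatively, deduce it directly from $(\alpha^B\circ g)\circ\tau=\psi\in\rigo(\riem^B_1)$ via Proposition \ref{pr:mixed_composition_riggings}, which is closer to the paper's own mechanism.
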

 \begin{proof}
  All of these claims hold in the non-refined setting by \cite[Theorem 5.2]{RS05}.  Thus the first claim follows immediately.  It was already observed that $\pi_0$ maps
  into $\mathcal{M}^B_0(g,n)$.  To show that $\pi_0$ is surjective,
  observe that by \cite[Theorem 5.2]{RS05}, for any $[\riem^B_1,\psi] \in
  \mathcal{M}^B_0(g,n)$ there is a $[\riem^B,f_1,\riem^B_*] \in T(\riem^B)$
  such that $[\riem^B_*,f_1 \circ \tau]=[\riem^B_1, \psi]$.  By composing
  with a biholomorphism we can assume that $\riem^B_*=\riem^B_1$ and $f_1
  \circ \tau = \psi$.  Thus $f_1 = \psi \circ \tau^{-1}$.  Since
  for $i=1,\ldots,n$ we have $\psi_i \circ \tau_i^{-1} \in \qso(\partial_i \riem^B,\partial_i \riem^B_1)$ by Proposition \ref{pr:composition_preserves_qso_surfaces}, $f_1 \in \qco(\riem^B,\riem^B_1)$.
  Thus $[\riem^B,f_1,\riem^B_1] \in T_0(\riem^B)$ and $P_0([\riem^B,f_1,\riem^B_1])=[\riem^B_1,\psi]$, which completes the proof.
 \end{proof}
 This shows that $T_0(\riem^B)/\pmodi(\riem^B)$ and $\mathcal{M}_0^B(g,n)$
 are bijective.  They are also biholomorphic.
 \begin{corollary}  The rigged moduli space $\mathcal{M}^B(g,n)$ is a Hilbert manifold
  and the map $P_0$ is holomorphic and possesses local holomorphic inverses.
  The Hilbert manifold structure is independent of the choice of base
  surface $\riem^B$ and rigging $\tau$.
 \end{corollary}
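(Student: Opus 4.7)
The plan is to realize $\mathcal{M}_0^B(g,n)$ as the quotient $T_0(\riem^B)/\pmodi(\riem^B)$ and transport the Hilbert manifold structure. By Theorem \ref{th:P0_surjection_etc}, $P_0$ is surjective and its fibres are exactly the $\pmodi(\riem^B)$-orbits, so $P_0$ descends to a bijection $\bar{P}_0 : T_0(\riem^B)/\pmodi(\riem^B) \to \mathcal{M}_0^B(g,n)$.

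First I would equip $T_0(\riem^B)/\pmodi(\riem^B)$ with a Hilbert manifold structure. By Theorem \ref{th:pmodi_prop_disc_biholo}, $\pmodi(\riem^B)$ acts fixed-point-freely and properly discontinuously on $T_0(\riem^B)$ by biholomorphisms. For each $x \in T_0(\riem^B)$, choose an open neighborhood $V$ of $x$ on which $[\rho]V \cap V = \emptyset$ for every non-trivial $[\rho] \in \pmodi(\riem^B)$, small enough to lie in the domain of a chart $S:V \to \mathbb{C}^d \oplus (\Oqco)^n$ as in Definition \ref{de:TB_charts}. Then $\pi:T_0(\riem^B) \to T_0(\riem^B)/\pmodi(\riem^B)$ restricts to a homeomorphism on $V$, and $S \circ (\pi|_V)^{-1}$ defines a chart on the quotient. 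Two such charts overlap via a map of the form $S' \circ [\rho] \circ S^{-1}$ for some $[\rho] \in \pmodi(\riem^B)$, which is a composition of biholomorphisms by Theorem \ref{th:pmodi_prop_disc_biholo} and Theorem \ref{th:Hilbert_manifold_on_TB}. Hausdorffness and separability of the quotient follow from proper discontinuity together with Corollary \ref{co:Teichzero_Haus_2nd}. Thus $T_0(\riem^B)/\pmodi(\riem^B)$ is a complex Hilbert manifold and $\pi$ is locally biholomorphic.

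Transferring this structure along $\bar{P}_0$ gives $\mathcal{M}_0^B(g,n)$ a complex Hilbert manifold structure for which $\bar{P}_0$ is a biholomorphism. Since $P_0 = \bar{P}_0 \circ \pi$, the map $P_0$ is holomorphic and admits local holomorphic inverses (namely $\pi^{-1} \circ \bar{P}_0^{-1}$ on each sufficiently small open set).

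For the independence claim, let $(\riem_b^B,\tau_b)$ and $(\riem_a^B,\tau_a)$ be two choices with $\tau_b \in \rigo(\riem_b^B)$ and $\tau_a \in \rigo(\riem_a^B)$, and pick $\rho \in \qco(\riem_a^B,\riem_b^B)$ with $\rho \circ \tau_a = \tau_b$, which exists by the results cited before (\ref{eq:change_of_base_TB}). Denote the two projections by $P_0^{(b)}$ and $P_0^{(a)}$. A direct calculation gives
\[
P_0^{(a)}\!\left(\rho^*[\riem_b^B,f,\riem_1^B]\right) = P_0^{(a)}[\riem_a^B,f\circ\rho,\riem_1^B] = (\riem_1^B, f\circ\rho\circ\tau_a) = (\riem_1^B, f\circ\tau_b) = P_0^{(b)}[\riem_b^B,f,\riem_1^B].
\]
By Theorem \ref{th:change_of_base_TB}, $\rho^*$ is a biholomorphism, so the two induced quotient structures on $\mathcal{M}_0^B(g,n)$ coincide. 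The main technical point is simply the quotient construction in the Hilbert setting, which goes through verbatim from the finite-dimensional case because the action is by biholomorphisms and properly discontinuous; no new analytic input is required beyond the results already established.
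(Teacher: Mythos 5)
Your proposal is correct and takes essentially the same route as the paper, whose entire proof is a one-line appeal to Theorem \ref{th:P0_surjection_etc}, Theorem \ref{th:pmodi_prop_disc_biholo}, and base-point independence; you have simply written out the standard properly-discontinuous quotient construction and the intertwining identity $P_0^{(a)}\circ\rho^* = P_0^{(b)}$ that the paper leaves implicit. The one point worth flagging is that Hausdorffness of the quotient requires separating distinct orbits, not merely the local condition $[\rho]V\cap V=\emptyset$ at each point, but this gap is shared with (and is no worse than) the paper's own argument.
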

 \begin{proof}
  This follows immediately from Theorem \ref{th:P0_surjection_etc}, the fact that $\pmodi(\riem^B)$ acts
  fixed-point freely and properly discontinuously by biholomorphisms
  (Theorem \ref{th:pmodi_prop_disc_biholo}), and the fact that the
  complex structure on $T_0(\riem^B)$ is independent of the choice of
  base rigging.
 \end{proof}

 It was shown in \cite{RS05} that the border and puncture models of
 the rigged moduli space are in one-to-one correspondence, and that
 the puncture model can be obtained as a natural quotient of $\ttildeop(\riem)$.  Those results pass immediately to the refined
 setting, with only very minor changes to the proofs (much as above).
 We will simply summarize the results here.
 Let $\riem$ be a punctured Riemann surface of type $(g,n)$.
 Denote by $\operatorname{PModP}(\riem)$ the modular group of quasiconformal maps
 $f:\riem \rightarrow \riem$ modulo the quasiconformal maps
 homotopic to the identity rel boundary.  Elements $[\rho]$
 of $\operatorname{PModP}(\riem)$ act on $\tilde{T}_0(\riem)$
 via $[\rho][\riem,f_1,\riem_1,\psi]=[\riem,f_1 \circ \rho,
 \riem_1,\psi]$.
 Define the projection map
 \begin{align*}
  Q:\ttildeop(\riem) & \longrightarrow  \mathcal{M}_0^P(g,n) \\
  {[\riem,f,\riem_1,\psi]} & \longmapsto  [\riem_1,\psi].
 \end{align*}
 Finally, define the map
 \begin{align*}
  \mathcal{I}:\mathcal{M}^P(g,n) &\longrightarrow \mathcal{M}^B(g,n) \\
  {[\riem,\phi]} & \longmapsto  [\riem \backslash \overline{\phi_1(\mathbb{D})
  \cup \cdots \cup \phi_n(\mathbb{D})}, \left.\phi \right|_{S^1}].
 \end{align*}

 \begin{theorem}  The moduli spaces $\mathcal{M}^P(g,n)$ and $\mathcal{M}^B(g,n)$ are in one-to-one correspondence under the
 bijection $\mathcal{I}$.  Thus $\mathcal{M}^P(g,n)$ can be endowed
 with a unique Hilbert manifold structure so that $\mathcal{I}$
 is a biholomorphism.  The map $Q$ satisfies
 \begin{enumerate}
  \item $Q(p)=Q(q)$ if and only if
 there is a $[\rho] \in \operatorname{PModP}(\riem)$ such that $[\rho]p=[q]$
  \item $Q$ is surjective,
  \item $Q$ is holomorphic, and possesses a local holomorphic inverse
 in a neighborhood of every point.
 \end{enumerate}
 \end{theorem}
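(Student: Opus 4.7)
The plan is to proceed in three stages: establish that $\mathcal{I}$ is a bijection by exhibiting an explicit inverse via cap-sewing, use $\mathcal{I}$ to transport the Hilbert manifold structure from $\mathcal{M}^B(g,n)$ (already constructed in the previous subsection) to $\mathcal{M}^P(g,n)$, and then verify the three properties of $Q$ by exhibiting a commutative diagram relating $Q$ to the already-understood maps $\pdb_0$ and $P_0$.

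For the bijectivity of $\mathcal{I}$, I would define the inverse map $\mathcal{J}:\mathcal{M}^B(g,n)\to \mathcal{M}^P(g,n)$ by sending $[\riem^B,\psi]$ to $[\riem^B\#_\psi \mathbb{D}_0^n,\tilde{\psi}]$, where $\tilde{\psi}$ is the extension of equation \eqref{eq:rigging_ext}; this lands in $\mathcal{M}^P(g,n)$ by Proposition \ref{pr:two_kinds_riggings}. Well-definedness on equivalence classes in both directions reduces to observing that a biholomorphism $\sigma$ realizing the equivalence on one side extends uniquely across the seams (on the $\mathcal{J}$ side, using the conformal welding description in Remark \ref{re:complex_structure_sewn_surface} and removability of quasicircles, and on the $\mathcal{I}$ side, by restricting $\sigma$). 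A direct check of $\mathcal{I}\circ\mathcal{J}=\operatorname{id}$ and $\mathcal{J}\circ\mathcal{I}=\operatorname{id}$ then finishes this step. The Hilbert manifold structure on $\mathcal{M}^P(g,n)$ is then defined as the unique one making $\mathcal{I}$ a biholomorphism, which is automatically well-defined.

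For part (1) of the $Q$ statement, two points $[\riem,f_1,\riem_1,\psi_1]$ and $[\riem,f_2,\riem_2,\psi_2]$ of $\ttildeop(\riem)$ have the same $Q$-image iff there is a biholomorphism $\sigma:\riem_1\to\riem_2$ with $\sigma\circ\psi_1=\psi_2$, in which case $\rho = f_2^{-1}\circ\sigma\circ f_1$ is a quasiconformal self-map of $\riem$ and $[\rho]\in\operatorname{PModP}(\riem)$ carries the first class to the second; conversely the action of any $[\rho]$ preserves the $Q$-fibre. Part (2) is immediate: given $[\riem_1,\psi]\in\mathcal{M}^P_0(g,n)$, choose any quasiconformal $f:\riem\to\riem_1$; then $[\riem,f,\riem_1,\psi]\in \ttildeop(\riem)$ maps to $[\riem_1,\psi]$ under $Q$.

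The heart of the matter, and the main potential obstacle, is part (3). The key observation is the commutativity $\mathcal{I}\circ Q\circ\pdb_0 = P_0$ on $T_0(\riem^B)$, which follows by direct unwinding of the definitions together with Remark \ref{re:tilde_commutes} (the point being that $\widetilde{f\circ\tau}=\tilde{f}\circ\tilde{\tau}$ and that cutting off the caps returns $\riem^B_1$ with boundary parametrization $f\circ\tau$). Since $\pdb_0$ is locally biholomorphic by Theorem \ref{th:Hilbert_manifold_on_TB}, $P_0$ possesses local holomorphic inverses, and $\mathcal{I}$ is a biholomorphism by construction, we may locally write
\[
  Q = \mathcal{I}^{-1}\circ P_0\circ \pdb_0^{-1},
\]
which is holomorphic. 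A local holomorphic inverse of $Q$ near any point is then given by $\pdb_0\circ P_0^{-1}\circ \mathcal{I}$, where $P_0^{-1}$ is a local inverse of $P_0$ on $\mathcal{M}^B$. The main subtlety I expect is in checking that the domains of the various local inverses can be made to match up so that the composition is defined on a neighborhood of the given point, but this is already handled by the proper discontinuity and fixed-point freeness results of Theorem \ref{th:pmodi_prop_disc_biholo} combined with Theorem \ref{th:refined_Teich_base}, exactly as in the non-refined setting of \cite{RS05}.
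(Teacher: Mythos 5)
Your proposal is correct and follows essentially the route the paper intends: the paper itself gives no proof of this theorem, deferring to \cite{RS05} with the remark that the arguments pass to the refined setting, and your argument (transporting the structure via $\mathcal{I}$, then expressing $Q$ locally as $\mathcal{I}^{-1}\circ P_0\circ \pdb_0^{-1}$ using the commutativity $\mathcal{I}\circ Q\circ\pdb_0=P_0$) is exactly the refined analogue of that strategy and parallels the paper's own treatment of the border model via $P_0$ and Theorem \ref{th:P0_surjection_etc}. The only points needing care --- well-definedness of $\mathcal{J}$ across the seams via welding and removability, and consistency of the branches of the local inverses --- are the ones you already flag, and they are handled as in the non-refined case.
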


\end{subsection}
\end{section}

\end{document}